\theoremstyle{plain}
\newtheorem{thm}{Theorem}[section]
\newtheorem{lem}[thm]{Lemma}
\newtheorem{prop}[thm]{Proposition}
\newtheorem{cor}[thm]{Corollary}
\newtheorem*{thm*}{Theorem}
\newtheorem*{prop*}{Proposition}
\newtheorem*{cor*}{Corollary}
\newtheorem{thmintro}{Theorem}
\newtheorem{corintro}[thmintro]{Corollary}
\newtheorem{propintro}[thmintro]{Proposition}
\theoremstyle{definition}
\newtheorem{defn}[thm]{Definition}
\newtheorem{rmk}[thm]{Remark}
\newtheorem*{rmk*}{Remarks}
\newtheorem*{quest*}{Question}
\newtheorem*{ass}{Standing Assumptions}
\DeclareMathOperator{\op}{op} 
\DeclareMathOperator{\llk}{lk} 
\DeclareMathOperator{\sing}{sing} 
\renewcommand{\o}{\circ}
\newcommand{\wt}{\widetilde}
\newcommand{\R}{\mathbb{R}}
\newcommand{\Z}{\mathbb{Z}}
\newcommand{\N}{\mathbb{N}}
\newcommand{\s}{\sigma}
\newcommand{\ra}{\rightarrow}
\newcommand{\Ra}{\Rightarrow}
\newcommand{\cu}{\subseteq}
\newcommand{\g}{\gamma}
\newcommand{\G}{\Gamma}
\newcommand{\mbb}{\mathbb}
\newcommand{\mc}{\mathcal}
\newcommand{\mf}{\mathfrak}
\newcommand{\x}{\times}
\newcommand{\eps}{\epsilon}
\newcommand{\Om}{\Omega}
\newcommand{\om}{\omega}
\newcommand{\Aut}{\mathrm{Aut}}
\newcommand{\Out}{\mathrm{Out}}
\newcommand{\acts}{\curvearrowright}
\newcommand{\wh}{\widehat}
\newcommand{\mscr}{\mathscr}
\newcommand{\crt}{\mathrm{crt}}
\newcommand{\Cr}{\mathrm{cr}}
\newcommand{\lk}{\mathrm{lk}}
\newcommand{\hull}{\mathrm{Hull}}
\renewcommand{\ll}{\llbracket}
\newcommand{\rr}{\rrbracket}
\newcommand{\CAT}{{\rm CAT(0)}}
\newcommand{\CA}{{\rm CAT(-1)}}
\newcommand{\B}{\mbb{B}}
\title[Cross ratios on cube complexes and length-spectrum rigidity]{Cross ratios on $\CAT$ cube complexes and marked length-spectrum rigidity}
\author[J.\ Beyrer]{Jonas Beyrer}\address{IHES, Bures-sur-Yvette, France}\email{beyrer@ihes.fr}
\author[E.\ Fioravanti]{Elia Fioravanti}\address{Max--Planck--Institut f\"ur Mathematik, Bonn, Germany}\email{fioravanti@mpim-bonn.mpg.de} 
\begin{document}

\begin{abstract}
We show that group actions on irreducible $\CAT$ cube complexes with no free faces are uniquely determined by their $\ell^1$ length function. Actions are allowed to be non-proper and non-cocompact, as long as they are $\ell^1$--minimal and have no finite orbit in the visual boundary. This is, to our knowledge, the first length-spectrum rigidity result in a setting of \emph{non-positive} curvature (with the exception of some particular cases in dimension 2 and symmetric spaces).

As our main tool, we develop a notion of cross ratio on Roller boundaries of $\CAT$ cube complexes. Inspired by results in \emph{negative} curvature, we give a general framework reducing length-spectrum rigidity questions to the problem of extending cross-ratio preserving maps between (subsets of) Roller boundaries. The core of our work is then to show that, when there are no free faces, these cross-ratio preserving maps always extend to cubical isomorphisms. All our results equally apply to cube complexes with variable edge lengths.

As a special case of our work, we construct a compactification of the Char\-ney--Stambaugh--Vogtmann Outer Space for the group of untwisted outer automorphisms of an (irreducible) right-angled Artin group. This generalises the length function compactification of the classical Culler--Vogtmann Outer Space.
\end{abstract}

\subjclass[2010]{20F65, 20F67, 51F99 / 20F28, 20F36, 53C24}
\maketitle

\vspace{-0.85cm}

\tableofcontents

\section{Introduction.}

Cross ratios arise naturally in the study of boundaries at infinity of \emph{negatively} curved spaces. In this paper and the next \cite{BF2}, we develop a similar tool for a large class of \emph{non-positively} curved spaces: $\CAT$ cube complexes.

When $X$ is the universal cover of a closed Riemannian manifold of negative sectional curvature, it is well-known that the cross ratio on $\partial_{\infty}X$ bears a close connection\footnote{See Theorem~5.1 in \cite{Biswas1}.} to the long-standing ``\emph{marked length-spectrum rigidity conjecture}'' (Problem~3.1 in \cite{Burns-Katok}). Although similar notions of cross ratio are available on boundaries of arbitrary $\CAT$ spaces, said connection normally breaks down in this more general context. 
Quite surprisingly, this is not the case for $\CAT$ cube complexes, where it is still possible to relate our cross ratio to length spectra.

This enables us to prove a version of the \emph{marked length-spectrum rigidity conjecture} for $\CAT$ cube complexes (Theorem~\ref{NFF MLSR} below). To the best of our knowledge, this is the first result of this type for spaces of high dimension and possibly containing flats\footnote{Besides the ``obvious'' cases where marked length-spectrum rigidity already follows from stronger forms of rigidity (e.g.\ symmetric spaces and Margulis' superrigidity).}. Up to now, work on the conjecture had been limited to $2$--dimensional spaces and Finsler manifolds --- the latter almost exclusively Riemannian and negatively-curved. See for instance \cite{Culler-Morgan,Otal-Ann,Croke,Croke-Fathi-Feldman,Eberlein,Hamenstaedt-GAFA,Kim-proj,Dalbo-Kim}.

Given a group $\G$, our results set the basis for the study of the \emph{space of all actions} of $\G$ on $\CAT$ cube complexes. As a first step in this direction, we use length functions to compactify large spaces of cubulations of $\G$ (cf.\ Proposition~\ref{cptf main} below). This is in many ways an analogue of Thurston's compactification of Teichm\"uller space.

\medskip
Cube complexes were originally introduced by Gromov \cite{Gromov-HypGps} and have quickly become somewhat ubiquitous within geometric group theory. They are better understood than arbitrary spaces of non-positive curvature, due to the distinctive features of their geometry. For instance, unlike arbitrary Hadamard manifolds, $\CAT$ cube complexes always contain many totally geodesic, codimension-one hypersurfaces \cite{Sag95}.

Still, cubical geometry is by no means trivial, as it displays a vast array of phenomena and it encodes the geometry of a large number of groups. Examples of \emph{cubulated} groups --- i.e.\ those groups that act properly and cocompactly on a $\CAT$ cube complex --- include right-angled Artin groups, hyperbolic or right-angled Coxeter groups \cite{Niblo-Reeves}, hyperbolic free-by-cyclic groups \cite{Hagen-Wise1,Hagen-Wise2}, hyperbolic $3$--manifold groups \cite{Bergeron-Wise}, most non-geometric $3$--manifold groups \cite{Prz-Wise1,Hagen-Prz,Prz-Wise2}, many arithmetic lattices in $SO(n,1)$ \cite{Bergeron-Haglund-Wise,Haglund-Wise-Ann}, finitely presented small cancellation groups \cite{Wise-GAFA}, random groups at low density \cite{Ollivier-Wise}, etc.

In our setting, it is more appropriate to endow $\CAT$ cube complexes with their \emph{$\ell^1$ metric}, also known as \emph{combinatorial metric}. This is bi-Lipschitz equivalent to the $\CAT$ metric, but it has the advantage of satisfying all the good properties of median geometry \cite{Roller,CDH,Fio1}. This will be crucial to many of our results and we will go into more detail on what fails for the $\CAT$ metric later in the introduction.

When a group $\G$ acts on a $\CAT$ cube complex $X$ by cubical automorphisms, we can associate to each element $g\in \G$ its \emph{translation length}:
\[\ell_X(g)=\inf_{x\in X} d(x,gx).\]
As another advantage of employing the combinatorial metric on $X$, the value $\ell_X(g)$ is always an integer. In fact, it suffices to compute the infimum over vertices $x$ of the barycentric subdivision $X'$ (cf.\ Proposition~\ref{Haglund FFT} below) and these always have an integer displacement with respect to the $\ell^1$ metric.

The map $\ell_X\colon\G\ra\N$ is normally known as \emph{length function}, or \emph{marked length spectrum} by analogy with the Riemannian setting. It is natural to wonder whether $\ell_X$ fully encodes the geometry of the $\G$--action. For cube complexes that, like Riemannian manifolds, enjoy the \emph{geodesic extension property}, we show that this is indeed the case:

\begin{thmintro}\label{NFF MLSR}
Let $X$ and $Y$ be irreducible $\CAT$ cube complexes with no free faces, each admitting properly discontinuous\footnote{It is not sufficient for our arguments that $X$ and $Y$ admit cocompact groups actions. However, this is only due to the need for Lemma~\ref{geometric implies nonelementary} in the proof of Proposition~\ref{colouring}.}, cocompact group actions. Let $\G$ be a group and let $\G\acts X$ and $\G\acts Y$ be $\ell^1$--minimal, non-elementary actions with the same $\ell^1$ length function. Then there exists a (unique) $\G$--equivariant cubical isomorphism $\Phi\colon X\ra Y$.
\end{thmintro}

This extends known results for groups acting on trees \cite{Culler-Morgan} and $2$--dimensional right-angled Artin groups acting on square complexes \cite{Charney-Margolis}.

We stress that the group $\G$ is not required to act properly or cocompactly in Theorem~\ref{NFF MLSR}. Actions are \emph{$\ell^1$--minimal}\footnote{These are closely related to the \emph{essential actions} from \cite{CS}, see Lemma~\ref{minimal vs essential} below.} when they do not leave invariant any proper convex subcomplexes of the barycentric subdivision. \emph{Non-elem\-ent\-ary} actions are those without any finite orbits in the visual compactification. 

Having \emph{no free faces} is equivalent to the geodesic extension property for the $\CAT$ metric\footnote{See Definitions~II.5.7,~II.5.9 and Proposition~II.5.10 in \cite{BH}.}. 
Note that, when $X$ has no free faces, every properly discontinuous, cocompact action of a non-virtually-cyclic group is $\ell^1$--minimal and non-elementary (see Lemmas~\ref{geometric implies nonelementary} and~\ref{minimal vs essential}).

\begin{rmk*} 
\begin{enumerate} 
\item[]
\item We are not requiring the group $\G$ to be countable, nor --- in case $\G$ has a topology --- that actions have continuous orbits.
\item Although we have preferred to state Theorem~\ref{NFF MLSR} for \emph{cube} complexes, it holds, more generally, for \emph{cuboid} complexes and $\G$--actions respecting the cell structures. In cuboid complexes, edges can have arbitrary real lengths. The price to pay is that the map $\Phi$ (provided by Theorem~\ref{NFF MLSR}) becomes a mere $\ell^1$--isometry in this context, and it will not take vertices to vertices in general. Still, we emphasise that $\ell^1$--isometries are not as violent as $\CAT$--isometries can be: $\Phi$ will still take median walls to median walls, it just might not respect which of these walls are designated as hyperplanes by the cellular structure. 

With such necessary changes, all results in this paper apply to $\CAT$ cuboid complexes. We refer the reader to Section~\ref{cuboid section} for more information. 

\item A more general version of Theorem~\ref{NFF MLSR} holds for many cube complexes that do have free faces, for instance all Davis complexes associated to (irreducible) right-angled Coxeter groups. See Theorem~\ref{most general statement} below.
\item Irreducibility --- on the other hand --- cannot be removed from the statement of the theorem. Given $\G\acts X$, the induced action on the barycentric subdivision $\G\acts X'$ and the diagonal action $\G\acts X\x X$ always have the same length function $2\cdot\ell_X$.  
\end{enumerate}
\end{rmk*}

In the next paper \cite{BF2}, we will remove the no-free-faces assumption from Theorem~\ref{NFF MLSR}, at the cost of imposing the strong restriction that $\G$ be a Gromov hyperbolic group acting properly and cocompactly on $X$ and $Y$. We will again rely on the general framework developed in the present paper, namely on the connection between marked length-spectrum rigidity and the extension of certain boundary maps (Theorem~\ref{from ell to moeb} below). However, a completely different approach will then be required in order to extend boundary maps to cubical isomorphisms:
rather than rely on Theorem~\ref{ext Moeb} below, we will aim to recognise limit sets of hyperplanes and halfspaces in the visual boundary, whereby hyperbolicity will play a key role.

\medskip
Theorem~\ref{NFF MLSR} can be used to compactify spaces of cubulations. The picture that one should have in mind is that of Culler and Vogtmann's Outer Space \cite{Culler-Vogtmann}, where every point is an action of the free group $F_n$ on a simplicial tree. This space embeds into the space of projectivised length functions $\mbb{P}(\R^{F_n})$ and can be compactified by adding length functions of certain non-proper actions on real trees \cite{Culler-Morgan,Cohen-Lustig}. 

Given a finitely generated group $\G$ and an integer $D\geq 0$, let us denote by $\mathrm{Cub}_D(\G)$ the collection of all actions with unbounded orbits of $\G$ on $\CAT$ cuboid complexes with countably many vertices and dimension at most $D$. We identify actions that are $\G$--equivariantly isometric or differ by a homothety. Each of these actions must admit elements with positive translation length by a result of Sageev (see Theorem~5.1 in \cite{Sag95}), so we can consider the composition
\[\mathrm{Cub}_D(\G)\overset{\ell}{\longrightarrow}\R^{\G}\setminus\{0\}\longrightarrow\mbb{P}(\R^{\G}),\]
where the first arrow simply assigns to every action its $\ell^1$ length function. Theorem~\ref{NFF MLSR} states that this composition is injective on a certain subset of $\mathrm{Cub}_D(\G)$. We endow $\R^{\G}$ with the product topology and $\mbb{P}(\R^{\G})$ with the quotient topology.

Combining previous work of the second author on the geometry of finite-rank median spaces \cite{Fio2} with a standard Bestvina--Paulin construction \cite{Bestvina,Paulin-arb}, we prove:

\begin{propintro}\label{cptf main}
Let $\G$ be finitely generated. For every $D\geq 0$, the image of the map $\mathrm{Cub}_D(\G)\ra\mbb{P}(\R^{\G})$ is relatively compact.
\end{propintro}

We draw the reader's attention to the generality of Proposition~\ref{cptf main}, since actions in $\mathrm{Cub}_D(\G)$ are only required to have unbounded orbits. Every point in the closure of the image of $\mathrm{Cub}_D(\G)$ in $\mbb{P}(\R^{\G})$ is the (projectivised) length function of an action of $\G$ on a finite rank median space with unbounded orbits (Proposition~\ref{cptf 3.5}).

Theorem~\ref{NFF MLSR} and Proposition~\ref{cptf main} can in particular be applied to the \emph{Salvetti blow-ups} constructed by Charney, Stambaugh and Vogtmann in \cite{Charney-Stambaugh-Vogtmann} in order to study the group of untwisted outer automorphisms $U(\G)\leq\Out(\G)$ of a right-angled Artin group $\G$.

\begin{corintro}\label{CSV application}
Let $\G$ be a right-angled Artin group that does not split as a direct product. The Charney--Stambaugh--Vogtmann Outer Space for $U(\G)$ continuously injects into $\mbb{P}(\R^{\G})$ with relatively compact image.
\end{corintro}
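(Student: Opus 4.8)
The plan is to identify the Charney--Stambaugh--Vogtmann Outer Space of $U(\G)$ with a subset of $\mathrm{Cub}_D(\G)$ for a suitable $D$, and then read off the statement from Theorems~\ref{NFF MLSR} and~\ref{cptf main}. Recall that a point of this space is a free, properly discontinuous, cocompact action of $\G$ on the universal cover $X$ of a metric Salvetti blow-up, two such actions being identified when they are $\G$--equivariantly isometric (or homothetic, in the projectivised version). First I would collect the features of these complexes that we need. Since a Salvetti blow-up is a finite complex, $X$ has finitely many $\G$--orbits of cubes, hence countably many vertices; moreover its dimension is bounded in terms of the defining graph $\Gamma$ alone, there being only finitely many combinatorial types of blow-ups, so that all of CSV Outer Space lands inside a single $\mathrm{Cub}_D(\G)$. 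As $\G$ acts freely, properly discontinuously and cocompactly on $X$, it sits in $\Aut(X)$ as a uniform lattice, and it has unbounded orbits because $\G$ is infinite. Finally, because $\G$ does not split as a direct product --- equivalently, $\Gamma$ is not a join --- the complex $X$ is irreducible and, by construction of the untwisted blow-ups in \cite{Charney-Stambaugh-Vogtmann}, it has no free faces. Lemmas~\ref{geometric implies nonelementary} and~\ref{minimal vs essential} then show that $\G\acts X$ is minimal and non-elementary, the degenerate case in which $\G$ is virtually cyclic (where the Outer Space is a point) being treated by hand.

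Each point of CSV Outer Space is thus in particular an action belonging to $\mathrm{Cub}_D(\G)$, and the resulting map $\mathrm{CSV}\to\mathrm{Cub}_D(\G)$ composed with
\[\mathrm{Cub}_D(\G)\overset{\ell}{\longrightarrow}\R^{\G}\setminus\{0\}\longrightarrow\mbb{P}(\R^{\G})\]
is the map in the statement. Relative compactness of its image is then immediate from Theorem~\ref{cptf main}. For continuity, note that the topology on CSV Outer Space is the usual one for such deformation spaces: within a fixed blow-up, the translation lengths $\ell_X(g)$ depend continuously --- in fact piecewise-linearly --- on the edge-length parameters, and this continuity persists across the face identifications gluing different blow-ups together, exactly as for the Culler--Vogtmann Outer Space \cite{Culler-Vogtmann}. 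Hence $X\mapsto\ell_X$, and therefore the composite map to $\mbb{P}(\R^{\G})$, is continuous. (If one takes the length-function topology as the definition of the topology on CSV Outer Space, continuity becomes tautological and the content reduces to well-definedness and injectivity.)

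For injectivity, suppose two points, represented by actions $\G\acts X$ and $\G\acts Y$, are sent to the same element of $\mbb{P}(\R^{\G})$; after rescaling one cuboid complex we may assume $\ell_X=\ell_Y$. By the features collected above, $X$ and $Y$ satisfy all the hypotheses of the cuboid version of Theorem~\ref{NFF MLSR} (see item~(2) of the Remarks following that theorem), so there is a $\G$--equivariant isometry $X\to Y$; by definition of CSV Outer Space the two points coincide. Combined with the previous paragraph, this exhibits the desired continuous injection with relatively compact image.

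I expect the only genuinely delicate point to be the verification, from the definitions of \cite{Charney-Stambaugh-Vogtmann}, that every metric Salvetti blow-up of a right-angled Artin group which is not a direct product has an \emph{irreducible} universal cover \emph{with no free faces}. Irreducibility should reflect the assumption that $\Gamma$ is not a join --- so that the Salvetti complex itself is irreducible --- together with the fact that blowing up a vertex never introduces a product splitting; the absence of free faces is a design feature of the untwisted blow-up construction. Everything else --- the dimension bound, countability of the vertex set, the uniform-lattice property, and the continuity of length functions --- is routine.
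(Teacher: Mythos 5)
Your reduction to Theorems~\ref{NFF MLSR} and~\ref{cptf main} is exactly the route the paper takes, and the routine verifications (dimension bound, countable vertex set, uniform lattice in $\Aut(X)$, minimality and non-elementarity via Lemmas~\ref{geometric implies nonelementary} and~\ref{minimal vs essential}, continuity, relative compactness) are fine. The problem is that the two hypotheses you defer --- irreducibility of $X$ and the absence of free faces --- are precisely the substance of the paper's proof, and you supply no argument for either. The no-free-faces property is not a ``design feature'' one can read off the construction in \cite{Charney-Stambaugh-Vogtmann}: the paper has to prove it, by taking a blow-up $M=\mbb{S}^{\mathbf{\Pi}}$ with a putative free face $c'\cu c$ chosen so that $k+\dim c$ is minimal (where $\mathbf{\Pi}=\{\mathbf{P}_1,\dots,\mathbf{P}_k\}$), using hyperplane collapses (Theorem~4.6 of \emph{op.\ cit.}) to show $c$ must meet every hyperplane dual to the edges $e_{\mathbf{P}_i}$, and then ruling out both possible labels ($e_{\mathbf{P}_i}$, or $e_x$ with $x\in V^{\pm}$) for the edge of $c\setminus c'$ at a vertex of $c'$, via links of partitions, regions, and Theorem~3.14 of \emph{op.\ cit.} Flagging this as ``the only genuinely delicate point'' and leaving it unproved means the corollary is not proved.

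The irreducibility claim has the same defect: ``blowing up a vertex never introduces a product splitting'' is a heuristic, not an argument --- a priori a blow-up could split even though $\mbb{S}$ does not, and irreducibility of a compact nonpositively curved complex is not obviously inherited under such modifications. The paper's argument is different and, notably, uses the no-free-faces property: since $\G$ does not split as a direct product, $\wt{\mbb{S}}$ has nonempty contracting boundary by \cite{Charney-Sultan}; as $X$ is quasi-isometric to $\wt{\mbb{S}}$, its contracting boundary is also nonempty, so $X$ has at most one unbounded factor; and essentiality of $X$ (which follows from finite dimensionality plus the absence of free faces) rules out bounded factors, forcing $X$ to be irreducible. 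So both of the facts your proposal treats as plausible background require genuine proofs, and without them the application of Theorem~\ref{NFF MLSR} is unjustified.
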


The $2$--dimensional case was obtained in \cite{Charney-Margolis,Vijayan}. We would expect the map in Corollary~\ref{CSV application} to be a topological embedding, but this is not immediate and we have not investigated the matter in the present paper.

\medskip \noindent
{\bf Cross ratios and more on Theorem~\ref{NFF MLSR}.} The proof of Theorem~\ref{NFF MLSR} consists of two steps. Roughly speaking, we first need to construct a \emph{boundary map at infinity} ``$f\colon\partial X\dashrightarrow\partial Y$'' and use the assumption on length functions to show that $f$ preserves a \emph{combinatorial cross ratio} on these boundaries. Things are complicated by the fact that the map $f$ will not be defined everywhere. 

The second step then amounts to showing that these boundary maps can be extended to cubical isomorphisms. It is here that it becomes crucial to endow our cube complexes with the $\ell^1$ metric. Indeed, the corresponding horofunction compactification $X\cup\partial X$, aka the \emph{Roller compactification}, is endowed with a structure of \emph{median algebra} \cite{Roller,Nevo-Sageev,Fio1}, unlike the \emph{visual compactification} $X\cup\partial_{\infty}X$. This means that any three points $x,y,z\in\partial X$ have a canonical barycentre $m(x,y,z)\in X$ and we will essentially aim to construct the isomorphism $\Phi$ by setting $\Phi(m(x,y,z))=m(f(x),f(y),f(z))$.

We now go into more detail. Given vertices $x,y,z,w\in X$, we defined their \emph{cross ratio} in previous work with Incerti-Medici \cite{BFI-new}:
\begin{align*}
\Cr(x,y,z,w)&=d(x,w)+d(y,z)-d(x,z)-d(y,w) \\
&=\#\mscr{W}(x,z|y,w)-\#\mscr{W}(x,w|y,z).
\end{align*}
Here $\mscr{W}(x,z|y,w)$ denotes the collection of hyperplanes separating $x$ and $z$ from $y$ and $w$. In Section~\ref{crossratio section} below, we show that this function extends \emph{continuously} to the Roller boundary $\partial X$ (which is totally disconnected).

We will actually only be interested in the restriction of the cross ratio to the \emph{regular boundary} $\partial_{\rm reg}X\cu\partial X$. This boundary was introduced in \cite{Fernos,Fernos-Lecureux-Matheus} and has the advantage of sharing a large subset with the contracting boundary $\partial_cX$ of Charney and Sultan \cite{Charney-Sultan}; see Theorem~D in \cite{BF2}. The first step in the proof of Theorem~\ref{NFF MLSR} is then the following.

\begin{thmintro}\label{from ell to moeb}
Let $X$ and $Y$ be locally finite, finite dimensional, irreducible $\CAT$ cube complexes. Given $\ell^1$--minimal, non-elementary actions $\G\acts X$ and $\G\acts Y$ with the same $\ell^1$ length function, there exists a $\G$--equivariant, cross-ratio preserving bijection $f\colon \mc{A}\ra \mc{B}$ between nonempty, $\G$--invariant subsets $\mc{A}\cu\partial_{\rm reg}X$ and $\mc{B}\cu\partial_{\rm reg}Y$.
\end{thmintro}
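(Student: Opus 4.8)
The plan is to construct the boundary map $f$ by using the dynamics of the two actions together with the fact that translation lengths agree. Since $\G \acts X$ is minimal and non-elementary, it contains an element $g$ acting as a \emph{rank-one isometry} (via ping-pong / the structure theory of actions on irreducible cube complexes), hence with a well-defined attracting and repelling pair of points $g^{\pm} \in \partial_{\rm reg} X$ along its axis. Because $\ell_X = \ell_Y$, the same element $g$ acts on $Y$ with positive translation length, and I would first argue that $g$ must also act on $Y$ essentially as a rank-one / contracting element: here the hypothesis that $Y$ is irreducible and that the length function is shared is crucial, and one likely deduces this from the way products of translation lengths detect ``coarse commutation'' or from a combinatorial analogue of the Culler--Morgan argument on trees — if $g$ were not contracting on $Y$, one could find group elements whose $\ell^1$ translation lengths would then be incompatible with the values forced by $\G \acts X$.

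Next I would set up the correspondence on \emph{pairs of axis endpoints}. Let $\mc{P}_X$ be the set of pairs $(g^+, g^-)$ for $g \in \G$ acting rank-one on $X$; minimality and non-elementarity make the orbit of any such pair dense-enough in $\partial_{\rm reg}X \times \partial_{\rm reg}X$ for our purposes. The assignment $g \mapsto (g^+_X, g^-_X)$ and $g \mapsto (g^+_Y, g^-_Y)$ is in each case $\G$-equivariant and canonical, so one gets a $\G$-equivariant bijection between the relevant subsets of ordered pairs. The content is to upgrade this to a map on \emph{points}: a point $\xi = g^+_X$ is recovered from the family of all pairs $(g^+_X, \cdot)$ it belongs to, or equivalently from a suitable ``confluence'' of axes, and I would show the analogous recipe in $Y$ produces a single well-defined point $f(\xi) \in \partial_{\rm reg}Y$; injectivity and surjectivity onto the corresponding $\G$-invariant subset $\mc{B} \subseteq \partial_{\rm reg}Y$ follow by symmetry (running the construction with the roles of $X$ and $Y$ swapped).

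It remains to check that $f$ preserves the cross ratio. Here I would use the formula $\Cr(x,y,z,w) = d(x,w) + d(y,z) - d(x,z) - d(y,w)$ and approximate the four boundary points $\xi,\eta,\zeta,\omega$ by axis endpoints of group elements; along such axes the relevant distances can be expressed, up to bounded error that washes out in the limit, in terms of translation lengths $\ell_X(\cdot)$ of group elements (powers of the $g_i$ and products thereof), exactly as in the classical picture where $\Cr$ of four endpoints of axes is read off from the combinatorics of how the axes overlap. Since these translation lengths are the same in $X$ and $Y$, and since the cross ratio is continuous on $\partial X$ by Section~\ref{crossratio section}, passing to the limit gives $\Cr_Y(f\xi, f\eta, f\zeta, f\omega) = \Cr_X(\xi,\eta,\zeta,\omega)$. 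The main obstacle I anticipate is precisely the step of showing $g$ stays rank-one/contracting in $Y$ and that the point-reconstruction from pairs of axis endpoints is well defined and continuous enough — i.e.\ controlling the geometry of $\G \acts Y$ knowing only its length function — rather than the cross-ratio computation itself, which should be a relatively formal limiting argument once the map $f$ is in hand.
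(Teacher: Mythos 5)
Your outline matches the paper's general strategy (fixed points of contracting elements, an equivariant map on an orbit of such points, cross ratios recovered as limits of combinations of translation lengths), but the two steps you flag as "the main obstacle" and then leave unargued are precisely the ones carrying all the difficulty, and in one case you propose a route the paper deliberately avoids. First, you assert that since $\ell_X=\ell_Y$, an element $g$ that is contracting on $X$ must be contracting on $Y$, to be deduced from some "combinatorial analogue of the Culler--Morgan argument". No such deduction is given, and it is far from clear that the length function alone detects contracting behaviour in this setting; the paper sidesteps the issue entirely by invoking the random-walk results of Fern\'os--L\'ecureux--Math\'eus (Lemma~\ref{cNT RW}) to produce a single element of $\G$ that is \emph{simultaneously} neatly contracting on $X$ and on $Y$, so that no transfer of rank-one behaviour from $X$ to $Y$ is ever needed. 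Without either that input or an actual argument for your transfer claim, your construction does not get off the ground.

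Second, the well-definedness of the point map is not a soft "reconstruction from pairs": the issue is that two elements $a,b\in\G$ (conjugates of $g$) may satisfy $a^+_X=b^+_X$ while a priori $a^+_Y\neq b^+_Y$, and nothing in your pair-correspondence addresses this. In the paper this is exactly Proposition~\ref{regular stabilisers} (the stabiliser of $g_X^+$ in $\G$ equals that of $g_Y^+$), whose proof is the technical heart of the section: it uses auxiliary contracting elements produced via Lemma~\ref{disjoining sets}, the sequence $a_n=h_2^{-n}(kg^{-n}k^{-1})$, the exact identity $\Cr(g^-,h^-,g^+,h^+)=\ell_X(g^n)+\ell_X(h^n)-\ell_X(g^nh^n)$ of Theorem~\ref{cr from ell}, and continuity of $\Cr$, detecting "$k$ fixes $g^+$" by whether a limiting cross ratio is $-\infty$. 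Your proposal never explains how the hypothesis $\ell_X=\ell_Y$ enters this well-definedness step, yet it must, since equivariance alone cannot force it. Relatedly, your final limiting argument for M\"obius-ness needs approximating elements whose fixed points converge to the prescribed points \emph{simultaneously} in $X$ and in $Y$ (the paper's Lemma~\ref{easy fix}, which itself needs the adjustment in Remark~\ref{additional procedure}); this is again a consequence of the carefully constructed single-orbit map $\psi(k^+_X)=k^+_Y$, not something available for free. So while the skeleton is right, the proposal as written has genuine gaps at the two load-bearing steps.
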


Note that the hypotheses of Theorem~\ref{from ell to moeb} are much weaker than those of Theorem~\ref{NFF MLSR}. This will allow us to rely on this result also in \cite{BF2}.

Once we have Theorem~\ref{from ell to moeb}, the proof of Theorem~\ref{NFF MLSR} is completed with:

\begin{thmintro}\label{ext Moeb}
Let $X$ and $Y$ be irreducible $\CAT$ cube complexes with no free faces, endowed with $\ell^1$--minimal, non-elementary actions of a group $\G$. Suppose that $X$ and $Y$ also admit properly discontinuous, cocompact group actions. Let ${f\colon \mc{A}\ra \mc{B}}$ be a $\G$--equivariant, cross-ratio preserving bijection, where ${\mc{A}\cu\partial_{\rm reg}X}$ and $\mc{B}\cu\partial_{\rm reg}Y$ are non\-em\-pty and $\G$--invariant. Then there exists a unique $\G$--equivariant cubical isomorphism $\Phi\colon X\ra Y$ extending $f$. 
\end{thmintro}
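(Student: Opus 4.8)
The plan is to recover the whole cubical structure --- and the isomorphism $\Phi$ itself --- from the cross-ratio data carried by $f$. The two structural inputs I would use are: \emph{(i)} a locally finite, finite-dimensional $\CAT$ cube complex is determined by its pocset of halfspaces (via Sageev's construction), so that a $\G$-equivariant isomorphism between the halfspace pocsets of $X$ and $Y$ yields a $\G$-equivariant cubical isomorphism $X\ra Y$; and \emph{(ii)} for irreducible $X$ with a minimal, non-elementary $\G$-action, any nonempty $\G$-invariant $\mc A\cu\partial_{\rm reg}X$ has median convex hull with $\hull(\mc A)\cap X=X$ --- indeed $\hull(\mc A)\cap X$ is a $\G$-invariant convex subcomplex of $X$, nonempty by non-elementarity and irreducibility, hence all of $X$ by minimality --- so that in particular every vertex of $X$ is an iterated median of points of $\mc A$ (and likewise for $Y,\mc B$). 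The argument then splits into \textbf{Stage A}, producing a $\G$-equivariant pocset isomorphism $f_*$ out of $f$ and the cross ratio, and \textbf{Stage B}, deducing $\Phi$ together with its uniqueness and its compatibility with $f$.

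\textbf{Stage A} is the heart. Every Roller boundary point lies on exactly one side of every hyperplane, so each hyperplane $\hat h$ of $X$ cuts $\mc A$ into a partition $\mc A=\mc A_h\sqcup\mc A_{h^*}$, where $\mc A_h$ is the set of points of $\mc A$ in the halfspace $h$; and, because $\hull(\mc A)\cap X=X$, the map $h\mapsto\mc A_h$ is a $\G$-equivariant embedding of the pocset of halfspaces of $X$ into the pocset of partitions of $\mc A$, with $h\subseteq k$ corresponding to $\mc A_h\subseteq\mc A_k$ and $\hat h,\hat k$ transverse corresponding to the four intersections $\mc A_h\cap\mc A_k$, $\mc A_h\cap\mc A_{k^*}$, $\mc A_{h^*}\cap\mc A_k$, $\mc A_{h^*}\cap\mc A_{k^*}$ all being nonempty. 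The key lemma to prove is that the image of this embedding --- \emph{which} partitions of $\mc A$ come from hyperplanes, and \emph{which} pairs of them are nested, resp.\ transverse --- is detected purely by the cross ratio on $\mc A$, through an explicit family of (in)equalities among cross ratios of quadruples in $\mc A$. Granting this, the cross-ratio preserving bijection $f$ matches the hyperplane-partitions of $\mc A$ and their nesting/transversality pattern with those of $\mc B$, and so induces a bijection $f_*$ between halfspaces of $X$ and halfspaces of $Y$, which preserves inclusion and complementation by construction and is $\G$-equivariant because $f$ is. It is exactly in the proof of the key lemma that the hypotheses ``no free faces'' and ``$\Aut(X),\Aut(Y)$ contain uniform lattices'' are needed: no free faces gives the geodesic extension property, so regular rays can be prolonged past any hyperplane on both of its sides and across the hyperplanes adjacent to it, which is what allows a bounded number of cross ratios of regular boundary points to isolate a single hyperplane and to separate nesting from transversality; and cocompactness of $\Aut(X)$ --- even though $\G$ itself need be neither proper nor cocompact --- supplies the ambient homogeneity and reduces the verification to finitely many orbits of local hyperplane configurations.

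\textbf{Stage B.} Input \emph{(i)} turns $f_*$ into a cubical isomorphism $\Phi\colon X\ra Y$, which is $\G$-equivariant because $f_*$ is. To see that $\Phi$ extends $f$: the boundary homeomorphism induced by $\Phi$ sends $\mc A_h$ onto $\mc B_{f_*(h)}$ for every halfspace $h$ of $X$, i.e.\ it transports the hyperplane partition structure of $\mc A$ exactly as $f$ does, so for every $\xi\in\mc A$ the points $\Phi(\xi)$ and $f(\xi)$ lie on the same side of every hyperplane of $Y$, whence $\Phi(\xi)=f(\xi)$. For uniqueness, if $\Phi'$ is another $\G$-equivariant cubical isomorphism extending $f$, then $\Psi=(\Phi')^{-1}\o\Phi$ is a $\G$-equivariant cubical automorphism of $X$, hence a median automorphism of $\overline X$, fixing $\mc A$ pointwise; since every vertex of $X$ is an iterated median of points of $\mc A$, an induction on the number of median iterations shows $\Psi=\id$ on $X$.

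The main obstacle is the key lemma of \textbf{Stage A}: reconstructing the entire hyperplane pocset --- individual hyperplanes together with all their nesting and transversality relations --- from the cross ratio restricted to the possibly sparse, merely $\G$-invariant set $\mc A\cu\partial_{\rm reg}X$. Everything around it is bookkeeping once this dictionary between cross ratios of regular boundary points and hyperplanes is in place, and it is here that ``no free faces'' does the work, since without geodesic extension nothing forces regular boundary points to detect a prescribed hyperplane or to distinguish nesting from transversality.
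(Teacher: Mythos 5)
Your overall strategy is genuinely different from the paper's (the paper never reconstructs the halfspace pocset from cross ratios; it represents vertices as medians $m(x,y,z)$ of \emph{opposite regular} triples, shows via Theorem~\ref{from OR thm} that a M\"obius bijection induces a distance-preserving bijection between the sets of such ``OR medians'', proves these medians exist under the no-free-faces/uniform-lattice hypotheses in Theorem~\ref{OR exist thm}, and then extends the partial isometry by Theorem~\ref{ext partial iso thm}), but as it stands your proposal has a genuine gap. The ``key lemma'' of your Stage~A --- that which partitions of $\mc A$ come from halfspaces, and which pairs are nested versus transverse, is detected by an explicit family of cross-ratio (in)equalities on quadruples in $\mc A$ --- is not a bookkeeping step: it is essentially the full content of the theorem, and you give no argument for it. Worse, the obvious route to such a dictionary runs straight into the obstruction illustrated by Figure~\ref{spanning a cube}: cross ratios of permutations of a $4$--tuple only ever record the pairwise \emph{differences} of $\#\mscr{W}(x,y|z,w)$, $\#\mscr{W}(x,z|y,w)$, $\#\mscr{W}(x,w|y,z)$, never the three quantities themselves, so in dimension at least $3$ the cross ratio cannot distinguish a tripod configuration from four points spanning a corner of $\R^3$. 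Overcoming exactly this is why the paper introduces the relation $x\op_z y$ (which forces one of the three sets to vanish, Remark~\ref{one is zero} and Proposition~\ref{new op vs lcrt}) and why the bulk of the technical work (Theorem~\ref{OR exist thm}) goes into producing even one vertex detectable this way. Your appeal to geodesic extension and homogeneity (``a bounded number of cross ratios isolates a single hyperplane and separates nesting from transversality'') does not engage with this difficulty, and even the injectivity of $\mf{h}\mapsto\mc A\cap\mf{h}$ (distinct halfspaces could a priori induce the same partition of a sparse invariant set $\mc A$) needs an argument you do not supply.

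There is also a concrete error in Stage~B. From $\hull(\mc A)\cap X=X$ you infer that every vertex of $X$ is an iterated median of points of $\mc A$; this is false, because the convex hull is in general strictly larger than the median subalgebra generated by $\mc A$. The paper's own example makes this explicit: in a barycentric subdivision (e.g.\ of the Cayley tree of a free group, which satisfies all hypotheses of the theorem), the subdivision vertices lie in the convex hull of the regular boundary but are never obtained as (iterated) medians of boundary points. So your uniqueness argument, which inducts on median iterations, does not go through as written; uniqueness has to be obtained differently, as the paper does via Theorem~\ref{from OR thm}(4) together with the uniqueness clause of Theorem~\ref{ext partial iso thm}. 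In short: Stage~B is repairable, but Stage~A's key lemma is the theorem itself and remains unproved.
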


If we were to replace the actions $\G\acts X$ and $\G\acts Y$ with proper cocompact actions on two $\CA$ spaces, the analogue of Theorem~\ref{from ell to moeb} would be well-known. We would actually obtain a homeomorphism of \emph{Gromov boundaries} that preserves cross ratios everywhere\footnote{See e.g.\ Th\'eor\`eme~2.2 in \cite{Otal-IberoAm} or Section~5 in \cite{Kim-Top} for the Riemannian case. Their arguments actually apply to any $\CA$ space.}. 

Note, however, that this procedure fails for general $\CAT$ spaces: We can still obtain a homeomorphism of \emph{contracting boundaries} --- assuming these are nonempty --- but it is not clear whether the cross ratio arising from the $\CAT$ metric will be preserved in general. We will rely on the median properties of $\ell^1$ metrics in order to circumvent these issues for $\CAT$ cube complexes. Still, the arguments become more involved in this context and we will make use of results of \cite{Fernos-Lecureux-Matheus} on random walks. See Section~\ref{MLSR section}.

On the other hand, no analogue of Theorem~\ref{ext Moeb} is available even in $\CA$ spaces and this is perhaps the most innovative feature of our work. We stress that the lack of an analogue of Theorem~\ref{ext Moeb} for $\CA$ Riemannian manifolds is the main reason why the \emph{marked length-spectrum rigidity conjecture} (Problem~3.1 in \cite{Burns-Katok}) is still wide open (see Theorem~5.1 in \cite{Biswas1}). 

\medskip \noindent
{\bf On the proof of Theorem~\ref{ext Moeb}.} 
As mentioned earlier, we would like to define the isomorphism $\Phi$ by representing vertices $v\in X$ as $m(x,y,z)$ for $x,y,z\in\partial X$ and then setting $\Phi(v)=m(f(x),f(y),f(z))$. 

There are two main issues with this approach:
\begin{enumerate}
\item It is not a priori clear that the map $\Phi$ is well-defined. There might exist two triples $(x,y,z),(x',y',z')\in (\partial X)^3$ for which the medians $m(x,y,z)$ and $m(x',y',z')$ coincide, while $m(f(x),f(y),f(z))$ and $m(f(x'),f(y'),f(z'))$ are different.
\item The map $f$ provided by Theorem~\ref{from ell to moeb} is only defined on $\mc{A}\cu\partial_{\rm reg}X$, so we need to be able to pick $x,y,z$ within $\partial_{\rm reg}X$. In many cases, however, there exist vertices that are not representable as medians of regular points. An example is provided by the vertices that we add to any cube complex $X$ when we barycentrically subdivide it.
\end{enumerate}

A naive solution to issue~(1) might aim to recognise purely in terms of cross ratios whether the condition ``$m(x,y,z)=m(x',y',z')$'' holds. This proves to be a delicate matter, as the cross ratios of the six points alone will in general not suffice. The problem is illustrated in Figure~\ref{spanning a cube} in the simple case with $x=x'$ and $y=y'$. In both cube complexes, all $4$--tuples obtained by permuting $x$, $y$, $z$ and $z'$ have vanishing cross ratio, although we have $m(x,y,z)=m(x,y,z')$ on the left and $m(x,y,z)\neq m(x,y,z')$ on the right.

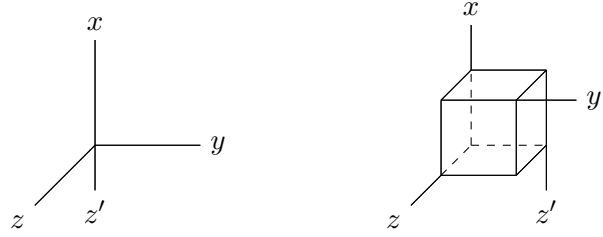
\begin{figure}
\begin{tikzpicture}
\begin{scope}
\draw [fill] (-0.4,-0.4) -- (0.4,0.4);
\draw [fill] (0.4,0.4) -- (0.4,1.8);
\draw [fill] (0.4,0.4) -- (1.8,0.4);
\draw [fill] (0.4,0.4) -- (0.4,-0.2);
\node [below left] at (-0.4, -0.4) {$z$};
\node [above] at (0.4,1.8) {$x$};
\node [right] at (1.8,0.4) {$y$};
\node [below] at (0.4,-0.2) {$z'$};
\end{scope}

\begin{scope}[xshift=5cm]
\draw [fill] (0,0) -- (0,1);
\draw [fill] (0,0) -- (1,0);
\draw [fill] (1,0) -- (1, 1);
\draw [fill] (0,1) -- (1, 1);
\draw [fill] (0,1) -- (0.4, 1.4);
\draw [fill] (1,0) -- (1.4, 0.4);
\draw [fill] (1,1) -- (1.4, 1.4);
\draw [fill] (0.4,1.4) -- (1.4,1.4);
\draw [dashed] (0,0) -- (0.4, 0.4);
\draw [dashed] (0.4,0.4) -- (0.4, 1.4);
\draw [dashed] (0.4,0.4) -- (1.4, 0.4);
\draw [fill] (1.4,0.4) -- (1.4, 1.4);
\draw [fill] (0,0) -- (-0.4, -0.4);
\draw [fill] (0.4,1.4) -- (0.4, 2);
\draw [fill] (1,1) -- (1.8, 1);
\draw [fill] (1.4,0.4) -- (1.4, -0.2);
\node [below left] at (-0.4, -0.4) {$z$};
\node [above] at (0.4,2) {$x$};
\node [right] at (1.8,1) {$y$};
\node [below] at (1.4,-0.2) {$z'$};
\end{scope}
\end{tikzpicture}
\caption{The cube complex on the left is a tree with a single branch point and four boundary points $x,y,z,z'$. On the right, $x,y,z,z'$ lie in the Roller boundary of $\R^3$.}
\label{spanning a cube} 
\end{figure}

This is a characteristic feature of cross ratios on cube complexes of dimension at least 3. Indeed, cross ratios obtained by permuting coordinates of a $4$--tuple $(x,y,z,w)$ will only provide information on the respective differences between the cardinalities of $\mscr{W}(x,y|z,w)$, $\mscr{W}(x,z|y,w)$, $\mscr{W}(x,w|y,z)$, and never on these three quantities themselves. Note in this regard that these three sets of hyperplanes are pairwise transverse.

It is possible to circumvent these problems by representing vertices $v$ as medians of triples $(x,y,z)$ that satisfy the additional condition $x\op_z y$ (Definition~\ref{opposite defn}). This means that all geodesics with endpoints $x$ and $y$ must pass through $v$. This forces one of the three sets $\mscr{W}(x,y|z,w)$, $\mscr{W}(x,z|y,w)$, $\mscr{W}(x,w|y,z)$ to be empty for every choice of $w$. 

In relation to Figure~\ref{spanning a cube}, we have $x\op_zy$ on the left, but not on the right. For more details, we refer the reader to Sections~\ref{opposite sect},~\ref{general setup sect} and~\ref{from OR sect}.

Issue~(2) is now exacerbated by the fact that we need to represent vertices $v\in X$ as $v=m(x,y,z)$ not only for $x,y,z\in\partial_{\rm reg}X$, but also with $x\op_z y$. We say that $v$ is an \emph{OR median} if this is possible (Definition~\ref{OR defn}). 

The most technical part of the paper is devoted to showing that, under the hypotheses of Theorem~\ref{ext Moeb}, there always exists at least one OR median (Theorem~\ref{OR exist thm}). It is then possible to define an isometry $\Phi$ between the subsets of OR medians (Theorem~\ref{from OR thm}). Finally, the proof of Theorem~\ref{ext Moeb} is completed by extending this partial isometry to a global isomorphism of cube complexes. This is achieved by relying on the following result, which we believe of independent interest.

\begin{thmintro}\label{ext partial iso thm}
Let $X$ and $Y$ be irreducible, locally finite $\CAT$ cube complexes with no free faces, endowed with cocompact actions $G_1\acts X$ and $G_2\acts Y$.  Let nonempty subsets $A\cu X$ and $B\cu Y$ be invariant, respectively, under $G_1$ and $G_2$. Then every distance-preserving bijection $\phi\colon A\ra B$ admits a unique extension to a cubical isomorphism $\Phi\colon X\ra Y$.
\end{thmintro}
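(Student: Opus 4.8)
The plan is to promote $\phi$ to an isomorphism of median algebras and then to argue that this median structure, which $\phi$ now respects, already reconstructs $X$ together with the inclusion $A\hookrightarrow X$; it is precisely the absence of free faces that makes this reconstruction possible. All distances are combinatorial, so $d(x,y)=\#\mscr{W}(x|y)$; I would first reduce to the case where $A$ and $B$ consist of vertices, i.e.\ $A\cu X^{(0)}$, $B\cu Y^{(0)}$. Since $G_1$ acts cocompactly and $A\neq\emptyset$ is $G_1$--invariant, $A$ contains a $G_1$--orbit and is therefore coarsely dense; fix $R$ with $X=\mc N_R(A)$, and likewise $Y=\mc N_R(B)$. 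Recalling that $m(x,y,z)$ is the unique point $p$ with $d(x,p)+d(p,y)=d(x,y)$ (and cyclically), and that $d(m(x,y,z),w)$ --- more generally $d(m(x,y,z),m(x',y',z'))$ --- is a fixed universal function of the pairwise distances of the points involved, one sees that $\phi$ carries any median of points of $A$ lying in $A$ to the corresponding median in $B$. Adjoining medians of triples one step at a time then extends $\phi$ to a distance--preserving bijection $\bar\phi\colon\langle A\rangle\ra\langle B\rangle$ between the median subalgebras generated by $A$ and $B$, the universal distance formula guaranteeing well--definedness and distance preservation at each stage. Since $A$ consists of vertices, so does $\langle A\rangle$; it is still coarsely dense, and $\bar\phi$ is now an isomorphism of integer--valued median metric spaces.

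The heart of the proof is to show that $\langle A\rangle$, with its embedding in $X$, determines $X$. First, \emph{every halfspace $\mf h$ of $X$ is deep}: the geodesic extension property (equivalent to having no free faces) lets one prolong a geodesic edge dual to $\wh{\mf h}$ arbitrarily far into $\mf h$, and since a geodesic crosses $\wh{\mf h}$ at most once this gives vertices of $\mf h$ at unbounded distance from $\wh{\mf h}$. Combined with coarse density, this forces every halfspace of $X$ to meet $A$, so $\hull(\langle A\rangle)=\bigcap\{\mf h:\langle A\rangle\cu\mf h\}=X$; in particular $\langle A\rangle$ is separated across every hyperplane of $X$. Since hyperplanes of a no--free--faces complex again have no free faces, the same reasoning shows that the four quadrants of any two transverse hyperplanes are deep, hence each meets $A$; therefore transverse hyperplanes of $X$ induce distinct ``walls'' (convex bipartitions) of $\langle A\rangle$, while nested hyperplanes inducing the same wall form, by local finiteness and coarse density, only finite stacks. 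Consequently the measured wall space of $\langle A\rangle$ --- walls weighted by the number of hyperplanes of $X$ inducing them --- has finite multiplicities, and $X$, together with the embedding $\langle A\rangle\hookrightarrow X$, is recovered as its canonical $\CAT$ cube complex (the ``unit--edge cubing''; cf.\ \cite{CDH,Fio1}): having no free faces rules out any hyperplane or cube of $X$ undetected by the coarsely dense subset. The same description applies to $Y$ and $\langle B\rangle$.

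Finally, as $\bar\phi$ is an isomorphism of integer--valued median metric spaces it matches the measured wall space of $\langle A\rangle$ with that of $\langle B\rangle$, hence induces an isomorphism of their canonical cube complexes, i.e.\ a cubical isomorphism $\Phi\colon X\ra Y$ which by construction restricts to $\bar\phi$ on $\langle A\rangle$ and therefore to $\phi$ on $A$; for uniqueness, any cubical isomorphism extending $\phi$ restricts on $\langle A\rangle$ to a median metric isomorphism agreeing with $\bar\phi$, and by the previous step this pins down the map on all of $X$. I expect the genuine obstacle to be the middle step --- showing that a coarsely dense median subset of an irreducible, cocompact, locally finite $\CAT$ cube complex \emph{without free faces} is confined to no halfspace and leaves no hyperplane undetected, so that $X$ can be read off from it. That is exactly where the no--free--faces hypothesis is indispensable --- for a line with periodically attached pendant edges, the spine is coarsely dense and convex but not the whole complex --- and it is the only part of the argument needing specifically cubical input (geodesic extension, deepness of halfspaces and of quadrants, local finiteness) rather than formal median--algebra manipulation; establishing the well--definedness of $\bar\phi$ in the first step is a further, more routine, point.
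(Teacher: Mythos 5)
There is a genuine gap, and it sits exactly at the step you dismiss as routine. The claim that $d(m(x,y,z),w)$ (more generally $d(m(x,y,z),m(x',y',z'))$) is a universal function of the pairwise distances of the points involved is false once the dimension exceeds $1$. Concretely, let $x,y,z,w$ be the four neighbours of a vertex $o$ in the regular $4$--valent tree, and let $x'=(1,0,0)$, $y'=(0,1,0)$, $z'=(0,0,1)$, $w'=(1,1,1)$ be vertices of a $3$--cube: in both configurations all six pairwise distances equal $2$, so the obvious bijection between the two $4$--point sets is distance-preserving, yet $d(m(x,y,z),w)=1$ while $d(m(x',y',z'),w')=3$, and the median subalgebras generated by the two sets are not even isometric. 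So adjoining medians does not automatically yield a well-defined, distance-preserving $\bar\phi\colon\langle A\rangle\ra\langle B\rangle$; the ``universal distance formula'' cannot carry that step, and any correct argument must already invoke the standing hypotheses (a coarsely dense invariant $A$, no free faces, irreducibility, local finiteness) at this point. This is precisely the phenomenon highlighted in the introduction (issue~(1) and Figure~\ref{spanning a cube}), and it is where the paper does its real work: instead of passing through median subalgebras, it extends $\phi$ one vertex-neighbourhood at a time, recognising the halfspaces adjacent to a point $v\in A$ purely from their traces on $A$ via the families $\mc{H}_v$ and the sets $H_{\mf{w}}$; the no-free-faces hypothesis enters there through Lemma~\ref{semi-straight+} (and through Lemma~\ref{H vs h 1}, which rests on Proposition~\ref{NS corner}), and Proposition~\ref{h1=h2} together with Corollary~\ref{extension to 1-nbd} then gives the one-step extension, iterated finitely often by cocompactness.

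Your second stage (reconstructing $X$ from $\langle A\rangle$ as the cubulation of a measured wall space) also needs more than is offered, though it is a secondary issue. Coarse density of $A$ does not prevent two distinct nested hyperplanes of $X$ from inducing the same wall of $\langle A\rangle$ (an $A$--free strip between parallel hyperplanes is not excluded merely by $X=\mc{N}_R(A)$), and a wall of $\langle A\rangle$ weighted by a multiplicity does not record which hyperplane in the stack is transverse to which other hyperplanes; so recovering $X$ itself, rather than some cube complex inducing the same weighted walls on $\langle A\rangle$, requires an argument (presumably again via essentiality and sector-heaviness, i.e.\ Propositions~\ref{NS corner} and~\ref{my corner}). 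The paper sidesteps this entirely by its local extension procedure, which never needs to see hyperplanes far from the current domain.
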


It is interesting to remark that having no free faces is not strictly necessary in our proof of Theorem~\ref{ext partial iso thm}. What we really need is essentiality, hyperplane-essentiality\footnote{See Section~\ref{automorphisms section} for a definition.} and Lemma~\ref{semi-straight+}, which should be considerably weaker.

Thus, Theorems~\ref{NFF MLSR} and~\ref{ext Moeb} fundamentally only require absence of free faces in order to show the existence of an OR median (Theorem~\ref{OR exist thm} and Section~\ref{OR exist sect}). In particular, we obtain the following extension of Theorem~\ref{NFF MLSR}.

\begin{thmintro}\label{most general statement}
Let $X$ and $Y$ be locally finite, finite dimensional, irreducible, sector-heavy $\CAT$ cube complexes. Let $\ell^1$--minimal, non-elementary actions $\G\acts X$ and $\G\acts Y$ have the same $\ell^1$ length function. Then there exists a $\G$--equivariant isometry between the subsets of OR medians.
\end{thmintro}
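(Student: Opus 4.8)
The plan is to chain together the two halves into which the proof of Theorem~\ref{ext Moeb} decomposes, discarding only the final extension step --- which is the one that genuinely requires the no-free-faces hypothesis. Since $X$ and $Y$ are locally finite, finite dimensional and irreducible, and the actions $\G\acts X$, $\G\acts Y$ are minimal, non-elementary and have the same $\ell^1$ length function, Theorem~\ref{from ell to moeb} applies verbatim and produces a $\G$--equivariant, cross-ratio preserving bijection $f\colon\mc A\ra\mc B$ between nonempty, $\G$--invariant subsets $\mc A\cu\partial_{\rm reg}X$ and $\mc B\cu\partial_{\rm reg}Y$. This is the only point at which the equality of length functions enters.

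Next I would invoke the existence of OR medians; this is the single place where the \emph{sector-heavy} assumption is used. By Theorem~\ref{OR exist thm}, a locally finite, finite dimensional, irreducible, sector-heavy $\CAT$ cube complex carrying a minimal, non-elementary $\G$--action admits OR medians, and moreover every OR median can be represented as $m(x,y,z)$ with $x,y,z$ lying in the prescribed $\G$--invariant set $\mc A$ (resp.\ $\mc B$) and with $x\op_z y$. In particular the set $\mc M_X\cu X$ of OR medians of $X$ is a nonempty $\G$--invariant subset, and likewise for $\mc M_Y\cu Y$. Note that, unlike in Theorem~\ref{ext Moeb}, we never appeal to ``no free faces'': that hypothesis is only needed by Theorem~\ref{ext partial iso thm} to propagate the isometry beyond the OR medians, and here we make no such claim.

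Finally I would feed $f$ into Theorem~\ref{from OR thm}: for an OR median $v=m(x,y,z)$ of $X$, with $x,y,z\in\mc A$ and $x\op_z y$, one sets $\Phi(v)=m(f(x),f(y),f(z))$. The content of that theorem is exactly that this formula is well defined (independent of the triple representing $v$), that $f(x),f(y),f(z)$ span an OR median of $Y$, and that the resulting map $\Phi\colon\mc M_X\ra\mc M_Y$ is a distance-preserving bijection; it is $\G$--equivariant because $f$ is. This $\Phi$ is the isometry claimed by the theorem. The real obstacles are fully contained in the two quoted results. The genuinely hard one is Theorem~\ref{OR exist thm}: one must actually produce regular boundary points $x,y,z$ with $x\op_z y$, i.e.\ such that \emph{every} geodesic from $x$ to $y$ runs through one common vertex, using only sector-heaviness --- the delicate combinatorial-dynamical core of the paper. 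The second subtle point is the well-definedness of $\Phi$ in Theorem~\ref{from OR thm}, which fails for unrestricted triples: as the discussion of Figure~\ref{spanning a cube} shows, the six cross ratios of $x,y,z,z'$ cannot detect whether $m(x,y,z)=m(x,y,z')$, and it is precisely this failure that forces the $\op$--condition into the definition of an OR median. Once these are granted, all that remains is bookkeeping --- verifying that ``locally finite, finite dimensional, irreducible, sector-heavy'' together with ``minimal, non-elementary, equal $\ell^1$ length functions'' really do subsume the hypotheses of Theorems~\ref{from ell to moeb}, \ref{OR exist thm} and~\ref{from OR thm}.
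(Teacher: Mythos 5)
There is a genuine error in the middle step. You claim that ``by Theorem~\ref{OR exist thm}, a locally finite, finite dimensional, irreducible, sector-heavy $\CAT$ cube complex carrying a minimal, non-elementary $\G$--action admits OR medians.'' That is not what Theorem~\ref{OR exist thm} says: its hypotheses are that $X$ has \emph{no free faces} and that $\Aut(X)$ contains a \emph{uniform lattice}, and its proof in Section~\ref{OR exist sect} uses both repeatedly (free faces enter via Lemma~\ref{semi-straight+}, Remark~\ref{finding disjoint cliques}, Proposition~\ref{colouring}, etc., and a proper cocompact action is needed from Proposition~\ref{colouring} onwards). Sector-heaviness is a strictly weaker condition (Proposition~\ref{my corner} goes only one way), and neither hypothesis of Theorem~\ref{OR exist thm} is available under the assumptions of Theorem~\ref{most general statement}. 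In fact, whether OR medians exist at all in this generality is precisely the open Question posed immediately after the statement; accordingly, the theorem does not assert that $\mf{M}(X)$ and $\mf{M}(Y)$ are nonempty, and no such assertion should be smuggled in. Your attribution of ``every OR median is a median of points of $\mc A$'' to Theorem~\ref{OR exist thm} is also misplaced: that is Lemma~\ref{an orbit suffices}, which is part of the proof of Theorem~\ref{from OR thm} and needs ampleness of $\mc A$.

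This also means you have misidentified where sector-heaviness actually enters. It is not used to produce OR medians; it is used, together with essentiality of the actions (minimal $+$ non-elementary $+$ finite dimensional, via Lemma~\ref{minimal vs essential}), to show through Lemma~\ref{ample sets}(2) that the $\G$--invariant sets $\mc A\cu\partial_{\rm reg}X$ and $\mc B\cu\partial_{\rm reg}Y$ furnished by Theorem~\ref{from ell to moeb} are \emph{ample} (they are concise by Remark~\ref{regular is concise}). Ampleness is a standing hypothesis of Theorem~\ref{from OR thm} (and is what drives Proposition~\ref{new op vs lcrt}, Corollary~\ref{op-preserving} and Lemma~\ref{an orbit suffices}, i.e.\ the well-definedness you correctly flag as delicate), so without this step your appeal to Theorem~\ref{from OR thm} is not justified, and you dismiss exactly this verification as ``bookkeeping.'' The correct argument is the two-step chain you describe with the middle step removed: Theorem~\ref{from ell to moeb} gives the M\"obius bijection $f\colon\mc A\ra\mc B$; sector-heaviness plus essentiality makes $\mc A$ and $\mc B$ ample; and Theorem~\ref{from OR thm}(1)--(2) then yields the $\G$--equivariant, distance-preserving bijection $\mf M(X)\ra\mf M(Y)$ (possibly between empty sets), with no appeal to Theorem~\ref{OR exist thm} whatsoever.
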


Here, we say that $X$ is \emph{sector-heavy} if the intersection of any family of pairwise-transverse halfspaces contains a halfspace (Definition~\ref{sector-heavy defn}). This is automatically satisfied when $X$ has no free faces (Proposition~\ref{my corner}).

Theorem~\ref{most general statement} applies for instance to all Davis complexes associated to (irreducible) right-angled Coxeter groups. Although these often have many free faces, they are always sector-heavy and all their vertices are OR medians.

In relation to Theorem~\ref{most general statement}, we should mention that we are not aware of any (reasonable) cube complex where the subset of OR medians is empty. Thus, the following is in order:

\begin{quest*}
Let $X$ be a locally finite, finite dimensional, irreducible $\CAT$ cube complex admitting an $\ell^1$--minimal, non-elementary action of a finitely generated group $\G$. Does $X$ always contain an OR median?
\end{quest*}

{\bf Structure of the paper.} At the beginning of each subsection, we use a Standing Assumptions environment to declare various hypotheses that are implicit in the subsequent results and will not be declared in their statements.

In Section~\ref{prelims} we collect various facts on $\CAT$ cube complexes that are more or less well-known. Section~\ref{cuboid section} is devoted to cube complexes with variable edge lengths; we describe the few changes needed to adapt the arguments in the rest of the paper to the cuboidal context. In Section~\ref{crossratio section}, we introduce the cross ratio on the Roller boundary and prove that it is continuous. Section~\ref{MLSR section} revolves around the proof of Theorem~\ref{from ell to moeb}. We will introduce \emph{neatly contracting automorphisms} (Definition~\ref{neatly contracting defn}), which will allow us to link length functions and cross ratios (Theorem~\ref{cr from ell}).

The proof of Theorem~\ref{ext Moeb} is the main subject of Section~\ref{from moeb to cubical iso}, with an elementary overview being provided in Section~\ref{general setup sect}. In particular, Theorem~\ref{ext partial iso thm} is obtained in Section~\ref{ext partial iso sect}. Finally, Section~\ref{cptf sect} is devoted to compactifications of spaces of cubulations; we prove Proposition~\ref{cptf main} and Corollary~\ref{CSV application} there.

\medskip
{\bf Acknowledgements.}
This project stems from a previous collaboration with M.\ Incerti-Medici, whom we also thank for providing Figure~\ref{spanning a cube}. We are indebted to Pierre-Emmanuel Caprace, Christopher Cashen, Matt Cordes, Ruth Charney, Cornelia Dru\c{t}u, Ilya Gekhtman, Mark Hagen, Tobias Hartnick, John Mackay, Beatrice Pozzetti, Michah Sageev, Viktor Schroeder and Ric Wade for many helpful conversations. 

We are especially grateful to Talia Fern\'os for suggesting that we look at the regular boundary and to Sam Shepherd for suggesting the current proof of Proposition~\ref{continuity} via Lemma~\ref{GP continuous} (considerably simpler than our own). 

We would also like to thank the referees for their many constructive comments, which improved the exposition.

We thank the organisers of the following conferences, where part of this work was carried out: \emph{Third GEAR (Junior) Retreat}, Stanford, 2017; \emph{Moduli Spaces}, Ventotene, 2017; \emph{Young Geometric Group Theory VII}, Les Diablerets, 2018; \emph{Topological and Homological Methods in Group Theory}, Bielefeld, 2018; \emph{3--Manifolds and Geometric Group Theory}, Luminy, 2018; \emph{Representation varieties and geometric structures in low dimensions}, Warwick, 2018. EF also thanks M.\ Incerti-Medici, Viktor Schroeder and Anna Wienhard for his visits at UZH and Universit\"at Heidelberg. 

JB was supported by the Swiss National Science Foundation under Grant 200020$\setminus$175567. EF was supported by the Clarendon Fund and the Moussouris Scholarship of Merton College, Oxford.

\addtocontents{toc}{\protect\setcounter{tocdepth}{1}}
\section{Preliminaries.}\label{prelims}

\subsection{$\CAT$ cube complexes.}\label{ccc prelims}

We will assume a certain familiarity with basic properties of $\CAT$ cube complexes. The reader can consult for instance \cite{Sageev} and the first sections of \cite{Chatterji-Niblo,CS,Nevo-Sageev,CFI} for an introduction to the subject. In this subsection, we merely fix notation and recall a few facts that will be necessary over the course of the paper.

Let $X$ be a simply connected cube complex satisfying Gromov's no-$\triangle$-condition; see 4.2.C in \cite{Gromov-HypGps} and Chapter~II.5 in \cite{BH}. As customary, we will not distinguish between $X$ and its $0$--skeleton, denoting both simply by $X$. It will always be clear from the context how to interpret the notation, although points $v\in X$ will generally be understood to be vertices.

The Euclidean metrics on the cubes of $X$ fit together to yield a $\CAT$ metric on $X$, thus justifying the terminology \emph{``$\CAT$ cube complex''}. Such $\CAT$ metric, however, will generally be of no interest to us and we instead prefer to endow $X$ with its \emph{combinatorial metric} (aka \emph{$\ell^1$ metric}). This is the induced path metric obtained by endowing each cube $[0,1]^k\cu X$ with the restriction of the $\ell^1$ metric of $\R^k$. 

The notation $d(-,-)$ will always refer to this latter metric. We will employ the more familiar and concise expression ``$\CAT$ cube complex'' with the meaning of ``simply connected cube complex satisfying Gromov's no-$\triangle$-con\-di\-tion and endowed with its combinatorial metric $d$''. Unless specified otherwise, all geodesics will be implicitly assumed to be combinatorial geodesics contained in the $1$--skeleton. We refer to bi-infinite geodesics simply as \emph{lines}.

Given vertices $v,w\in X$, their combinatorial distance $d(v,w)$ coincides with the length of a shortest path connecting $v$ and $w$ within the $1$--skeleton; as such, it is always a natural number. In finite dimensional cube complexes, the $\CAT$ metric and the combinatorial metric are bi-Lipschitz equivalent and complete. It is however important to remark that $(X,d)$ is \emph{uniquely geodesic} only when $X$ is a tree.

The metric space $(X,d)$ is a \emph{median space}. This means that, given any three points $p_1,p_2,p_3\in X$, there exists a unique point $m=m(p_1,p_2,p_3)\in X$ such that $d(p_i,p_j)=d(p_i,m)+d(m,p_j)$ for all $1\leq i<j\leq 3$. We refer to $m(p_1,p_2,p_3)$ as the \emph{median} of $p_1$, $p_2$ and $p_3$. The map ${m\colon X^3\ra X}$ endows $X$ with a structure of \emph{median algebra}. We refer the reader to \cite{Roller,CDH,Fio1} for more information on median geometry.

We denote by $X'$ the \emph{barycentric subdivision} (aka \emph{first cubical subdivision}) of $X$. This is the $\CAT$ cube complex obtained by adding a vertex $v(c)$ at the centre of each cube $c\cu X$; vertices $v(c)$ and $v(c')$ are joined by an edge of $X'$ if $c$ is a codimension-one face of $c'$ or vice versa. Each $k$--cube of $X$ gives rise to $2^k$ $k$--cubes of $X'$. We assign all edges of $X'$ length $1$, so the natural inclusion $X\hookrightarrow X'$ is a homothety of a factor $2$.

Let $\mscr{W}(X)$ and $\mscr{H}(X)$ be, respectively, the sets of hyperplanes and of halfspaces of $X$. We simply write $\mscr{W}$ and $\mscr{H}$ when there is no need to specify the cube complex. Let $\mf{h}^*$ denote the complement of the halfspace $\mf{h}$. The set $\mscr{H}$ is endowed with the order relation given by inclusions; the involution $*$ is order reversing. The triple $(\mscr{H},\cu,*)$ is thus a \emph{po{\bf c}set} \cite{Sageev}.

Three hyperplanes of $X$ form a \emph{facing triple} if they are pairwise disjoint and none of the three separates the other two. Three halfspaces form a \emph{facing triple} if they are pairwise disjoint. Two halfspaces $\mf{h}$ and $\mf{k}$ are \emph{nested} if either $\mf{h}\cu\mf{k}$ or $\mf{k}\cu\mf{h}$. Two distinct hyperplanes are \emph{transverse} if they cross. 

We remark that every intersection $\mf{w}_1\cap ...\cap\mf{w}_k$ of pairwise transverse hyperplanes $\mf{w}_1,...,\mf{w}_k$ can itself be regarded as a $\CAT$ cube complex; its cells are precisely the intersections $\mf{w}_1\cap ...\cap\mf{w}_k\cap c$, where $c\cu X$ is a cube. 

If $e\cu X$ is an edge, $\mf{w}(e)$ denotes the hyperplane crossed by $e$. We say that a hyperplane $\mf{w}$ is \emph{adjacent} to a point $v\in X$ if $\mf{w}=\mf{w}(e)$ for an edge $e$ incident to $v$. The set of hyperplanes adjacent to $v$ will be denoted by $\mscr{W}_v$.

The \emph{link} of $v$ (denoted $\lk_X(v)$ or just $\llk v$) is the graph\footnote{This a non-standard definition. However, the usual notion of link is recovered by taking the flag completion of our graph $\llk v$, thus justifying the abuse of notation.} having $\mscr{W}_v$ as vertex set and edges joining two hyperplanes exactly when they are transverse. Equivalently, vertices of $\llk v$ are edges of $X$ incident to $v$, two edges being connected by an edge of $\llk v$ if and only if they span a square in $X$. Cubes based at $v$ are in one-to-one correspondence with complete subgraphs of $\llk v$; we refer to the latter as \emph{cliques}. We write $\deg(v):=\#\mscr{W}_v=\#\llk^0v$.

Given vertices $e,f\in\llk v$, we write $\delta(e,f)$ for the minimal number of edges of $\llk v$ in a path from $e$ to $f$. If no such path exists, we declare ${\delta(e,f)=+\infty}$. When $\llk v$ is connected, $\delta$ is a metric on its $0$--skeleton. We will sometimes refer to vertices of $\llk v$ simply as \emph{elements of $\llk v$}, in order to avoid confusion with vertices of $X$.

We say that two halfspaces $\mf{h}$ and $\mf{k}$ are \emph{transverse} if they are bounded by transverse hyperplanes; equivalently, the four intersections $\mf{h}\cap\mf{k}$, $\mf{h}^*\cap\mf{k}$, $\mf{h}\cap\mf{k}^*$ and $\mf{h}^*\cap\mf{k}^*$ are all nonempty. A hyperplane is \emph{transverse} to a halfspace $\mf{h}$ if it is transverse to the hyperplane that bounds $\mf{h}$. Finally, we say that two subsets $\mc{U},\mc{V}\cu\mscr{W}$ are \emph{transverse} if every element of $\mc{U}$ is transverse to every element of $\mc{V}$. Given subsets $A,B\cu X$, we adopt the notation
\[\mscr{H}(A|B)=\{\mf{h}\in\mscr{H}\mid B\cu\mf{h},~A\cu\mf{h}^*\},\]
\[\mscr{W}(A|B)=\{\mf{w}\in\mscr{W}\mid\mf{w}\text{ separates } A\text{ and }B\},~~~\mscr{W}(A)=\bigcup_{x,y\in A}\mscr{W}(x|y).\]
For vertices $u,v\in X$, we have $d(u,v)=\#\mscr{W}(u|v)$. 

We will generally conflate geodesics and their images as subsets of $X$. Every geodesic $\g\cu X$ can be viewed as a collection of edges; distinct edges $e,e'\cu\g$ must yield distinct hyperplanes $\mf{w}(e)$ and $\mf{w}(e')$. Denote by $\mscr{W}(\g)$ the collection of hyperplanes crossed by (the edges of) $\g$. If two geodesics $\g$ and $\g'$ share an endpoint $v\in X$, their union $\g\cup\g'$ is again a geodesic if and only if $\mscr{W}(\g)\cap\mscr{W}(\g')=\emptyset$. The following is then a simple observation (cf.\ Lemma~2.1 in \cite{BFI-new}).

\begin{lem}\label{straight rays 2}
Given $v\in X$ and rays $r_1,r_2\cu X$ based at $v$, consider the sets $\mscr{W}_i=\mscr{W}(r_i)\cap\mscr{W}_v$. The union $r_1\cup r_2$ is a line if and only if $\mscr{W}_1\cap\mscr{W}_2=\emptyset$.
\end{lem}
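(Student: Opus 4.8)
\textbf{Proof proposal for Lemma~\ref{straight rays 2}.}

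The statement is essentially the extension of the ``concatenation of geodesics'' criterion recorded just before the lemma to the case of rays rather than finite geodesics, so the plan is to reduce to that case. First I would recall the guiding principle: a concatenation $r_1 \cup r_2$ of two combinatorial paths sharing the single common vertex $v$ is itself geodesic precisely when no hyperplane crosses both $r_1$ and $r_2$, i.e.\ $\mscr{W}(r_1) \cap \mscr{W}(r_2) = \emptyset$; this holds because $d$ counts separating hyperplanes and a hyperplane crossed twice by a path is ``wasted''. So the real content is to show that the a priori stronger-looking condition $\mscr{W}(r_1)\cap\mscr{W}(r_2)=\emptyset$ is equivalent to the a priori weaker-looking condition $\mscr{W}_1 \cap \mscr{W}_2 = \emptyset$, where $\mscr{W}_i = \mscr{W}(r_i)\cap\mscr{W}_v$ only involves the hyperplanes adjacent to $v$.

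The direction ($r_1\cup r_2$ a line $\Rightarrow \mscr{W}_1\cap\mscr{W}_2=\emptyset$) is immediate: if $r_1\cup r_2$ is a line it is in particular a geodesic, so $\mscr{W}(r_1)\cap\mscr{W}(r_2)=\emptyset$, hence certainly $\mscr{W}_1\cap\mscr{W}_2=\emptyset$. For the converse, suppose $\mscr{W}_1\cap\mscr{W}_2=\emptyset$ but, for contradiction, that some hyperplane $\mf{w}$ lies in $\mscr{W}(r_1)\cap\mscr{W}(r_2)$. Pick such a $\mf{w}$ that is crossed by $r_1$ as early as possible; let $e_1\cu r_1$ be the edge of $r_1$ with $\mf{w}(e_1)=\mf{w}$, and let $v_1$ be the endpoint of the initial segment of $r_1$ from $v$ to $e_1$. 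The key observation is that the hyperplane $\mf{w}$ then separates $v$ from $v_1$, so it lies in $\mscr{W}(v|v_1)$, and likewise crossing $r_2$ it separates $v$ from the corresponding vertex $v_2$ on $r_2$. Now consider the geodesic segment $[v_1, v_2]$: since $r_1$ restricted to $[v,v_1]$ and $r_2$ restricted to $[v,v_2]$ are each geodesics, and $\mf{w}$ is the unique hyperplane crossed by both initial segments up to the chosen edges (by minimality of $\mf{w}$ along $r_1$ — here one has to be a little careful and instead argue that among all hyperplanes separating $v$ from both $v_1$ and $v_2$ there is one adjacent to $v$), one produces a hyperplane in $\mscr{W}_v$ crossed by both $r_1$ and $r_2$, contradicting $\mscr{W}_1\cap\mscr{W}_2=\emptyset$. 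More precisely: the set $\mscr{W}(r_1)\cap\mscr{W}(r_2)$ is nonempty by assumption; among the hyperplanes in this set, consider one, call it $\mf{w}$, bounding a halfspace $\mf{h}$ with $v\in\mf{h}^*$ that is \emph{innermost}, meaning no other hyperplane of $\mscr{W}(r_1)\cap\mscr{W}(r_2)$ separates $v$ from $\mf{w}$ on \emph{both} rays; then the first edge of $r_1$ must already cross $\mf{w}$ or a hyperplane transverse to $\mf{w}$, and similarly for $r_2$, forcing a common adjacent hyperplane.

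The cleanest way to run this, which is what I would actually write, is to avoid the ``innermost'' bookkeeping by working with medians: let $m = m(v, r_1(t_1), r_2(t_2))$ for large parameters $t_i$. If $\mscr{W}_1\cap\mscr{W}_2=\emptyset$ then every hyperplane adjacent to $v$ is crossed by at most one of the two rays, so it does not separate $r_1(t_1)$ from $r_2(t_2)$ through $v$; combined with the fact that the initial edges of $r_1$ and $r_2$ are geodesic, this forces $m = v$, i.e.\ $v = m(v, r_1(t_1), r_2(t_2))$ lies on a geodesic from $r_1(t_1)$ to $r_2(t_2)$. Letting $t_1,t_2\to\infty$ and using that the restrictions $r_1|_{[v,r_1(t_1)]}$ and $r_2|_{[v,r_2(t_2)]}$ are geodesics through $m=v$, we conclude that $r_1\cup r_2$ restricted to any pair of finite sub-rays is a geodesic, hence $r_1\cup r_2$ is a bi-infinite geodesic, i.e.\ a line. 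I expect the main obstacle to be precisely this reduction from ``$v$ is a median'' to ``the full concatenation is geodesic'': one needs that a path through $v$ which is locally geodesic at $v$ (no repeated adjacent hyperplane) and geodesic on each side is globally geodesic, which is exactly where the no-$\triangle$-condition and the characterization of geodesics via non-repetition of hyperplanes (already recalled in the excerpt) get used. Everything else is routine manipulation of the sets $\mscr{W}(\cdot)$.
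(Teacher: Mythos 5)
Your median argument is correct, and it is essentially the intended route: the paper gives no written proof of this lemma (it records the concatenation criterion $\mscr{W}(\g)\cap\mscr{W}(\g')=\emptyset$ for geodesics sharing an endpoint and defers to Lemma~2.1 of \cite{BFI-new}), and your proof amounts to reducing the ray statement to exactly that criterion. The one step you only assert and should actually write out is the bridge from ``no common hyperplane \emph{adjacent to} $v$'' to ``$m=v$'': if $m=m(v,r_1(t_1),r_2(t_2))\neq v$, take the first edge of any geodesic from $v$ to $m$; its hyperplane $\mf{u}$ lies in $\mscr{W}_v$ and in $\mscr{W}(v|m)\cu\mscr{W}(v|r_i(t_i))$ for both $i$, and since the initial segments of $r_1,r_2$ are geodesics they cross every hyperplane separating $v$ from their endpoints, so $\mf{u}\in\mscr{W}_1\cap\mscr{W}_2$, a contradiction. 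With that line in place, $v\in I(r_1(t_1),r_2(t_2))$ gives $d(r_1(t_1),r_2(t_2))=t_1+t_2$ for all $t_1,t_2$, which is the desired conclusion.

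Two smaller points. The ``innermost'' sketch in the middle is not a valid argument as written --- ``the first edge of $r_1$ must cross $\mf{w}$ or a hyperplane transverse to $\mf{w}$'' does not by itself produce a common adjacent hyperplane --- but it is superseded by the median version, so it should simply be dropped. And your closing worry about needing a local-to-global geodesic principle is misplaced: nothing of the sort is required (and invoking ``no repeated hyperplane at $v$ implies geodesic'' would be circular, since that is precisely the content of the lemma). Once $v$ is the median of $v$, $r_1(t_1)$, $r_2(t_2)$ for every $t_1,t_2$, additivity of distances is immediate and the union is a line.
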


Given a vertex $v\in X$, we denote by $\s_v\cu\mscr{H}$ the set of all halfspaces containing $v$. It satisfies the following properties:
\begin{enumerate}
\item given any two halfspaces $\mf{h},\mf{k}\in\s_v$, we have $\mf{h}\cap\mf{k}\neq\emptyset$;
\item for any hyperplane $\mf{w}\in\mscr{W}$, a side of $\mf{w}$ lies in $\s_v$;
\item every descending chain of halfspaces in $\s_v$ is finite. 
\end{enumerate}
Subsets $\s\cu\mscr{H}$ satisfying $(1)$--$(3)$ are known as \emph{DCC ultrafilters}. We refer to a set $\s\cu\mscr{H}$ satisfying only $(1)$ and $(2)$ simply as an \emph{ultrafilter}. 

Let $\iota\colon X\ra 2^{\mscr{H}}$ denote the map that takes each vertex $v$ to the set $\s_v$. Its image $\iota (X)$ is precisely the collection of all DCC ultrafilters. Endowing $2^{\mscr{H}}$ with the product topology, we can consider the closure $\overline{\iota(X)}$, which coincides with the set of all ultrafilters. Equipped with the subspace topology, this is a totally disconnected, compact, Hausdorff space known as the \emph{Roller compactification} of $X$ \cite{BCGNW,Nevo-Sageev}; we denote it by $\overline X$. 

The \emph{Roller boundary} $\partial X$ is defined as the difference $\overline X\setminus X$. The inclusion $\iota\colon X\ra\overline X$ is always continuous\footnote{Note that $\iota$ is only defined on the $0$--skeleton, which has the discrete topology.}. If, moreover, $X$ is locally finite, then $\iota$ is a topological embedding, $X$ is open in $\overline X$ and $\partial X$ is compact. We prefer to imagine $\partial X$ as a set of points at infinity, rather than a set of ultrafilters. We will therefore write $x\in\partial X$ for points in the Roller boundary and employ the notation $\s_x\cu\mscr{H}$ only to refer to the ultrafilter representing $x$.

We remark that the Roller boundary $\partial X$ is naturally homeomorphic to the horofunction boundary of the metric space $(X,d)$. This is an unpublished result of U.\ Bader and D.\ Guralnik; see \cite{Caprace-Lecureux} or \cite{Fernos-Lecureux-Matheus} for a proof. On the other hand, the horofunction boundary with respect to the $\CAT$ metric coincides with the \emph{visual boundary} $\partial_{\infty}X$.

If $r\cu X$ is a (combinatorial) ray, we denote by $r(0)$ its initial vertex and by $r(n)$ its $n$--th vertex. Given a hyperplane $\mf{w}\in\mscr{W}$, there exists a unique side $\mf{h}$ of $\mf{w}$ such that $r\setminus\mf{h}$ is bounded. The collection of all such halfspaces forms an ultrafilter and we denote by $r^+\in\partial X$ the corresponding point; we refer to $r^+$ as the \emph{endpoint at infinity} of $r$.

Fixing a basepoint $v\in X$, every point of $\partial X$ is of the form $r^+$ for a ray $r$ based at $v$. We obtain a bijection between points of $\partial X$ and rays based at $v$, where we need to identify the rays $r_1$ and $r_2$ if $\mscr{W}(r_1)=\mscr{W}(r_2)$. See Proposition~A.2 in \cite{Genevois} for details.

Given $v\in X$ and $\mf{h}\in\mscr{H}$, we have $v\in\mf{h}$ if and only if $\mf{h}\in\s_v$. We can extend the halfspace $\mf{h}\cu X$ to a subset $\overline{\mf{h}}\cu\overline X$ by declaring that a point $x\in\partial X$ lies in $\overline{\mf{h}}$ if and only if $\mf{h}\in\s_x$. Our notation is justified by the observation that $\overline{\mf{h}}$ is simply the closure of $\mf{h}$ in $\overline X$. Note that $\overline{\mf{h}}$ and $\overline{\mf{h}^*}$ provide a partition of $\overline X$. Thus, given subsets $A,B\cu\overline X$, we can still employ the notation $\mscr{W}(A|B)$ and $\mscr{H}(A|B)$. For instance,
\[\mscr{H}(A|B)=\{\mf{h}\in\mscr{H}\mid B\cu\overline{\mf{h}},~A\cu\overline{\mf{h}^*}\}.\]
It is immediate from the definitions that $\mscr{W}(x|y)\neq\emptyset$ if and only if $x,y\in\overline X$ are distinct. For ease of notation, we will generally omit the overline symbol outside of the current section, thus not distinguishing between a halfspace $\mf{h}\cu X$ and its extension $\overline{\mf{h}}\cu\overline X$.

Given $x,y\in\overline X$, the \emph{interval} between $x$ and $y$ is the set
\[I(x,y)=\{z\in\overline X\mid\mscr{W}(z|x,y)=\emptyset\}.\]
We always have $I(x,x)=\{x\}$. If $x,y\in X$, the interval $I(x,y)$ is contained in $X$ and coincides with the union of all geodesics from $x$ to $y$. For any three points $x,y,z\in\overline X$, there exists a unique point $m(x,y,z)\in\overline X$ that lies in all three intervals $I(x,y)$, $I(y,z)$ and $I(z,x)$. We refer to $m(x,y,z)$ as the \emph{median} of $x$, $y$ and $z$ and remark that it is represented by the ultrafilter
\[(\s_x\cap\s_y)\cup(\s_y\cap\s_z)\cup(\s_z\cap\s_x).\] 
The map $m\colon\overline{X}^3\ra\overline X$ is continuous. It coincides with the usual median operator on $X$, thus extending to $\overline X$ the median algebra structure of $X$.

A subset $C\cu\overline X$ is \emph{convex} if $I(x,y)\cu C$ for all $x,y\in C$. Thus, a subset ${C\cu X}$ is convex if and only if all geodesics between points of $C$ are contained in $C$. Given $u,v\in\overline X$, the interval $I(u,v)\cu\overline X$ is convex and so is every halfspace $\mf{h}\cu X$. In fact, halfspaces are precisely those nonempty convex subsets of $X$ whose complement is convex and nonempty.

Intersections of convex sets are convex. Given a subset ${A\cu\overline X}$, we denote by $\hull(A)$ the smallest convex subset of $\overline X$ containing $A$. We remark that $\hull(A)\cap X$ coincides with the intersection of all halfspaces $\mf{h}\cu X$ with the property that $A$ is contained in the extension $\overline{\mf{h}}\cu\overline X$. If ${A\cu X}$, we have $\hull(A)\cu X$, as $X$ is a convex subset of $\overline X$. In this case, $\hull(A)$ is itself a $\CAT$ cube complex and a subcomplex of $X$; its hyperplane set is identified with the subset $\mscr{W}(A)\cu\mscr{W}(X)$.

If $C\cu\overline X$ is convex and closed, there exists a unique \emph{gate-projection} $\pi_C\colon\overline X\ra C$, namely a map with the property that $\pi_C(x)\in I(x,y)$ for every $x\in X$ and every ${y\in C}$. When $u,v\in\overline X$ and $C=I(u,v)$, the gate-projection is given by $x\mapsto m(u,v,x)$.

If $C\cu X$ is convex, its closure $\overline C\cu\overline X$ is closed and convex. As such, it admits a gate-projection $\pi_{\overline C}\colon\overline X\ra\overline C$; we will simply write $\pi_C\colon X\ra C$ for the restriction of $\pi_{\overline C}$ to $X$. Note that this is $1$--Lipschitz and it is just the nearest-point projection with respect to the combinatorial metric $d$.

Given a closed convex subset $C\cu\overline X$ and two points $x,y\in\overline X$, we always have $\mscr{W}(x|\pi_C(x))=\mscr{W}(x|C)$ and $\mscr{W}(x|y)\cap\mscr{W}(C)=\mscr{W}(\pi_C(x)|\pi_C(y))$; see Proposition~2.3 in \cite{Fio1}. If $C_1,...,C_k\cu\overline X$ are convex and pairwise intersect, then ${C_1\cap ...\cap C_k\neq\emptyset}$; this is known as Helly's lemma.

Consider now two convex subsets $C_1,C_2\cu X$ and simply write ${\pi_i\colon X\ra C_i}$ for the two gate-projections. We say that points $x_1\in C_1$ and ${x_2\in C_2}$ form a \emph{pair of gates} if $\pi_1(x_2)=x_1$ and $\pi_2(x_1)=x_2$. Equivalently, we have ${\mscr{W}(x_1|x_2)=\mscr{W}(C_1|C_2)}$, or $d(x_1,x_2)=d(C_1,C_2)$. 

We denote by $S=S(C_1,C_2)$ the \emph{restriction quotient} of $X$ (in the sense of p.~860 in \cite{CS}) associated to the set $\mscr{W}(C_1)\cap\mscr{W}(C_2)$. We say that $S$ is the \emph{(abstract) shore} of $C_1$ and $C_2$. Let $S_i\cu C_i$ be the subsets of points that are part of a pair of gates; we also refer to $S_1$ and $S_2$ as \emph{shores}. Finally, consider $B=B(C_1,C_2)=\hull(S_1\cup S_2)$, which we call the \emph{bridge}. The following appears e.g.\ in Section~2.G of \cite{CFI} or Section~2.2 of \cite{Fio3}.

\begin{lem}\label{basics on bridges}
Let $x_1\in S_1$, $x_2=\pi_2(x_1)$ and $I=I(x_1,x_2)$. 
\begin{enumerate}
\item The bridge $B$ splits as a product $S\x I$, corresponding to the decomposition:
\[\mscr{W}(B)=(\mscr{W}(C_1)\cap\mscr{W}(C_2))\sqcup\mscr{W}(C_1|C_2).\] 
The projections to the two factors are, respectively, the restriction of the quotient projection $X\ra S$ and the gate-projection $\pi_I$.
\item We have $B\cap C_i=S_i$ and the projection $B\ra S$ restricts to isomorphisms $S\simeq S_i$. Furthermore, $S_1=\pi_1(C_2)$ and $S_2=\pi_2(C_1)$.
\end{enumerate}
\end{lem}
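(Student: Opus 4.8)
The plan is to reduce everything to a careful description of the shores $S_1,S_2$; once those are understood, the product structure of the bridge and all the remaining assertions follow by hyperplane bookkeeping. Throughout write $\mc{U}:=\mscr{W}(C_1)\cap\mscr{W}(C_2)$ (the hyperplanes crossing both $C_i$) and $\mc{V}:=\mscr{W}(C_1|C_2)$. The first thing I would record is that $\mc{U}$ and $\mc{V}$ are \emph{transverse} families, and in particular disjoint: if $\mf{w}\in\mc{V}$, say with $C_1\cu\mf{w}$ and $C_2\cu\mf{w}^*$, then any $\mf{u}\in\mc{U}$ crosses $C_1\cu\mf{w}$ and crosses $C_2\cu\mf{w}^*$, so all four intersections $\mf{u}\cap\mf{w}$, $\mf{u}^*\cap\mf{w}$, $\mf{u}\cap\mf{w}^*$, $\mf{u}^*\cap\mf{w}^*$ are nonempty. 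I would use freely the two gate-projection identities recalled just before the lemma, namely $\mscr{W}(x\,|\,\pi_C(x))=\mscr{W}(x\,|\,C)$ and $\mscr{W}(x|y)\cap\mscr{W}(C)=\mscr{W}(\pi_C(x)\,|\,\pi_C(y))$, together with the fact that $\mscr{W}(\hull(A))=\mscr{W}(A)$.

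The heart of the matter — and the step I expect to need the most care — is to pin down the shores: $S_1=\pi_1(C_2)$, $S_2=\pi_2(C_1)$, $\mscr{W}(S_i)=\mc{U}$, and $\pi_1|_{S_2}$, $\pi_2|_{S_1}$ mutually inverse isomorphisms, each inducing the quotient onto $S$. The key claim is that $x_1:=\pi_1(z)\in S_1$ for every $z\in C_2$. I would prove this by first showing $\mscr{W}(x_1|C_2)=\mc{V}$: the inclusion $\supseteq$ is obvious, and if $\mf{w}$ separates $x_1$ from $C_2$ then $z\in\mf{w}$, while $\mf{w}\in\mscr{W}(z|x_1)=\mscr{W}(z|C_1)$ forces $C_1\cu\mf{w}^*$, so $\mf{w}\in\mc{V}$. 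Setting $x_2:=\pi_2(x_1)$ this gives $\mscr{W}(x_1|x_2)=\mc{V}$; since every hyperplane of $\mc{V}$ separates $C_1$ from $x_2\in C_2$ while $x_1\in C_1$, we also get $\mc{V}\cu\mscr{W}(x_2|C_1)\cu\mscr{W}(x_2|x_1)=\mc{V}$, hence $\mscr{W}(x_2|C_1)=\mc{V}$; and since any hyperplane separating $x_1$ from $\pi_1(x_2)$ crosses $C_1$ — so lies outside $\mc{V}$ — yet lies in $\mscr{W}(x_1|x_2)\cup\mscr{W}(x_2\,|\,\pi_1(x_2))=\mc{V}$, there is none and $\pi_1(x_2)=x_1$. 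Thus $(x_1,x_2)$ is a pair of gates and $x_1\in S_1$; the reverse inclusion $S_1\cu\pi_1(C_2)$ is immediate, so $S_1=\pi_1(C_2)$ and in particular it is nonempty (symmetrically $S_2=\pi_2(C_1)$). Then $\mscr{W}(\pi_1(y)\,|\,\pi_1(y'))=\mscr{W}(y|y')\cap\mc{U}$ for $y,y'\in C_2$ gives $\mscr{W}(S_1)=\mscr{W}(C_2)\cap\mscr{W}(C_1)=\mc{U}$ and shows $\pi_1|_{C_2}$ has the same fibres as the restriction-quotient map $C_2\twoheadrightarrow S$ (namely the $\mscr{W}(C_2)\setminus\mscr{W}(C_1)$-classes), hence induces an isomorphism $S\cong S_1$; symmetrically for $S_2$, and composing shows $\pi_1|_{S_2}$, $\pi_2|_{S_1}$ are the asserted inverse isomorphisms.

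Once the shores are understood, I would finish by hyperplane bookkeeping. For $\mscr{W}(B)=\mscr{W}(S_1\cup S_2)$, split a separating pair of points according to whether both lie in $S_1$, both in $S_2$, or one in each; the first two cases give subsets of $\mc{U}$, and in the mixed case, writing the $S_2$-point as $\pi_2(p')$ with $p'\in S_1$, one has $\mscr{W}(p\,|\,\pi_2(p'))\cu\mscr{W}(p|p')\cup\mscr{W}(p'\,|\,\pi_2(p'))\cu\mc{U}\cup\mscr{W}(p'|C_2)=\mc{U}\cup\mc{V}$. With $\mc{U}=\mscr{W}(S_1)\cu\mscr{W}(B)$ and $\mc{V}=\mscr{W}(x_1|x_2)\cu\mscr{W}(B)$, this yields $\mscr{W}(B)=\mc{U}\sqcup\mc{V}$ — the decomposition in (1), the disjointness being the transversality observation. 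Because $\mscr{W}(B)$ is the disjoint union of two transverse families, $B$ splits as the product of the corresponding restriction quotients, by the standard product-splitting criterion for cube complexes: the $\mc{U}$-factor is $S$ (it contains the slice $S_1\cong S$), and the $\mc{V}$-factor is $I=I(x_1,x_2)$, since $\mscr{W}(I)=\mscr{W}(x_1|x_2)=\mc{V}$ and $I\cu B$. Comparing fibres with the gate-projection identities then identifies the two factor projections as claimed: the $\mc{V}$-collapsing one is the quotient $X\to S$ restricted to $B$, and the $\mc{U}$-collapsing one equals $\pi_I$, because for $v\in B$ we have $\mscr{W}(v\,|\,\pi_I(v))=\mscr{W}(v|I)\cu\mc{U}$. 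This proves (1).

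For (2), $S_i\cu B\cap C_i$ is clear. Conversely, if $v\in B\cap C_1$ then no hyperplane of $\mc{V}$ crosses $C_1$ (they separate $C_1$ from $C_2$), so $\mscr{W}(v|x_1)\cap\mc{V}=\emptyset$, i.e.\ $v$ and $x_1$ have the same $I$-coordinate in $B=S\x I$; since $S_1$ is precisely that $S$-slice, $v\in S_1$. Hence $B\cap C_1=S_1$ and, symmetrically, $B\cap C_2=S_2$; the remaining assertions of (2) — that $B\to S$ restricts to isomorphisms onto $S_i$ and that $S_1=\pi_1(C_2)$, $S_2=\pi_2(C_1)$ — were already established in the second paragraph. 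The one genuinely delicate point in all of this is the claim $\pi_i(C_{3-i})\cu S_i$ (equivalently, the exact description of the shores together with the existence of pairs of gates); after that, everything reduces to the transverse-product splitting plus routine accounting of hyperplane sets.
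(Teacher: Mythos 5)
The paper itself does not prove this lemma; it simply refers to Section~2.G of \cite{CFI} and Section~2.2 of \cite{Fio3}, so there is no in-paper argument to compare against. Judged on its own, your proof is correct and is essentially the standard argument from those references: exhibit the shores as gate images and pairs of gates, show $\mscr{W}(B)=(\mscr{W}(C_1)\cap\mscr{W}(C_2))\sqcup\mscr{W}(C_1|C_2)$ with the two families transverse, and invoke the product-splitting criterion, then identify the factor projections by comparing fibres. The hyperplane computations at the core (e.g.\ $\mscr{W}(\pi_1(z)|C_2)=\mscr{W}(C_1|C_2)$ for $z\in C_2$, and the verification that $(\pi_1(z),\pi_2\pi_1(z))$ is a pair of gates) are accurate. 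Two steps are left implicit but are routine one-liners, and you should be aware of them: (i) surjectivity of the map $C_2\ra S$, which your ``$C_2\twoheadrightarrow S$'' assumes and which is needed both for the induced bijection $S\simeq S_1$ and for checking that this bijection really is the restriction of the quotient projection --- it follows since, for $v\in X$, the set $\mscr{W}(v|\pi_2(v))=\mscr{W}(v|C_2)$ contains no hyperplane crossing $C_2$, hence none of $\mscr{W}(C_1)\cap\mscr{W}(C_2)$, so $v$ and $\pi_2(v)$ have the same image in $S$; and (ii) that two hyperplanes which are transverse in $X$ and both cross the convex set $B$ are transverse \emph{within} $B$ (take the median of two suitable points of $B$ with a point of $X$ in the relevant sector; it lies in $B$ by convexity), which is what licenses applying the splitting criterion to $B$ itself. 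With those remarks added, the argument is complete.
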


We say that two disjoint halfspaces $\mf{h},\mf{k}\in\mscr{H}$ are \emph{strongly separated} if the shore $S(\mf{h},\mf{k})$ consists of a single point. Equivalently, no hyperplane is transverse to both $\mf{h}$ and $\mf{k}$. In this case, we also say that the hyperplanes bounding $\mf{h}$ and $\mf{k}$ are \emph{strongly separated}.

If $\mf{h}$ and $\mf{k}$ are strongly separated, there exists a unique point $v\in\mf{h}\cap B(\mf{h},\mf{k})$. Given points $x\in\overline{\mf{h}}$ and $y\in\overline{\mf{k}}$, we always have $m(x,v,y)=v$ and $v\in I(x,y)$.

We have already observed that the combinatorial metric on $X$ can be expressed as $d(x,y)=\#\mscr{W}(x|y)$ for any two vertices $x,y\in X$. When $x$ and $y$ are arbitrary points of $\overline X$, the same expression yields a distance function ${d\colon\overline X\x\overline X\ra\N\cup\{+\infty\}}$. The latter satisfies all the axioms of a metric, except that we now allow infinite values. Given points $x,y\in\overline X$ and $v\in X$, we define the \emph{Gromov product} 
\[(x\cdot y)_v:=d\big(v,m(v,x,y)\big)=\#\mscr{W}(v|x,y).\]
Note that this coincides with the usual Gromov product 
\[(x\cdot y)_v=\tfrac{1}{2}\cdot\big[d(v,x)+d(v,y)-d(x,y)\big],\] 
whenever the latter is defined.

\begin{lem}\label{infinite Gromov product}
Consider $x,y,z\in\overline X$ and $v\in X$.
\begin{enumerate}
\item We have $m(x,y,z)\in X$ if and only if each of the three intervals $I(x,y)$, $I(y,z)$, $I(z,x)$ intersects $X$.
\item We have $(x\cdot y)_v<+\infty$ if and only if $I(x,y)$ intersects $X$.
\end{enumerate}
\end{lem}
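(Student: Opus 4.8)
The statement to prove is Lemma~\ref{infinite Gromov product}, relating finiteness of Gromov products and the position of medians to whether certain intervals meet the $0$--skeleton $X$. The plan is to reduce everything to counting hyperplanes, since both $d$ and the median are expressed combinatorially in $\overline X$.

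First I would prove part~(2). Fix $v\in X$ and $x,y\in\overline X$, and recall $(x\cdot y)_v=\#\mscr{W}(v|x,y)$ and $m(v,x,y)\in I(x,y)$. The key observation is that $m(v,x,y)$ is exactly the gate-projection $\pi_{I(x,y)}(v)$, and $\mscr{W}\big(v|m(v,x,y)\big)=\mscr{W}(v|x,y)$. If $(x\cdot y)_v<+\infty$, then $m(v,x,y)$ differs from $v$ by only finitely many halfspaces, so walking across these finitely many hyperplanes one at a time produces a genuine vertex of $X$ lying in $I(x,y)$ --- one needs to check that crossing one hyperplane at a time from $v$ stays inside $X$ (the relevant ultrafilter still satisfies DCC because it agrees with $\s_v$ off a finite set), and that the resulting point lies in $I(x,y)$ (it does, because $\mscr{W}(\text{point}|x,y)$ shrinks to $\emptyset$). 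Conversely, if some $w\in I(x,y)\cap X$, then $\mscr{W}(v|x,y)\subseteq\mscr{W}(v|w)$: indeed any hyperplane separating $v$ from both $x$ and $y$ must separate $v$ from every point of $I(x,y)$, in particular from $w$. Since $w\in X$, $\#\mscr{W}(v|w)=d(v,w)<+\infty$, giving $(x\cdot y)_v<+\infty$. This inclusion $\mscr{W}(v|x,y)\subseteq\mscr{W}(v|w)$ for $w\in I(x,y)$ is the one small point to verify carefully, but it is immediate from the definition of interval via $\mscr{W}(w|x,y)=\emptyset$.

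Next I would deduce part~(1). Recall $m(x,y,z)$ is represented by $(\s_x\cap\s_y)\cup(\s_y\cap\s_z)\cup(\s_z\cap\s_x)$, and that $m(x,y,z)\in I(x,y)\cap I(y,z)\cap I(z,x)$. For the forward direction, if $m:=m(x,y,z)\in X$, then $m$ itself is a point of $X$ lying in each of the three intervals, so all three intervals meet $X$. For the converse, suppose each of $I(x,y),I(y,z),I(z,x)$ meets $X$; I want $m\in X$, equivalently that $\s_m$ is a DCC ultrafilter, equivalently (given it is already an ultrafilter) that every descending chain in $\s_m$ is finite. Here I would pick $v\in X$ arbitrary and estimate $d(v,m)=\#\mscr{W}(v|m)$. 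The point is that $\mscr{W}(v|m)\subseteq\mscr{W}(v|x,y)\cup\mscr{W}(v|y,z)\cup\mscr{W}(v|z,x)$: a hyperplane $\mf{w}$ separating $v$ from $m$ has its $m$-side in at least two of $\s_x,\s_y,\s_z$ (by the ultrafilter formula for $\s_m$), say in $\s_x$ and $\s_y$, hence $\mf{w}\in\mscr{W}(v|x,y)$. By part~(2) applied to the three pairs (using that each interval meets $X$), each of $\mscr{W}(v|x,y)$, $\mscr{W}(v|y,z)$, $\mscr{W}(v|z,x)$ is finite, so $\mscr{W}(v|m)$ is finite, i.e.\ $d(v,m)<+\infty$. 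A point of $\overline X$ at finite $d$-distance from a vertex is itself a vertex (its ultrafilter agrees with a DCC ultrafilter off a finite set, hence is DCC), so $m\in X$.

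The main obstacle, such as it is, is bookkeeping with ultrafilters rather than any deep idea: one must be careful that "finitely many hyperplanes separate $v$ from $p$'' genuinely implies "$p\in X$'', which rests on the fact that modifying a DCC ultrafilter on a finite set of hyperplanes preserves the DCC property (any infinite descending chain would eventually avoid the finite modification set and descend within $\s_v$, contradicting DCC for $\s_v$). Once that lemma-within-the-lemma is isolated, both parts follow by the hyperplane-counting inclusions described above. I would also double-check the edge cases $x=y$ (then $I(x,y)=\{x\}$, $(x\cdot y)_v=d(v,x)$, and both equivalences are trivialities) to make sure no degenerate configuration is mishandled.
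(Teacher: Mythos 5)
Your argument is correct, but it runs in the opposite direction to the paper's and uses a different key tool. The paper handles the easy implications exactly as you do in spirit (the medians $m(x,y,z)$ and $m(x,y,v)$ lie in $I(x,y)$, so if $I(x,y)\cu\partial X$ they lie in $\partial X$), but for the hard direction it proves part~(1) first by applying Helly's lemma to the four convex sets $I(x,y)$, $I(y,z)$, $I(z,x)$ and $X$ (which pairwise intersect), concluding that their common intersection, i.e.\ $\{m(x,y,z)\}\cap X$, is nonempty; part~(2) is then the special case $z=v$. You instead prove (2) first by pure hyperplane counting --- using $\mscr{W}(v|m(v,x,y))=\mscr{W}(v|x,y)$, the inclusion $\mscr{W}(v|x,y)\cu\mscr{W}(v|w)$ for $w\in I(x,y)$, and the fact that an ultrafilter differing from a DCC ultrafilter on finitely many hyperplanes is again DCC --- and then deduce (1) from the inclusion $\mscr{W}(v|m(x,y,z))\cu\mscr{W}(v|x,y)\cup\mscr{W}(v|y,z)\cup\mscr{W}(v|z,x)$, which you justify correctly via the ultrafilter formula for $\s_{m}$. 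Both routes are sound. The paper's proof is shorter because Helly's lemma and the convexity of intervals are already recorded in the preliminaries; yours is more self-contained and elementary, makes explicit the quantitative fact $d(v,m(x,y,z))\leq (x\cdot y)_v+(y\cdot z)_v+(z\cdot x)_v$ implicit in your inclusion, and isolates the useful general principle that points of $\overline X$ at finite $\ell^1$ distance from a vertex are vertices. The only caveat is that this last principle, and the identity $\mscr{W}(x|\pi_C(x))=\mscr{W}(x|C)$ for gate-projections, should be cited or proved as you sketch, since your whole argument rests on them; you do address both, so there is no gap.
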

\begin{proof}
The medians $m(x,y,z)$ and $m(x,y,v)$ lie in $I(x,y)$. If $I(x,y)\cu\partial X$, we have $m(x,y,z)\in\partial X$, $m(x,y,v)\in\partial X$ and $(x\cdot y)_v=+\infty$. This shows one implication of both $(1)$ and $(2)$.

If $I(x,y)$, $I(y,z)$, $I(z,x)$ all intersect $X$, Helly's Lemma guarantees that $I(x,y)\cap I(y,z)\cap I(z,x)\cap X$ is nonempty, i.e.\ $m(x,y,z)$ lies in $X$. Applying this observation to the case $z=v$, we see that $I(x,y)\cap X\neq\emptyset$ is enough to guarantee that $m(x,y,v)\in X$; hence $(x\cdot y)_v<+\infty$.
\end{proof}

\begin{lem}\label{GP continuous}
Let $X$ be locally finite. Fixing a vertex $v\in X$, the function $(-\cdot -)_v\colon\overline X^2\ra\N\cup\{+\infty\}$ is continuous.
\end{lem}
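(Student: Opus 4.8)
The plan is to show that $(-\cdot -)_v$ is continuous at every point $(x,y)\in\overline X^2$ by exhibiting, for each such point, a neighbourhood basis on which the Gromov product is controlled. Recall that $(x\cdot y)_v=\#\mscr{W}(v|x,y)$, and that $\mscr{W}(v|x,y)=\s_v\setminus(\s_x\cup\s_y)^{*}$... more precisely $\mf{w}\in\mscr{W}(v|x,y)$ iff the side of $\mf{w}$ containing $v$ is the one that contains neither $x$ nor $y$, i.e.\ the halfspace $\mf{h}$ with $v\in\mf{h}$, $x,y\in\mf{h}^{*}$ (using the extended halfspaces $\overline{\mf{h}}$). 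Since $X$ is locally finite, $\mscr{W}_v$ is finite, but $\mscr{W}(v|x,y)$ may be infinite; the key point is that it is determined, side by side, by the ultrafilters $\s_x,\s_y$ in a way that is ``continuous'' in the product topology on $2^{\mscr{H}}$.

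I would split into two cases according to whether $(x\cdot y)_v$ is finite or infinite. \textbf{Finite case:} Suppose $(x\cdot y)_v=N<+\infty$, so $\mscr{W}(v|x,y)=\{\mf{w}_1,\dots,\mf{w}_N\}$ with $\mf{h}_i$ the side containing $v$. Consider the basic open neighbourhood $U$ of $(x,y)$ in $\overline X^2$ consisting of pairs $(x',y')$ with $\mf{h}_i^{*}\in\s_{x'}\cap\s_{y'}$ for $i=1,\dots,N$ (finitely many conditions, hence open) and, additionally, for each hyperplane $\mf{w}\in\mscr{W}(v|x)\setminus\mscr{W}(v|y)$ pick a witness that $\mf{w}$ does not separate $v$ from $y$, i.e.\ the side of $\mf{w}$ containing $v$ also contains $y$, so $\s_{y'}$ still contains that side — but this last family may be infinite. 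Here one uses Lemma~\ref{infinite Gromov product}(2): $(x\cdot y)_v<+\infty$ means $I(x,y)\cap X\ne\emptyset$; fix $z\in I(x,y)\cap X$, so $\mscr{W}(v|x,y)=\mscr{W}(v|z)$ is finite and, crucially, $z$ witnesses on which side $x$ and $y$ separately lie: $\s_z\supseteq\s_x\cap\s_y$. Then a neighbourhood forcing $\s_{x'}\cap\s_{y'}$ to contain all of $\s_z$ except possibly for halfspaces in $\mscr{W}(v|z)$... I would instead argue more directly: on the open set $\{(x',y'):\mf{h}_i^{*}\in\s_{x'}\cap\s_{y'},\ i=1,\dots,N\}$ we get $\mscr{W}(v|x',y')\supseteq\mscr{W}(v|x,y)$ minus nothing and $\subseteq$... to get the reverse inclusion $\mscr{W}(v|x',y')\subseteq\mscr{W}(v|x,y)$ I intersect with the further open condition that, for each of the finitely many $\mf{w}\in\mscr{W}_v\setminus\{\mf{w}_i\}$ with $v$ on side $\mf{k}$, at least one of $x',y'$ lies in $\overline{\mf{k}}$; since $\mscr{W}(v|x',y')\subseteq\mscr{W}_v$ is automatically finite (it consists of hyperplanes adjacent to $v$? — no). The honest route: $\mscr{W}(v|x',y')\subseteq\mscr{W}(v|x')\cup(\text{stuff})$, and $\#\mscr{W}(v|x')=(x'\cdot x')_v$ is not bounded. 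So I reduce to the interval: let $m=m(v,x,y)\in X$ (finite case). Choose a neighbourhood of $(x,y)$ so small that $m(v,x',y')=m$; this is possible because $m$ is determined by finitely many halfspace-membership conditions on $\s_v,\s_x,\s_y$ (namely: $\mf{h}\in\s_m$ iff $\mf{h}\in\s_v$ and $\mf{h}\in\s_x\cup\s_y$, reformulated via the finite set $\mscr{W}_m$ and $\mscr{W}(v|m)=\mscr{W}(v|x,y)$ — here local finiteness of $X$ gives $\#\mscr{W}_m<\infty$, and the conditions cutting out ``$m(v,x',y')=m$'' only involve hyperplanes in $\mscr{W}_m\cup\mscr{W}(v|m)$, a finite set). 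On that neighbourhood $(x'\cdot y')_v=d(v,m)=N$, so $(-\cdot-)_v$ is locally constant, hence continuous, at $(x,y)$.

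\textbf{Infinite case:} Suppose $(x\cdot y)_v=+\infty$. By Lemma~\ref{infinite Gromov product}(2), $I(x,y)\cap X=\emptyset$, so $m(v,x,y)\in\partial X$ and $d(v,m(v,x,y))=+\infty$. I must show $(x'\cdot y')_v\to+\infty$. Given any $R>0$, I want a neighbourhood of $(x,y)$ on which $(x'\cdot y')_v>R$. Pick $R+1$ hyperplanes $\mf{w}_0,\dots,\mf{w}_R\in\mscr{W}(v|x,y)$ with $v$ on sides $\mf{h}_0,\dots,\mf{h}_R$; the condition ``$\mf{h}_j^{*}\in\s_{x'}$ and $\mf{h}_j^{*}\in\s_{y'}$ for all $j=0,\dots,R$'' is an open neighbourhood of $(x,y)$ in $\overline X^2$, and on it $\{\mf{w}_0,\dots,\mf{w}_R\}\subseteq\mscr{W}(v|x',y')$, so $(x'\cdot y')_v\ge R+1>R$. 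Hence $(-\cdot-)_v$ is continuous (indeed, $+\infty$ is the limit) at $(x,y)$ in this case too.

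\textbf{Main obstacle.} The only genuinely delicate point is the finite case: making precise that ``$m(v,x',y')=m$'' is an \emph{open} condition on $(x',y')$, which is exactly where local finiteness of $X$ is used (so that $m$ has finite degree and the relevant determining set of hyperplanes $\mscr{W}_m\cup\mscr{W}(v|x,y)$ is finite). Everything else is a direct unwinding of the product topology on $2^{\mscr{H}}$ together with Lemma~\ref{infinite Gromov product}. I would present the argument by first establishing the elementary claim that, for a vertex $p\in X$ of finite degree, $\{x'\in\overline X : m(v,p',?)\dots\}$ — more cleanly: the set of $(x',y')$ with $m(v,x',y')=m$ is open — and then deduce both cases from it, or alternatively phrase the whole proof uniformly as: for every $R$, the set $\{(x',y') : (x'\cdot y')_v > R\}$ and the set $\{(x',y') : (x'\cdot y')_v < R\}$ are open, the former by the facing-halfspace argument above and the latter by noting $(x'\cdot y')_v<R$ iff $m(v,x',y')\in X$ with $d(v,m(v,x',y'))<R$, which holds on the union over the finitely many vertices $p$ with $d(v,p)<R$ of the open sets $\{m(v,x',y')=p\}$.
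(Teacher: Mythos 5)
Your proposal is correct and follows essentially the same route as the paper: the infinite case is handled by fixing finitely many halfspaces of $\mscr{H}(v|x,y)$ to get an open set on which the Gromov product exceeds any given bound, and the finite case by using local finiteness (finiteness of $\mscr{W}_m$ for $m=m(v,x,y)$) to produce an open neighbourhood on which $m(v,x',y')=m$, so the product is locally constant. The one step you leave slightly informal --- that matching side-membership of $x',y'$ on the finite set $\mscr{W}_m$ forces $m(v,x',y')=m$ --- is exactly the paper's one-line observation that then no element of $\mscr{W}_m$ separates $m$ from $m(v,x',y')$, hence the two medians coincide.
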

\begin{proof}
Consider points $x,y\in\overline X$. Given any finite subset $F\cu\mscr{H}(v|x,y)$, let $U\cu\overline X$ be the set of points represented by ultrafilters containing $F$. Note that $U$ is open, that $x$ and $y$ lie in $U$ and that $(x'\cdot y')_v\geq\# F$ for all $x',y'\in U$. In particular, if $(x\cdot y)_v=+\infty$, this shows that the Gromov product is continuous at the pair $(x,y)$.

Suppose instead that $(x\cdot y)_v<+\infty$; the median $m=m(v,x,y)$ lies in $X$ by Lemma~\ref{infinite Gromov product}. Let $V_x$ denote the set of points $x'\in\overline X$ that satisfy $\mscr{W}(v|x')\cap\mscr{W}_m=\mscr{W}(v|x)\cap\mscr{W}_m$. Similarly, we define $V_y$ as the set of points $y'\in\overline X$ with $\mscr{W}(v|y')\cap\mscr{W}_m=\mscr{W}(v|y)\cap\mscr{W}_m$. Note that $x\in V_x$ and $y\in V_y$.

Since $X$ is locally finite, the set $\mscr{W}_m$ is finite; hence $V_x$ and $V_y$ are open in $\overline X$. Given $(x',y')\in V_x\x V_y$, no element of $\mscr{W}_m$ separates $x$ and $x'$ or $y$ and $y'$. It follows that no element of $\mscr{W}_m$ separates $m$ and $m(v,x',y')$; hence $m(v,x',y')=m$ and $(x'\cdot y')_v=(x\cdot y)_v$.
\end{proof}

We conclude this subsection introducing two notions that will be crucial to the rest of the paper, especially in Section~\ref{from moeb to cubical iso}. For the time being, however, we will not discuss them any further.

\begin{defn}
A subset $\mc{A}\cu\partial X$ is \emph{concise} if $I(x,y)\cap X\neq\emptyset$ for all distinct points $x,y\in \mc{A}$.
\end{defn}

\begin{defn}
A subset $\mc{A}\cu\partial X$ is said to be \emph{ample} if, for every $k\geq 1$ and every choice of pairwise transverse halfspaces $\mf{h}_1,...,\mf{h}_k$, the intersection $\mc{A}\cap\overline{\mf{h}_1}\cap...\cap\overline{\mf{h}_k}$ is nonempty.
\end{defn}

\subsection{Automorphisms of cube complexes.}\label{automorphisms section}

Let $X$ be a $\CAT$ cube complex. An \emph{axis} for $g\in\Aut(X)$ is a $\langle g\rangle$--invariant (combinatorial) geodesic $\g\cu X$ none of whose points is fixed by $g$. The \emph{minimal set} ${\rm Min}_X(g)$ is defined as either the union of all axes of $g$, or the set of vertices that are fixed by $g$; we also write simply ${\rm Min}(g)$ when this causes no confusion.

We say that $g$ acts \emph{stably without inversions} (Definition~4.1 in \cite{Haglund}) if no power of $g$ preserves a hyperplane and swaps its two sides. Note that every $g\in\Aut(X)$ acts stably without inversions on the barycentric subdivision $X'$ (Lemma~4.2 in \emph{op.\ cit.}). We say that $g$ acts \emph{non-transversely} (cf.\ Definition~3.15 in \cite{FFT}) if, for every ${\mf{w}\in\mscr{W}({\rm Min}(g))}$, the hyperplanes $g\mf{w}$ and $\mf{w}$ are not transverse. The notation $\ell_X$ was defined in the introduction. 

\begin{prop}\label{Haglund FFT}
Let $g\in\Aut(X)$ act stably without inversions.
\begin{enumerate}
\item We have ${\rm Min}(g)\neq\emptyset$. Thus, either $g$ fixes a vertex and $\ell_X(g)=0$, or $g$ admits an axis and $\ell_X(g)>0$. 
\item We have ${\rm Min}(g)\cu {\rm Min}(g^n)$ and $\ell_X(g^n)=|n|\cdot\ell_X(g)$ for all $n\in \Z$.
\item A vertex $v\in X$ satisfies $d(v,gv)=\ell_X(g)$ if and only if $v\in{\rm Min}(g)$.
\item If $g$ acts non-transversely, ${\rm Min}(g)$ is a convex subcomplex of $X$.
\item If $\dim X=D$, the automorphism $g^{D!}$ acts non-transversely.
\end{enumerate}
\end{prop}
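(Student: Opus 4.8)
The plan is to establish the five claims in sequence, each building on the previous ones, working with the poc set structure $(\mscr{H},\cu,*)$ and the combinatorial geometry of $X$.

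For (1), I would recall that $g$ acts stably without inversions, so no power of $g$ inverts a hyperplane. The standard dichotomy (as in \cite{Haglund}) is between \emph{elliptic} elements (fixing a vertex) and \emph{hyperbolic} ones. To produce $\mathrm{Min}(g)\neq\emptyset$, take any vertex $v$ realising (or nearly realising) the infimum $\ell_X(g)=\inf_x d(x,gx)$; since displacements of vertices are integers, the infimum is attained. If $\ell_X(g)=0$ then $g$ fixes a vertex. If $\ell_X(g)>0$, I would show that a displacement-minimising vertex $v$ lies on an axis by a standard ``bridge'' argument: let $m=m(v,gv,g^2v)$; minimality of $d(v,gv)$ forces $m=v$ (otherwise $d(g^{-1}v, gv) < 2d(v,gv)$ via the median, contradicting that the midpoint argument improves displacement), so the concatenation $\bigcup_n g^n[\pi\text{-path from }v\text{ to }gv]$ is a geodesic line, i.e.\ an axis. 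This simultaneously gives the equivalence ``$\ell_X(g)=0\Leftrightarrow g$ fixes a vertex''.

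For (2): $\mathrm{Min}(g)\cu\mathrm{Min}(g^n)$ is immediate from the definitions --- an axis for $g$ is $\langle g^n\rangle$--invariant and has no $g^n$--fixed points when $\ell_X(g)>0$ (and in the elliptic case a $g$--fixed vertex is $g^n$--fixed). For $\ell_X(g^n)=|n|\ell_X(g)$: the inequality $\ell_X(g^n)\leq|n|\ell_X(g)$ follows by evaluating $d(v,g^nv)$ along an axis; the reverse follows because $\mscr{W}(v|g^nv)$ along an axis is a disjoint union of the $n$ translated copies of $\mscr{W}(v|gv)$ (these hyperplane sets are disjoint precisely because $v$ lies on a geodesic line), so $d(v,g^nv)=n\cdot d(v,gv)$ there; combined with (3) this pins down the translation length. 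Statement (3) in the hyperbolic case is exactly the characterisation of points on axes: if $d(v,gv)=\ell_X(g)$ then the median argument of (1) applies to $v$, putting $v$ on an axis; conversely axis points realise the minimum by construction. In the elliptic case $\mathrm{Min}(g)$ is the fixed-vertex set, and $d(v,gv)=0$ iff $v$ is fixed.

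For (4) and (5), which I expect to be \textbf{the main obstacle}: when $g$ acts non-transversely, I would show $\mathrm{Min}(g)$ is convex by showing it is an intersection of halfspaces, or directly: given $u,v\in\mathrm{Min}(g)$ and $w\in I(u,v)$, I need $d(w,gw)=\ell_X(g)$. Decompose $\mscr{W}(w|gw)$ and compare with $\mscr{W}(u|gu)$; the non-transversality hypothesis ensures that for $\mf{w}\in\mscr{W}(\mathrm{Min}(g))$, the hyperplanes $\mf{w}$ and $g\mf{w}$ are nested or equal, which lets one ``straighten'' geodesics and show $w$ achieves the minimal displacement --- this is the content of \cite{FFT} and I would cite their Definition~3.15 and the surrounding analysis rather than reprove it. That $\mathrm{Min}(g)$ is a \emph{subcomplex} follows once it is convex and $g$--invariant together with the observation that it is a union of vertices closed under the cube structure (any cube with all vertices at minimal displacement lies in $\mathrm{Min}(g)$, again using non-transversality). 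Finally, for (5), the point is that any hyperplane $\mf{w}\in\mscr{W}(\mathrm{Min}(g^{D!}))$ and its orbit $\mf{w}, g\mf{w}, g^2\mf{w},\dots$ cannot have an infinite antichain of pairwise-transverse elements of size exceeding $D=\dim X$ (since pairwise-transverse hyperplanes bound a cube), so within $D!$ steps the combinatorics of the orbit forces $g^{D!}\mf{w}$ to be nested with (or equal to) $\mf{w}$; more precisely, the induced action of $g$ on the finite ``local'' poset of hyperplanes transverse-equivalent near the axis has order dividing $D!$, killing all transversality. I would phrase (5) as a consequence of the bound $\dim X = D$ on the size of pairwise-transverse families plus a pigeonhole/orbit argument on the cyclic group $\langle g\rangle$ acting on hyperplanes meeting $\mathrm{Min}(g)$, citing \cite{FFT} for the precise statement.
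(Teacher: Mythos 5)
Your proposal distributes the work quite differently from the paper, and two of the steps have genuine gaps. For (1)--(3) the paper simply invokes Corollary~6.2 of \cite{Haglund}; you instead sketch the argument, but the key step is misstated: what you need is that a displacement-minimising vertex $v$ satisfies $gv\in I(v,g^2v)$, i.e.\ $m(v,gv,g^2v)=gv$ (equivalently $m(g^{-1}v,v,gv)=v$), not $m(v,gv,g^2v)=v$, and the justification ``otherwise $d(g^{-1}v,gv)<2d(v,gv)$, contradicting that the midpoint argument improves displacement'' is not an argument in the $\ell^1$/median setting --- there is no midpoint trick here, and $d(v,g^2v)<2d(v,gv)$ is not by itself in tension with minimality of $d(v,gv)$. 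Producing a vertex of strictly smaller displacement from a hyperplane separating $v$ from both $g^{-1}v$ and $gv$ is exactly where the stably-without-inversions hypothesis must enter; your sketch never uses it, and as written it would ``prove'' the statement for an inversion swapping the two endpoints of an edge, which has neither a fixed vertex nor an axis, so $\mathrm{Min}(g)=\emptyset$.

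The more serious gap is in (5), which is the one part the paper actually proves from scratch. The dimension bound does give some $1\leq i\leq D$ with $\mf{w}$ and $g^i\mf{w}$ not transverse, but the jump ``within $D!$ steps the combinatorics of the orbit forces $g^{D!}\mf{w}$ to be nested with (or equal to) $\mf{w}$'' is precisely what needs proof, and your ``finite local poset near the axis'' is not a finite $\langle g\rangle$--invariant set (infinitely many hyperplanes cross an axis), so no pigeonhole applies as stated. The paper uses two different arguments: if $\mf{w}$ crosses an axis of $g$, the axis skewers $\mf{w}$ and $g^i\mf{w}$, so one may choose a side $\mf{h}$ with $g^i\mf{h}\subsetneq\mf{h}$ and iterate, using $i\mid D!$, to get $g^{D!}\mf{h}\subsetneq\mf{h}$; if instead $\mf{w}$ separates two fixed vertices $v,w$ (the two-axes case being reduced to this via $X_g^{\rm ell}$ of \cite{FFT}), one proves the stronger fact that $g^{D!}$ \emph{fixes} $\mf{w}$, because the hyperplanes of $\mscr{W}(v|w)$ at a fixed distance $n+\tfrac12$ from $v$ form a pairwise-transverse, hence at most $D$--element, set permuted by $g$. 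Your outline addresses neither the coherent choice of nested side nor the hyperplanes of $\mathrm{Min}$ not crossed by any axis, and \cite{FFT} is cited by the paper only for the definition of non-transversality and for the reduction used in (4), not for (5), so ``citing \cite{FFT} for the precise statement'' would leave (5) unproved. Similarly for (4): the paper extracts from \cite{FFT} only the reduction to the case where $g$ fixes a vertex, and must still argue that case (distinct non-transverse hyperplanes of $\mscr{W}(v|w)$ lie at different distances from $v$, so non-transversality forces each to be $g$--invariant, whence $I(v,w)\cu\mathrm{Min}(g)$); your deferral to ``Definition~3.15 and the surrounding analysis'' does not supply this.
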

\begin{proof}
Part~(1)--(3) follow from Corollary~6.2 in \cite{Haglund}. By Lemma~3.10 and Proposition~3.17 in \cite{FFT}, it suffices to prove part~(4) in the case when $g$ fixes a vertex of $X$ (indeed, in the notation of \emph{op.\ cit.}, this also implies that $X_g^{\rm fix}$ is a convex subcomplex of $X_g^{\rm ell}$).

Let us then assume that $g$ fixes vertices $v,w\in X$. Given $\mf{w}\in\mscr{W}(v|w)$, we have $g\mf{w}\in\mscr{W}(gv|gw)=\mscr{W}(v|w)$ and $d(v,\mf{w})=d(v,g\mf{w})$. It follows that $\mf{w}$ and $g\mf{w}$ are either transverse or equal. Since $g$ acts non-transversely, it must fix every element of $\mscr{W}(v|w)$. It follows that $I(v,w)\cu{\rm Min}(g)$.

Finally, we prove part~(5). Given $\mf{w}\in\mscr{W}(X)$, the hyperplanes $g^i\mf{w}$ cannot be pairwise transverse for $0\leq i\leq D$; pick $i$ so that $\mf{w}$ and $g^i\mf{w}$ are not transverse. If $\mf{w}$ crosses an axis of $g$, we can then choose a side $\mf{h}$ of $\mf{w}$ so that $g^i\mf{h}\subsetneq\mf{h}$. Since $i$ divides $D!$, we have $g^{D!}\mf{h}\subsetneq\mf{h}$. This shows that $g^{D!}\mf{w}$ and $\mf{w}$ are disjoint whenever $\mf{w}$ crosses an axis of $g$. 

The remaining elements of $\mscr{W}({\rm Min}(g))$ must separate two axes of $g$, or two fixed vertices. It suffices to consider the latter case, as, in the notation of \cite{FFT}, we can always pass to $X_g^{\rm ell}$. Now, if $v,w\in X$ are fixed by $g$ and $n\in\N$, there are at most $D$ hyperplanes of $\mscr{W}(v|w)$ at distance exactly $n+\frac{1}{2}$ from $v$; indeed, they are pairwise transverse. Since, moreover, these hyperplanes are permuted by $g$, we conclude that $g^{D!}$ fixes every element of $\mscr{W}(v|w)$. Hence $g^{D!}$ acts non-transversely.
\end{proof}

Let now $\G\acts X$ or $G\acts X$ denote an action by cubical automorphisms. Throughout the paper, we prefer to denote groups by $G$ if they act cocompactly and by $\G$ otherwise. We hope this will improve the exposition, as we will sometimes be in the presence of two actions: one of a group $\G$ --- of which we study the length spectrum --- and one of a group $G$, which acts cocompactly but otherwise bears little relevance to us.

The following is an iterated application of Exercise~1.6 in \cite{Sageev}. See e.g.\ Lemma~2.3 in \cite{FH} for a proof.

\begin{lem}\label{cocompact stabilisers}
Let $G\acts X$ be cocompact and let $\mf{w}_1,...,\mf{w}_k$ be pairwise transverse hyperplanes. Denote by $G_i$ the stabiliser of $\mf{w}_i$ in $G$. The action $G_1\cap ...\cap G_k\acts\mf{w}_1\cap ...\cap\mf{w}_k$ is cocompact.
\end{lem}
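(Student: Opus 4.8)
The plan is to prove this by induction on $k$, using the single-hyperplane case ($k=1$) as the base step and then iterating. Since the statement references "an iterated application of Exercise~1.6 in \cite{Sageev}", the base case is essentially the standard fact that, when $G$ acts cocompactly on $X$, the stabiliser of a hyperplane $\mf{w}$ acts cocompactly on $\mf{w}$ (viewing $\mf{w}$ itself as a $\CAT$ cube complex, as recalled in the preliminaries). The inductive engine is that an intersection of pairwise transverse hyperplanes $\mf{w}_1 \cap \dots \cap \mf{w}_k$ is again a $\CAT$ cube complex, so one can peel off one hyperplane at a time.

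Here is how I would carry it out. First I would set up notation: write $Y = \mf{w}_2 \cap \dots \cap \mf{w}_k$ and $H = G_2 \cap \dots \cap G_k$, so that by the inductive hypothesis $H \acts Y$ is cocompact. The key observation is that $\mf{w}_1 \cap Y$ is a hyperplane of the cube complex $Y$: indeed, the cells of $Y$ are the intersections $\mf{w}_2 \cap \dots \cap \mf{w}_k \cap c$ for cubes $c \cu X$, and since $\mf{w}_1$ is transverse to each $\mf{w}_i$ (so it genuinely crosses $Y$ rather than being disjoint from it or containing it), $\mf{w}_1 \cap Y$ is a genuine hyperplane of $Y$ that separates $Y$ into two halfspaces. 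Moreover the stabiliser of this hyperplane of $Y$ inside $H$ is exactly $G_1 \cap H = G_1 \cap \dots \cap G_k$ — here one uses that an element of $H$ preserves $Y$ and preserves $\mf{w}_1$ iff it preserves $\mf{w}_1 \cap Y$ (the "only if" needs that $\mf{w}_1$ is determined by $\mf{w}_1 \cap Y$ as a hyperplane of $X$, which holds because hyperplanes of $X$ are determined by any one of their midcubes, hence by any nonempty intersection with a convex subcomplex that they cross transversely). Then I would apply the base case ($k=1$) to the cocompact action $H \acts Y$ and the hyperplane $\mf{w}_1 \cap Y$ of $Y$, concluding that $(G_1 \cap \dots \cap G_k) \acts (\mf{w}_1 \cap \dots \cap \mf{w}_k)$ is cocompact, since $\mf{w}_1 \cap \dots \cap \mf{w}_k = (\mf{w}_1 \cap Y) \cap Y$ is precisely this hyperplane of $Y$ with its induced cube complex structure.

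For the base case itself I would argue as follows: let $G$ act cocompactly on $X$ and let $\mf{w}$ be a hyperplane with stabiliser $G_{\mf{w}}$. Every cube of $\mf{w}$ is a midcube of a cube $c \cu X$ dual to $\mf{w}$, i.e.\ $c$ has an edge crossing $\mf{w}$. Since $G\acts X$ is cocompact, there are finitely many $G$-orbits of cubes, hence finitely many $G$-orbits of edges, hence the edges dual to hyperplanes in the finitely many $G$-orbits of hyperplanes fall into finitely many orbits; restricting to the orbit $G\cdot\mf{w}$ and using that $G_{\mf{w}}$ is exactly the subgroup preserving $\mf{w}$, a standard orbit-counting argument shows $G_{\mf{w}}$ acts on the cubes dual to $\mf{w}$ with finitely many orbits, hence cocompactly on $\mf{w}$ itself. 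This is literally the content invoked as Exercise~1.6 in \cite{Sageev}.

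The main obstacle — though it is more a bookkeeping point than a deep difficulty — is verifying cleanly that $\mf{w}_1 \cap Y$ really is a hyperplane of $Y$ and that stabilisers match up under the passage from $X$ to $Y$; one must be careful that transversality of $\mf{w}_1$ with each $\mf{w}_i$ is exactly what guarantees $\mf{w}_1$ crosses $Y$ transversely (all four quadrant intersections nonempty at the level of the subcomplex $Y$), so that the induced object is a two-sided hyperplane of $Y$ and not something degenerate. Once that structural point is pinned down, the induction runs mechanically and the base case is the quoted exercise, so no further serious work is needed.
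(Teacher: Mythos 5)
Your proposal is correct and is essentially the paper's argument: the paper gives no proof beyond the remark that the lemma is ``an iterated application of Exercise~1.6 in \cite{Sageev}'', and your induction on $k$ --- treating $\mf{w}_1\cap Y$ as a hyperplane of the cube complex $Y=\mf{w}_2\cap\dots\cap\mf{w}_k$ with stabiliser $G_1\cap\dots\cap G_k$ in $H$ and invoking the single-hyperplane case --- is exactly what that iteration amounts to. The structural points you flag (that transversality forces $\mf{w}_1$ to cross $Y$ and that stabilisers match up via midcubes) are the right ones to check, and your treatment of them is fine.
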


We say that $X$ is \emph{irreducible} if $X$ does not split nontrivially as a product of cube complexes. The action $\G\acts X$ is \emph{non-elementary} if there are no finite orbits in $X\cup\partial_{\infty}X$; we recall that $\partial_{\infty}X$ denotes the \emph{visual} boundary. 

The cube complex $X$ is \emph{essential} if no halfspace is contained in a metric neighbourhood of the corresponding hyperplane. The action $\G\acts X$ is \emph{essential} if no $\G$--orbit is contained in a metric neighbourhood of a halfspace. Thus, a cocompact action $G\acts X$ is essential if and only if $X$ is essential. 

We say that an element $g\in \G$ \emph{flips} a halfspace $\mf{h}\in\mscr{H}(X)$ if we have $\mf{h}^*\cap g\mf{h}^*=\emptyset$. The halfspace $\mf{h}$ is \emph{($\G$--)flippable} if such a group element exists and \emph{($\G$--)unflippable} otherwise.

\begin{lem}\label{geometric implies nonelementary}
Let $X$ be irreducible and essential and let $G$ be a discrete, non-virtually-cyclic group. Every proper cocompact action $G\acts X$ is non-elementary.
\end{lem}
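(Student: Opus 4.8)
The plan is to verify directly the definition of non-elementarity, namely that there is no finite orbit in $X\cup\partial_\infty X$, treating the two pieces $X$ and $\partial_\infty X$ separately after a few harmless reductions. First I would record the following. Since $G$ is not virtually cyclic it is infinite, so a proper cocompact action forces $X$ to be unbounded, and cocompactness forces $X$ to be finite dimensional (finitely many $G$--orbits of cubes, hence one of maximal dimension). Next, $X\not\cong\R$: otherwise $G$ would act properly and cocompactly on a line, hence be quasi-isometric to $\R$ by the Milnor--\v{S}varc lemma, hence two-ended, hence virtually cyclic --- contrary to hypothesis. Finally, since $X$ is essential and $G\acts X$ is cocompact, the action $G\acts X$ is essential (as recalled just before the statement). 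So the setting reduces to: an essential, proper, cocompact action of an infinite, non-virtually-cyclic group on an irreducible, finite dimensional, essential $\CAT$ cube complex that is not a line.

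For the $0$--skeleton: if $G\cdot v$ were finite for some vertex $v$, then $\mathrm{Stab}_G(v)$ would have finite index in $G$ and still act cocompactly on $X$, while fixing $v$; then every point of $X$ lies within bounded distance of $v$, so $X$ is bounded --- a contradiction. (Equivalently, properness forces $\mathrm{Stab}_G(v)$, hence $G$, to be finite, which is again excluded.)

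For the visual boundary: here I would invoke the structure theory of essential cocompact cubical actions of Caprace--Sageev \cite{CS}. An essential cocompact action of a group on an irreducible, finite dimensional $\CAT$ cube complex other than a line has no finite orbit in $\partial_\infty X$; since our action is precisely of this type, we are done. (If one wishes to use instead the version of this statement dealing with a global fixed point, first pass to the finite index subgroup $G'\leq G$ fixing one point of the putative finite orbit; $G'$ still acts cocompactly and, $X$ being essential, essentially, so $G'$ would fix a point of $\partial_\infty X$ --- impossible by \cite{CS}.) This completes the proof.

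The only non-formal ingredient is this last citation, and it is also the conceptual heart of the matter, so it is the ``hard part'' --- though one I would not reprove. The mechanism is: because $X$ is irreducible (hence carries no product decomposition) and is not a line, the Double Skewering Lemma of \cite{CS} produces a pair of strongly separated hyperplanes; skewering elements for these generate a nonabelian free ping-pong subgroup and, in particular, provide two rank-one (contracting) isometries with disjoint fixed-point sets in $\partial_\infty X$. Since rank-one isometries act on $\partial_\infty X$ with North--South dynamics, any finite $G$--invariant subset of $\partial_\infty X$ would have to lie in the two-point fixed set of each of them, hence be empty. Everything else in the argument is a direct consequence of properness, cocompactness, and the equivalence between essentiality of $X$ and of the action.
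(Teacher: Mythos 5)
Your reductions and the treatment of finite orbits inside $X$ are fine and agree with the paper (properness forces point stabilisers, hence $G$, to be finite). The gap is in the visual-boundary step, which is the actual content of the lemma. The statement you cite --- that an essential cocompact action on an irreducible, finite-dimensional $\CAT$ cube complex other than a line has no finite orbit in $\partial_{\infty}X$ --- is not available in \cite{CS} in that form, and as you quote it (with properness omitted) it is false: the full stabiliser of an end $\xi$ of a regular tree acts on the tree cocompactly (even vertex-transitively) and essentially, the tree is irreducible, locally compact and not a line, yet $\xi$ is a fixed point in the visual boundary. So properness/discreteness must enter the boundary argument, and in your sketch it never does. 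Moreover, the mechanism you offer to justify the citation is circular: the Flipping and Double Skewering Lemmas of \cite{CS} carry the hypothesis that the acting group has \emph{no fixed point at infinity}, which is exactly what must be ruled out for the finite-index subgroup $G'$ fixing a point of the putative finite orbit. You cannot apply them to $G'$; applying them to $G$ itself only disposes of the case where $G$ has a finite boundary orbit but no fixed point, and leaves the hard case (an actual fixed point at infinity) untouched.

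What is missing is the bridge the paper builds between a fixed point at infinity and the flipping machinery. The paper passes, as you do, to a finite-index subgroup fixing $\xi\in\partial_{\infty}X$, takes a $\CAT$ ray $r$ representing $\xi$, and shows that for any hyperplane $\mf{w}$ crossed by $r$ the distance $d(r(t),\mf{w})$ diverges (otherwise $r$ would be asymptotic to a ray contained in $\mf{w}$ and based at the crossing point, which the $\CAT$ condition forbids). Since every element of the subgroup fixes $\xi$ and asymptotic rays stay at bounded distance, the side of $\mf{w}$ containing $r(0)$ cannot be flipped by that subgroup. This is then contradicted by Theorem~4.7 of \cite{CS}, the cocompact-case flipping theorem, applied to the proper cocompact (hence essential) action of the finite-index subgroup; it is in that citation, not in the Flipping/Double Skewering Lemmas, that the fixed-point hypothesis is avoided, and the tree example above shows that some such properness input is indispensable. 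Either reproduce an argument of this kind or supply a genuine reference for the boundary statement with properness included; as written, the conceptual heart of the lemma is assumed rather than proved.
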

\begin{proof}
If $G$ had a finite orbit within $X$, the properness assumption would force $G$ to be finite. Suppose for the sake of contradiction that $G$ has a finite orbit in $\partial_{\infty}X$. Up to passing to a finite index subgroup, we can assume that $G$ fixes a point $\xi\in\partial_{\infty}X$; let $r$ be a $\CAT$--ray representing $\xi$.

 Let $\mf{w}$ be a hyperplane crossed by $r$ at a point $p$. Observe that $d(r(t),\mf{w})$ diverges for $t\ra+\infty$. Otherwise $r$ would be asymptotic to a different $\CAT$--ray, based at $p$ and entirely contained in $\mf{w}$, which is forbidden by the $\CAT$ condition. We conclude that the side of $\mf{w}$ that contains $r(0)$ is unflippable. On the other hand, Theorem~4.7 in \cite{CS} shows that every element of $\mscr{H}(X)$ is $G$--flippable, a contradiction.
\end{proof}

We say that the action $\G\acts X$ is \emph{$\ell^1$--minimal} if there does not exist a proper, convex, $\G$--invariant subcomplex $Y\cu X'$ (recall that $X'$ is the barycentric subdivision). This notion is closely related to essentiality, as the next result shows.

\begin{lem}\label{minimal vs essential}
Every essential action $\G\acts X$ is $\ell^1$--minimal. The converse holds if $X$ is finite dimensional and $\G\acts X$ is non-elementary.
\end{lem}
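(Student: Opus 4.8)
The plan is to prove the two implications separately. For the first implication (essential $\Rightarrow$ minimal), I would argue contrapositively: suppose $\G\acts X$ is not minimal, so there is a proper, nonempty, convex, $\G$--invariant subcomplex $Y\subsetneq X$. Since $Y$ is convex and proper, there is a halfspace $\mf{h}$ with $Y\subseteq\mf{h}$ (take $\mf{h}$ bounded by any hyperplane in $\mscr{W}(X)\setminus\mscr{W}(Y)$ oriented so that $Y$ lies on the $\mf{h}$--side; such a hyperplane exists because $Y\neq X$). Then any $\G$--orbit inside $Y$ is contained in $\mf{h}$, and in particular contained in the metric neighbourhood of the bounding hyperplane of $\mf{h}$ only if... — more carefully, I want to extract from $Y\subseteq\mf{h}$ a $\G$--invariant set trapped in a \emph{bounded} neighbourhood of a single hyperplane. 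The standard move here (cf.\ Proposition~3.5 in \cite{CS} / Caprace--Sageev) is to consider the \emph{minimal} such subcomplex or, equivalently, the set of hyperplanes $\mf{w}$ with both sides containing a $\G$--orbit point of $Y$; if $\G\acts X$ is essential one shows this forces $Y=X$. Concretely: let $\mathcal{U}$ be the set of halfspaces $\mf{k}$ with $\g Y\subseteq\mf{k}$, i.e.\ $\mf{k}\supseteq Y$ and $\mf{k}$ is $\G$--invariant as a family is not quite right — instead, observe $\bigcap_{\mf{k}\supseteq Y}\mf{k}$ is a $\G$--invariant convex subcomplex, and if $Y\neq X$ one of these halfspaces $\mf{h}$ has no $\G$--translate of $Y$ crossing $\partial\mf{h}$, so the orbit $\G v$ (for $v\in Y$) stays in $\mf{h}$ at bounded distance from $\mf{h}^*$, contradicting essentiality of the action. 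I will spell out the Hausdorff-distance bookkeeping so that "orbit contained in a metric neighbourhood of a halfspace" is literally what emerges.

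For the converse (minimal $\Rightarrow$ essential, assuming $X$ finite dimensional and $\G\acts X$ non-elementary), I would again argue contrapositively. Suppose $\G\acts X$ is not essential, so there is a halfspace $\mf{h}$ and a constant $R$ with $\G v\subseteq N_R(\mf{h})\cap N_R(\mf{h}^*)$ — no wait, non-essential means a $\G$--orbit is contained in a neighbourhood $N_R(\mf{h})$ of a halfspace for some single side; equivalently some $\G v$ is disjoint from a sub-halfspace $\mf{h}'\subsetneq\mf{h}$ deep inside $\mf{h}$. Let $\mf{h}$ be such, chosen (using finite dimensionality and a minimality/maximality argument on chains of halfspaces) so that the set $D(\mf{h})$ of hyperplanes "deep" on the far side forms an infinite descending chain $\mf{h}\supsetneq\mf{h}_1\supsetneq\mf{h}_2\supsetneq\cdots$ all avoided by $\G v$. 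Then $\bigcup_n \mf{h}_n^*$ together with $\G v$ generates — via convex hulls and translates — a $\G$--invariant convex subcomplex $Y=\hull(\G v)$ which, I claim, is proper: no $\mf{h}_n$ can belong to $\mscr{W}(Y)$ because both sides of $\mf{h}_n$ would then contain points of $\G v$, contradicting that $\G v$ avoids $\mf{h}_n$. Hence $\mscr{W}(Y)\subsetneq\mscr{W}(X)$, so $Y\subsetneq X$ is a proper $\G$--invariant convex subcomplex, contradicting minimality. The role of non-elementarity is to rule out the degenerate alternative where $\G$ has a finite orbit (equivalently $\hull(\G v)$ is bounded or splits off a half-line fixed at infinity), which would make the "convex hull is proper" step vacuous in the wrong way; I would invoke it to guarantee $\hull(\G v)$ is genuinely unbounded in all the relevant directions and not itself forced to be all of $X$ for trivial reasons.

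The main obstacle I anticipate is the bookkeeping in the converse: turning "non-essential" (an orbit lies in a bounded neighbourhood of one halfspace) into an honest \emph{infinite} descending chain of halfspaces all missed by the orbit, which is what lets me conclude $\hull(\G v)$ omits hyperplanes and hence is proper. This is exactly where finite-dimensionality enters — it bounds the "width" of neighbourhoods in terms of chains of halfspaces, so that a bounded neighbourhood of $\mf{h}$ cannot meet an infinite descending chain inside $\mf{h}$. I would extract the needed statement from the structure of $\CAT$ cube complexes (a ball of radius $R$ around a point meets at most finitely many, in fact a chain-bounded family of, nested halfspaces), combined with essentiality of individual \emph{hyperplanes} not being assumed — so I must be careful to only use that the \emph{action} fails essentiality. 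The non-elementarity hypothesis handles the remaining edge case (finite orbit in $X\cup\partial_\infty X$), and I expect a short separate paragraph citing Lemma~\ref{geometric implies nonelementary}-style reasoning or Caprace--Sageev's flipping/double-skewering machinery (Theorem~4.7 in \cite{CS}) to close it.
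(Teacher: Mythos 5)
Your first implication is essentially correct, and simpler than you make it: if $Y\subsetneq X$ is a nonempty, proper, convex, $\G$--invariant subcomplex, then any halfspace $\mf{h}$ separating a vertex outside $Y$ from $Y$ satisfies $\G v\cu Y\cu\mf{h}$ for $v\in Y$, so this orbit lies in a metric neighbourhood of the halfspace $\mf{h}$ and the action is not essential; the fuss about invariant families of halfspaces and translates of $Y$ crossing the bounding hyperplane is unnecessary. (For context: the paper does not argue directly at all here --- its proof is a citation of Lemma~3.1 and Proposition~3.5 of \cite{CS}.)

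The converse, however, has a genuine gap at its crux. You pass from ``$\G v\cu N_R(\mf{h})$'' to the existence of a halfspace (indeed an infinite descending chain of halfspaces) avoided by the orbit, and then conclude that $\hull(\G v)$ is proper. That passage is false in general: non-essentiality of the action does not by itself produce any halfspace disjoint from the orbit. (Incidentally, the deep halfspaces you want would have to lie in $\mf{h}^*$, not in $\mf{h}$ as written.) The problematic case is when the shallow side $\mf{h}^*$ contains no deep halfspaces at all --- for instance when $\mf{h}^*$ is itself contained in a bounded neighbourhood of its hyperplane --- for then the orbit may meet every halfspace of $X$, so $\hull(\G v)=X$ and minimality yields no contradiction. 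A concrete illustration: $X$ a single edge with $\Z/2$ swapping its endpoints is minimal and non-essential, yet every halfspace meets the orbit; this action is of course elementary, which is exactly the point. Indeed, minimality together with your Dilworth-type chain count only shows that in this situation $\mf{h}^*$ must be a ``thin'' halfspace of $X$; the contradiction must then come from non-elementarity, by showing that a minimal action on a finite-dimensional complex admitting such a thin halfspace forces a finite orbit in $X\cup\partial_{\infty}X$ (equivalently, that the essential core is nonempty in the absence of a finite orbit at infinity). That is precisely the content of Caprace--Sageev's essential-core argument (Proposition~3.5 of \cite{CS}), and it is where finite dimensionality and non-elementarity genuinely enter; your plan invokes non-elementarity only to exclude bounded hulls or finite orbits in $X$, which does not touch this case. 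Relatedly, the statement you credit to finite dimensionality --- that a bounded neighbourhood of $\mf{h}$ cannot contain arbitrarily deep terms of a descending chain --- is true but easy; the real difficulty is producing the missed halfspace (or chain) in the first place.
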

\begin{proof} 
The two implications follow, respectively, from Lemma~3.1 and Proposition~3.5 in \cite{CS}. Note that the statement of \cite[Proposition~3.5]{CS} requires the unstated assumption that the action be without inversions\footnote{Otherwise, extend an arbitrary proper cocompact action $\G\acts X$ to $Y=X\x[0,1]$ so that it flips the new hyperplane according to a homomorphism $\G\ra\Z/2\Z$. Then the $\G$--essential core of $Y$ does not embed $\G$--equivariantly in $Y$, though it does in $Y'$.} (this is why we used barycentric subdivisions when defining $\ell^1$--minimality).
\end{proof}

We say that $X$ is \emph{hyperplane-essential} (cf.\ \cite{Hagen-Touikan}) if all its hyperplanes are essential when viewed as lower dimensional $\CAT$ cube complexes. An action $\G\acts X$ is \emph{hyperplane-essential} if each hyperplane-stabiliser acts essentially on the corresponding hyperplane. By Lemma~\ref{cocompact stabilisers}, a cocompact action $\G\acts X$ is hyperplane-essential if and only if $X$ is hyperplane-essential.

We say that $X$ is \emph{cocompact} if the action $\Aut(X)\acts X$ is cocompact. The following is Proposition~1 in \cite{Hagen-Tun}.

\begin{prop}\label{NS corner}
Let $X$ be cocompact, locally finite, essential, hyperplane-essential and irreducible. For any two transverse halfspaces $\mf{h}_1$ and $\mf{h}_2$, there exists a halfspace ${\mf{k}\cu\mf{h}_1\cap\mf{h}_2}$.
\end{prop}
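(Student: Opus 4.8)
The plan is to produce an automorphism $g\in\Aut(X)$ with $g\mf{h}_1\cu\mf{h}_1\cap\mf{h}_2$; the halfspace $\mf{k}:=g\mf{h}_1$ then has the required property. Write $\mf{w}_1=\partial\mf{h}_1$ and $\mf{w}_2=\partial\mf{h}_2$. Three inputs will be used repeatedly. First, the Double Skewering Lemma of Caprace--Sageev (\cite{CS}): since $X$ is essential, cocompact, finite dimensional and locally finite, for any halfspaces $\mf{p}\cu\mf{q}$ there is $g$ with $g\mf{q}\subsetneq\mf{p}$; by Lemma~\ref{cocompact stabilisers} together with hyperplane-essentiality the same holds \emph{inside} every hyperplane of $X$, as each such hyperplane is again essential, cocompact, finite dimensional and locally finite. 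Second, irreducibility guarantees a pair of strongly separated halfspaces (\cite{CS}), and chaining this with double skewering one gets that every halfspace $\mf{h}$ contains a halfspace $\mf{a}\subsetneq\mf{h}$ whose bounding hyperplane is strongly separated from $\partial\mf{h}$. Third, the elementary remark that if $\partial\mf{a}$ is strongly separated from a hyperplane $\mf{w}$, then no hyperplane transverse to $\mf{w}$ can be transverse to $\partial\mf{a}$.

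First I would apply the second input to $\mf{h}_1$, obtaining $\mf{a}\subsetneq\mf{h}_1$ with $\partial\mf{a}$ strongly separated from $\mf{w}_1$. Since $\mf{w}_2$ is transverse to $\mf{w}_1$ while $\partial\mf{a}$ does not cross $\mf{w}_1$ (being strongly separated from it), we have $\partial\mf{a}\neq\mf{w}_2$; by the third input $\mf{w}_2$ is then disjoint from $\partial\mf{a}$, so $\mf{a}$ lies entirely on one side of $\mf{w}_2$. Using that all four quadrants of $\{\mf{w}_1,\mf{w}_2\}$ are nonempty, one checks the only possibilities are $\mf{a}\cu\mf{h}_1\cap\mf{h}_2$ --- in which case $\mf{k}=\mf{a}$ --- or $\mf{a}\cu\mf{h}_1\cap\mf{h}_2^*$. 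Running the same argument on $\mf{h}_2$, I may likewise assume that I obtained $\mf{b}\subsetneq\mf{h}_2$ with $\partial\mf{b}$ strongly separated from $\mf{w}_2$ and $\mf{b}\cu\mf{h}_1^*\cap\mf{h}_2$ (otherwise we are already done).

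It remains to handle this last configuration. Here $\partial\mf{a}$ and $\partial\mf{b}$ are themselves strongly separated: a hyperplane transverse to both would meet $\mf{a}\cu\mf{h}_1$ and $\mf{b}\cu\mf{h}_1^*$, hence cross $\mf{w}_1$ while being transverse to $\partial\mf{a}$, contradicting the choice of $\mf{a}$. Since in the second input we are free to take $\mf{a}$ and $\mf{b}$ as deep as we wish, we can make $\partial\mf{a}$ and $\partial\mf{b}$ far apart, with $\mf{w}_1$ and $\mf{w}_2$ among the hyperplanes separating them. I would then feed this into hyperplane-essentiality: since $\mf{w}_1\cap\mf{h}_2$ is an essential halfspace of the cube complex $\mf{w}_1$ (bounded by $\mf{w}_1\cap\mf{w}_2$), double skewering inside $\mf{w}_1$ produces hyperplanes of $X$ transverse to $\mf{w}_1$ whose traces on $\mf{w}_1$ lie arbitrarily deep inside $\mf{w}_1\cap\mf{h}_2$; combining one such hyperplane with the strongly separated pair $\partial\mf{a},\partial\mf{b}$, via a further skewering that drives it into the narrow region separating $\mf{a}$ from $\mf{b}$ on the $\mf{h}_1$ side, should yield a halfspace inside $\mf{h}_1\cap\mf{h}_2$.

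The main obstacle is exactly this final combination. A single application of the Double Skewering Lemma only shrinks a halfspace \emph{within} the quadrant of $\{\mf{w}_1,\mf{w}_2\}$ it already occupies, so one must genuinely alternate moves along the $\mf{w}_1$-- and $\mf{w}_2$--directions while keeping careful track of sides; it is here that irreducibility (to escape the transverse directions) and hyperplane-essentiality (to bend a hyperplane around the corner) are both indispensable --- for the reducible complex $\R^2$ no halfspace is contained in a quadrant and the statement is false.
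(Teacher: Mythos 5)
The first thing to note is that the paper does not actually prove Proposition~\ref{NS corner}: it is quoted from Lemma~5.5 of \cite{Nevo-Sageev} (with the caveat that the proof there tacitly assumes proper actions), and a complete proof in this generality is deferred to \cite{Hagen-Wilton}. So you are attempting an independent argument, and the preparatory part of it is sound: inside $\mf{h}_1$ one can indeed place a nested strongly separated pair (existence of strongly separated halfspaces by irreducibility plus Double Skewering to push them into $\mf{h}_1$), and the inner hyperplane of such a pair is then strongly separated from $\mf{w}_1$; the trichotomy forcing $\mf{a}\cu\mf{h}_1\cap\mf{h}_2$ or $\mf{a}\cu\mf{h}_1\cap\mf{h}_2^*$ is correct, as is the observation that in the remaining configuration $\partial\mf{a}$ and $\partial\mf{b}$ are strongly separated.

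The problem is that the only case that carries any content --- $\mf{a}\cu\mf{h}_1\cap\mf{h}_2^*$ and $\mf{b}\cu\mf{h}_1^*\cap\mf{h}_2$ --- is exactly the statement being proved, and you do not prove it; you yourself flag it as ``the main obstacle'' and the final step is phrased as ``should yield''. Moreover, the sketch offered for that step cannot work as written. Any hyperplane produced by skewering inside $\mf{w}_1$ is transverse to $\mf{w}_1$, hence meets both $\mf{h}_1$ and $\mf{h}_1^*$, and therefore can never be contained in $\mf{h}_1\cap\mf{h}_2$: the desired hyperplane must be disjoint from both $\mf{w}_1$ and $\mf{w}_2$, so one has to translate a hyperplane completely off $\mf{w}_1$, not ``drive'' a transverse one around the corner. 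And the Double Skewering Lemma gives no control in the transverse direction: for instance, skewering $\mf{a}\cu\mf{h}_1$ produces $g$ with $g\mf{h}_1\cu\mf{a}\cu\mf{h}_1\cap\mf{h}_2^*$, but says nothing about where $g\mf{h}_2$ lands, and all the analogous moves (skewering $\mf{b}\cu\mf{h}_1^*$, $\mf{a}\cu\mf{h}_2^*$, $\mf{b}\cu\mf{h}_2$, or $\mf{a}\cu\mf{b}^*$) only ever deposit translates of $\mf{w}_1$ or $\mf{w}_2$ into the two quadrants you already occupy. Coupling the $\mf{w}_1$-- and $\mf{w}_2$--directions, using hyperplane-essentiality of the stabiliser actions, is precisely the nontrivial work done in \cite{Nevo-Sageev} and \cite{Hagen-Wilton}, and it is absent here. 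As it stands your argument only re-establishes that two of the four quadrants contain halfspaces with strong-separation properties, which is a correct but strictly weaker conclusion.
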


A \emph{free face} in $X$ is a non-maximal cube $c\cu X$ that is contained in a unique maximal cube. When $X$ has no free faces, Proposition~\ref{my corner} below provides a stronger version of the previous result.

\begin{rmk}\label{nff hyp rmk}
Hyperplanes of cube complexes with no free faces are again cube complexes with no free faces. 
\end{rmk}

\begin{defn}\label{sector-heavy defn}
We say that $X$ is \emph{sector-heavy} if, for every $k\geq 2$ and every choice of pairwise transverse halfspaces $\mf{h}_1,...,\mf{h}_k$, there exists a halfspace $\mf{k}\cu\mf{h}_1\cap...\cap\mf{h}_k$.
\end{defn} 

\begin{prop}\label{my corner}
If $X$ is cocompact, locally finite, irreducible and has no free faces, then $X$ is sector-heavy.
\end{prop}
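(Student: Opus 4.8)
The plan is to bootstrap from Proposition~\ref{NS corner}, which already gives a halfspace inside the intersection of \emph{two} transverse halfspaces, and to run an induction on $k$. So suppose $\mf{h}_1,\dots,\mf{h}_k$ are pairwise transverse with $k\geq 3$, and assume inductively that $\mf{h}_2\cap\dots\cap\mf{h}_k$ contains a halfspace $\mf{k}'$. The obvious hope is that $\mf{h}_1$ is transverse to $\mf{k}'$, so that one more application of Proposition~\ref{NS corner} produces a halfspace inside $\mf{h}_1\cap\mf{k}'\subseteq\mf{h}_1\cap\dots\cap\mf{h}_k$; but of course nothing forces $\mf{k}'$ to be transverse to $\mf{h}_1$, since $\mf{k}'$ was only required to sit inside the intersection of the others. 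The role of the no-free-faces hypothesis will be precisely to repair this: it should let us \emph{choose} the inner halfspace at each stage so that transversality with the remaining $\mf{h}_i$'s is preserved.

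Concretely, the key local input I would use is the following consequence of having no free faces: in the link $\llk v$ of any vertex $v$, every vertex (hyperplane adjacent to $v$) lies in a maximal clique, and more to the point, whenever we have a clique of pairwise-transverse hyperplanes through $v$, we can extend/pivot within the link. The cleanest way to exploit this is to work in the cube complex $Z=\mf{w}_1\cap\dots\cap\mf{w}_k$, where $\mf{w}_i=\partial\mf{h}_i$ — recall from the preliminaries that this is itself a $\CAT$ cube complex, and its hyperplanes are exactly the hyperplanes of $X$ transverse to all of $\mf{w}_1,\dots,\mf{w}_k$. The point is that $Z$ is nonempty (Helly, since the $\overline{\mf{h}_i}$ pairwise intersect, indeed the $\mf{w}_i$ pairwise cross), it inherits cocompactness and local finiteness, and — this is where no free faces is used essentially — $Z$ is again essential and hyperplane-essential (no-free-faces passes to such intersections of hyperplanes, because a free face of $Z$ would give, after taking the corresponding cube of $X$ spanned by $e_1,\dots,e_k$ and a free face of $Z$, a free face of $X$). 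Then: if $Z$ is irreducible, Proposition~\ref{NS corner} applied to $Z$ already gives a halfspace of $Z$ inside the trace on $Z$ of $\mf{h}_1\cap\mf{h}_2$, and such a halfspace of $Z$ is cut by a hyperplane $\mf{u}$ transverse to all $\mf{w}_i$, so a side of $\mf{u}$ is a halfspace of $X$ contained in $\mf{h}_1\cap\dots\cap\mf{h}_k$ once one checks the side is chosen to lie in every $\mf{h}_i$ — and this last check is exactly the content of the inductive/geometric bookkeeping.

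When $Z$ \emph{reducibly} splits, I would argue that this cannot actually obstruct us: a product decomposition $Z=Z_1\times Z_2$ would come from a partition of $\mscr{W}(Z)$ into two transverse families, and one then picks the halfspace in the factor that ``points the right way'' relative to $\mf{h}_1$, i.e.\ one chooses coordinates in $Z$ so that moving deeper into $\mf{h}_1\cap\dots\cap\mf{h}_k$ is achieved factor-by-factor; since each factor is essential, cocompact, locally finite (inherited), one finds a halfspace of that factor, pulls it back to a halfspace of $Z$, and then to a halfspace of $X$ nested inside $\mf{h}_1\cap\dots\cap\mf{h}_k$. Alternatively — and this is probably the slicker route I would actually write — avoid the reducibility issue entirely by phrasing the induction directly in $X$: from the inductive halfspace $\mf{k}'\subseteq\mf{h}_2\cap\dots\cap\mf{h}_k$, look at the bridge $B(\mf{k}',\mf{h}_1^{*})$ and the gate $v=\pi_{\mf{k}'}(\mf{h}_1^{*})\in\mf{k}'$; no free faces guarantees that $v$ has a cube whose hyperplanes include one transverse to $\mf{w}_1$ while still pointing into $\mf{k}'$, so we can slide $\mf{k}'$ to a nested halfspace transverse to $\mf{w}_1$, and close with Proposition~\ref{NS corner}.

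\textbf{Main obstacle.} The genuinely delicate point is the same in every version: producing, from the no-free-faces hypothesis, an inner halfspace that is \emph{simultaneously} nested in $\mf{h}_1\cap\dots\cap\mf{h}_k$ \emph{and} transverse to $\mf{w}_1$ (or, in the $Z$-picture, handling the possible reducibility of $Z$ and choosing the side of the new hyperplane $\mf{u}$ correctly in every coordinate). This is where one must combine the link condition forced by no free faces with essentiality/hyperplane-essentiality of the relevant sub-cube-complex; the rest (Helly for nonemptiness of $Z$, inheritance of cocompactness and local finiteness, the final one-step application of Proposition~\ref{NS corner}) is routine. I would expect the write-up to isolate a small lemma — ``in a cocompact, locally finite, no-free-faces cube complex, any clique in a link extends across a vertex'' or equivalently ``the link of every vertex has no free faces in the graph-theoretic sense'' — and then feed that into the induction above.
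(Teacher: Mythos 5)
You have identified the right obstacle --- the inductive halfspace $\mf{k}'\cu\mf{h}_2\cap\dots\cap\mf{h}_k$ need not be transverse to $\mf{w}_1$ and may lie in $\mf{h}_1^*$ --- but neither of your mechanisms for repairing it works. In the $Z$--picture, a halfspace of $Z=\mf{w}_1\cap\dots\cap\mf{w}_k$ is bounded by a hyperplane $\mf{u}$ of $X$ transverse to \emph{every} $\mf{w}_i$; by the very definition of transversality, both sides of $\mf{u}$ meet both $\mf{h}_i$ and $\mf{h}_i^*$, so no side of $\mf{u}$ can be contained in any $\mf{h}_i$. The ``check'' you defer to bookkeeping is therefore impossible: the halfspace you are after is bounded by a hyperplane nested inside each $\mf{h}_i$, i.e.\ one that is invisible in $Z$, so applying Proposition~\ref{NS corner} inside $Z$ (and worrying about its irreducibility) cannot produce it. In the bridge/gate variant, the sentence ``no free faces guarantees that we can slide $\mf{k}'$ to a nested halfspace transverse to $\mf{w}_1$'' is precisely the assertion to be proved and is not justified: $\pi_{\mf{k}'}(\mf{h}_1^*)$ is in general a whole shore rather than a point, and a hyperplane adjacent to it which happens to be transverse to $\mf{w}_1$ need not bound a halfspace contained in $\mf{h}_2\cap\dots\cap\mf{h}_k$. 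No purely local link statement (``links have no free faces'') delivers this, and that is not the lemma the argument rests on.

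The missing idea is global and group-theoretic: a skewering-and-translating step. In the problem case the paper takes the subgroup $G\le\Aut(X)$ preserving $\mf{h}_1,\dots,\mf{h}_{k-1}$ (here cocompactness of $X$ enters through Lemma~\ref{cocompact stabilisers}), notes that $\mf{w}_1\cap\dots\cap\mf{w}_{k-1}$ has no free faces and is hence essential, so the cocompact action $G\acts\mf{w}_1\cap\dots\cap\mf{w}_{k-1}$ is essential, and applies Proposition~3.2 of \cite{CS} to the hyperplane $\mf{w}_1\cap\dots\cap\mf{w}_k$ of this complex to obtain $g\in G$ with $\mf{h}_k\subsetneq g\mf{h}_k$. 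For large $n$ the hyperplane of the inductive halfspace is transverse to, or contained in, $g^n\mf{h}_k$; the easy cases (which you do have) then give a halfspace inside $\mf{h}_1\cap\dots\cap\mf{h}_{k-1}\cap g^n\mf{h}_k=g^n(\mf{h}_1\cap\dots\cap\mf{h}_k)$, and one translates back by $g^{-n}$. Without this use of the cocompact, essential action of the halfspace stabiliser --- rather than a local no-free-faces condition --- your induction does not close.
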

\begin{proof}
Since $X$ is finite dimensional and without free faces, $X$ is essential. By a repeated application of Remark~\ref{nff hyp rmk}, any intersection of pairwise-transverse hyperplanes of $X$ is itself a cube complex with no free faces. We conclude that $X$ is hyperplane-essential, and the case $k=2$ follows from Proposition~\ref{NS corner}. For the general case, we proceed by induction on $k\geq 3$.

Let $\mf{h}_1,...,\mf{h}_k$ be pairwise transverse halfspaces. By the inductive hypothesis, there exists a halfspace $\mf{j}\cu\mf{h}_1\cap ...\cap\mf{h}_{k-1}$; let $\mf{w}$ be the hyperplane corresponding to $\mf{j}$. If $\mf{w}\cu\mf{h}_k$, either we have $\mf{j}\cu\mf{h}_k$ and we are done, or $\mf{h}_k^*\cu\mf{j}$; in the latter case, $\mf{j}$ intersects $\mf{h}_1^*$, a contradiction. If $\mf{w}$ is transverse to $\mf{h}_k$, we also immediately conclude by invoking Proposition~\ref{NS corner}, which yields a halfspace $\mf{k}\cu\mf{j}\cap\mf{h}_k$. We are left to consider the case when $\mf{w}\cu\mf{h}_k^*$. 

Let $G<\Aut(X)$ be the subgroup that preserves each of the halfspaces $\mf{h}_1,...,\mf{h}_{k-1}$ and denote by $\mf{w}_i$ the hyperplane corresponding to $\mf{h}_i$. Since $X$ has no free faces, the cube complex $\mf{w}_1\cap ...\cap\mf{w}_{k-1}$ is essential. The action $G\acts\mf{w}_1\cap ...\cap\mf{w}_{k-1}$ is cocompact by Lemma~\ref{cocompact stabilisers} and therefore essential. Now, applying Proposition~3.2 in \cite{CS} to the hyperplane $\mf{w}_1\cap ...\cap\mf{w}_k$ of the cube complex $\mf{w}_1\cap ...\cap\mf{w}_{k-1}$, we obtain an element $g\in G$ with $\mf{h}_k\subsetneq g\mf{h}_k$. For large $n\geq 1$, either $\mf{w}$ is transverse to $g^n\mf{h}_k$ or $\mf{w}\cu g^n\mf{h}_k$. The discussion above then shows that, in both cases, there exists a halfspace 
\[\mf{j}'\cu\mf{h}_1\cap ...\cap\mf{h}_{k-1}\cap g^n\mf{h}_k=g^n(\mf{h}_1\cap ...\cap\mf{h}_k).\] 
We conclude by taking $\mf{k}=g^{-n}\mf{j}'$.
\end{proof}

\subsection{Regular points.}

Throughout this subsection:

\begin{ass}
Let the $\CAT$ cube complex $X$ be irreducible. 
\end{ass}

The following notion was first introduced in \cite{Fernos,Fernos-Lecureux-Matheus}. See Proposition~7.5 in \cite{Fernos} for the formulation presented here.

\begin{defn}\label{regular defn}
A point $x\in\partial X$ is \emph{regular} if the ultrafilter $\s_x$ contains an infinite chain $\mf{h}_0\supsetneq\mf{h}_1\supsetneq ...$ such that $\mf{h}_n^*$ and $\mf{h}_{n+1}$ are strongly separated for every $n\geq 0$. We refer to the latter as a \emph{strongly separated chain}. We denote by $\partial_{\rm reg}X\cu\partial X$ the subset of regular points.
\end{defn}

We record a few observations for later use. 

\begin{lem}[Lemma~5.12 in \cite{Fernos-Lecureux-Matheus}]\label{single point in ss chain}
Let $\mf{h}_0\supsetneq\mf{h}_1\supsetneq ...$ be a strongly separated chain. There exists a unique point $x\in\overline X$ that lies in each $\mf{h}_n$. Moreover, $x$ is regular.
\end{lem}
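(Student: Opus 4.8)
\textbf{Proof plan for Lemma~\ref{single point in ss chain}.}

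The plan is to build the point $x$ directly as an ultrafilter and verify separately that it is well-defined, that it is the unique point in $\bigcap_n\overline{\mf{h}_n}$, and that it is regular. First I would construct a candidate ultrafilter $\s$. The natural guess is $\s = \{\mf{k}\in\mscr{H}\mid \mf{k}\supseteq\mf{h}_n \text{ for some } n\}$, i.e.\ all halfspaces that contain some tail of the chain; equivalently $\s$ should be the set of halfspaces $\mf{k}$ with $\mf{k}^*\notin\s$ whenever $\mf{k}^*$ contains a tail. I would check the three ultrafilter axioms: consistency (any two members of $\s$ intersect), totality (for each hyperplane $\mf{w}$ exactly one side lies in $\s$), and — to land in $\overline X$ rather than needing the DCC — just the first two, since $\overline X$ is the space of all ultrafilters. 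Consistency uses that the $\mf{h}_n$ are nested and that strong separation in particular forces disjointness of $\mf{h}_n^*$ and $\mf{h}_{n+1}$, hence any $\mf{k}\supseteq\mf{h}_m$ and $\mf{k}'\supseteq\mf{h}_n$ both contain $\mf{h}_{\max(m,n)}$ and so meet. Totality is where strong separation does the real work: given a hyperplane $\mf{w}$ with sides $\mf{k},\mf{k}^*$, I must show one side contains a tail of the chain. If $\mf{w}$ is transverse to $\mf{h}_n$ for infinitely many $n$, then (using Lemma~\ref{basics on bridges} / the bridge decomposition, or directly) $\mf{w}$ would be transverse to both $\mf{h}_n$ and $\mf{h}_{n+1}$ for some $n$, contradicting that $\mf{h}_n^*$ and $\mf{h}_{n+1}$ are strongly separated; so $\mf{w}$ is transverse to only finitely many $\mf{h}_n$, and for large $n$ the hyperplane $\mf{w}$ is nested with $\mf{h}_n$, which pins down which side of $\mf{w}$ engulfs the tail.

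Having produced $\s$, let $x\in\overline X$ be the corresponding point. By construction every $\mf{h}_n\in\s$, so $x\in\bigcap_n\overline{\mf{h}_n}$. For uniqueness, suppose $y\in\overline X$ also lies in every $\overline{\mf{h}_n}$; then $\s_y\supseteq\{\mf{h}_n\}_{n\geq 0}$. I claim $\s_y=\s$. If some $\mf{k}\in\s_y$ were not in $\s$, then by totality of $\s$ we would have $\mf{k}^*\in\s$, so $\mf{k}^*\supseteq\mf{h}_n$ for some $n$; but then $\mf{h}_n\in\s_y$ and $\mf{k}\in\s_y$ are disjoint, contradicting consistency of $\s_y$. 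Hence $\s_y\cu\s$, and since both are ultrafilters (totality, applied to each hyperplane) they are equal, so $y=x$.

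Finally, regularity: the chain $\mf{h}_0\supsetneq\mf{h}_1\supsetneq\cdots$ already lies in $\s_x=\s$, and it is by hypothesis a strongly separated chain in the sense of Definition~\ref{regular defn} — $\mf{h}_n^*$ and $\mf{h}_{n+1}$ are strongly separated for all $n$ — so $x$ is regular by definition. I expect the main obstacle to be the totality argument, i.e.\ showing that for every hyperplane one side contains a tail of the chain; this is exactly the place where strong separation (as opposed to mere nesting) is indispensable, and one must rule out a hyperplane that stays transverse to infinitely many terms of the chain. Everything else is routine manipulation of ultrafilters and intervals.
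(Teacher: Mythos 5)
The paper itself offers no argument for this lemma --- it is imported wholesale as Lemma~5.12 of \cite{Fernos-Lecureux-Matheus} --- so there is no in-paper proof to compare with; your reconstruction is therefore judged on its own, and it is essentially correct: the definition of $\s$ as the halfspaces containing a tail of the chain, the consistency check, the uniqueness argument, and the use of the chain itself to witness regularity are all sound. The one place where the argument as written is incomplete is the totality step. After excluding transversality (correctly: if $\mf{w}$ were transverse to $\mf{h}_m$ and $\mf{h}_n$ with $m<n$, each side of $\mf{w}$ meets $\mf{h}_n\cu\mf{h}_{m+1}$ and $\mf{h}_m^*\cu\mf{h}_{m+1}^*$, so $\mf{w}$ is transverse to the consecutive pair $\mf{h}_m,\mf{h}_{m+1}$, contradicting strong separation), you say that nestedness of $\mf{w}$ with $\mf{h}_n$ for large $n$ ``pins down which side engulfs the tail''. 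It does not, by itself: nestedness allows four configurations, and two of them, $\mf{k}\cu\mf{h}_n$ and $\mf{k}^*\cu\mf{h}_n$, put neither side of $\mf{w}$ into $\s$. You must rule out that some side of $\mf{w}$ is contained in $\mf{h}_n$ for every large $n$. This is easy but needs saying: the bounding hyperplanes of the $\mf{h}_n$ are pairwise distinct and each separates any vertex of $\bigcap_n\mf{h}_n$ from any vertex of $\mf{h}_0^*$, so if $\bigcap_n\mf{h}_n$ contained a vertex, two vertices of $X$ would be at infinite distance; hence $\bigcap_n\mf{h}_n$ contains no vertex and in particular no (nonempty) halfspace. (Note this uses only that the chain is infinite and strictly descending, not strong separation.) With this observation, for all large $n$ either $\mf{h}_n\cu\mf{k}$ or $\mf{h}_n\cu\mf{k}^*$, and totality --- hence also your uniqueness argument, which invokes totality of $\s$ --- goes through.

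A second, smaller point: Definition~\ref{regular defn} declares regularity only for points of $\partial X$, so before concluding you should observe that $x$ is not a vertex; this is immediate, since $\s$ contains the infinite strictly descending chain $\{\mf{h}_n\}$ and therefore violates the descending chain condition characterising ultrafilters of vertices. With these two additions your plan is a complete and correct proof.
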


\begin{lem}\label{disjoining sets}
Let $\G\acts X$ be essential and non-elementary. For every finite subset $\mc{F}\cu\partial_{\rm reg}X$, there exists $g\in \G$ with $g\mc{F}\cap\mc{F}=\emptyset$.
\end{lem}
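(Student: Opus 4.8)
The plan is to combine two ingredients: the random-walk–type result that regular points are "generic" in the boundary (or, more concretely, that the $\G$--action on $\partial_{\rm reg}X$ mixes enough that finite subsets can be pushed off themselves), and the ping-pong dynamics of essential non-elementary actions on irreducible cube complexes. Since the statement is for a finite set $\mc{F}=\{x_1,\dots,x_n\}\cu\partial_{\rm reg}X$, I expect the cleanest route is via a single element acting with strong contraction-expansion, so that a high power moves all of $\mc{F}$ into a small "ball" disjoint from $\mc{F}$.

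First I would recall (from \cite{CS}, via Lemma~\ref{minimal vs essential} and the Double Skewering / flipping machinery, together with the existence of strongly separated hyperplanes in irreducible essential non-elementary actions) that there is an element $g\in\G$ admitting a \emph{strongly separated} axis: i.e.\ $g$ skewers a pair of strongly separated halfspaces $\mf{k}\subsetneq\mf{h}$ with $g\mf{h}\subsetneq\mf{k}$, so that $\bigcap_n g^n\mf{h}=\{g^+\}$ and $\bigcap_n g^{-n}\mf{h}^*=\{g^-\}$ are regular points by Lemma~\ref{single point in ss chain}. The key dynamical consequence is North--South dynamics on $\overline X$: for any point $z\in\overline X\setminus\{g^-\}$ we have $z\in\overline{\mf{h}^*}$ eventually along $g^{-n}$... more precisely, for any compact $K\cu\overline X\setminus\{g^-\}$ there is $N$ with $g^NK\cu\overline{\mf{h}}$, and by iterating the skewering, $g^{nN}K$ converges uniformly to $g^+$. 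This is where I would cite the relevant contraction statement for strongly separated halfspaces recalled in the preliminaries: since no hyperplane is transverse to both $\mf{h}$ and $\mf{k}$, any point in $\overline{\mf{h}}$ is "gated" toward the unique point $v\in\mf{h}\cap B(\mf{h},\mf{k})$, and applying $g$ repeatedly drives the $\mscr{W}_v$--coordinate monotonically, yielding the claimed convergence.

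Next I would arrange that the pivot point $g^+$ is \emph{not} one of the $x_i$ and that $g^-$ is not one of the $x_i$ either — if it is, I replace $g$ by a conjugate $hgh^{-1}$ for suitable $h\in\G$; non-elementarity guarantees the $\G$--orbit of the pair $(g^-,g^+)$ is infinite, so such $h$ exists moving $\{g^-,g^+\}$ off the finite set $\mc{F}$ (alternatively one uses that there are infinitely many strongly separated axes with distinct endpoints). Then $\mc{F}\cu\overline X\setminus\{g^-\}$ is compact (it is finite), so by the North--South dynamics there is $m$ with $g^m\mc{F}$ contained in an arbitrarily small neighbourhood $U$ of $g^+$; shrinking $U$ so that $U\cap\mc{F}=\emptyset$ (possible since $g^+\notin\mc{F}$ and $\mc{F}$ is finite, hence closed with $g^+$ in its complement — using that $\overline X$ is Hausdorff), we get $g^m\mc{F}\cap\mc{F}\cu U\cap\mc{F}=\emptyset$. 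Taking this $g^m$ as the required element finishes the proof. Note $g^m\mc{F}\cu\partial_{\rm reg}X$ automatically since regularity is $\G$--invariant, though this is not even needed for the statement.

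\emph{Main obstacle.} The delicate point is establishing the uniform North--South convergence on the \emph{Roller compactification} (not just bi-infinite rays) and, upstream of that, producing the strongly separated axis: both rely on results about essential actions on irreducible complexes from \cite{CS} (existence of strongly separated hyperplanes / facing triples of halfspaces and a skewering element) which must be invoked carefully since $\G$ is neither assumed discrete nor cocompact. I would handle this by quoting the version of these facts valid for arbitrary essential, non-elementary actions on irreducible finite-dimensional complexes — the same level of generality already used elsewhere in the paper — and then the convergence statement is a soft consequence of the strong-separation gate estimate recalled just before Lemma~\ref{infinite Gromov product}. If one prefers to avoid even naming "North--South dynamics," an equivalent bookkeeping argument works directly with ultrafilters: for each $x_i\ne g^-$ there is $N_i$ with $g^{N_i}\mf{h}\in\s_{x_i}$, hence $\mf{h}\in\s_{g^{-N_i}x_i}$, and for $m>\max N_i$ one checks $g^m x_i\in\overline{g^{m-N_i}\mf{h}}\cu\overline{g\mf{h}}\subsetneq\overline{\mf{k}}\subsetneq\overline{\mf{h}}$, while choosing $\mf{h}$ deep enough in $g$'s axis (replace $g$ by a power, using that $\bigcap g^n\mf{h}=\{g^+\}$ and $g^+\notin\mc{F}$) forces $\overline{\mf{h}}\cap\mc{F}=\emptyset$; this makes the disjointness completely explicit.
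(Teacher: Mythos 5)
Your route is viable but genuinely different from the paper's, and noticeably heavier. The paper's proof is soft: since the action is non-elementary, the orbit $\G\cdot x$ of any $x\in\mc{F}$ is infinite (this is the one non-formal input, handled by citing Proposition~5.2 of \cite{Fio2} together with the absence of finite orbits in the visual boundary); picking $y\in(\G\cdot x)\setminus\mc{F}$ and a strongly separated chain converging to $y$ (Lemma~\ref{single point in ss chain}), finiteness of $\mc{F}$ gives a halfspace $\mf{h}$ in that chain with $\mc{F}\cu\mf{h}^*$, and the Flipping Lemma of \cite{CS} then produces $g\in\G$ with $g\mf{h}^*\cap\mf{h}^*=\emptyset$, hence $g\mc{F}\cap\mc{F}=\emptyset$. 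You instead manufacture a contracting element by double skewering a strongly separated chain (which exists because $\mc{F}\cu\partial_{\rm reg}X$, the empty case being trivial) and then run its North--South dynamics on $\overline X$; this is essentially Proposition~\ref{g+-}, which appears later but does not rely on the present lemma, so there is no circularity, and since $\mc{F}$ is finite you only need pointwise convergence, not the uniform statement you invoke. Your final ultrafilter bookkeeping (replacing $\mf{h}$ by $g^M\mf{h}$ so that $\overline{\mf{h}}\cap\mc{F}=\emptyset$) is correct once $g^{\pm}\notin\mc{F}$.

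The one genuine gap is the step you dismiss in passing: ``non-elementarity guarantees the $\G$--orbit of the pair $(g^-,g^+)$ is infinite.'' Non-elementarity is defined via the visual compactification $X\cup\partial_{\infty}X$, whereas $g^{\pm}$ live in the Roller boundary, so infinitude of their orbits is not formal; one needs the bridge from regular Roller points to visual boundary points (a finite orbit of a regular point would yield a finite orbit in $\partial_{\infty}X$), which is exactly what the paper imports from Proposition~5.2 of \cite{Fio2} at the analogous moment. Your fallback (``infinitely many strongly separated axes with distinct endpoints'') would need an argument of the same nature. Once this is supplied your proof goes through; note, though, that it consumes the Double Skewering Lemma and the contraction dynamics where the published argument needs only the Flipping Lemma, so the paper's proof is both shorter and closer to the minimal hypotheses of the statement.
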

\begin{proof}
The orbit $\G\cdot x$ is infinite for every $x\in\mc{F}$; this follows for instance from Proposition~5.2 in \cite{Fio2} and the fact that $\G$ has no finite orbits in the visual boundary of $X$. Lemma~\ref{single point in ss chain} shows that any strongly separated chain containing a point $y\in (\G\cdot x)\setminus\mc{F}$ must include a halfspace $\mf{h}$ with $\mc{F}\cu\mf{h}^*$. The Flipping Lemma of \cite{CS} now yields $g\in \G$ with $g\mf{h}^*\cap\mf{h}^*=\emptyset$, so in particular $g\mc{F}$ and $\mc{F}$ are disjoint.
\end{proof}

\begin{rmk}\label{regular is concise}
Lemma~5.14 in \cite{Fernos-Lecureux-Matheus} shows that the median $m(x,y,z)$ lies in $X$ whenever the three points $z\in\overline X$ and $x,y\in\partial_{\rm reg} X$ are pairwise distinct. In particular, $\partial_{\rm reg}X$ is a concise subset of $\partial X$.
\end{rmk}

\begin{lem}\label{ample sets}
Let $\G\acts X$ be an essential non-elementary action. Consider a nonempty, $\G$--invariant subset $\mc{A}\cu\partial_{\rm reg}X$.
\begin{enumerate}
\item For every halfspace $\mf{h}$, we have $\mf{h}\cap\mc{A}\neq\emptyset$.
\item If $X$ is sector-heavy, the set $\mc{A}$ is ample.
\end{enumerate}
\end{lem}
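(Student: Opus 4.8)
\textbf{Proof plan for Lemma~\ref{ample sets}.}

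The plan is to deduce part~(1) from the Flipping Lemma together with Lemma~\ref{single point in ss chain}, and then to bootstrap part~(2) from part~(1) using the sector-heavy hypothesis and an induction on the number of pairwise-transverse halfspaces. For part~(1), fix a halfspace $\mf{h}$. Since $\mc{A}$ is nonempty, pick $x\in\mc{A}\cu\partial_{\rm reg}X$ and choose a strongly separated chain $\mf{k}_0\supsetneq\mf{k}_1\supsetneq\dots$ in $\s_x$. After throwing away finitely many terms I may assume either $\mf{k}_0\cu\mf{h}$, $\mf{k}_0\cu\mf{h}^*$, or $\mf{k}_0$ is transverse to $\mf{h}$; in fact, because consecutive complements are strongly separated, at most one term of the chain can be transverse to (or contain on the wrong side) the fixed hyperplane bounding $\mf{h}$, so some tail of the chain lies entirely inside $\mf{h}$ or entirely inside $\mf{h}^*$. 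In the first case we are done since $x\in\overline{\mf{k}_n}\cu\overline{\mf{h}}$ already gives a regular point of $\mc{A}$ in $\mf{h}$. In the second case, the chain tail gives a halfspace $\mf{j}\cu\mf{h}^*$; the Flipping Lemma of \cite{CS} (as used in Lemma~\ref{disjoining sets}) produces $g\in\G$ with $g\mf{j}^*\cap\mf{j}^*=\emptyset$, i.e.\ $g\mf{j}\cu\mf{j}\cu\mf{h}^*$ is false — rather $g\mf{j}\supsetneq\mf{h}\supsetneq\dots$; more precisely $\mf{h}\cu g\mf{j}$, hence $gx\in\overline{g\mf{j}}\cu\overline{\mf{h}}$, and $gx\in\mc{A}$ by $\G$--invariance. (If $X$ is not already assumed essential I invoke Lemma~\ref{minimal vs essential} or note that essentiality is part of the standing hypotheses being used, matching the phrasing ``essential non-elementary action''.)

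For part~(2), I must show that for every $k\geq 1$ and every choice of pairwise transverse halfspaces $\mf{h}_1,\dots,\mf{h}_k$ the intersection $\mc{A}\cap\overline{\mf{h}_1}\cap\dots\cap\overline{\mf{h}_k}$ is nonempty. The case $k=1$ is exactly part~(1). For $k\geq 2$, the sector-heavy hypothesis (Definition~\ref{sector-heavy defn}) provides a halfspace $\mf{k}\cu\mf{h}_1\cap\dots\cap\mf{h}_k$. Then $\overline{\mf{k}}\cu\overline{\mf{h}_1}\cap\dots\cap\overline{\mf{h}_k}$ (the extension to $\overline X$ is monotone under inclusion of halfspaces, as recorded in the preliminaries), so it suffices to find a point of $\mc{A}$ inside $\overline{\mf{k}}$ — but that is precisely what part~(1) gives, applied to the halfspace $\mf{k}$. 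Hence $\mc{A}\cap\overline{\mf{h}_1}\cap\dots\cap\overline{\mf{h}_k}\supseteq\mc{A}\cap\overline{\mf{k}}\neq\emptyset$, and $\mc{A}$ is ample.

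The only genuinely delicate point is the case analysis in part~(1): one must argue carefully that a strongly separated chain, after discarding finitely many terms, has a tail contained in one side of the fixed hyperplane bounding $\mf{h}$. This uses that no hyperplane can be transverse to two strongly separated halfspaces, so the hyperplane of $\mf{h}$ is transverse to at most one $\mf{k}_n$; and that the chain $\mf{k}_0\supsetneq\mf{k}_1\supsetneq\dots$ eventually ``descends past'' $\mf{h}$ in the sense that it cannot keep straddling it. I would phrase this via the point $x$ itself: since $x\in\overline X$ lies either in $\overline{\mf{h}}$ or in $\overline{\mf{h}^*}$, and $x\in\bigcap_n\overline{\mf{k}_n}$ by Lemma~\ref{single point in ss chain}, if $x\in\overline{\mf{h}}$ we are immediately done with $g=1$; if $x\in\overline{\mf{h}^*}$ then $\mf{h}^*\in\s_x$, so $\mf{h}^*$ is nested with or transverse to each $\mf{k}_n$, and nestedness with all but at most one forces some tail $\mf{k}_n\cu\mf{h}^*$ (the alternative $\mf{h}^*\cu\mf{k}_n$ for infinitely many $n$ would contradict that the $\mf{k}_n$ shrink to the single point $x\in\overline{\mf{h}^*}$ with $\mf{h}^*$ having nonempty complement). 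From there the Flipping Lemma argument above applies verbatim. Everything else is a direct appeal to the cited lemmas, so no long computation is needed.
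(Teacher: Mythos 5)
Your reduction of part~(2) to part~(1) via sector-heaviness is exactly the paper's argument, and your tail analysis of the strongly separated chain in part~(1) (the hyperplane bounding $\mf{h}$ is transverse to at most one $\mf{k}_n$, and cannot contain a side of that hyperplane for infinitely many $n$, so some tail of the chain is nested into one side of $\mf{h}$) is correct and matches what the paper asserts. However, there is a genuine gap at the decisive dynamical step in the case $x\in\overline{\mf{h}^*}$. You apply the Flipping Lemma to $\mf{j}=\mf{k}_n\cu\mf{h}^*$, obtaining $g\in\G$ with $g\mf{j}^*\cap\mf{j}^*=\emptyset$, i.e.\ $g\mf{j}^*\cu\mf{j}$, hence $\mf{h}\cu\mf{j}^*\cu g\mf{j}$. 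From $\mf{h}\cu g\mf{j}$ you then conclude ``$gx\in\overline{g\mf{j}}\cu\overline{\mf{h}}$'', but this reverses the inclusion: what actually follows is $\overline{\mf{h}}\cu\overline{g\mf{j}}$, so knowing $gx\in\overline{g\mf{j}}$ gives no information about membership in $\overline{\mf{h}}$. Worse, this choice of $g$ compresses $\mf{j}^*$ into $\mf{j}\cu\mf{h}^*$, i.e.\ it pushes points deeper into $\mf{h}^*$; for instance a flipping element for $\mf{j}$ can be taken with attracting fixed point inside $\overline{\mf{j}}$, and if $x$ happens to be that fixed point then $gx=x\in\overline{\mf{h}^*}$. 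So the argument as written does not produce a point of $\mc{A}$ in $\mf{h}$.

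The repair is short and is essentially what the paper does. Either apply the Flipping Lemma to $\mf{h}$ itself: essentiality and non-elementarity give $g\in\G$ with $g\mf{h}^*\cap\mf{h}^*=\emptyset$, i.e.\ $g\mf{h}^*\cu\mf{h}$, and then $x\in\overline{\mf{h}^*}$ yields $gx\in\overline{g\mf{h}^*}\cu\overline{\mf{h}}$ (on this route the chain is not even needed for the flip). Or, as in the paper's proof, combine the Flipping and Double Skewering Lemmas to produce $g$ with $g\mf{k}_n\cu\mf{h}$ for the tail halfspace $\mf{k}_n\cu\mf{h}^*$, whence $gx\in\overline{g\mf{k}_n}\cu\overline{\mf{h}}$. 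The essential point you are missing is that one must move the halfspace containing $x$ \emph{into} $\mf{h}$, rather than flip it within $\mf{h}^*$. Everything else in your proposal (the case $x\in\overline{\mf{h}}$, the use of $\G$--invariance of $\mc{A}$, and the deduction of ampleness from sector-heaviness) is fine.
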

\begin{proof}
Part~(2) immediately follows from part~(1), so let us only prove the latter. Pick $x\in\mc{A}$ and a strongly separated chain $\mf{k}_0\supsetneq\mf{k}_1\supsetneq ...$ containing $x$. The hyperplane bounding $\mf{h}$ must be contained in $\mf{k}_n^*$ for all sufficiently large values of $n$. We thus have either $x\in\mf{k}_n\cu\mf{h}$ or $\mf{k}_n\cap\mf{h}=\emptyset$ for some $n\geq 0$. In the latter case, the Flipping and Double Skewering Lemmas of \cite{CS} yield an element $g\in \G$ with $g\mf{k}_n\cu\mf{h}$. In particular, $gx\in\mf{h}\cap \mc{A}$.
\end{proof}

\subsection{$\CAT$ cuboid complexes.}\label{cuboid section}

A \emph{cuboid complex} is made up of Euclidean cuboids $[0,d_1]\x ...\x [0,d_k]$ (aka \emph{orthotopes}, for instance in \cite{BCV}) with arbitrary edge lengths, rather than unit Euclidean cubes $[0,1]^k$. Glueing maps are still required to be isometries of faces. In order to make things slightly more precise, we consider the following construction.

Let $X$ be a $\CAT$ cube complex. Every function $\mu\colon\mscr{W}(X)\ra\R_{>0}$ determines a \emph{weighted combinatorial metric} $d_\mu$ on $X$. This is given by:
\[d_{\mu}(v,w)=\sum_{\mf{w}\in\mscr{W}(v|w)}\mu(\mf{w})\]
for all vertices $v,w\in X$. The usual combinatorial metric $d$ then arises from the function $\mu_1$ that assigns the value $+1$ to each hyperplane.

\begin{defn}
A \emph{$\CAT$ cuboid complex} $\mbb{X}$ is any metric cell complex $(X,d_{\mu})$ arising from this construction.
\end{defn}

Whenever dealing with cuboid complexes --- that is, mostly here and in Sections~\ref{ext partial iso sect} and~\ref{cptf sect} --- we forsake our custom of implicitly requiring points to be vertices. Note that the combinatorial metric $d_{\mu}$ can be extended outside of the $0$--skeleton, so that every closed $k$--cell in $\mbb{X}$ is isometric to a cuboid $[0,d_1]\x ...\x [0,d_k]$ endowed with the restriction of the $\ell^1$ metric on $\R^k$. It is useful to observe that this gives $\mbb{X}$ a structure of \emph{median space} \cite{Fio1}.

\begin{rmk}
As the name suggests, a canonical $\CAT$ metric can also be constructed on $\mbb{X}$. It suffices to instead endow each cuboid with its Euclidean metric and then consider the induced path metric. This metric, however, will be of no use to us in this paper.
\end{rmk}

We say that $\CAT$ cuboid complexes $\mbb{X}=(X,d_{\mu})$ and $\mbb{Y}=(Y,d_{\nu})$ are \emph{isomorphic} if there exists an isometric cellular isomorphism $f\colon\mbb{X}\ra\mbb{Y}$. In other words, $f\colon X\ra Y$ is an isomorphism of $\CAT$ cube complexes inducing a map $f_*\colon\mscr{W}(X)\ra\mscr{W}(Y)$ such that $\mu=\nu\o f_*$. 

Note however that there can be isometries $\mbb{X}\ra\mbb{Y}$ that do not preserve the cellular structures. For instance, consider the cuboid complex $\mbb{X}'$ arising from the barycentric subdivision $X'$; edges of $X'$ are assigned half the length of the corresponding edges of $X$. The identity map $\mbb{X}\ra\mbb{X}'$ is then a (surjective) isometry, but never an isomorphism.

Let $\Aut(\mbb{X})\leq\Aut(X)$ denote the group of automorphisms of $\mbb{X}$. All group actions $\G\acts\mbb{X}$ will be assumed to be by automorphisms, i.e.\ to arise from a homomorphism $\G\ra\Aut(\mbb{X})$. This is the same as specifying an action $\G\acts X$ leaving the function $\mu$ invariant.

All results in the present paper equally apply to $\CAT$ cuboid complexes $\mbb{X}$ and actions on $\mbb{X}$ by automorphisms. With the exception of the discussion in Section~\ref{ext partial iso sect}, all proofs immediately generalise to the cuboidal context simply by performing the following adaptations.
\begin{enumerate}
\item Given a subset $\mc{U}\cu\mscr{W}(\mbb{X})=\mscr{W}(X)$, the \emph{cardinality} $\#\mc{U}$ should always be replaced by the \emph{weight} $\sum_{\mf{w}\in\mc{U}}\mu(\mf{w})$.
\item The cross ratio $\Cr$ does not take values in $\Z\cup\{\pm\infty\}$, but rather in $M\cup\{\pm\infty\}$, where $M$ is the $\Z$--module generated by the image of the map $\mu$. Similar observations apply to $\crt$ (Definition~\ref{crt defn}) and length functions.
\end{enumerate}

The proofs of Theorem~\ref{ext partial iso thm} and Proposition~\ref{cptf main} in Sections~\ref{ext partial iso sect} and~\ref{cptf sect}, respectively, will require a few more notions, which we introduce here. 

Every edge $e\cu \mbb{X}$ gives rise to a gate-projection $\pi_e\colon \mbb{X}\ra e$. We stress that $\pi_e$ is now defined on the entire $\mbb{X}$ and not just on its $0$--skeleton. A \emph{median halfspace} of $\mbb{X}$ is any subset $\mf{h}\cu \mbb{X}$ arising as $\mf{h}=\pi_e^{-1}(J)$ for some edge $e\cu \mbb{X}$ and some open sub-interval $J\cu e$ containing exactly one endpoint of $e$. We say that $\mf{h}$ is of \emph{type~2} if $J$ is exactly $e$ minus one endpoint; otherwise, $\mf{h}$ is of \emph{type~1}. We denote the collection of all median halfspaces by $\mf{H}(\mbb{X})$.

Every median halfspace is an open, convex subset of $\mbb{X}$ with convex complement. Given $\mf{w}\in\mscr{W}(\mbb{X})$, edges crossing $\mf{w}$ all give rise to the same family of median halfspaces, which we denote by $\mf{H}(\mf{w})$. We say that these median halfspaces are \emph{subordinate to $\mf{w}$}. 

Note that $\mf{H}(\mf{w})$ is naturally in bijection with two copies of any edge transverse to $\mf{w}$. We can use this map to define a measure $\nu_{\mf{w}}$ on $\mf{H}(\mf{w})$ with total mass $2\cdot\mu(\mf{w})$. These measures piece together to yield a measure $\nu$ on the entire $\mf{H}(\mf{w})$. Given points $x,y\in\mbb{\mbb{X}}$, we then have $d_{\mu}(x,y)=\nu(\mf{H}(x|y))$, where $\mf{H}(x|y)$ denotes the set of median halfspaces containing $y$ but not $x$.

Given $x\in \mbb{X}$, we denote by $\mf{H}_x$ the collection of median halfspaces that contain $x$ in their frontier. If $x$ is a vertex, elements of $\mf{H}_x$ naturally correspond to edges incident to $x$; more precisely, for every $\mf{w}\in\mscr{W}_x$, there exists a unique element of $\mf{H}_x$ subordinate to $\mf{w}$ and it is of type~2. 

More generally, let $c\cu \mbb{X}$ be the cube containing $x$ in its interior. For every hyperplane $\mf{w}$ cutting the cube $c$, there are two disjoint elements of $\mf{H}_x$ subordinate to $\mf{w}$; they are both of type~1. For every other hyperplane $\mf{u}$ containing $c$ in its carrier, there exists exactly one element of $\mf{H}_x$ subordinate to $\mf{u}$; this is of type~2. Every element of $\mf{H}_x$ arises in one of these two ways.

Finally, let us consider a geodesic segment $\g\cu \mbb{X}$. We say that $\g$ is an \emph{edge parallel} if all median halfspaces containing exactly one endpoint of $\g$ are subordinate to the same hyperplane of $\mbb{X}$. Equivalently, $\g$ is contained in a cuboid $c$ and is obtained by letting only one coordinate of $c$ vary.

\addtocontents{toc}{\protect\setcounter{tocdepth}{2}}
\section{The cross ratio on the Roller boundary.} \label{crossratio section}

Let $\Om$ be a set and $\mscr{A}\cu\Om^4$ a subset closed under permutations of the four coordinates. We say that a map $\B\colon\mscr{A}\ra\R\cup\{\pm\infty\}$ is an \emph{(abstract) cross ratio} on $\Om$ if it satisfies the following conditions for all $4$--tuples $(x,y,z,w)$, $(x,y,z,t)$ and $(x,y,t,w)$ lying in $\mscr{A}$:
\begin{enumerate}
\item[(i)] $\B(x,y,z,w)=-\B(y,x,z,w)$;
\item[(ii)] $\B(x,y,z,w)=\B(z,w,x,y)$;
\item[(iii)] $\B(x,y,z,w)=\B(x,y,z,t)+\B(x,y,t,w)$;
\item[(iv)] $\B(x,y,z,w)+\B(y,z,x,w)+\B(z,x,y,w)=0$.
\end{enumerate}
This definition should be compared e.g.\ with those in \cite{Otal-IberoAm,Hamenstaedt-ErgodTh,Labourie-IM}.

Let now $X$ be a $\CAT$ cube complex. Given a basepoint $v\in X$, we consider the subset $\mscr{A}\cu(\overline X)^4$ of $4$--tuples $(x,y,z,w)$ such that at most one of the three quantities $(x\cdot y)_v+(z\cdot w)_v$, $(x\cdot z)_v+(y\cdot w)_v$ and $(x\cdot w)_v+(y\cdot z)_v$ is infinite. By Lemma~\ref{infinite Gromov product}, the set $\mscr{A}$ does not depend on $v$ and, by Lemma~\ref{GP continuous}, it is an open subset of $(\overline X)^4$. Note moreover that, by Remark~\ref{regular is concise}, every $4$--tuple of pairwise distinct points of $\partial_{\rm reg}X$ lies in $\mscr{A}$.

It is easy to check that the map $\Cr_v\colon\mscr{A}\ra\Z\cup\{\pm\infty\}$ defined by 
\[\Cr_v(x,y,z,w)=(x\cdot z)_v+(y\cdot w)_v-(x\cdot w)_v-(y\cdot z)_v\]
is well-defined and a cross ratio on $\overline X$. When $x$, $y$, $z$ and $w$ lie in $X$, we have 
\[\Cr_v(x,y,z,w)=d(x,w)+d(y,z)-d(x,z)-d(y,w),\]
which shows independence of the choice of the basepoint $v$ in this case. In general, basepoint-independence follows from the following result, which we obtained in previous work with Incerti-Medici (Proposition~3.2 in \cite{BFI-new}).

\begin{prop}\label{independent of basepoint}
For every $v\in X$ and every $(x,y,z,w)\in\mscr{A}$, we have
\[\Cr_v(x,y,z,w)=\#\mscr{W}(x,z|y,w)-\#\mscr{W}(x,w|y,z).\]
\end{prop}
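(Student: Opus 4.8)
The statement is essentially the compatibility of two formulas for $\Cr_v$: the ``Gromov product'' definition and the ``hyperplane counting'' expression. The plan is to reduce the identity for $4$-tuples in $\mscr{A}$ to the already-known identity on $X$ by an approximation/exhaustion argument, being careful about which terms are finite. First I would record the finite case: when $x,y,z,w\in X$ the identity follows from $d(u,v)=\#\mscr{W}(u|v)$ together with a double-counting of hyperplanes, exactly as stated in the excerpt right before the proposition. So the content is the extension to boundary points.

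The key step is to handle the Gromov products at infinity term by term. Fix $v\in X$. For any two points $a,b\in\overline X$, by Lemma~\ref{infinite Gromov product} we have $(a\cdot b)_v=\#\mscr{W}(v|a,b)<+\infty$ precisely when $I(a,b)$ meets $X$, and in that case $m(v,a,b)\in X$. The defining condition for $\mscr{A}$ guarantees that at least two of the three sums $(x\cdot y)_v+(z\cdot w)_v$, $(x\cdot z)_v+(y\cdot w)_v$, $(x\cdot w)_v+(y\cdot z)_v$ are finite; since $\Cr_v(x,y,z,w)$ only involves the last two, I would split into cases according to which of $(x\cdot z)_v, (y\cdot w)_v, (x\cdot w)_v, (y\cdot z)_v$ are infinite. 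If all four are finite, set $p=m(v,x,z)$, $q=m(v,y,w)$, $p'=m(v,x,w)$, $q'=m(v,y,z)$, all vertices of $X$, and observe $\mscr{W}(v|x,z)=\mscr{W}(v|p)$ etc.; likewise the sets $\mscr{W}(x,z|y,w)$ and $\mscr{W}(x,w|y,z)$ consist of hyperplanes that, being disjoint from the relevant intervals on one side, are detected already by their traces relative to suitable gates. Concretely, a hyperplane $\mf{w}$ separates $\{x,z\}$ from $\{y,w\}$ iff it separates $p$ from $q$ and does not lie in $\mscr{W}(v|p)\cap\mscr{W}(v|q)$ in the appropriate sense; translating everything into finite hyperplane sets reduces the claim to an identity among cardinalities of finite sets, which is the $X$-case applied to $p,q,p',q'$ after bookkeeping. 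The cleanest route is: choose a basepoint and note that $\Cr_v$ is a cross ratio (properties (i)--(iv)), and that both sides satisfy the cocycle identity (iii) in each variable; hence it suffices to prove the identity ``one boundary point at a time,'' i.e. assuming $x,y,z\in X$ and only $w\in\partial X$, then iterating.

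So the concrete reduction I would carry out is: write $w$ as $r^+$ for a combinatorial ray $r$ based at some vertex, and consider the vertices $r(n)$. For fixed $x,y,z\in X$, Proposition~\ref{independent of basepoint} holds with $r(n)$ in place of $w$ (finite case). Now let $n\to\infty$: the Gromov products $(x\cdot r(n))_v$, $(y\cdot r(n))_v$, $(z\cdot r(n))_v$ stabilize to $(x\cdot w)_v$, $(y\cdot w)_v$, $(z\cdot w)_v$ by Lemma~\ref{GP continuous} (or directly, since for a ray these are eventually constant), and similarly the finite sets $\mscr{W}(x,z|y,r(n))$ and $\mscr{W}(x,r(n)|y,z)$ stabilize to $\mscr{W}(x,z|y,w)$ and $\mscr{W}(x,w|y,z)$ because a hyperplane separates from $r(n)$ for large $n$ iff it separates from $w=r^+$. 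Passing to the limit gives the identity for $x,y,z\in X$, $w\in\partial X$. Iterating this one coordinate at a time (using the permutation symmetries (i),(ii) to move whichever point is being pushed to infinity into the last slot, and the $\mscr{A}$-condition to ensure no forbidden pair of infinities arises at intermediate stages) yields the full statement. The main obstacle is exactly this bookkeeping: making sure that when several of the points lie at infinity, the relevant hyperplane sets $\mscr{W}(x,z|y,w)$ and $\mscr{W}(x,w|y,z)$ remain finite (which the $\mscr{A}$-hypothesis guarantees) and that the stabilization of finite hyperplane sets along approximating rays is simultaneous for all the points being sent to infinity. This is where one must argue carefully that, e.g., if $w=r^+$ then $\mf{w}\in\mscr{W}(x,z|y,w)$ iff $w\in\overline{\mf{w}}$ on the correct side and $\mf{w}$ separates $x,z$ from $y$, and that all but finitely many hyperplanes crossed by $r$ are irrelevant because they are not in $\mscr{W}(x,z|y,\cdot)$ for $x,z,y\in X$ fixed.
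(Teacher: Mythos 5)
The paper does not actually prove this proposition here: it is quoted from Proposition~3.2 of \cite{BFI-new}, and the proof there (as the surrounding text suggests) is a direct double count: partition $\mscr{W}(X)$ according to the subset $S\subseteq\{x,y,z,w\}$ of points that a hyperplane separates from $v$, observe that a hyperplane contributes nonzero to $(x\cdot z)_v+(y\cdot w)_v-(x\cdot w)_v-(y\cdot z)_v$ exactly when $S\in\{\{x,z\},\{y,w\},\{x,w\},\{y,z\}\}$, with the same contribution it makes to $\#\mscr{W}(x,z|y,w)-\#\mscr{W}(x,w|y,z)$, and use the $\mscr{A}$--condition to justify the bookkeeping when one of the two groups is infinite. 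This works for all of $\mscr{A}$ at once, with no approximation.

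Your approximation scheme can probably be pushed through, but as written it has a genuine gap precisely at the step you yourself flag as the main obstacle. First, your claim that the $\mscr{A}$--hypothesis guarantees that $\mscr{W}(x,z|y,w)$ and $\mscr{W}(x,w|y,z)$ are finite is false: $\mscr{A}$ only rules out that \emph{both} of the sums $(x\cdot z)_v+(y\cdot w)_v$ and $(x\cdot w)_v+(y\cdot z)_v$ are infinite, and the proposition explicitly covers tuples with $\Cr_v(x,y,z,w)=\pm\infty$, where one of the two hyperplane sets is infinite; your limit argument (which needs both sides of the inductive identity to converge to the correct extended values) is not addressed in that case. Second, once two or three of the points lie in $\partial X$, the "stabilization" you invoke is no longer a statement about finitely many candidate hyperplanes: $\mscr{W}(x,z|y,r(n))$ is a subset of the possibly infinite set $\mscr{W}(x,z|y)$, and its symmetric difference with $\mscr{W}(x,z|y,w)$ sits inside $\mscr{W}(r(n)|w)$ intersected with sets that need not be finite a priori; similarly $(y\cdot r(n))_v\to(y\cdot w)_v$ for $y\in\partial X$ is not "eventually constant" and Lemma~\ref{GP continuous} is not available since the proposition carries no local finiteness hypothesis. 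These convergences do hold, but only after an actual argument — for instance, basing the ray $r$ at $v$ itself, so that every hyperplane in the symmetric differences separates $v$ from two of the remaining points and is therefore controlled by one of the Gromov products that the $\mscr{A}$--condition forces to be finite — and none of this is carried out in your sketch. Given that the pointwise hyperplane count proves the statement in a few lines with no limits at all, I would recommend abandoning the ray approximation, or at least supplying the missing convergence lemma (with the ray based at $v$) and the infinite-value case explicitly.
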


\begin{defn}
We will thus simply write $\Cr\colon\mscr{A}\ra\Z\cup\{\pm\infty\}$ from now on and refer to it as \emph{the cross ratio} on $\overline X$ (or $\partial X$).
\end{defn}

Despite taking values in a discrete set, the cross ratio $\Cr$ is continuous; note in this regard that $\overline X$ is totally disconnected. Endowing $\mscr{A}\cu (\overline X)^4$ with the subspace topology, this follows from Lemma~\ref{GP continuous}:

\begin{prop}\label{continuity}
If $X$ is locally finite, the cross ratio $\Cr$ is continuous.
\end{prop}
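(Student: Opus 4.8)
The statement is that, for locally finite $X$, the cross ratio $\Cr\colon\mscr{A}\ra\Z\cup\{\pm\infty\}$ is continuous on $\mscr{A}\cu(\overline X)^4$ with the subspace topology. The plan is to reduce everything to Lemma~\ref{GP continuous}, which already gives continuity of the Gromov product $(-\cdot-)_v\colon\overline X^2\ra\N\cup\{+\infty\}$ for a fixed vertex $v$.

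\textbf{Main step.} Fix a basepoint $v\in X$ and recall the formula
\[\Cr_v(x,y,z,w)=(x\cdot z)_v+(y\cdot w)_v-(x\cdot w)_v-(y\cdot z)_v,\]
valid on $\mscr{A}$, where by definition of $\mscr{A}$ at most one of the three sums $(x\cdot y)_v+(z\cdot w)_v$, $(x\cdot z)_v+(y\cdot w)_v$, $(x\cdot w)_v+(y\cdot z)_v$ is infinite. Each of the four maps $(x,y,z,w)\mapsto(x\cdot z)_v$, etc., is continuous $\overline X^4\ra\N\cup\{+\infty\}$ by Lemma~\ref{GP continuous} composed with the (continuous) coordinate projections $\overline X^4\ra\overline X^2$. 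So I would argue continuity of $\Cr$ pointwise on $\mscr{A}$, splitting into two cases according to whether the value at the point is finite or infinite.

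\textbf{Finite case.} Suppose $(x,y,z,w)\in\mscr{A}$ with $\Cr(x,y,z,w)$ finite. By definition of $\mscr{A}$ and Lemma~\ref{infinite Gromov product}(2), this forces \emph{all four} of $(x\cdot z)_v,(y\cdot w)_v,(x\cdot w)_v,(y\cdot z)_v$ to be finite: indeed if, say, $(x\cdot z)_v=+\infty$ then also $(x\cdot z)_v+(y\cdot w)_v=+\infty$, and since at most one of the three sums is infinite the other two, in particular $(x\cdot w)_v+(y\cdot z)_v$, are finite, so $(x\cdot w)_v$ and $(y\cdot z)_v$ are finite and $\Cr=+\infty$, contradiction; similarly for the other variables. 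Now by Lemma~\ref{GP continuous}, each of the four Gromov products is \emph{locally constant} near a point where it takes a finite value (the proof of Lemma~\ref{GP continuous} exhibits an explicit open neighbourhood on which it is constant). Hence on a common open neighbourhood $U$ of $(x,y,z,w)$ in $\overline X^4$, all four Gromov products are constant, so $U\cap\mscr{A}$ is an open neighbourhood on which $\Cr$ is constant, hence continuous there.

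\textbf{Infinite case.} Suppose $(x,y,z,w)\in\mscr{A}$ with $\Cr(x,y,z,w)=+\infty$ (the case $-\infty$ is symmetric, by property~(i), swapping $x\leftrightarrow y$ or using $\Cr(x,y,z,w)=-\Cr(x,y,w,z)$). Then exactly one of the two sums $(x\cdot z)_v+(y\cdot w)_v$ and $(x\cdot w)_v+(y\cdot z)_v$ is $+\infty$ and the other finite; say the first is $+\infty$ and the second finite. So $\Cr=\big((x\cdot z)_v+(y\cdot w)_v\big)-c$ with $c:=(x\cdot w)_v+(y\cdot z)_v<\infty$. By the finite-case reasoning, $(x\cdot w)_v$ and $(y\cdot z)_v$ are individually finite and locally constant, so near $(x,y,z,w)$ the quantity $(x'\cdot w')_v+(y'\cdot z')_v$ stays equal to $c$. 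On the other hand, by continuity of the (extended-valued) maps $(x,y,z,w)\mapsto(x\cdot z)_v$ and $\mapsto(y\cdot w)_v$ into $\N\cup\{+\infty\}$, given any $N$ there is a neighbourhood on which $(x'\cdot z')_v+(y'\cdot w')_v\ge N$ — concretely, pick finite subsets of $\mscr{W}(v|x,z)$ and $\mscr{W}(v|y,w)$ of total size $N$ and use the basic open sets (as in the first paragraph of the proof of Lemma~\ref{GP continuous}) to get a neighbourhood where both Gromov products are bounded below accordingly. On the intersection of these neighbourhoods (intersected with $\mscr{A}$), we have $\Cr\ge N-c$, so $\Cr(x',y',z',w')\to+\infty$, giving continuity at $(x,y,z,w)$.

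\textbf{Main obstacle.} The only genuinely delicate point is bookkeeping the combinatorics of $\mscr{A}$: one must check, in both cases, that the hypothesis "$(x,y,z,w)\in\mscr{A}$ with $\Cr$ finite/infinite of a given sign" pins down exactly which of the individual Gromov products are finite, so that the dichotomy (finite $\Rightarrow$ locally constant, infinite $\Rightarrow$ diverges) of Lemma~\ref{GP continuous} can be applied coordinate by coordinate. The topological input is entirely contained in Lemma~\ref{GP continuous} and the continuity of the coordinate projections $\overline X^4\ra\overline X^2$; no further idea is needed, just care with signs and with the definition of $\mscr{A}$.
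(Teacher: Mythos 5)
Your proof is correct and follows the same route as the paper, which deduces Proposition~\ref{continuity} directly from Lemma~\ref{GP continuous} via the basepoint formula $\Cr_v(x,y,z,w)=(x\cdot z)_v+(y\cdot w)_v-(x\cdot w)_v-(y\cdot z)_v$. Your finite/infinite case bookkeeping (finite Gromov products are locally constant, infinite sums are forced to diverge using the basic open sets of ultrafilters containing a finite set of halfspaces) is exactly the argument the paper leaves implicit.
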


It is often useful to simultaneously record information about the cross ratios of all $4$--tuples obtained by permuting the coordinates of $(x,y,z,w)$. This is the purpose of Definition~\ref{crt defn} below. Note that identities~(i) and~(ii) imply that $|\Cr(x,y,z,w)|$ is invariant under a subgroup of order $8$ of the symmetric group on $4$ elements. Thus, we only need to record $24/8=3$ `meaningful' values for every subset $\{x,y,z,w\}$. These values are precisely the three cross ratios appearing in identity~(iv), so they are not independent. 

Given $a_1,b_1,c_1,a_2,b_2,c_2\in\N\cup\{+\infty\}$, we say that the triples $(a_1,b_1,c_1)$ and $(a_2,b_2,c_2)$ are \emph{equivalent} if there exists $n\geq 0$ such that 
\[a_i=a_j+n;~~~~b_i=b_j+n;~~~~c_i=c_j+n,\] 
where $\{i,j\}=\{1,2\}$. We denote the equivalence class of the triple $(a,b,c)$ by $\ll a:b:c\rr$. Note that $\ll+\infty:+\infty:+\infty\rr$ is the only class consisting of a single triple; every other equivalence class has a unique representative with at least one zero entry. We also remark that all the triples in a given equivalence class have the same infinite entries.

\begin{defn}\label{crt defn}
Given $x,y,z,w\in\overline X$ and $v\in X$, the \emph{cross ratio triple} $\crt_v(x,y,z,w)$ is the equivalence class
\[\Big\ll(x\cdot y)_v+(z\cdot w)_v:(x\cdot z)_v+(y\cdot w)_v:(x\cdot w)_v+(y\cdot z)_v\Big\rr.\]
This is the same as $\ll\#\mscr{W}(x,y|z,w):\#\mscr{W}(x,z|y,w):\#\mscr{W}(x,w|y,z)\rr$ when $(x,y,z,w)\in\mscr{A}$, as a consequence of Proposition~\ref{independent of basepoint}.
\end{defn}

Note that $\crt_v$ is always independent of the choice of $v$. This follows from Proposition~\ref{independent of basepoint} when $(x,y,z,w)\in\mscr{A}$ and is clear otherwise. We are therefore allowed to simply write $\crt$.

We remark that all entries of a cross ratio triple are nonnegative. Cross ratios are recovered by taking the difference of two entries of the triple. 

Let now $Y$ be another $\CAT$ cube complex. We write $\mscr{A}(X)$ rather than just $\mscr{A}$ when it is necessary to specify the cube complex under consideration. By analogy with the context of $\CA$ spaces, we give the following:

\begin{defn}
Let $\mc{A}\cu\partial X$ and $\mc{B}\cu\partial Y$ be subsets. A map $f\colon \mc{A}\ra \mc{B}$ is \emph{M\"obius} if ${f^4(\mscr{A}(X)\cap \mc{A}^4)\cu\mscr{A}(Y)}$ and, for each $(x,y,z,w)\in\mscr{A}(X)\cap \mc{A}^4$, we have 
\[\Cr(f(x),f(y),f(z),f(w))=\Cr(x,y,z,w).\] 
The latter happens if and only if $\crt(f(x),f(y),f(z),f(w))=\crt(x,y,z,w)$ for all $4$--tuples ${(x,y,z,w)\in\mscr{A}(X)\cap \mc{A}^4}$.
\end{defn}

A bijection $f\colon \mc{A}\ra \mc{B}$ is M\"obius if and only if its inverse $f^{-1}\colon \mc{B}\ra \mc{A}$ is.

\section{From length spectra to M\"obius maps.}\label{MLSR section}

This section is entirely devoted to the proof of Theorem~\ref{from ell to moeb}. 

In Section~\ref{sc sect}, we introduce \emph{neatly contracting isometries} --- a particular class of elements in the automorphism group of an irreducible $\CAT$ cube complex. Their length functions are related to cross ratios via Theorem~\ref{cr from ell}, which will act as our bridge between length spectra and M\"obius maps. 

The latter maps will be constructed in Section~\ref{proving MLSR}, relying on a random-walk argument of \cite{Fernos-Lecureux-Matheus}, in order to show that neatly contracting isometries are always `sufficiently abundant'.

\subsection{Neatly contracting isometries.}\label{sc sect}

In this subsection:

\begin{ass}
Let $X$ be an irreducible, locally finite $\CAT$ cube complex. We allow $X$ to be infinite dimensional.
\end{ass}

\begin{defn}\label{neatly contracting defn}
A cubical automorphism $g\in\Aut(X)$ is \emph{neatly contracting} if there exist halfspaces $\mf{h}_1$ and $\mf{h}_2$ such that $g\mf{h}_1\cu\mf{h}_2\cu\mf{h}_1$ and both pairs $(\mf{h}_2,\mf{h}_1^*)$ and $(g\mf{h}_1,\mf{h}_2^*)$ are strongly separated.
\end{defn}

As motivation for our terminology, observe that neatly contracting automorphisms are contracting isometries (Lemma~6.2 in \cite{CS}). 

\begin{rmk}\label{SWI and NT}
Let $\mf{h}_1$, $\mf{h}_2$ and $g$ be as in Definition~\ref{neatly contracting defn}. Given any $\mf{w}\in\mscr{W}(X)$, strong separation yields $n\in\Z$ and $i\in\{1,2\}$ such that $\mf{w}\cu g^n\mf{h}_i\cap g^{n+1}\mf{h}_i^*$. In particular, for every $k\in\Z\setminus\{0\}$, the hyperplanes $\mf{w}$ and $g^k\mf{w}$ are distinct and not transverse. This shows that $g$ acts stably without inversions and non-transversely, as defined in Section~\ref{automorphisms section}.
\end{rmk}

\begin{prop}\label{g+-}
Let $g\in\Aut(X)$ be neatly contracting.
\begin{enumerate}
\item The automorphism $g$ fixes exactly two points $g^{\pm}\in\partial_{\rm reg}X$. 
\item If $x\in\overline{X}\setminus\{g^{\pm}\}$, we have $g^nx\ra g^+$ and $g^{-n}x\ra g^-$ as $n\ra+\infty$. 
\item If $\mf{j}_1,\mf{j}_2\in\mscr{H}(g^-|g^+)$, there exists $N$ such that $g^n\mf{j}_1\cu\mf{j}_2$ for $n\geq N$.
\end{enumerate}
\end{prop}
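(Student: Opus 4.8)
The plan is to extract from Definition~\ref{neatly contracting defn} a nested, strongly separated chain on which $g$ acts by shifting, and then to read off all three claims from the basic theory of strongly separated chains (Lemmas~\ref{single point in ss chain} and~\ref{basics on bridges}). First I would observe that the hypotheses $g\mf{h}_1\cu\mf{h}_2\cu\mf{h}_1$ give $g^n\mf{h}_1\cu g^n\mf{h}_2\cu g^{n-1}\mf{h}_1$ for all $n$, so the halfspaces $\dots\supsetneq g^{-1}\mf{h}_1\supsetneq g^{-1}\mf{h}_2\supsetneq\mf{h}_1\supsetneq\mf{h}_2\supsetneq g\mf{h}_1\supsetneq g\mf{h}_2\supsetneq\dots$ form a bi-infinite descending chain. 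Strong separation of $(\mf{h}_2,\mf{h}_1^*)$ and of $(g\mf{h}_1,\mf{h}_2^*)$, translated by powers of $g$, shows that consecutive (or near-consecutive) complementary pairs in this chain are strongly separated; in particular the sub-chain $(g^n\mf{h}_1)_{n\geq 0}$ is a strongly separated chain in the sense of Definition~\ref{regular defn}, and likewise $(g^{-n}\mf{h}_1^*)_{n\geq 0}$. By Lemma~\ref{single point in ss chain} there is a unique point $g^+\in\overline X$ lying in every $g^n\mf{h}_1$, and it is regular; applying the lemma to the reversed chain of complements gives a unique regular point $g^-$ lying in every $g^{-n}\mf{h}_1^*$. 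Since $g$ permutes the chain by a shift, $g$ fixes both $g^+$ and $g^-$, and $g^+\neq g^-$ because $\mf{h}_1$ separates them.

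For part~(2): given $x\in\overline X\setminus\{g^\pm\}$, I would argue that $x\notin g^n\mf{h}_1$ for some $n$ (otherwise $x=g^+$ by uniqueness), hence $x\in g^n\mf{h}_1^*$; applying $g^k$ and using $g^{k+n}\mf{h}_1\subsetneq g^n\mf{h}_1$ together with strong separation, the ultrafilter $\s_{g^kx}$ contains $g^{k+n}\mf{h}_1$ for all large $k$ — indeed one checks $g^k x\in g^{k-1}\mf{h}_1$ once $k$ is large enough using the strong-separation "one point in the bridge" property: $m(g^kx, v, g^+) $ stabilizes. More cleanly: any cylinder neighbourhood of $g^+$ is determined by finitely many halfspaces $\mf{k}_1,\dots,\mf{k}_r$ with $g^+\in\bigcap\overline{\mf{k}_i}$; each $\mf{k}_i$ contains some $g^{m}\mf{h}_1$, and for $n$ large $g^n x\in g^{m}\mf{h}_1\cu\mf{k}_i$, so $g^nx$ eventually enters every such neighbourhood, i.e. $g^nx\to g^+$. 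The statement $g^{-n}x\to g^-$ is identical with $g$ replaced by $g^{-1}$ (note $(g^{-1})^\pm=g^\mp$).

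For part~(3): let $\mf{j}_1,\mf{j}_2\in\mscr{H}(g^-|g^+)$, i.e. $g^+\in\overline{\mf{j}_i}$ and $g^-\in\overline{\mf{j}_i^*}$. Then the hyperplane bounding $\mf{j}_2$ separates $g^-$ from $g^+$, so it is crossed by any line from $g^-$ to $g^+$, in particular by the axis whose halfspaces are the $g^n\mf{h}_1$; hence $g^m\mf{h}_1\cu\mf{j}_2$ for all $m$ sufficiently large. Since $g^+\in\overline{\mf{j}_1}$, part~(2) applied to a point $x\in\partial X\cap\mf{j}_1^*$ — or directly the convergence $g^n\mf{j}_1\to$ "$g^+$" in the appropriate sense — shows $g^n\mf{j}_1$ eventually lies inside any fixed halfspace containing $g^+$ in its closure; concretely, pick $m$ with $g^m\mf{h}_1\cu\mf{j}_2$ and note that by the Double Skewering behaviour of the nested chain, $g^n\mf{j}_1\cu g^m\mf{h}_1$ once $n\geq N$ for suitable $N$ (because $\mf{j}_1\supsetneq$ some $g^\ell\mf{h}_1^*$-complement forces $g^n\mf{j}_1$ to shrink past $g^m\mf{h}_1$). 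Combining, $g^n\mf{j}_1\cu\mf{j}_2$ for $n\geq N$.

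\medskip
\noindent\textbf{Main obstacle.} The delicate point is making the convergence arguments in~(2) and~(3) rigorous without circularity: the cleanest route is to first nail down that the cylinder sets $\{z: g^m\mf{h}_1\in\s_z\}$ form a neighbourhood basis at $g^+$ — which uses strong separation to control \emph{all} hyperplanes near $g^+$, via the fact (noted after Lemma~\ref{single point in ss chain} / in the strong-separation discussion) that for strongly separated $\mf{h},\mf{k}$ the bridge collapses to a point and every hyperplane lies on one "side" of the chain. Once that neighbourhood-basis fact is established, parts~(2) and~(3) are formal; getting that fact stated precisely is where the real work lies. I would likely isolate it as a short preliminary claim within the proof.
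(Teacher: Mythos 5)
Your proposal is correct and follows essentially the same route as the paper: the fixed points come from the shifted strongly separated chain via Lemma~\ref{single point in ss chain}, and the dynamics in~(2) and~(3) reduce to the fact that every hyperplane of $X$ is confined to a single slab $g^n\mf{h}_i\cap g^{n+1}\mf{h}_i^*$ of the chain --- the preliminary claim you flag as the main obstacle is precisely Remark~\ref{SWI and NT}, which the paper establishes beforehand and then uses to prove~(3) first (getting the two-sided bounds $\mf{j}_1\cu g^{n_1}\mf{h}_{i_1}$ and $g^{n_2+1}\mf{h}_{i_2}\cu\mf{j}_2$), deducing~(2) and the uniqueness in~(1) from it. Two small points to tighten: the parenthetical in your part~(3) has the containment backwards, since $\mf{j}_1$ containing some $g^{\ell}\mf{h}_1$ does not bound $\mf{j}_1$ from above --- what you need is $\mf{j}_1\cu g^{n_1}\mf{h}_{i_1}$, i.e.\ the slab fact applied to the hyperplane of $\mf{j}_1$ (equivalently, your neighbourhood-basis claim applied at $g^-$ to $\mf{j}_1^*$) --- and the word ``exactly'' in~(1) should be explicitly noted to follow from~(2), since a third fixed point would be a constant sequence converging to $g^+$.
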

\begin{proof}
Let $\mf{h}_1$ and $\mf{h}_2$ be halfspaces as in Definition~\ref{neatly contracting defn}. By Lemma~\ref{single point in ss chain}, there exists a unique point $g^+$ lying in $g^n\mf{h}_1$ for all $n\geq 0$. Similarly, there exists a unique point $g^-$ lying in $g^n\mf{h}_1^*$ for all $n\leq 0$. It is clear that the points $g^{\pm}\in\partial_{\rm reg}X$ are fixed by $g$; their uniqueness will follow from part~$(2)$. Given $x\in\overline{X}\setminus\{g^{\pm}\}$, we must have $x\in g^n\mf{h}_1\cap g^{n+1}\mf{h}_1^*$ for some $n\in\Z$; hence part~$(2)$ in turn follows from part~$(3)$.

Let thus $\mf{w}_1$ and $\mf{w}_2$ be the hyperplanes determined by $\mf{j}_1$ and $\mf{j}_2$. By Remark~\ref{SWI and NT}, there exist integers $n_1,n_2\in\Z$ and indices $i_1,i_2\in\{1,2\}$ such that $\mf{w}_1\cu g^{n_1}\mf{h}_{i_1}\cap g^{n_1+1}\mf{h}_{i_1}^*$ and $\mf{w}_2\cu g^{n_2}\mf{h}_{i_2}\cap g^{n_2+1}\mf{h}_{i_2}^*$. Since $\mf{j}_1$ and $\mf{j}_2$ lie in $\mscr{H}(g^-|g^+)$, we have $\mf{j}_1\cu g^{n_1}\mf{h}_{i_1}$ and $g^{n_2+1}\mf{h}_{i_2}\cu\mf{j}_2$. Hence $g^n\mf{j}_1\cu\mf{j}_2$ for all $n\geq n_2-n_1+2$.
\end{proof}

The length function $\ell_X\colon\Aut(X)\ra\N$ was defined in the introduction.

\begin{prop}\label{axes}
Let $g\in\Aut(X)$ be neatly contracting. A vertex $v\in X$ lies in the interval $I(g^-,g^+)$ if and only if $d(v,gv)=\ell_X(g)$.
\end{prop}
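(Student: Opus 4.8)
The plan is to establish the two implications separately, using the fact (Remark~\ref{SWI and NT} and Proposition~\ref{Haglund FFT}) that $g$ acts stably without inversions and non-transversely, so that ${\rm Min}(g)$ is a nonempty convex subcomplex and a vertex $v$ realizes $d(v,gv)=\ell_X(g)$ if and only if $v\in{\rm Min}(g)$. Thus the statement is equivalent to the assertion that ${\rm Min}(g)=I(g^-,g^+)\cap X$.

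First I would show $I(g^-,g^+)\cap X\cu{\rm Min}(g)$. Fix a vertex $v\in I(g^-,g^+)$. I claim $\mscr{W}(v|gv)=\mscr{W}(g^-|g^+)$; granting this, $d(v,gv)=\#\mscr{W}(g^-|g^+)$ is a value independent of $v$, and the infimum of $d(v,gv)$ over $v\in X$ is attained (e.g.\ on the axis whose existence follows from Proposition~\ref{Haglund FFT}(1), or directly: any axis $\g$ has all its vertices in $I(g^-,g^+)$ since $g^{\pm}$ are its endpoints at infinity by Proposition~\ref{g+-}(2)), hence equals $\ell_X(g)$, so $v\in{\rm Min}(g)$. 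To prove the claim, take $\mf{w}\in\mscr{W}(v|gv)$ and let $\mf{h}$ be the side with $v\in\mf{h}$, $gv\notin\mf{h}$. Since $v\in I(g^-,g^+)$, the ultrafilter $\s_v$ contains exactly one of $g^-,g^+$ for each hyperplane separating them; more precisely $\mscr{H}(g^-|g^+)\cap\s_v$ together with $v$ pins down which side. I would argue: by Proposition~\ref{g+-}(2), $g^nv\to g^+$ and $g^{-n}v\to g^-$, so the chain $\dots\supsetneq g\mf{h}\supsetneq\dots$ obtained from $\mf{w}$ (using Remark~\ref{SWI and NT}, $g^k\mf{w}$ and $\mf{w}$ are never transverse, and since $\mf{w}$ separates $v$ from $gv$ one gets a nested bi-infinite chain $g^{n}\mf{h}$) is strongly separated in a way that forces $g^+\in\bigcap_{n}g^{-n}\mf{h}$-type side and $g^-$ in the opposite side; hence $\mf{w}\in\mscr{W}(g^-|g^+)$. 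Conversely, if $\mf{w}\in\mscr{W}(g^-|g^+)$ then $v\in I(g^-,g^+)$ forces $\mf{w}$ to separate $v$ from exactly one of $g^{\pm}$, and applying $g$ and using that $g$ fixes $g^{\pm}$, one checks $\mf{w}\in\mscr{W}(v|gv)$ as well (this is where I would use Proposition~\ref{g+-}(3) to control how $\mscr{H}(g^-|g^+)$ behaves under $g$, together with the non-transversality so that $g\mf{w}\neq\mf{w}$ and $g\mf{w}$, $\mf{w}$ are nested). So $\mscr{W}(v|gv)=\mscr{W}(g^-|g^+)$.

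For the reverse inclusion ${\rm Min}(g)\cu I(g^-,g^+)$, suppose $v\in X$ with $d(v,gv)=\ell_X(g)$ but $v\notin I(g^-,g^+)$. Then there is a halfspace $\mf{k}$ with $g^-,g^+\in\overline{\mf{k}}$ but $v\notin\mf{k}$ (i.e.\ $\mf{k}\notin\mscr{W}(g^-|g^+)$ and $v$ lies on the "wrong" side relative to both boundary points). Using Proposition~\ref{g+-}(2), $g^nv\to g^+\in\overline{\mf{k}}$, so $g^nv\in\mf{k}$ for all large $n$; similarly $g^{-n}v\in\mf{k}$ for all large $n$. Hence both $\mf{w}(\mf{k})$ and (after pulling back by powers of $g$) its $g$-translates separate $v$ from some $g^Nv$; counting, $d(v,g^Nv)$ picks up the hyperplanes $\mf{k},g\mf{k},\dots$ that do not all lie in $\mscr{W}(g^-|g^+)$, which by Proposition~\ref{Haglund FFT}(2) gives $\ell_X(g)=\tfrac1N d(v,g^Nv)>\#\mscr{W}(g^-|g^+)=\ell_X(g)$ (the last equality from the first part, applied to a vertex in $I(g^-,g^+)$, which is nonempty since it contains an axis), a contradiction. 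I would streamline this by instead gate-projecting $v$ onto the convex set $\overline{I(g^-,g^+)}$: the projection $v_0=\pi_{I(g^-,g^+)}(v)$ satisfies $d(v_0,gv_0)\leq d(v,gv)$ with equality iff $v\in I(g^-,g^+)$ (since $g$ preserves $I(g^-,g^+)$ and projection is $1$-Lipschitz, and strict inequality holds whenever $\mscr{W}(v|I(g^-,g^+))\neq\emptyset$ because some such hyperplane, being outside $\mscr{W}(g^-|g^+)$, is flipped across by a power of $g$ by Proposition~\ref{g+-}(2)), so minimality of $d(v,gv)$ forces $v=v_0\in I(g^-,g^+)$.

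The main obstacle I anticipate is the clean verification of the identity $\mscr{W}(v|gv)=\mscr{W}(g^-|g^+)$ for $v\in I(g^-,g^+)$: one must carefully use strong separation (Definition~\ref{neatly contracting defn}) to see that every hyperplane separating $v$ from $gv$ embeds into a bi-infinite strongly separated chain with $g^-$ and $g^+$ on opposite sides, and conversely that $g$ maps the "positive" part of $\mscr{H}(g^-|g^+)\cap\s_v$ properly inside itself, so that no hyperplane of $\mscr{W}(g^-|g^+)$ is fixed or survives on the same side under $g$. The non-transversality of the $g$-action (Remark~\ref{SWI and NT}) and Proposition~\ref{g+-}(3) are exactly the tools that make this bookkeeping go through, and once it is in place both inclusions are short.
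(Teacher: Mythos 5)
Your reduction to the identity ${\rm Min}(g)=I(g^-,g^+)\cap X$ via Remark~\ref{SWI and NT} and Proposition~\ref{Haglund FFT} matches the paper, but the pivotal claim in your forward implication is false: for $v\in I(g^-,g^+)$ you assert $\mscr{W}(v|gv)=\mscr{W}(g^-|g^+)$ and deduce $d(v,gv)=\#\mscr{W}(g^-|g^+)$. The set $\mscr{W}(g^-|g^+)$ is nonempty, $\langle g\rangle$--invariant, and by Remark~\ref{SWI and NT} every $\langle g\rangle$--orbit in it is infinite, so $\#\mscr{W}(g^-|g^+)=+\infty$, whereas $\#\mscr{W}(v|gv)=d(v,gv)<+\infty$. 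Hence the ``converse'' half of your claim cannot hold, and with it the conclusions ``$d(v,gv)$ is independent of $v$'' and ``$\#\mscr{W}(g^-|g^+)=\ell_X(g)$'' (used again in your counting argument for the reverse inclusion) collapse; the forward implication is simply not established. What is true and easy is only the inclusion $\mscr{W}(v|gv)\cu\mscr{W}(g^-|g^+)$ for $v\in I(g^-,g^+)$. A repair along your lines would need the further statement that \emph{every} $\langle g\rangle$--orbit in $\mscr{W}(g^-|g^+)$ meets $\mscr{W}(v|gv)$ in exactly one hyperplane, for every vertex $v$: using Remark~\ref{SWI and NT} and Proposition~\ref{g+-}(3), the translates $g^n\mf{h}$ of any $\mf{h}\in\mscr{H}(g^-|g^+)$ form a strictly decreasing chain with $\bigcap_n g^n\mf{h}\cap X=\emptyset$ and $\bigcap_n g^{-n}\mf{h}^*\cap X=\emptyset$, so each vertex lies in exactly one slab $g^{n}\mf{h}\setminus g^{n+1}\mf{h}$. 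This gives $d(v,gv)=\#\big(\mscr{W}(g^-|g^+)/\langle g\rangle\big)\leq d(w,gw)$ for all $w\in X$, hence $d(v,gv)=\ell_X(g)$; none of this bookkeeping appears in your write-up. Likewise, in the reverse direction the strict inequality $d(v_0,gv_0)<d(v,gv)$ for $v\notin I(g^-,g^+)$ is asserted rather than proved: Proposition~\ref{g+-}(2) only shows that a hyperplane in $\mscr{W}(v|I(g^-,g^+))$ eventually separates $v$ from $g^Nv$, not from $gv$; to conclude one should either pass to $g^N$ and use $d(v,g^Nv)=N\cdot\ell_X(g)$ from Proposition~\ref{Haglund FFT}(2)--(3), or observe that otherwise $g$ would permute the finite set $\mscr{W}(v|I(g^-,g^+))$, contradicting Remark~\ref{SWI and NT}.

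For comparison, the paper's argument avoids all hyperplane counting. After the same reduction to ${\rm Min}(g)$, it notes that a halfspace containing a semi-axis of $g$ contains $g^+$ or $g^-$, so a halfspace contains one (equivalently, every) axis if and only if it contains both $g^{\pm}$; since ${\rm Min}(g)$ is the union of the axes and is convex by Proposition~\ref{Haglund FFT}(4), it equals the intersection of the halfspaces containing $\{g^{\pm}\}$, which is $\hull(\{g^{\pm}\})\cap X=I(g^-,g^+)\cap X$. That convexity-of-${\rm Min}(g)$ route gives both inclusions at once and is the cleaner model to follow here.
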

\begin{proof}
By Remark~\ref{SWI and NT}, the automorphism $g$ acts stably without inversions and non-transversely. Proposition~\ref{Haglund FFT} then shows that $d(v,gv)=\ell_X(g)$ holds if and only if $v$ lies in the convex subcomplex ${\rm Min}(g)\cu X$.

If a halfspace $\mf{h}$ contains a semi-axis of $g$, then it also contains either $g^+$ or $g^-$. Hence $\mf{h}$ contains an entire axis of $g$ if and only if it contains both $g^-$ and $g^+$ and this happens if and only if $\mf{h}$ contains \emph{all} axes of $g$. On the one hand, the intersection of all such halfspaces must coincide with the convex set ${\rm Min}(g)$; on the other, it must be $\hull(\{g^{\pm}\})\cap X=I(g^-,g^+)\cap X$.
\end{proof}

\begin{prop}\label{g^nh^n}
Let $g$ and $h$ be neatly contracting automorphisms with $\#\{g^{\pm},h^{\pm}\}=4$. For all sufficiently large values of $n>0$, the automorphism $g^nh^n$ is neatly contracting. Furthermore, we have $(g^nh^n)^+\ra g^+$ and $(g^nh^n)^-\ra h^-$ for $n\ra +\infty$.
\end{prop}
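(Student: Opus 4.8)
The plan is to produce explicit ``ping-pong'' halfspaces for $g^nh^n$ directly from the halfspaces provided by Definition~\ref{neatly contracting defn} for $g$ and for $h$. First I would fix halfspaces $\mf{a}_1\supseteq\mf{a}_2\supseteq g\mf{a}_1$ witnessing that $g$ is neatly contracting (with $(\mf{a}_2,\mf{a}_1^*)$ and $(g\mf{a}_1,\mf{a}_2^*)$ strongly separated), and similarly $\mf{b}_1\supseteq\mf{b}_2\supseteq h\mf{b}_1$ for $h$. Since the four fixed points $g^{\pm},h^{\pm}$ are distinct, I can use Proposition~\ref{g+-}(2) to shrink these halfspaces (replacing $\mf{a}_i$ by a large positive $g$--translate, $\mf{b}_i$ by a large negative $h$--translate, etc.) so that the four ``ends'' are pairwise nested and disjoint in the right pattern: concretely, I want $\mf{a}_1\supseteq\mf{a}_2\supseteq g\mf{a}_1$ with $g^+\in g\mf{a}_1$, and after enough negative $h$--iteration $\mf{b}_1^*$ with $h^-\in\mf{b}_1^*$, arranged so that $g\mf{a}_1$, $\mf{a}_1^*$, $h\mf{b}_1$, $\mf{b}_1^*$ are pairwise disjoint and each pair among the relevant ones is strongly separated. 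The key technical point, used repeatedly, is Proposition~\ref{g+-}(3) together with Remark~\ref{SWI and NT}: strong separation is preserved under passing to halfspaces nested between strongly separated ones, and every hyperplane is ``eventually swallowed'' by iterates of a neatly contracting automorphism.

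Next I would define, for large $n$, the candidate halfspaces $\mf{h}_1:=g^n\mf{a}_1$ (or a slight variant nested inside it) and $\mf{h}_2:=g^n h^n\mf{a}_1$-type expression, and verify the three required inclusions/separations for $g^nh^n$. The idea is: $h^n$ maps $\mf{a}_1$ (which for large $n$ sits inside $\mf{b}_2\cap(\text{stuff})$) deep into $h\mf{b}_1$, and then $g^n$ maps that deep into $g\mf{a}_1$; so $g^nh^n\mf{a}_1$ is nested very deeply inside $\mf{a}_1$. Choosing an intermediate halfspace between $g^nh^n\mf{a}_1$ and $\mf{a}_1$ — for instance $g^n\mf{a}_2$ works, since $g^nh^n\mf{a}_1\subseteq g^n(h\mf{b}_1)\subseteq g^n\mf{a}_2\subseteq g^n\mf{a}_1\subseteq \mf{a}_1$ once the preliminary nesting is set up — one checks the two strong-separation conditions: $(g^n\mf{a}_2,\mf{a}_1^*)$ is strongly separated because it is ``between'' $(g^n\mf{a}_2, g^n\mf{a}_1^*)$... actually one needs $(g^n\mf{a}_2, \mf{a}_1^*)$, which follows since no hyperplane transverse to $\mf{a}_1^*$ can reach $g^n\mf{a}_2$ for large $n$ by the eventual-swallowing property; and $(g^nh^n\mf{a}_1, (g^n\mf{a}_2)^*)=(g^n h^n\mf{a}_1, g^n\mf{a}_2^*)$ is strongly separated because it is the $g^n$--image of $(h^n\mf{a}_1,\mf{a}_2^*)$, and $h^n\mf{a}_1\subseteq h\mf{b}_1$ with $(h\mf{b}_1,\mf{b}_2^*)$ strongly separated plus $\mf{b}_2^*\supseteq$ a translate dominating $\mf{a}_2^*$. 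I would organize these verifications as a short lemma: \emph{if $\mf{p}\supseteq\mf{q}\supseteq\mf{r}$ with $(\mf{q},\mf{p}^*)$ and $(\mf{r},\mf{q}^*)$ strongly separated, and $\mf{p}'\supseteq\mf{p}$, $\mf{r}'\subseteq\mf{r}$ with the obvious extra strong-separation hypotheses, then the same pattern holds for $\mf{p}'\supseteq\mf{q}\supseteq\mf{r}'$}, which reduces everything to bookkeeping.

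Finally, for the convergence statement, I would use Proposition~\ref{g+-}(1)--(2) applied to $g^nh^n$: its attracting fixed point $(g^nh^n)^+$ is the unique point in $\bigcap_k (g^nh^n)^k\mf{h}_1$, so in particular $(g^nh^n)^+\in\mf{h}_1=g^n\mf{a}_1$ (up to the variant), and as $n\to\infty$ these halfspaces form a descending chain whose intersection is $\{g^+\}$ by the definition of $g^+$; hence $(g^nh^n)^+\to g^+$ by compactness of $\overline X$ and the fact that the $g^n\mf{a}_1$ form a neighbourhood basis of $g^+$ among halfspace-closures. Symmetrically $(g^nh^n)^-$ lies in $(g^nh^n\mf{a}_1)^*$-type ends accumulating to $h^-$: more precisely $(g^nh^n)^-\in\bigcap_k (g^nh^n)^{-k}\mf{h}_1^*$, and one checks $\mf{h}_1^*$ (the complement of $g^nh^n\mf{a}_1$, essentially) is contained in an $h^n$--preimage end shrinking to $h^-$. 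The main obstacle I anticipate is precisely the combinatorial bookkeeping in the middle paragraph — getting all the nesting and strong-separation relations among the eight-or-so halfspaces to line up simultaneously, uniformly for all large $n$ — rather than any conceptual difficulty; the eventual-swallowing property (Remark~\ref{SWI and NT}, Proposition~\ref{g+-}(3)) and the stability of strong separation under nesting are the only real tools needed, and the argument is essentially a standard north–south/ping-pong dynamics computation adapted to the pocset language.
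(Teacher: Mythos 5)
Your overall strategy (halfspace ping-pong via the eventual-swallowing property of Proposition~\ref{g+-}(3), stability of strong separation under nesting, and reading off the convergence of $(g^nh^n)^{\pm}$ from the halfspaces in which the construction of Proposition~\ref{g+-} places them) is the same as the paper's, but the concrete configuration you propose cannot be arranged in general, and this is not mere bookkeeping. Your chain $g^nh^n\mf{a}_1\cu g^n(h\mf{b}_1)\cu g^n\mf{a}_2\cu g^n\mf{a}_1\cu\mf{a}_1$ needs two things at once: $h\mf{b}_1\cu\mf{a}_2$, which forces $h^+\in\mf{a}_2\cu\mf{a}_1$ (since $h^+\in h\mf{b}_1$ for every witness $\mf{b}_1$ of $h$), and $h^n\mf{a}_1\cu h\mf{b}_1$ for large $n$, which forces $h^-\notin\mf{a}_1$ (since $h^-$ is fixed by $h$ and lies in $\mf{b}_1^*$). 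So $\mf{a}_1$ must be a halfspace with $g\mf{a}_1\cu\mf{a}_2\cu\mf{a}_1$ that contains $h^+$ but not $h^-$, i.e.\ a neat-contraction witness for $g$ lying in $\mscr{H}(h^-|h^+)$. No amount of translating by powers of $g$ or $h$ produces such a halfspace in general: take $X$ a locally finite tree and $g,h$ hyperbolic with disjoint axes. Every halfspace $\mf{a}_1$ with $g\mf{a}_1\subsetneq\mf{a}_1$ is bounded by an edge crossed by the axis of $g$, while $h^+$ and $h^-$ both lie in the single branch hanging off that axis at the projection of the axis of $h$; hence every admissible $\mf{a}_1$ contains both of $h^{\pm}$ or neither, and your two requirements are incompatible. (Your preliminary goal of making $g\mf{a}_1$, $\mf{a}_1^*$, $h\mf{b}_1$, $\mf{b}_1^*$ pairwise disjoint fails for similar reasons: it would force halfspace neighbourhoods of both $h^+$ and $h^-$ into the single ``annulus'' $\mf{a}_1\cap(g\mf{a}_1)^*$, which is impossible, for instance, when the axes of $g$ and $h$ overlap along a segment longer than the translation length of $g$.)

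The missing idea is to decouple the ping-pong halfspaces from the defining witnesses of $g$ and $h$. The paper uses regularity of the four fixed points to pick four pairwise-disjoint halfspaces, one ``end'' at each of them: $\mf{h}_0\ni g^+$ and $\mf{k}_0\ni h^-$ coming from strongly separated chains, together with $\mf{j}\ni h^+$ and $\mf{m}\ni g^-$. The outer halfspace for $g^nh^n$ is then $\mf{k}_k^*$ --- the complement of an end at $h^-$ --- which automatically contains $h^+$ and omits $h^-$, so Proposition~\ref{g+-}(3) applies to it and to $\mf{m}^*$, giving $g^nh^n\mf{k}_k^*\cu g^n\mf{j}\cu g^n\mf{m}^*\cu\mf{h}_k$ for $n\geq n_k$; neat contraction and both convergence statements then follow essentially as you envisage, since $(g^nh^n)^+\in\mf{h}_k$ and $(g^nh^n)^-\in\mf{k}_k$. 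If you replace your $\mf{a}_1,\mf{a}_2$ by such a facing $4$--tuple of ends, your argument goes through; as written, the middle step fails.
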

\begin{proof}
Let $\mf{h}_0\supsetneq\mf{h}_1\supsetneq ... $ and $\mf{k}_0\supsetneq\mf{k}_1\supsetneq ... $ be strongly separated chains containing $g^+$ and $h^-$, respectively. Up to discarding finitely many halfspaces, we can assume that $\mf{h}_0\cu\mf{k}_0^*$ and that $g^-$ and $h^+$ lie in $\mf{h}_0^*\cap\mf{k}_0^*$. Since $h^+$ and $g^-$ are also contained in strongly separated chains, we can find halfspaces $\mf{j}\ni h^+$ and $\mf{m}\ni g^-$ such that $\mf{h}_0$, $\mf{k}_0$, $\mf{j}$ and $\mf{m}$ are pairwise disjoint.

Given $k\geq 0$, part~$(3)$ of Proposition~\ref{g+-} provides $n_k\geq 0$ such that $h^n\mf{k}_k^*\cu\mf{j}$ and $g^n\mf{m}^*\cu\mf{h}_k$ for all $n\geq n_k$. Observe that we then have:
\[g^nh^n\mf{k}_k^*\cu g^n\mf{j}\cu g^n\mf{m}^*\cu\mf{h}_k.\]
In particular, $g^nh^n$ is neatly contracting for all $n\geq n_1$. In light of the construction of $(g^nh^n)^{\pm}$ in Proposition~\ref{g+-}, this also shows that $(g^nh^n)^+$ lies in $\mf{h}_k$ and $(g^nh^n)^-$ lies in $\mf{k}_k$ for ${n\geq n_k}$. We conclude that $(g^nh^n)^+$ and $(g^nh^n)^-$ converge to $g^+$ and $h^-$, respectively, as $k$ goes to infinity.
\end{proof}

The following should be compared to Th\'eor\`eme~2.2 in \cite{Otal-IberoAm} and Theorem~1 in Section~5 of \cite{Kim-Top} --- both concerned with the context of simply connected, negatively curved Riemannian manifolds.

\begin{thm}\label{cr from ell}
Let $g$ and $h$ be neatly contracting automorphisms such that ${\#\{g^{\pm},h^{\pm}\}=4}$. Then there exists $N\geq 0$ such that every $n\geq N$ satisfies:
\[\Cr(g^-,h^-,g^+,h^+)=\ell_X(g^n)+\ell_X(h^n)-\ell_X(g^nh^n).\]
\end{thm}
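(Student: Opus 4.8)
The plan is to compute both sides of the claimed identity in terms of hyperplane sets and match them. For the left-hand side, Proposition~\ref{independent of basepoint} gives
\[\Cr(g^-,h^-,g^+,h^+)=\#\mscr{W}(g^-,g^+\mid h^-,h^+)-\#\mscr{W}(g^-,h^+\mid h^-,g^+),\]
and since all four points $g^{\pm},h^{\pm}$ are distinct regular points, Remark~\ref{regular is concise} ensures this $4$--tuple lies in $\mscr{A}$, so everything in sight is finite. For the right-hand side, I would first fix, for large $n$, a vertex $v_n\in I(g^-,g^+)$ with $d(v_n,g^nv_n)=\ell_X(g^n)$ and a vertex $w_n\in I(h^-,h^+)$ with $d(w_n,h^nw_n)=\ell_X(h^n)$, using Proposition~\ref{axes} (applied to the neatly contracting automorphisms $g^n$, $h^n$; note $(g^n)^{\pm}=g^{\pm}$ and likewise for $h$, by Proposition~\ref{g+-}(1) together with Proposition~\ref{Haglund FFT}(2)). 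Then $\ell_X(g^n)=\#\mscr{W}(v_n\mid g^nv_n)$, $\ell_X(h^n)=\#\mscr{W}(w_n\mid h^nw_n)$, and $\ell_X(g^nh^n)=d(p_n,g^nh^np_n)$ for a suitable vertex $p_n\in I((g^nh^n)^-,(g^nh^n)^+)$.

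The key geometric input is the strong-separation structure: by Remark~\ref{SWI and NT}, for a neatly contracting $g$ with halfspaces $\mf{h}_1\supseteq\mf{h}_2\supseteq g\mf{h}_1$, every hyperplane sits in exactly one ``slab'' $g^k\mf{h}_i\cap g^{k+1}\mf{h}_i^*$, and these slabs are linearly ordered along the axis $I(g^-,g^+)$. This lets me describe $\mscr{W}(v_n\mid g^nv_n)$ very concretely: it is the set of hyperplanes crossing the segment of the axis between $v_n$ and $g^nv_n$, which for large $n$ consists of a ``head'' near $g^-$, a long periodic middle part, and a ``tail'' near $g^+$. Now I use Proposition~\ref{g^nh^n}: for large $n$ the product $g^nh^n$ is neatly contracting with $(g^nh^n)^+\to g^+$ and $(g^nh^n)^-\to h^-$. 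The idea is that an axis of $g^nh^n$ can be taken to travel from near $h^-$, along (a translate of) the $h$--axis up to a point near where the four separating halfspaces $\mf{h}_0,\mf{k}_0,\mf{j},\mf{m}$ of the proof of Proposition~\ref{g^nh^n} lie, then cross over to (a translate of) the $g$--axis and travel to near $g^+$. Concretely, I would take $p_n$ to be a vertex in $I(h^-,g^+)$ lying in the bridge/shore region where the $g$-- and $h$--axes are closest, and verify $d(p_n,g^nh^np_n)=\ell_X(g^nh^n)$ for $n$ large using Proposition~\ref{axes} together with the fact that all relevant separating halfspaces become deeply nested.

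With these descriptions in hand, the computation becomes bookkeeping: $\mscr{W}(v_n\mid g^nv_n)$ and $\mscr{W}(w_n\mid h^nw_n)$ each consist of $n$ ``periods'' worth of hyperplanes plus bounded correction terms, and $\mscr{W}(p_n\mid g^nh^np_n)$ consists of the hyperplanes of $n$ periods of the $h$--part (those separating $h^-$ from the crossover region, i.e.\ essentially $\mscr{W}(w_n\mid h^nw_n)$ minus the hyperplanes separating the crossover region from $h^+$) followed by $n$ periods of the $g$--part (those separating the crossover region from $g^+$, essentially $\mscr{W}(v_n\mid g^nv_n)$ minus the hyperplanes separating $g^-$ from the crossover region). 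Taking the difference $\ell_X(g^n)+\ell_X(h^n)-\ell_X(g^nh^n)$, the $n$ periodic pieces cancel exactly, and what survives is precisely $\#\mscr{W}(g^-,g^+\mid h^-,h^+)-\#\mscr{W}(g^-,h^+\mid h^-,g^+)=\Cr(g^-,h^-,g^+,h^+)$ — the hyperplanes counted positively are those ``between the two axes'' (separating $\{g^-,g^+\}$ from $\{h^-,h^+\}$) and those counted negatively are the ones that got double-counted in the crossover. Once the cancellation stabilizes, the identity holds for all $n\ge N$; monotonicity/stability of these slab counts (Proposition~\ref{g+-}(3)) is what provides the uniform $N$.

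\textbf{Main obstacle.} I expect the crux to be the honest identification of an axis (or at least a minimal-displacement vertex) for $g^nh^n$ and the precise bookkeeping of which hyperplanes near the ``crossover region'' are counted by $\mscr{W}(p_n\mid g^nh^np_n)$ versus $\mscr{W}(v_n\mid g^nv_n)$ and $\mscr{W}(w_n\mid h^nw_n)$ — i.e.\ controlling the bounded error terms and showing they assemble into exactly the two cross-ratio hyperplane sets. The strong-separation hypotheses in Definition~\ref{neatly contracting defn} and the disjointness of $\mf{h}_0,\mf{k}_0,\mf{j},\mf{m}$ from the proof of Proposition~\ref{g^nh^n} are exactly what make this region ``rigid'' enough for the count to be clean, but setting it up carefully (choosing $v_n,w_n,p_n$ compatibly, and invoking Proposition~\ref{axes} three times) is where the real work lies.
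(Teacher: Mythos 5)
Your overall frame (reduce both sides to hyperplane counts along the axes, use Proposition~\ref{axes}, Proposition~\ref{g+-} and Proposition~\ref{g^nh^n}, and identify the surviving term with the cross ratio) points in the same direction as the paper, but the two steps you yourself flag as ``where the real work lies'' are exactly the steps that constitute the proof, and your plan for them is not carried out and would be genuinely hard to push through as described. First, you never establish that your vertex $p_n$, chosen ``in the bridge/shore region where the two axes are closest,'' actually lies in $I((g^nh^n)^-,(g^nh^n)^+)$; the endpoints $(g^nh^n)^\pm$ move with $n$ and only converge to $h^-$ and $g^+$, so membership in that interval needs an argument, not just ``deeply nested halfspaces.'' The paper sidesteps this entirely by a different choice: it takes $x_k$ and $y_k$ to be the gate points of consecutive strongly separated halfspaces $(\mf{h}_{-k},\mf{h}_{-k-1}^*)$ and $(\mf{k}_{-k},\mf{k}_{-k-1}^*)$ far out towards $g^-$ and $h^-$ (not in the crossover region at all); since $(g^nh^n)^\pm$ are trapped on opposite sides of the pair $(\mf{k}_{-k},\mf{k}_{-k-1}^*)$ for $n\geq n_k$, the single point $y_k$ lies in $I(h^-,h^+)\cap I((g^nh^n)^-,(g^nh^n)^+)$ and therefore realises both $\ell_X(h^n)$ and $\ell_X(g^nh^n)$ simultaneously via Proposition~\ref{axes}, with no need to understand an axis of $g^nh^n$.

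Second, the ``periodic pieces cancel exactly and what survives is the cross ratio'' claim is asserted, not proved, and in dimension $\geq 2$ it is not routine bookkeeping: minimal-displacement vertices are not unique (the minimal sets are convex subcomplexes, possibly with product directions), hyperplanes crossed by the $g$--segment can also separate points near the crossover, and your three choices $v_n,w_n,p_n$ must be made compatibly for any slab-by-slab cancellation to be exact at a finite stage. The paper replaces all of this with one median/gate identity: strong separation makes $x_k$ a gate, so $m(g^{-n}y_k,x_k,h^ny_k)=x_k$, hence $d(y_k,g^nh^ny_k)=d(y_k,g^nx_k)+d(x_k,h^ny_k)$, and the length combination becomes, on the nose, the cross ratio of the four \emph{finite} points $x_k,y_k,g^nx_k,h^ny_k$; the passage to $\Cr(g^-,h^-,g^+,h^+)$ is then done by continuity and discreteness of $\Cr$ (Proposition~\ref{continuity}), which your plan never invokes because it tries to get an exact count directly at the boundary. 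So the missing ingredients are concrete: a point whose displacement provably equals $\ell_X(g^nh^n)$ (supplied in the paper by the strongly separated bracketing of $(g^nh^n)^\pm$), and the gate/median identity plus continuity of $\Cr$ that replaces the slab bookkeeping. Without these, the proposal is an outline of the statement rather than a proof of it.
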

\begin{proof}
Let $\mf{h}_0\supsetneq\mf{h}_1\supsetneq ... $ and $\mf{h}_0^*\supsetneq\mf{h}_{-1}^*\supsetneq ... $ be strongly separated chains containing $g^+$ and $g^-$, respectively. Similarly define halfspaces $\mf{k}_n$ for $h^{\pm}$. We can assume that $\mf{h}_0^*$, $\mf{h}_1$, $\mf{k}_0^*$ and $\mf{k}_1$ are pairwise disjoint. 

Given $k\geq 1$, Proposition~\ref{g^nh^n} yields $n_k\geq 0$ such that $(g^nh^n)^-\in\mf{k}_{-k-1}^*$ and $(g^nh^n)^+\in\mf{h}_1$ for all $n\geq n_k$. Enlarging $n_k$ if necessary, part~$(3)$ of Proposition~\ref{g+-} allows us to moreover assume that $g^n\mf{h}_{-k-1}\cu\mf{h}_k$ and $h^n\mf{k}_{-k}\cu\mf{k}_k$ for all $n\geq n_k$. Let $x_k\in\mf{h}_{-k}$ be the only point in the bridge $B(\mf{h}_{-k},\mf{h}^*_{-k-1})$ and $y_k\in\mf{k}_{-k}$ the only point in the bridge $B(\mf{k}_{-k},\mf{k}^*_{-k-1})$. 

For $n\geq n_k$, the points $h^-$ and $(g^nh^n)^-$ lie in $\mf{k}^*_{-k-1}$, while $h^+$ and $(g^nh^n)^+$ lie in $\mf{k}_{-k}$. Thus, the point $y_k$ belongs to $I(h^-,h^+)\cap I((g^nh^n)^-,(g^nh^n)^+)$ and similarly $x_k\in I(g^-,g^+)$. Proposition~\ref{axes} now implies that we have $\ell_X(g^nh^n)=d(y_k,g^nh^ny_k)$, $\ell_X(h^n)=d(y_k,h^ny_k)$ and $\ell_X(g^n)=d(x_k,g^nx_k)$. 

Observe that $y_k\in\mf{k}_{-k+1}^*\cu\mf{k}_0^*\cu\mf{h}_1^*\cu\mf{h}_k^*$; hence $g^{-n}y_k\in g^{-n}\mf{h}_k^*\cu\mf{h}_{-k-1}^*$ since we are assuming that $g^n\mf{h}_{-k-1}\cu\mf{h}_k$. On the other hand, the point $h^ny_k$ lies in $h^n\mf{k}_{-k}\cu\mf{k}_k\cu\mf{k}_1\cu\mf{h}_0\cu\mf{h}_{-k}$. Since $x_k$ is a gate for $\mf{h}_{-k}$ and $\mf{h}^*_{-k-1}$, we conclude that $m(g^{-n}y_k,x_k,h^ny_k)=x_k$ for all $n\geq n_k$. This implies that $m(y_k,g^nx_k,g^nh^ny_k)=g^nx_k$. Hence, for all $n\geq n_k$:
\[d(y_k,g^nh^ny_k)=d(y_k,g^nx_k)+d(g^nx_k,g^nh^ny_k)=d(y_k,g^nx_k)+d(x_k,h^ny_k)\]
and we have:
\begin{multline*}
\ell_X(g^n)+\ell_X(h^n)-\ell_X(g^nh^n)=d(x_k,g^nx_k)+d(y_k,h^ny_k)-d(y_k,g^nh^ny_k)= \\
=d(x_k,g^nx_k)+d(y_k,h^ny_k)-d(y_k,g^nx_k)-d(x_k,h^ny_k)= \\
=\Cr(x_k,y_k,g^nx_k,h^ny_k).
\end{multline*}
Proposition~\ref{continuity} shows that $\Cr(x_k,y_k,g^nx_k,h^ny_k)=\Cr(g^-,h^-,g^+,h^+)$ for all sufficiently large $k\geq 0$ and every $n\geq n_k$. It follows that every sufficiently large $n\geq 0$ satisfies ${\Cr(g^-,h^-,g^+,h^+)=\ell_X(g^n)+\ell_X(h^n)-\ell_X(g^nh^n)}$.
\end{proof}

\subsection{Constructing a M\"obius map.}\label{proving MLSR}

Throughout this subsection:

\begin{ass}
Let $X$ and $Y$ be locally finite, finite dimensional, irreducible $\CAT$ cube complexes. Let a group $\G$ act $\ell^1$--minimally and non-elementarily on $X$ and $Y$; by Lemma~\ref{minimal vs essential}, the actions are also essential. 
\end{ass}

We start by finding neatly contracting automorphisms.

\begin{lem}\label{cNT RW}
There exists $g\in \G$ that simultaneously acts as a neatly contracting automorphism of $X$ and $Y$.
\end{lem}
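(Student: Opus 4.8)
The plan is to produce a single group element $g \in \Gamma$ whose action on both $X$ and $Y$ is neatly contracting, by first producing elements that are neatly contracting on each factor separately and then combining them using the product action $\Gamma \acts X \times Y$. First I would invoke the results of \cite{Fernos-Lecureux-Matheus} on random walks: since $\Gamma \acts X$ is essential and non-elementary (hence has no fixed point in the visual boundary and the ultrafilter dynamics are non-degenerate), a random walk on $\Gamma$ driven by a suitable (e.g.\ admissible, finitely-supported-enough) measure converges almost surely to a regular point of $\partial X$, and moreover the pair of boundary points $(\omega^-,\omega^+)$ obtained from the bilateral walk is almost surely spanned by a strongly separated chain on each side. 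In fact, what one extracts from \emph{op.\ cit.}\ is that the set of elements $g \in \Gamma$ that are \emph{contracting with strongly separated behaviour} --- precisely the neatly contracting elements of Definition~\ref{neatly contracting defn} --- has full measure for the $n$-step distribution for $n$ large, hence is in particular nonempty; the same holds for $\Gamma \acts Y$.

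The difficulty is that a neatly contracting element for $X$ need not be neatly contracting for $Y$, so a naive pick does not work. To fix this, I would run the argument for the diagonal action $\Gamma \acts X \times Y$. One checks that $X \times Y$ need not be irreducible, so Definition~\ref{neatly contracting defn} does not apply verbatim to $X \times Y$; instead I would argue directly with the projections. The key point is that the event ``$g$ is neatly contracting on $X$'' is, for the driving measure, an event of probability tending to $1$ as the number of steps grows (this is the quantitative content of the convergence-to-the-regular-boundary statement, together with the fact that strong separation of the limiting chain is a generic phenomenon --- see Lemma~\ref{single point in ss chain} and the flipping/skewering technology of \cite{CS} feeding into \cite{Fernos-Lecureux-Matheus}). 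Likewise ``$g$ is neatly contracting on $Y$'' has probability tending to $1$. Hence for $n$ large enough the intersection of the two events has positive probability, so some $g \in \Gamma$ lies in both, which is exactly the claim.

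The main obstacle I anticipate is making precise, and citing correctly, the statement that neatly contracting elements occur with asymptotically full probability under the random walk --- the results of \cite{Fernos-Lecureux-Matheus} are phrased in terms of convergence of the walk to $\partial_{\mathrm{reg}} X$ and of the regularity of the limit, and one must translate this into a statement about a \emph{single} group element realising a strongly separated chain configuration of the form $g\mf{h}_1 \subseteq \mf{h}_2 \subseteq \mf{h}_1$ with $(\mf{h}_2,\mf{h}_1^*)$ and $(g\mf{h}_1,\mf{h}_2^*)$ strongly separated. This is morally the same as saying that a generic element is a ``strongly contracting'' axis-type isometry, which is available, but the bookkeeping of the two halfspaces $\mf{h}_1,\mf{h}_2$ and of the two strong-separation conditions needs care. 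An alternative, if one prefers to avoid the probabilistic language in the combined setting, is: first find (by the random walk on $X$) a neatly contracting $g_1$ for $X$ with regular fixed points $g_1^\pm$; then, using that $\Gamma \acts Y$ is non-elementary, find a neatly contracting $g_2$ for $Y$; then appeal to a ping-pong / double-skewering argument (Proposition~\ref{g^nh^n} in spirit, applied on each factor) to show that for suitable large exponents and a suitable product word the resulting element is simultaneously contracting on both $X$ and $Y$. Either route reduces the lemma to a single application of the random-walk machinery plus elementary cube-complex combinatorics.
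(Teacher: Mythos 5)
Your primary route is essentially the paper's proof: the paper also extracts the statement from \cite{Fernos-Lecureux-Matheus} (Lemmas~11.3,~11.4 and the proof of Theorem~11.5 there), in the slightly different form that for $\mbb{P}$--almost every sample path the \emph{density} of times $k\leq n$ at which $Z_k(\om)$ is neatly contracting in $X$ tends to $1$, and then intersects this with the analogous statement for $Y$ --- exactly your ``both events are asymptotically full, so they meet'' argument, which also resolves the bookkeeping worry you raise about the two halfspaces. The only detail you omit is that when $\G$ is uncountable one must first pass to a countable subgroup acting essentially and non-elementarily on both $X$ and $Y$ (the paper's Remark~\ref{extracting a countable subgroup}), since otherwise no probability measure on $\G$ has support generating $\G$ as a semigroup; by contrast, your sketched ping-pong alternative is not obviously workable as stated (you have no control on how $g_1$ acts on $Y$ or $g_2$ on $X$), and the paper instead points to Theorem~5.1 of \cite{Clay-Uyanik} for a non-probabilistic proof.
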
 

Before proving the lemma, a note is needed in case $\G$ is uncountable.

\begin{rmk}\label{extracting a countable subgroup}
There exist countable subgroups $\G_X,\G_Y\leq\G$ so that the actions $\G_X\acts X$ and $\G_Y\acts Y$ are essential and non-elementary. We can thus replace $\G$ with the countable subgroup $\langle\G_X,\G_Y\rangle$.

As to the existence of $\G_X$, observe that $X$ has only countably many hyperplanes. Thus, essentiality of the action $\G\acts X$ is witnessed by countably many elements of $\G$, which generate a countable subgroup $\G_1\leq\G$ acting essentially on $X$. Furthermore, there exists a countable subgroup $\G_2\leq\G$ such that $\G_2\acts X$ is non-elementary. We can even take $\G_2$ to be a finitely generated free group and its existence follows e.g.\ from Propositions~6.4(2) and~5.2 in \cite{Fio2}. Finally, we set $\G_X=\langle\G_1,\G_2\rangle$.
\end{rmk}

\begin{proof}[Proof of Lemma~\ref{cNT RW}]
By Remark~\ref{extracting a countable subgroup}, we can assume that $\G$ is countable. Set $\Om=\G^{\N}$ and let $\mu$ be a probability measure on $\G$ whose support generates $\G$ as a semigroup. Denote by $Z_n(\om)$ the random walk on $\G$ induced by $\mu$, where $\om\in\Om$, and let $\mbb{P}$ be the corresponding measure on $\Om$. Lemmas~11.3 and~11.4 in \cite{Fernos-Lecureux-Matheus} and the proof of Theorem~11.5 in \emph{loc.\ cit.}\ show that 
\[\lim_{n\ra+\infty}\tfrac{1}{n}\cdot\#\{k\leq n\mid Z_k(\om) \text{ is neatly contracting in } X\}=1\]
for $\mbb{P}$--almost every $\om\in\Om$. Applying the same reasoning to the action $\G\acts Y$, we conclude that there exists an element $g\in \G$ that is neatly contracting in both $X$ and $Y$.
\end{proof}

We remark that it is possible to give an alternative proof of Lemma~\ref{cNT RW} by relying on Theorem~5.1 in \cite{Clay-Uyanik} as suggested in Theorem~6.67 of \cite{Genevois-surv}.

Let us now fix for the rest of the subsection an element $g\in \G$ that is simultaneously neatly contracting in $X$ and $Y$. We denote by $g_X^+\in\partial_{\rm reg}X$ and $g_Y^+\in\partial_{\rm reg}Y$ the points introduced in Proposition~\ref{g+-}. Observe that $hgh^{-1}$ is neatly contracting for each $h\in \G$ and $(hgh^{-1})^+_X=h\cdot g^+_X$. 

\begin{prop}\label{regular stabilisers}
Assume that the length functions $\ell_X$ and $\ell_Y$ coincide. An element $k\in \G$ fixes $g_X^+\in\partial_{\rm reg}X$ if and only if it fixes $g_Y^+\in\partial_{\rm reg}Y$.
\end{prop}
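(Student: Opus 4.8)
The plan is to characterise the stabiliser of $g_X^+$ purely in terms of the length function, so that the symmetric statement for $g_Y^+$ follows automatically from $\ell_X = \ell_Y$. The key observation is that, by Proposition~\ref{g+-}(2), an element $k \in \G$ fixes $g_X^+$ if and only if the neatly contracting automorphism $kgk^{-1}$ (which has $(kgk^{-1})_X^+ = k\cdot g_X^+$) shares the attracting fixed point $g_X^+$ with $g$, and I want to detect this coincidence of fixed points dynamically. First I would show: for $k \in \G$, if $k$ fixes $g_X^+$ then $\{(kgk^{-1})_X^+,(kgk^{-1})_X^-\} \cap \{g_X^+,g_X^-\}$ has size at least one, so the pair $(g, kgk^{-1})$ does \emph{not} satisfy the hypothesis $\#\{g^\pm,(kgk^{-1})^\pm\}=4$ of Theorem~\ref{cr from ell}; conversely, if $k$ does not fix $g_X^+$, I would like to arrange a genuine ping-pong configuration. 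The cleanest route is to feed $g$ and $kgk^{-1}$ (or suitable conjugates/powers) into Theorem~\ref{cr from ell} and read off a length-function identity that holds \emph{precisely} when the fixed-point sets are disjoint.

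Concretely, here is the step I expect to do the real work. Suppose $k$ does \emph{not} fix $g_X^+$. Replacing $k$ by an element of the form $k' = g^a k g^b$ for suitable $a,b$ — using Proposition~\ref{g+-}(2) to push $k\cdot g_X^\pm$ away from $g_X^\pm$ — I would produce an element $h \in \G$, conjugate to a power of $g$, with $\#\{g^\pm, h^\pm\} = 4$ \emph{in $X$}, and then by Theorem~\ref{cr from ell} obtain
\[
\Cr(g^-,h^-,g^+,h^+) = \ell_X(g^n) + \ell_X(h^n) - \ell_X(g^n h^n)
\]
for all large $n$. The point is that, since $h$ is $\G$-conjugate to a power of $g$, one has $\ell_X(h^n) = \ell_X(g^{nj})$ for the relevant $j$, so the right-hand side is a quantity manufactured entirely from $\ell_X$. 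If instead $k$ \emph{does} fix $g_X^+$, then for \emph{every} element $h$ obtained from $k$ and $g$ by the same conjugation recipe one has $h^+_X = g^+_X$, the ping-pong hypothesis fails, and — I would check — the corresponding combination $\ell_X(g^n)+\ell_X(h^n)-\ell_X(g^nh^n)$ instead takes a \emph{different}, recognisable value (e.g.\ it grows like $c\cdot n$ because $g^nh^n$ translates along a common axis, rather than stabilising). Thus the dichotomy ``$k$ fixes $g_X^+$'' versus ``$k$ does not'' is encoded by an explicit asymptotic property of $\ell_X$ evaluated on a family of words in $g$ and $k$. Since $\ell_X = \ell_Y$ and $g$ is simultaneously neatly contracting in both complexes, the very same property of $\ell_Y$ characterises whether $k$ fixes $g_Y^+$, and the two conditions coincide.

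I would organise the write-up as: (i) a lemma that $k$ fixes $g_X^+$ iff $kgk^{-1}$ and $g$ have intersecting fixed-point pairs in $\partial_{\mathrm{reg}}X$ (immediate from Proposition~\ref{g+-}); (ii) a lemma that, when the fixed-point pairs are disjoint, one can conjugate to reach the hypothesis of Theorem~\ref{cr from ell} and extract a length-function formula — here one must be careful that replacing $k$ by $g^akg^b$ does not change whether $k$ fixes $g_X^+$, which is clear since $g$ fixes $g_X^+$; (iii) the converse asymptotic estimate in the non-generic case, using Proposition~\ref{axes} to compute translation lengths along a shared axis; (iv) assembling (ii) and (iii) into a criterion ``$k \in \mathrm{Stab}_\G(g_X^+)$ $\iff$ $P(\ell_X, g, k)$'' for an explicit condition $P$, and concluding by symmetry. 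The main obstacle I anticipate is step (iii): ensuring that when the fixed points coincide the length combination is genuinely \emph{distinguishable} from the value it takes in the ping-pong case — this requires ruling out accidental coincidences, and may force me to use not a single $n$ but the \emph{asymptotics in $n$} (bounded versus linear growth of $\ell_X(g^n)+\ell_X(h^n)-\ell_X(g^nh^n)$), which is where the argument has to be made robust.
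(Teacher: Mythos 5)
There is a genuine gap, and it sits exactly where you yourself flagged the risk: step (iii). The statistic you propose, $\ell_X(g^n)+\ell_X(h^n)-\ell_X(g^nh^n)$ with $h$ a conjugate of $g$ by $g^akg^b$, does not distinguish the case $h^+_X=g^+_X$ from the generic ping-pong case. When the attracting points coincide, $g^n$ and $h^n$ translate towards the common point and their translation lengths essentially add: already for two hyperbolic isometries of a tree whose axes share a ray towards $g^+=h^+$ one has $\ell(g^nh^n)=n\,\ell(g)+n\,\ell(h)$, so your combination is identically $0$ --- bounded, and equal to the value it takes in a ping-pong configuration with vanishing cross ratio. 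So the claimed ``bounded versus linear'' dichotomy fails, and no single value of the statistic is ``recognisable'' either. What does make this combination degenerate are the \emph{crossed} coincidences: consistently with Theorem~\ref{cr from ell}, the quantity $\Cr(g^-,h^-,g^+,h^+)$ equals $-\infty$ precisely when $g^-=h^+$ or $h^-=g^+$, while $h^+=g^+$ keeps it finite. For $h=(g^akg^b)g(g^akg^b)^{-1}$ the crossed coincidences read $k\cdot g^-_X=g^+_X$ or $k\cdot g^+_X=g^-_X$, which is not the condition you need to detect; using $g^{-1}$ instead of $g$ in the conjugate only detects ``$k$ fixes $g^+_X$ \emph{or} $k$ fixes $g^-_X$'', and in any variant the degenerate-case asymptotics are outside the scope of Theorem~\ref{cr from ell} and would need a separate proof. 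A smaller but real problem with step (ii): conjugating $k$ by powers of $g$ cannot always reach the hypothesis $\#\{g^{\pm},h^{\pm}\}=4$; for instance if $k\cdot g^+_X=g^-_X$ then $h^+_X=g^a k\cdot g^+_X=g^-_X$ for every choice of $a,b$, so conjugators not lying in $\langle g\rangle$ are unavoidable.

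For comparison, the paper does not attempt an intrinsic criterion $P(\ell_X,g,k)$ at all. It manufactures, via Lemma~\ref{disjoining sets} and Propositions~\ref{g+-} and~\ref{g^nh^n}, an auxiliary neatly contracting element $h_2\in\G$ whose fixed points avoid $\{g^{\pm},k\cdot g^{\pm}\}$ \emph{simultaneously in $X$ and in $Y$}, sets $a_n=h_2^{-n}\,kg^{-n}k^{-1}$, and studies $\Cr\left((a_n)^-,g^-,(a_n)^+,g^+\right)$. For large $n$ the four points are pairwise distinct in both complexes, so Theorem~\ref{cr from ell} together with $\ell_X=\ell_Y$ makes these cross ratios agree in $X$ and $Y$; by continuity (Proposition~\ref{continuity}) the limits $\Cr\left(k\cdot g^+,g^-,(h_2)^-,g^+\right)$ agree as well, and the choice of $h_2$ guarantees that this limit is $-\infty$ exactly when $k\cdot g^+=g^+$. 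That is the detection mechanism your statistic was meant to supply: a $4$--tuple whose \emph{only} possible degeneracy is the coincidence one wants to test, made generic in both complexes at once. To salvage your outline you would have to redesign the statistic along these lines rather than rely on growth of $\ell_X(g^n)+\ell_X(h^n)-\ell_X(g^nh^n)$.
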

\begin{proof}
Consider the sets $U_1=\{g^{\pm}_X\}\cup\{k\cdot g^{\pm}_X\}$ and $U_2=\{g^{\pm}_Y\}\cup\{k\cdot g^{\pm}_Y\}$. Lemma~\ref{disjoining sets} guarantees the existence of $\g_1,\g_2\in \G$ with $\g_1U_1\cap U_1=\emptyset$ and $\g_2U_2\cap(U_2\cup\g_1^{-1}U_2)=\emptyset$. If $\g_2g_X^+\not\in U_1$, we set $\g=\g_2$, otherwise $\g=\g_1\g_2$. In any case, we have $\g g^+_X\not\in U_1$ and $\g g^+_Y\not\in U_2$.

The element $h_1=\g g\g^{-1}$ is neatly contracting in both $X$ and $Y$; moreover, $(h_1)^+_X\not\in U_1$ and $(h_1)^+_Y\not\in U_2$. Taking $h_2=h_1^ngh_1^{-n}$ for a sufficiently large integer $n\geq 0$, part~$(2)$ of Proposition~\ref{g+-} guarantees that $\{(h_2)_X^{\pm}\}\cap U_1=\emptyset$ and $\{(h_2)_Y^{\pm}\}\cap U_2=\emptyset$.

Now, we consider the elements $a_n=h_2^{-n}(kg^{-n}k^{-1})$ for all $n\geq 0$. Proposition~\ref{g^nh^n} shows that $(a_n)^+_X\ra (h_2)^-_X$ and $(a_n)^-_X\ra k\cdot g^+_X$. Proposition~\ref{continuity} then implies that
\[\Cr\left((a_n)^-_X,g^-_X,(a_n)^+_X,g^+_X\right)\longrightarrow\Cr\left(k\cdot g^+_X,g^-_X,(h_2)^-_X,g^+_X\right),\]
as the points $g^-_X$, $g^+_X$ and $(h_2)^-_X$ are pairwise distinct. Observe that the limit is $-\infty$ if and only if $k$ fixes $g^+_X$. The same observations hold within $Y$.

Our choice of $h_2$ implies that $(a_n)^+_X\neq (h_2)^-_X$ and ${(a_n)^-_X\neq k\cdot g^+_X}$ for all sufficiently large $n\geq 0$. In particular, the sequences $(a_n)^+_X$ and $(a_n)^-_X$ are not eventually constant and the four points $(a_n)^-_X$, $g^-_X$, $(a_n)^+_X$ and $g^+_X$ are pairwise distinct for all large $n\geq 0$. As the same holds in $Y$, Theorem~\ref{cr from ell} and the assumption on length functions show that 
\[\Cr\left((a_n)^-_X,g^-_X,(a_n)^+_X,g^+_X\right)=\Cr\left((a_n)^-_Y,g^-_Y,(a_n)^+_Y,g^+_Y\right)\]
for all large $n$. We conclude that the limits coincide:
\[\Cr\left(k\cdot g^+_X,g^-_X,(h_2)^-_X,g^+_X\right)=\Cr\left(k\cdot g^+_Y,g^-_Y,(h_2)^-_Y,g^+_Y\right).\]
Hence $k\cdot g^+_X=g^+_X$ if and only if $k\cdot g^+_Y=g^+_Y$.
\end{proof}

We now consider the orbits $\mc{A}=\G\cdot g^+_X\cu\partial_{\rm reg}X$ and $\mc{B}=\G\cdot g^+_Y\cu\partial_{\rm reg}Y$. By Proposition~\ref{regular stabilisers}, there exists a unique $\G$--equivariant bijection $\psi\colon \mc{A}\ra \mc{B}$ with $\psi(g^+_X)=g^+_Y$. Observe that every point of $\mc{A}$ is of the form $k^+_X$ for some element $k\in \G$ conjugate to $g$; moreover, $\psi(k^+_X)=k^+_Y$.

\begin{rmk}\label{additional procedure}
If there exist conjugates $a,b\in \G$ of $g$ such that $a^-_X=b^-_X$, but $a^+_X\neq b^+_X$, we replace $g$ with $g^{-1}$ in the construction of the set $\mc{A}$. This way:
\begin{itemize}
\item either $a^-_X\neq b^-_X$ for all conjugates $a,b$ of $g$ that satisfy $a^+_X\neq b^+_X$,
\item or, for every $x\in \mc{A}$, there exist conjugates $a,b\in \G$ of $g$ such that $x=a^+_X=b^+_X$ and $a^-_X\neq b^-_X$.
\end{itemize}
The need for this property will become apparent in the proof of Lemma~\ref{easy fix}.
\end{rmk}

\begin{lem}\label{easy fix}
If $x,y\in \mc{A}$ are distinct, there exist elements $g_n\in \G$ satisfying:
\[(g_n)^-_X\ra x,~~(g_n)^-_Y\ra\psi(x);~~~~~(g_n)^+_X\ra y,~~(g_n)^+_Y\ra\psi(y).\]
\end{lem}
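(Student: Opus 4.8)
The statement asks, given distinct $x,y\in\mc{A}$, for a sequence $g_n\in\G$ of conjugates-of-$g$ type elements whose attracting and repelling fixed points simultaneously approximate $(x,y)$ in $X$ and $(\psi(x),\psi(y))$ in $Y$. Since $x,y\in\mc{A}=\G\cdot g_X^+$, we may write $x=a_X^+$ and $y=b_X^+$ for conjugates $a,b\in\G$ of $g$; by $\G$--equivariance of $\psi$ we then have $\psi(x)=a_Y^+$ and $\psi(y)=b_Y^+$. The rough idea is to build $g_n$ out of words alternating high powers of $b$ and $a$, so that the attracting point is dragged toward $b_X^+=y$ and the repelling point toward $a_X^+=x$, using Propositions~\ref{g+-} and~\ref{g^nh^n}; crucially, the \emph{same} element $g_n\in\G$ achieves this in both $X$ and $Y$ at once, because $g$ is neatly contracting in both complexes and all the dynamical statements (contracting-to-a-fixed-point, the ``product of two neatly contracting elements is neatly contracting'' lemma) apply verbatim in $Y$.

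\textbf{Key steps.} First I would reduce to the generic situation using Remark~\ref{additional procedure}: by that remark, after possibly replacing $g$ by $g^{-1}$, either distinct attracting fixed points of conjugates of $g$ automatically have distinct repelling fixed points, or every point of $\mc{A}$ is the common attracting point of two conjugates with distinct repelling points. In the first case, choosing $a,b$ with $a_X^+=x$, $b_X^+=y$ already gives $a_X^-\neq b_X^-$ (since $x\neq y$), and also $a_Y^-\neq b_Y^-$: indeed the same dichotomy, applied in $Y$, together with the fact that $a_Y^+=\psi(x)\neq\psi(y)=b_Y^+$, forces distinct repelling points there too (here one uses that $\psi$ is a bijection, so $\psi(x)\neq\psi(y)$). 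Now we need the four points $a_X^\pm, b_X^\pm$ to be pairwise distinct, not merely $a_X^-\neq b_X^-$ and $a_X^+\neq b_X^+$; if $a_X^+$ coincides with $a_X^-$ or $b_X^-$, or $b_X^+$ with $a_X^-$, we can further conjugate $b$ (replace it by $h b h^{-1}$ for suitable $h\in\G$, using Lemma~\ref{disjoining sets} to push $b_X^\pm$ and $b_Y^\pm$ off the finite bad set) while keeping $b_X^+=y$... but that changes $y$, so instead one applies Proposition~\ref{g^nh^n}-style surgery: set $g_n=b^n a^n$ (or $b^n a b^{-n}\cdot$-type words) and check via Proposition~\ref{g^nh^n} that for large $n$ this is neatly contracting in both $X$ and $Y$, with $(g_n)_X^+\to b_X^+=y$, $(g_n)_X^-\to a_X^-$... which is not quite $x$. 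So the honest construction is the one from the proof of Proposition~\ref{regular stabilisers}: take $g_n = b^n (a^{?} \cdots)$ arranged so that $(g_n)^-_X\to x = a_X^+$; concretely, mimic the $a_n=h_2^{-n}(kg^{-n}k^{-1})$ trick there, writing $g_n$ as a product of a fixed conjugate with attracting point near $y$ (in both complexes) and $n$-th powers arranged so that Proposition~\ref{g^nh^n} yields $(g_n)^+_X\to y$ and $(g_n)^-_X\to x$, and simultaneously $(g_n)^+_Y\to\psi(y)$, $(g_n)^-_Y\to\psi(x)$.

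\textbf{Handling the second case of Remark~\ref{additional procedure}.} Here $x=c_X^+=d_X^+$ for conjugates $c,d$ with $c_X^-\neq d_X^-$, and similarly $y=e_X^+=f_X^+$ with $e_X^-\neq f_X^-$; this is exactly the configuration engineered so that we can always pick, among the available conjugates realizing $x$ (resp.\ $y$) as an attracting point, one whose repelling point avoids any prescribed finite set — so we recover enough genericity to make the four relevant boundary points pairwise distinct in $X$ \emph{and}, via $\psi$-equivariance and the corresponding statement in $Y$, pairwise distinct in $Y$. Once pairwise distinctness is secured, the sequence is produced exactly as above.

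\textbf{Main obstacle.} The delicate point is not the dynamics — Propositions~\ref{g+-} and~\ref{g^nh^n} do all the heavy lifting — but the bookkeeping needed to guarantee that the \emph{four} boundary points involved are pairwise distinct \emph{simultaneously in $X$ and in $Y$}, using only the equivariant bijection $\psi$ and the length-function hypothesis (which, via Proposition~\ref{regular stabilisers}, is what ties the $X$-dynamics to the $Y$-dynamics). I expect this genericity juggling — exactly what Remark~\ref{additional procedure} was set up to enable, which is why the remark explicitly flags that it is needed ``in the proof of Lemma~\ref{easy fix}'' — to be the part requiring care; the rest is a routine application of the contraction lemmas plus continuity of the cross ratio (Proposition~\ref{continuity}).
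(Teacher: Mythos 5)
Your overall strategy is the paper's: write $x=a^+_X$, $y=b^+_X$ for conjugates $a,b$ of $g$ (so $\psi(x)=a^+_Y$, $\psi(y)=b^+_Y$ by equivariance), use Remark~\ref{additional procedure} to arrange $a^-_X\neq b^-_X$, and then produce the $g_n$ from Proposition~\ref{g^nh^n}. But the proposal stops short of an actual construction at two concrete points. First, in the generic case $\#\{a^{\pm}_X,b^{\pm}_X\}=4$ the correct word is simply $g_n=b^na^{-n}$: apply Proposition~\ref{g^nh^n} to the pair $(b,a^{-1})$, noting that $a^{-1}$ is again neatly contracting in both complexes with $(a^{-1})^{\pm}=a^{\mp}$, so that $(g_n)^+\ra b^+$ and $(g_n)^-\ra a^+$ simultaneously in $X$ and in $Y$. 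You noticed that your candidate $b^na^n$ drives the repelling point to $a^-_X$ rather than to $x$, but instead of inverting $a$ you defer to an unspecified ``surgery'', so the generic case is never actually settled. Second, the degenerate coincidences $a^-_X=b^+_X$ and $b^-_X=a^+_X$ (which can survive Remark~\ref{additional procedure}) are not resolved by your suggestions: conjugating $b$ changes $y$, as you yourself observe, and Proposition~\ref{g^nh^n} is unavailable because only three of the four relevant boundary points are distinct. The paper disposes of these cases trivially, taking the \emph{constant} sequences $g_n=a^{-1}$ (when $a^-_X=b^+_X$) and $g_n=b$ (when $b^-_X=a^+_X$), whose fixed points are already the required ones.

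A further caveat concerns your ``genericity in $Y$'' discussion: the dichotomy of Remark~\ref{additional procedure} is a statement about the $X$--dynamics only, so you cannot ``apply the same dichotomy in $Y$'' to conclude $a^-_Y\neq b^-_Y$. Where distinctness (or, in the degenerate cases, the identification $a^-_Y=\psi(y)$, resp.\ $b^-_Y=\psi(x)$) is needed on the $Y$--side, it follows from Proposition~\ref{regular stabilisers}: for any conjugate $c$ of $g$ or of $g^{-1}$ (these are all neatly contracting in both complexes), an element $k\in\G$ fixes $c^+_X$ if and only if it fixes $c^+_Y$, and applying this with $k=a$ or $k=b$ transfers the relevant coincidences and non-coincidences between $X$ and $Y$, using that $\psi$ is an injective equivariant map. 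This is exactly what your ``bookkeeping'' paragraph gestures at (and what the paper leaves implicit), but as written it is not carried out, so the proposal has a genuine gap both there and in the two points above.
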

\begin{proof}
Let $a,b\in \G$ be conjugates of $g$ such that $x=a^+_X$ and $y=b^+_X$; we also have $\psi(x)=a^+_Y$ and $\psi(y)=b^+_Y$. Replacing $a$ if necessary, Remark~\ref{additional procedure} enables us to ensure that $a^-_X\neq b^-_X$.

If $a^-_X=b^+_X$, we can take all $g_n$ to coincide with $a^{-1}$; similarly, if $b^-_X=a^+_X$, we set $g_n=b$. If none of these equalities is verified, we have $\#\{a^{\pm}_X,b^{\pm}_X\}=4$ and, by Proposition~\ref{g^nh^n}, it suffices to take $g_n=b^na^{-n}$. 
\end{proof}

\begin{prop}\label{psi preserves cr}
If $\ell_X$ and $\ell_Y$ coincide, the map $\psi$ is M\"obius.
\end{prop}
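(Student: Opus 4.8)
The plan is to verify directly that $\psi$ preserves cross ratios on every $4$--tuple of pairwise distinct points of $\mc{A}$ (this suffices, since $\mc{A}\cu\partial_{\rm reg}X$ is concise by Remark~\ref{regular is concise}, so every such $4$--tuple lies in $\mscr{A}(X)$, and likewise for $\mc{B}$). So fix pairwise distinct $x,y,z,w\in\mc{A}$; I want to show $\Cr(x,y,z,w)=\Cr(\psi(x),\psi(y),\psi(z),\psi(w))$. The strategy is to realise the left-hand cross ratio as a limit of cross ratios $\Cr(g_n^-,h_n^-,g_n^+,h_n^+)$ for suitable neatly contracting elements $g_n,h_n\in\G$, apply Theorem~\ref{cr from ell} to rewrite each of these in terms of $\ell_X$, use the hypothesis $\ell_X=\ell_Y$ to transfer the expression to $Y$, and then run the same limiting argument backwards in $Y$ using $\psi$--equivariance and the fact that $\psi$ sends $k^+_X\mapsto k^+_Y$.

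The key steps, in order. First, using Lemma~\ref{easy fix} twice, I produce elements $g_n\in\G$ with $(g_n)^-_X\to x$, $(g_n)^+_X\to z$, $(g_n)^-_Y\to\psi(x)$, $(g_n)^+_Y\to\psi(z)$, and elements $h_n\in\G$ with $(h_n)^-_X\to y$, $(h_n)^+_X\to w$, $(h_n)^-_Y\to\psi(y)$, $(h_n)^+_Y\to\psi(w)$. Since $x,y,z,w$ are pairwise distinct and the four sequences of fixed points converge to them, for all large $n$ the eight points $(g_n)^{\pm}_X,(h_n)^{\pm}_X$ are pairwise distinct (in particular $\#\{(g_n)^{\pm}_X,(h_n)^{\pm}_X\}=4$), and similarly in $Y$; discard the finitely many exceptional $n$. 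Second, by Proposition~\ref{continuity} (continuity of $\Cr$ on the open set $\mscr{A}$, into which the relevant $4$--tuples fall once $n$ is large by conciseness of $\partial_{\rm reg}$), we have
\[
\Cr\big((g_n)^-_X,(h_n)^-_X,(g_n)^+_X,(h_n)^+_X\big)\longrightarrow\Cr(x,y,z,w),
\]
and the same statement in $Y$ with $\psi$ applied. Third, for each fixed large $n$, apply Theorem~\ref{cr from ell} to the pair $g_n,h_n$ (neatly contracting in $X$, with the required genericity of fixed points): there is $N_n$ so that for all $m\geq N_n$,
\[
\Cr\big((g_n)^-_X,(h_n)^-_X,(g_n)^+_X,(h_n)^+_X\big)=\ell_X(g_n^m)+\ell_X(h_n^m)-\ell_X(g_n^mh_n^m),
\]
and the analogous identity holds in $Y$ for all $m\geq N_n'$. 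Fourth, since $g_n$ and $h_n$ are elements of $\G$ acting on both $X$ and $Y$, and $\ell_X=\ell_Y$ by hypothesis, the right-hand side above is the same whether computed in $X$ or in $Y$; taking $m\geq\max(N_n,N_n')$ gives
\[
\Cr\big((g_n)^-_X,(h_n)^-_X,(g_n)^+_X,(h_n)^+_X\big)=\Cr\big((g_n)^-_Y,(h_n)^-_Y,(g_n)^+_Y,(h_n)^+_Y\big).
\]
Letting $n\to\infty$ and using the convergence from the second step on both sides yields $\Cr(x,y,z,w)=\Cr(\psi(x),\psi(y),\psi(z),\psi(w))$, which is what we wanted; the equivalent formulation via $\crt$ is then automatic.

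The main obstacle I expect is bookkeeping around the genericity hypotheses rather than any deep point: at each stage one must ensure that the relevant $4$--tuples are genuinely $4$--tuples of pairwise distinct points (so that Theorem~\ref{cr from ell} and the continuity statement apply, and so that the cross ratios are finite where needed). This is exactly the kind of issue handled in Remark~\ref{additional procedure} and in the proof of Proposition~\ref{regular stabilisers}, and it should be dispatched by the same device: invoke Lemma~\ref{easy fix} (whose proof already arranges $a^-_X\neq b^-_X$ via Remark~\ref{additional procedure}) to get the approximating sequences, and then note that convergence of $(g_n)^{\pm}$ and $(h_n)^{\pm}$ to the four distinct points $x,z,y,w$ forces pairwise distinctness for large $n$. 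A minor secondary point is to confirm that $\psi(k^+_X)=k^+_Y$ for every conjugate $k$ of $g$ — but this was already observed just before Remark~\ref{additional procedure} — so that the sequences produced by Lemma~\ref{easy fix} in $Y$ really do converge to $\psi(x),\psi(y),\psi(z),\psi(w)$.
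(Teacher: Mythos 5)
Your proposal is correct and follows essentially the same route as the paper: approximate $x,y,z,w$ and their $\psi$--images by the fixed points $(g_n)^{\pm},(h_n)^{\pm}$ via Lemma~\ref{easy fix}, transfer cross ratios between $X$ and $Y$ through Theorem~\ref{cr from ell} and the equality $\ell_X=\ell_Y$, and conclude by the continuity of $\Cr$ (Proposition~\ref{continuity}). Your extra bookkeeping on pairwise-distinctness of the fixed points for large $n$ is exactly the (implicit) point the paper relies on, so there is nothing to add.
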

\begin{proof}
It suffices to show that $\psi$ preserves cross ratios of $4$--tuples $(x,y,z,w)$ with pairwise-distinct coordinates. All other $4$--tuples have cross ratio $0$ or $\pm\infty$, depending on which coordinates coincide. 

Lemma~\ref{easy fix} thus yields elements $g_n,h_n\in \G$ with:
\[(g_n)^-_X\ra x,~~(g_n)^-_Y\ra\psi(x);~~~~~(h_n)^-_X\ra y,~~(h_n)^-_Y\ra\psi(y);\]
\[(g_n)^+_X\ra z,~~(g_n)^+_Y\ra\psi(z);~~~~~(h_n)^+_X\ra w,~~(h_n)^+_Y\ra\psi(w).\]
Theorem~\ref{cr from ell} implies that
\[\Cr((g_n)_Y^-,(h_n)_Y^-,(g_n)_Y^+,(h_n)_Y^+)=\Cr((g_n)_X^-,(h_n)_X^-,(g_n)_X^+,(h_n)_X^+)\]
for all $n\geq 0$. Proposition~\ref{continuity} now shows that the two sides of the above equality converge to $\Cr(\psi(x),\psi(y),\psi(z),\psi(w))$ and $\Cr(x,y,z,w)$.
\end{proof}

This concludes the proof of Theorem~\ref{from ell to moeb}. 

\section{From M\"obius maps to cubical isomorphisms.}\label{from moeb to cubical iso}

Let $X$ and $Y$ be $\CAT$ cube complexes. This chapter is devoted to the proof of Theorem~\ref{NFF MLSR}. Having obtained Theorem~\ref{from ell to moeb} in Section~\ref{MLSR section}, it suffices to prove Theorem~\ref{ext Moeb} and this will be our sole focus.

The proof of Theorem~\ref{ext Moeb} will be divided into three steps, which will be carried out, respectively, in Sections~\ref{OR exist sect},~\ref{from OR sect} and~\ref{ext partial iso sect}. The last step, in Section~\ref{ext partial iso sect}, is actually the proof of Theorem~\ref{ext partial iso thm}. 

Section~\ref{general setup sect} will contain an overview of Sections~\ref{OR exist sect},~\ref{from OR sect} and~\ref{ext partial iso sect} and a complete proof of Theorem~\ref{ext Moeb} assuming those results. Before that, however, we need to introduce \emph{opposite} points in Section~\ref{opposite sect}.

\subsection{Opposite points.}\label{opposite sect}

Given points $x_1,x_2\in\partial X$, we consider the $\CAT$ cube complex $T=X\cap I(x_1,x_2)$.

Recall that $\mscr{W}_v$ denotes the set of hyperplanes adjacent to a vertex ${v\in X}$.

\begin{lem}\label{2-cuts}
The following conditions are equivalent for $v\in T$:
\begin{enumerate}
\item $\lk_T(v)$ has exactly two connected components; 
\item $\lk_T(v)$ is a union of two disjoint cliques;
\item $I(x_1,x_2)=I(x_1,v)\cup I(v,x_2)$.
\end{enumerate} 
In this case, the connected components of $\lk_T(v)$ are given by the sets of edges crossing the hyperplanes in $\mscr{W}(v|x_1)$ and $\mscr{W}(v|x_2)$.
\end{lem}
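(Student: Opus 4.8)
The plan is to work entirely with hyperplanes and ultrafilters. Write $\mathcal{U}_1 = \mscr{W}(v|x_1)$ and $\mathcal{U}_2 = \mscr{W}(v|x_2)$. These are exactly the hyperplanes adjacent-or-not to $v$ that separate $v$ from $x_1$, respectively $x_2$; since $v \in I(x_1,x_2)$ we have $\mscr{W}(v|x_1,x_2) = \emptyset$, so $\mathcal{U}_1 \cap \mathcal{U}_2 = \emptyset$, and every hyperplane of $\mscr{W}(x_1|x_2) = \mscr{W}(T)$ lies in exactly one of $\mathcal{U}_1$, $\mathcal{U}_2$ (it separates $v$ from precisely one of the two endpoints). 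The key elementary fact to extract first is: a hyperplane $\mf{w} \in \mscr{W}_T(v)$ (i.e.\ adjacent to $v$ inside $T$) lies in $\mathcal{U}_1$ iff the edge of $T$ at $v$ dual to $\mf{w}$ points "toward $x_1$", and dually for $\mathcal{U}_2$. So the vertex set of $\lk_T(v)$ is partitioned into $\mathcal{V}_1 := \mathcal{U}_1 \cap \mscr{W}_T(v)$ and $\mathcal{V}_2 := \mathcal{U}_2 \cap \mscr{W}_T(v)$.

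The heart of the argument is the claim that any $\mf{w}_1 \in \mathcal{V}_1$ and $\mf{w}_2 \in \mathcal{V}_2$ are transverse in $T$, i.e.\ joined by an edge in $\lk_T(v)$. For this I would use the gate-projection / interval description: given the two edges $e_i$ at $v$ dual to $\mf{w}_i$, let $v_i$ be the other endpoint of $e_i$; then $v_1 \in I(v, x_1) \subseteq I(x_1,x_2)$ and $v_2 \in I(v,x_2) \subseteq I(x_1,x_2)$, and I want a square of $T$ spanned by $e_1, e_2$ at $v$. Since $\mf{w}_1 \in \mscr{W}(v|x_1)$ and $\mf{w}_2 \notin \mscr{W}(v|x_1)$, the median $m := m(v_1, v_2, x_1)$ (respectively a symmetric choice with $x_2$) lies in $X$ by Lemma~\ref{infinite Gromov product} combined with conciseness-type arguments, and a short ultrafilter computation shows $\mf{w}_1$ and $\mf{w}_2$ must cross — otherwise one would separate $v$ from both $x_1$ and $x_2$ or create a hyperplane in $\mscr{W}(v|x_1,x_2)$, contradicting $v \in I(x_1,x_2)$. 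Concretely: if $\mf{w}_1, \mf{w}_2$ were not transverse, the four corners $\mf{h}_1 \cap \mf{h}_2$ etc.\ (with $\mf{h}_i$ the side of $\mf{w}_i$ not containing $v$) would not all be nonempty, and checking which corner is empty forces either $x_1 \notin \overline{\mf{h}_1}$ or $x_2 \notin \overline{\mf{h}_2}$, contradicting $\mf{w}_i \in \mathcal{U}_i$. Hence $\mathcal{V}_1$ and $\mathcal{V}_2$ each span a clique (this part is automatic: any two elements of $\mathcal{V}_1$ lie on a common geodesic from $v$ to $x_1$ — actually one needs them transverse too, which follows by the same separation argument applied within $I(v,x_1)$, using that $v \in I(v,x_1)$ trivially and no hyperplane separates $v$ from $x_1$ and $x_1$) — wait, I should instead argue $\mathcal{V}_1$ spans a clique directly: any $\mf{w}, \mf{w}' \in \mathcal{V}_1$ are both adjacent to $v$ and both in $\mscr{W}(v|x_1)$; if they were not transverse one would separate the other from $v$, impossible since both are adjacent to $v$. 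So $\lk_T(v)$ is the join-free union of the two cliques on $\mathcal{V}_1$ and $\mathcal{V}_2$ — no edges between them would contradict the transversality just shown, so in fact $\lk_T(v) = \mathcal{V}_1 * \mathcal{V}_2$ as a complete bipartite-plus-cliques graph, i.e.\ a single clique? That can't be right if (1) is to hold.

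Let me restate the structure: the equivalences should be read as $(1)\Leftrightarrow(2)\Leftrightarrow(3)$, and the final sentence identifies the components when they exist. So the correct line is: $\lk_T(v)$ always decomposes compatibly with $\mathcal{V}_1 \sqcup \mathcal{V}_2$, each of $\mathcal{V}_1, \mathcal{V}_2$ is a clique, and condition (2) — that there are \emph{no} edges between $\mathcal{V}_1$ and $\mathcal{V}_2$ — is the substantive hypothesis. So I would prove: (i) $\mathcal{V}_1$ and $\mathcal{V}_2$ are each cliques, always; (ii) $(3)$ holds iff every hyperplane of $\mscr{W}(T)$ lies in $\mathcal{U}_1 \cup \mathcal{U}_2$, which via ultrafilters is equivalent to $\lk_T(v)$ having no other vertices, hence $\mathcal{V}_1 \cup \mathcal{V}_2 = \lk_T^0(v)$; (iii) given (i), $(1)\Leftrightarrow(2)$ is the statement that a graph which is a disjoint union of two cliques is the same as a graph with exactly two components each of which is a clique — but a priori $\lk_T(v)$ could have two components that are not cliques, so $(1)\Rightarrow(2)$ needs: each component of $\lk_T(v)$ is a clique. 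That last fact is where I'd invoke the transversality argument above — a component containing $\mf{w} \in \mathcal{V}_1$ and $\mf{w}' \in \mathcal{V}_2$ would by (the transversality claim) have an edge between them, so the partition $\mathcal{V}_1 \sqcup \mathcal{V}_2$ refines the component partition; combined with each $\mathcal{V}_i$ being a clique and the two-component hypothesis, we get exactly $\mathcal{V}_1$ and $\mathcal{V}_2$ as the components.

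\textbf{Order of steps and main obstacle.} So concretely: Step 1, set up $\mathcal{U}_i = \mscr{W}(v|x_i)$, $\mathcal{V}_i = \mathcal{U}_i \cap \mscr{W}_T(v)$, observe $\mathcal{V}_1 \sqcup \mathcal{V}_2 \subseteq \lk_T^0(v)$ with the inclusion being equality iff $(3)$ holds (a direct ultrafilter / interval computation using $I(x_1,x_2) = I(x_1,v) \cup I(v,x_2) \Leftrightarrow \mscr{W}(x_1|x_2) = \mscr{W}(v|x_1) \sqcup \mscr{W}(v|x_2)$). Step 2, prove each $\mathcal{V}_i$ is a clique (two adjacent-to-$v$ hyperplanes on the same side are transverse, else nesting contradicts adjacency to $v$). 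Step 3 (the crux), prove $\mf{w}_1 \in \mathcal{V}_1$, $\mf{w}_2 \in \mathcal{V}_2$ are transverse, via the four-corners argument using $v \in I(x_1,x_2)$. Step 4, assemble: $(3) \Leftrightarrow \lk_T^0(v) = \mathcal{V}_1 \sqcup \mathcal{V}_2 \Leftrightarrow (2)$ (using Steps 2–3, and that $\mathcal{V}_i \neq \emptyset$ when $v$ is not an endpoint — if one $\mathcal{V}_i$ were empty then $v = x_j$ contradicting $v \in X$, or handle $T$ a single vertex trivially), and $(2) \Rightarrow (1)$ is trivial while $(1) \Rightarrow (2)$ uses Step 3 to show each component is contained in some $\mathcal{V}_i$ hence is a clique. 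The main obstacle is Step 3: making the four-corners argument airtight requires care about whether the relevant medians land in $X$ versus $\partial X$, and about the degenerate cases where $\mf{w}_i$ is not actually adjacent to $v$ in $X$ but only in $T$ — here one should restrict attention throughout to $\mscr{W}(T) = \mscr{W}(x_1|x_2)$ and use that $T$ is itself a $\CAT$ cube complex, so "adjacent in $T$" and the interval decomposition can be checked by the standard hyperplane combinatorics internal to $T$.
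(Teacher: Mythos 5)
Your setup is the same as the paper's first move, and part of it is sound: with $\mc{V}_i:=\mscr{W}(v|x_i)\cap\mscr{W}_v$, the vertex set of $\lk_T(v)$ is partitioned as $\mc{V}_1\sqcup\mc{V}_2$ and each $\mc{V}_i$ spans a clique; your $(1)\Leftrightarrow(2)$ bookkeeping then works (each clique is connected, hence lies in one component, and with exactly two components the components are exactly the $\mc{V}_i$) --- note this needs no statement about cross pairs at all. But the plan fails at the two places carrying the real content. First, your Step 1 characterisation of (3) is a tautology: since $v\in I(x_1,x_2)$, every hyperplane of $\mscr{W}(x_1|x_2)$ automatically separates $v$ from exactly one of $x_1,x_2$, so $\mscr{W}(x_1|x_2)=\mscr{W}(v|x_1)\sqcup\mscr{W}(v|x_2)$ and $\lk_T^0(v)=\mc{V}_1\sqcup\mc{V}_2$ hold for \emph{every} $v\in T$, whether or not (3) does (take $X=\R^2$ with the grid cubulation, $x_1,x_2$ two diagonally opposite Roller-boundary points and $v$ any vertex: the hyperplane identity holds, yet $I(x_1,x_2)\neq I(x_1,v)\cup I(v,x_2)$). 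With that reading, your Step 4 chain ``$(3)\Leftrightarrow\lk_T^0(v)=\mc{V}_1\sqcup\mc{V}_2\Leftrightarrow(2)$'' would show that (2) and (3) hold for every $v$, which is absurd; in other words the substantive implications $(2)\Rightarrow(3)$ and $(3)\Rightarrow(1)$ are not proved by your outline.

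Second, your ``crux'' Step 3 is false, and you noticed the contradiction yourself mid-argument but then kept the claim: elements of $\mc{V}_1$ and $\mc{V}_2$ need not be transverse (in a tree they never are), and if they always were, $\lk_T(v)$ would be the join $\mc{V}_1\ast\mc{V}_2$, hence connected, so (1) could never hold. The four-corner check does not give a contradiction there: the corner that may be empty is the one on the far side of both walls, and its emptiness only says the $x_1$--side and the $x_2$--side are disjoint, which is perfectly compatible with $\mf{w}_i\in\mscr{W}(v|x_i)$. (The corner argument \emph{does} work inside a single $\mc{V}_i$, because there both far sides contain $x_i$ in their closure --- that, rather than ``nesting contradicts adjacency,'' is why each $\mc{V}_i$ is a clique.) What the lemma actually requires is the opposite dichotomy: (3) fails exactly when some transverse pair $\mf{w}_1\in\mc{V}_1$, $\mf{w}_2\in\mc{V}_2$ exists. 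The paper proves $(2)\Rightarrow(3)$ by taking $w\in I(x_1,x_2)\setminus\big(I(x_1,v)\cup I(v,x_2)\big)$, choosing in $\mscr{W}(v|w)\cap\mscr{W}(v|x_i)$ the hyperplanes closest to $v$ (hence adjacent to $v$), and using the disjoint-clique hypothesis to force their far halfspaces to be disjoint, contradicting $w$ lying in both; and it proves $(3)\Rightarrow(1)$ contrapositively: an edge of $\lk_T(v)$ joining $\mc{V}_1$ to $\mc{V}_2$ yields transverse walls spanning a square at $v$ whose diagonal vertex $w$ satisfies $\mscr{W}(v|w)=\{\mf{w}_1,\mf{w}_2\}$, lies in $I(x_1,x_2)$, and lies in neither $I(x_1,v)$ nor $I(v,x_2)$. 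Neither argument, nor any substitute for them, appears in your proposal, so as it stands the proof is incomplete.
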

\begin{proof}
For each $i$, the hyperplanes in $\mscr{W}(v|x_i)\cap\mscr{W}_v$ are pairwise transverse and originate a clique in $\llk v$. As $\mscr{W}(T)=\mscr{W}(x_1|x_2)=\mscr{W}(x_1|v)\sqcup\mscr{W}(v|x_2)$, either $\lk_T(v)$ is connected, or $v$ satisfies $(1)$ and the two connected components of $\lk_T(v)$ are given precisely by $\mscr{W}(v|x_1)$ and $\mscr{W}(v|x_2)$. This shows the equivalence of $(1)$ and $(2)$. 

Regarding $(2)\Ra(3)$, if a point $w\in I(x_1,x_2)$ did not lie in the union $I(x_1,v)\cup I(v,x_2)$, the set $\mscr{W}(v|w)$ would intersect both $\mscr{W}(v|x_1)$ and $\mscr{W}(v|x_2)$. For $i=1,2$, pick hyperplanes $\mf{w}_i\in\mscr{W}(v|w)\cap\mscr{W}(v|x_i)$; choosing $\mf{w}_i$ closest to $v$, we can assume that $\mf{w}_i\in\mscr{W}_v$. Let $\mf{h}_i$ be the the side of $\mf{w}_i$ that contains $v$. Since $v$ satisfies condition~$(2)$, we must have $\mf{h}_1^*\cap\mf{h}_2^*=\emptyset$; on the other hand, $w$ lies in both $\mf{h}_1^*$ and $\mf{h}_2^*$. This is a contradiction.

Finally, if $\mf{w}_1\in\mscr{W}(v|x_1)$ and $\mf{w}_2\in\mscr{W}(v|x_2)$ are adjacent to $v$ and transverse, there exists a point $w\in I(x_1,x_2)$ with $\mscr{W}(v|w)=\{\mf{w}_1,\mf{w}_2\}$. In this case, we have $w\not\in I(x_1,v)\cup I(v,x_2)$. This shows $(3)\Ra (1)$. 
\end{proof}

We refer to a point $v\in T$ satisfying the equivalent conditions in Lemma~\ref{2-cuts} as a \emph{cut point} for $T$.

\begin{defn}\label{opposite defn}
Given $x,y,z\in\overline X$, we say that $x$ and $y$ are \emph{opposite} with respect to $z$ (written $x\op_z y$) if the point $m=m(x,y,z)$ lies in $X$ and it is a cut point for $I(x,y)\cap X$. Equivalently, $I(x,y)=I(x,m)\cup I(m,y)$.
\end{defn}

The terminology above is inspired by the case when $z$ coincides with the median $m(x,y,z)\in X$. We are however mostly interested in the general situation where the points $x$, $y$ and $z$ lie in $\partial X$. 

\begin{lem}\label{one is zero}
Consider points $x,y,z\in\overline X$ with $x\op_z y$ and $m=m(x,y,z)$. Then, given any $w\in\partial X$, we have either $(x\cdot w)_m=0$ or $(y\cdot w)_m=0$. 
\end{lem}
\begin{proof}
Since $I(x,y)=I(x,m)\cup I(m,y)$, the gate-projection $n:=m(x,y,w)$ falls either in $I(m,y)$ or in $I(x,m)$. If $n\in I(m,y)$, we have: 
\[\mscr{W}(w|m)=\mscr{W}(w|n)\sqcup\mscr{W}(n|m)\cu\mscr{W}(w|x,y)\cup\mscr{W}(w|m,x)\cu\mscr{W}(w|x).\] 
Hence $\mscr{W}(m|w,x)=\emptyset$ and $(x\cdot w)_m=0$. If instead if $n\in I(x,m)$, we similarly obtain $(y\cdot w)_m=0$. 
\end{proof}

As will become clear in Section~\ref{from OR sect}, opposite points are key to our proof of Theorem~\ref{ext Moeb}. The rest of this subsection is devoted to showing that M\"obius bijections take triples of opposite points to triples with the same property.

\begin{prop}\label{new op vs lcrt}
Let $X$ be locally finite. Consider an ample, concise subset $\mc{A}\cu\partial X$ and pair\-wise-distinct points $x_1,x_2,y\in \mc{A}$. The condition $x_1\op_y x_2$ fails if and only if there exists $z\in \mc{A}$ such that ${\crt(x_1,x_2,y,z)=\ll a:b:c\rr}$ with $a<\min\{b,c\}$ and $\max\{b,c\}<+\infty$.
\end{prop}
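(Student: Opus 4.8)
The plan is to characterise the failure of $x_1\op_y x_2$ combinatorially, in terms of the hyperplane set of $T=X\cap I(x_1,x_2)$, and then translate that into a statement about cross-ratio triples with a witnessing point $z\in\mc A$. Let $m=m(x_1,x_2,y)$; by conciseness $m\in X$. Recall from Lemma~\ref{2-cuts} and Definition~\ref{opposite defn} that $x_1\op_y x_2$ means precisely that $m$ is a cut point for $T$, equivalently that $\mscr W_m\cap\mscr W(T)$ splits as two transverse cliques $\mscr W(m|x_1)\cap\mscr W_m$ and $\mscr W(m|x_2)\cap\mscr W_m$. So $x_1\op_y x_2$ \emph{fails} exactly when there are hyperplanes $\mf w_1\in\mscr W(m|x_1)\cap\mscr W_m$ and $\mf w_2\in\mscr W(m|x_2)\cap\mscr W_m$ that are transverse; equivalently (as in the last paragraph of the proof of Lemma~\ref{2-cuts}) there is a vertex $w\in T$ with $\mscr W(m|w)=\{\mf w_1,\mf w_2\}$, so that $w\notin I(x_1,m)\cup I(m,x_2)$.

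First I would handle the ``only if'' direction. Assume $x_1\op_y x_2$ fails and pick such transverse $\mf w_1,\mf w_2$ and the associated vertex $w$. Let $\mf h_i$ be the side of $\mf w_i$ containing $m$ (hence containing $y$, since $\mf w_i\in\mscr W(m|x_i)$ and $x_i$ is on the other side, while $y$ lies with $m$). Note $\mf h_1,\mf h_2,\mf h_1^*,\mf h_2^*$ are pairwise transverse-compatible, and the four corner intersections are nonempty. I want $z\in\mc A$ with $z\in\mf h_1^*\cap\mf h_2^*$ — this uses ampleness of $\mc A$ applied to the transverse pair $\mf h_1^*,\mf h_2^*$. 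For such $z$: $\mf w_1$ separates $\{x_1,z\}$ from $\{x_2,y\}$ and $\mf w_2$ separates $\{x_2,z\}$ from $\{x_1,y\}$, so both $\mscr W(x_1,z\mid x_2,y)$ and $\mscr W(x_2,z\mid x_1,y)$ are nonempty, whereas I must arrange $\mscr W(x_1,x_2\mid y,z)$ to be \emph{strictly smaller}. The cleanest route is to choose $z$ more carefully inside a strongly separated chain: take a strongly separated chain in $\s_z$ descending past $\mf h_1^*\cap\mf h_2^*$ toward $z$, which forces $\mscr W(y\mid z)$ to eventually be disjoint from $\mscr W_m$ and in fact makes $\mscr W(x_1,x_2\mid y,z)$ controlled. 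Here I would use the Flipping/Double-Skewering machinery exactly as in the proof of Lemma~\ref{ample sets}(1) to push a suitable halfspace of the chain inside $\mf h_1^*\cap\mf h_2^*$; the point is that $\mscr W(x_1,x_2\mid y,z)\subseteq\mscr W(m\mid z)$ minus the hyperplanes $\mf w_1,\mf w_2$ (which are in the other two sets), giving $a=\#\mscr W(x_1,x_2\mid y,z)<\min\{\#\mscr W(x_1,z\mid x_2,y),\#\mscr W(x_2,z\mid x_1,y)\}=\min\{b,c\}$. Finiteness of $b,c$ follows because $\mc A$ is concise: $b,c$ are dominated by $(x_i\cdot z)_m<\infty$ since $I(x_i,z)\cap X\ni m$ (or an easy variant).

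Next, the ``if'' direction: suppose $z\in\mc A$ gives $\crt(x_1,x_2,y,z)=\ll a:b:c\rr$ with $a<\min\{b,c\}$, $\max\{b,c\}<\infty$. Here $a=\#\mscr W(x_1,x_2\mid y,z)$, $b=\#\mscr W(x_1,y\mid x_2,z)$, $c=\#\mscr W(x_1,z\mid x_2,y)$. I claim $x_1\op_y x_2$ fails. Suppose not; then $m$ is a cut point, so $\mscr W(T)=\mscr W(m\mid x_1)\sqcup\mscr W(m\mid x_2)$ with these two families transverse, and moreover $\mscr W(x_1\mid x_2)$ partitions this way. One shows $\mscr W(x_1,z\mid x_2,y)$ and $\mscr W(x_1,y\mid x_2,z)$ each embed into $\mscr W(x_1\mid x_2)$ in complementary ``halves'' (via intersecting with $\mscr W(m\mid x_1)$ vs $\mscr W(m\mid x_2)$), while $\mscr W(x_1,x_2\mid y,z)$ is disjoint from $\mscr W(x_1\mid x_2)$ entirely. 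Tracking where $\pi_{I(x_1,x_2)}(z)$ lands — in $I(m,x_1)$ or $I(m,x_2)$ by the cut-point property — one of the two sets $\mscr W(x_1,y\mid x_2,z)$, $\mscr W(x_1,z\mid x_2,y)$ is forced to be contained in (a translate of) $\mscr W(x_1,x_2\mid y,z)$ together with the short segment $\mscr W(m\mid \pi(z))$, yielding $\min\{b,c\}\le a$, contradicting $a<\min\{b,c\}$. Making this last inequality precise — i.e.\ the exact bookkeeping of which hyperplanes lie in which of the three pairwise-transverse families once $m$ is a cut point, and how the gate-projection of $z$ interacts with it — is the main obstacle, and I expect it to be the most delicate part of the argument; conciseness and ampleness of $\mc A$ are used only to guarantee medians stay in $X$ and to produce the witness $z$, but the core is this purely combinatorial hyperplane count in $I(x_1,x_2)$.
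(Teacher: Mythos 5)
There is a genuine gap, and it sits in the direction that carries all the content: producing the witness $z$ when $x_1\op_y x_2$ fails. Your choice of $z\in\mc{A}\cap\mf{h}_1^*\cap\mf{h}_2^*$ only guarantees $\mf{w}_1\in\mscr{W}(x_1,z|x_2,y)$ and $\mf{w}_2\in\mscr{W}(x_2,z|x_1,y)$, i.e.\ $b,c\geq 1$; it gives no control on $a=\#\mscr{W}(x_1,x_2|y,z)=(y\cdot z)_m$, which can easily be $\geq\min\{b,c\}$ (there may be hyperplanes separating $m$ from $y$ that also cut deep into the corner $\mf{h}_1^*\cap\mf{h}_2^*$ and end up separating $m$ from your $z$). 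Your proposed repair does not work as stated: the Flipping/Double-Skewering arguments you invoke (as in Lemma~\ref{ample sets}) require an essential, non-elementary group action, which is not among the hypotheses of this proposition --- ampleness of $\mc{A}$ is precisely the substitute for such an action; moreover $\mc{A}$ is only assumed to be an ample, concise subset of $\partial X$, so its points need not be regular and ``a strongly separated chain in $\s_z$'' is not available; and even granting such a $z$, you never prove the inequality $a<\min\{b,c\}$, you only assert it. The paper's construction is different and is where local finiteness (which you never use) enters: pass to the vertex $m'$ opposite $m$ across the square spanned by $\mf{w}_1,\mf{w}_2$, note that $\mscr{W}(m'|y)\cap\mscr{W}_{m'}$ is a finite family of pairwise transverse hyperplanes containing $\mf{w}_1,\mf{w}_2$, apply ampleness to the sides containing $m'$ of \emph{all} of them to get $z$, and then Lemma~\ref{straight rays 2} forces $(y\cdot z)_{m'}=0$, hence $(y\cdot z)_m=0$; with $\mf{w}_i\in\mscr{W}(m|x_i,z)$ this gives $\crt(x_1,x_2,y,z)=\ll 0:b:c\rr$ with $1\leq b,c<\infty$ (finiteness by conciseness, as you say).

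The converse direction you flag as ``the main obstacle'' is in fact the easy one, and your sketch of it is both vaguer than needed and slightly off target (the phrase ``a translate of'' has no meaning here, and the bookkeeping you describe is never carried out). If $x_1\op_y x_2$ holds, then for \emph{any} $z$, Remark~\ref{one is zero} applied at $m=m(x_1,x_2,y)$ gives $\min\{(x_1\cdot z)_m,(x_2\cdot z)_m\}=0$; since $\crt(x_1,x_2,y,z)=\ll(y\cdot z)_m:(x_2\cdot z)_m:(x_1\cdot z)_m\rr$ and Gromov products are nonnegative, $a<\min\{b,c\}$ is impossible. So this half is two lines once you quote the right remark; the half you treated as routine is the one that needs the careful choice of $z$ described above.
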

\begin{proof}
Since $\mc{A}$ is concise, the point $m=m(x_1,x_2,y)$ lies in $X$ by Lemma~\ref{infinite Gromov product}. We have:
\[\crt_m(x_1,x_2,y,z)=\big\ll (y\cdot z)_m:(x_2\cdot z)_m:(x_1\cdot z)_m\big\rr.\] 
If $x_1\op_y x_2$ and $z\in\partial X$, Lemma~\ref{one is zero} yields $\min\{(x_2 \cdot z)_m,(x_1 \cdot z)_m\}=0$ and we cannot have $(y\cdot z)_m<0$.

Consider instead the case when $x_1$ and $x_2$ are not opposite with respect to $y$. There exist transverse hyperplanes $\mf{w}_i\in\mscr{W}(m|x_i)\cap\mscr{W}_m$. Denoting by $m'\in X$ the point with ${\mscr{W}(m|m')=\{\mf{w}_1,\mf{w}_2\}}$, the hyperplanes $\mf{w}_1$ and $\mf{w}_2$ lie in $\mscr{W}(m'|y)\cap\mscr{W}_{m'}$. This set is finite, as $X$ is locally finite. Let $\mf{h}_1,...,\mf{h}_k$ be the sides containing $m'$ of the elements of $\mscr{W}(m'|y)\cap\mscr{W}_{m'}$.

As $\mc{A}$ is ample, there exists a point ${z\in \mc{A}\cap\mf{h}_1\cap...\cap\mf{h}_k}$. This property and Lemma~\ref{straight rays 2} imply that $(y\cdot z)_m=0$. The fact that $\mc{A}$ is concise guarantees that the Gromov products $(x_1\cdot z)_m$ and $(x_2\cdot z)_m$ are finite. Observing that $\mf{w}_i\in\mscr{W}(m|x_i,z)$, we moreover have $(x_i\cdot z)_m\geq 1$. Since ${\crt(x_1,x_2,y,z)=\ll 0:(x_2\cdot z)_m:(x_1\cdot z)_m\rr}$, this concludes the proof.
\end{proof} 

\begin{cor}\label{op-preserving}
Let $X$ and $Y$ be locally finite. Consider ample, concise subsets $\mc{A}\cu\partial X$ and $\mc{B}\cu\partial Y$ and a M\"obius bijection $f\colon\mc{A}\ra\mc{B}$. For all $x,y,z\in \mc{A}$, we have $x\op_z y\Leftrightarrow f(x)\op_{f(z)}f(y)$.
\end{cor}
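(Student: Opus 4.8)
\textbf{Proof plan for Corollary~\ref{op-preserving}.}

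The statement is essentially a purely combinatorial/characterisational consequence of Proposition~\ref{new op vs lcrt}, so the plan is simply to \emph{transport the characterisation through the M\"obius bijection}. First I would record that, since $f\colon\mc{A}\ra\mc{B}$ is a M\"obius bijection, both $f$ and $f^{-1}$ preserve cross-ratio triples: for all $4$--tuples $(x,y,z,w)\in\mscr{A}(X)\cap\mc{A}^4$ one has $\crt(f(x),f(y),f(z),f(w))=\crt(x,y,z,w)$, and symmetrically for $f^{-1}$ on $4$--tuples in $\mscr{A}(Y)\cap\mc{B}^4$. In particular, since $\mc{A}$ and $\mc{B}$ are concise, every $4$--tuple of pairwise-distinct points of $\mc{A}$ (resp.\ $\mc{B}$) lies in $\mscr{A}(X)$ (resp.\ $\mscr{A}(Y)$) by Lemma~\ref{infinite Gromov product} together with Remark~\ref{regular is concise}-type reasoning, so the cross-ratio-triple identity is available for exactly the tuples that occur in the criterion of Proposition~\ref{new op vs lcrt}.

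Next, fix pairwise-distinct $x,y,z\in\mc{A}$ (the cases where two of them coincide are vacuous, since $\op$ requires distinctness and $f$ is injective). By Proposition~\ref{new op vs lcrt} applied in $X$ — legitimate because $\mc{A}$ is ample and concise — the relation $x\op_z y$ \emph{fails} if and only if there is $w\in\mc{A}$ with $\crt(x,y,z,w)=\ll a:b:c\rr$ where $a<\min\{b,c\}$ and $\max\{b,c\}<+\infty$. Applying Proposition~\ref{new op vs lcrt} in $Y$ — legitimate because $\mc{B}$ is ample and concise — the relation $f(x)\op_{f(z)}f(y)$ fails if and only if there is $w'\in\mc{B}$ with $\crt(f(x),f(y),f(z),w')=\ll a':b':c'\rr$ of the same shape $a'<\min\{b',c'\}$, $\max\{b',c'\}<+\infty$. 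Since $f$ is a bijection $\mc{A}\ra\mc{B}$ and $\crt(f(x),f(y),f(z),f(w))=\crt(x,y,z,w)$ for every $w\in\mc{A}$, the witness $w$ for the failure in $X$ produces the witness $w'=f(w)$ for the failure in $Y$, and conversely $f^{-1}$ carries a witness $w'$ for the failure in $Y$ back to a witness $w=f^{-1}(w')$ in $X$ with the identical cross-ratio triple. Hence ``$x\op_z y$ fails'' $\Leftrightarrow$ ``$f(x)\op_{f(z)}f(y)$ fails'', which is the contrapositive of the desired equivalence; negating both sides gives $x\op_z y\Leftrightarrow f(x)\op_{f(z)}f(y)$.

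There is no real obstacle here: the only points needing a line of care are (i) checking that the relevant $4$--tuples lie in $\mscr{A}(X)$ and $\mscr{A}(Y)$ so that $\crt$ and the M\"obius property apply — this is where conciseness of $\mc{A}$ and $\mc{B}$ is used — and (ii) noting that the shape condition ``$a<\min\{b,c\}$ and $\max\{b,c\}<+\infty$'' on the equivalence class $\ll a:b:c\rr$ is preserved verbatim under equality of cross-ratio triples, since it only refers to the ordered pattern of the three (finite or infinite) entries, which is an invariant of the class $\ll\cdot:\cdot:\cdot\rr$. Everything else is the tautology that a bijection carrying one characterising condition to another equivalent one preserves the characterised relation.
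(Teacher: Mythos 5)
Your argument is correct and is exactly the deduction the paper intends: Corollary~\ref{op-preserving} is stated as an immediate consequence of Proposition~\ref{new op vs lcrt}, obtained by transporting the witness characterisation through $f$ and $f^{-1}$ (both M\"obius), using that conciseness places the relevant $4$--tuples in $\mscr{A}(X)$ and $\mscr{A}(Y)$ and that the shape condition on $\ll a:b:c\rr$ is a class invariant. Nothing is missing.
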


\subsection{Proof outline.}\label{general setup sect}

We begin by introducing a few key notions.

\begin{defn}\label{SL defn}
A geodesic $\g\cu X$ is \emph{straight} if no two hyperplanes in $\mscr{W}(\g)$ are transverse. An oriented segment $\alpha\cu X$ is \emph{straight-looking (SL)} if no hyperplane in $\mscr{W}(\alpha)$ is transverse to the first hyperplane crossed by $\alpha$. A ray $r\cu X$ is \emph{regular} if $r^+\in\partial_{\rm reg}X$. Finally, a ray $r\cu X$ is \emph{straight-looking regular (SLR)} if it is straight-looking and regular. 
\end{defn}

Despite the many requirements, SLR rays always exist.

\begin{lem}\label{SLR rays exist}
If $\partial_{\rm reg}X\neq\emptyset$, there exists an SLR ray $r\cu X$.
\end{lem}
\begin{proof}
Pick a hyperplane $\mf{u}$ and a point $x\in\partial_{\rm reg}X$. Let $\overline x$ denote the gate-projection of $x$ to the carrier of $\mf{u}$; by regularity of $x$, the point $\overline x$ lies inside $X$. Let $w\in X$ be the only vertex with $\mscr{W}(w|\overline x)=\{\mf{u}\}$. Then any ray from $w$ to $x$ is SLR.
\end{proof}

To avoid confusion, we will speak of \emph{elements} of a link $\llk v$ with the implicit assumption that they are actually vertices of said link.

\begin{defn}\label{SLR-ext defn}
An oriented edge $e\cu X$ is \emph{SLR-extendable} if it is the initial edge of an SLR ray. Given a vertex $v\in X$, we say that an element $e\in\llk v$ is \emph{SLR-extendable} if the corresponding edge of $X$ is SLR-extendable (when oriented away from $v$).
\end{defn} 

\begin{defn}\label{OR defn}
A vertex $v\in X$ is an \emph{OR median} (OR stands for \emph{`opposite regular'}) if there exist $x,y,z\in\partial_{\rm reg}X$ with $m(x,y,z)=v$ and $x\op_zy$. We denote by $\mf{M}(X)\cu X$ the subset of OR medians.
\end{defn}

Not every vertex is an OR median in general, even when $\partial_{\rm reg}X$ is infinite. For instance, if $X$ is obtained by barycentrically subdividing a cube complex $X_0$, every OR median in $X$ is a vertex of $X_0$. In fact, it is not even clear whether `nicer' cube complexes should always contain such medians. 

The next lemma is our main source of OR medians. Recall from Section~\ref{ccc prelims} that $\llk v$ is endowed with a metric $\delta$ (possibly attaining the value $+\infty$).

\begin{lem}\label{easy OR}
If $\llk v$ contains pairwise-distinct, SLR-extendable elements $e_1,e_2,e_3$ with $\delta(e_1,e_2)\geq 2$, then $v$ is an OR median.
\end{lem}
\begin{proof}
Extend each $e_i$ to an SLR ray $r_i\cu X$. The points $r_i^+$ are all regular and, by Lemma~\ref{straight rays 2}, their median is $v$. Since $\delta(e_1,e_2)\geq 2$, the ball of radius $1$ around $v$ within $I(r_1^+,r_2^+)$ is precisely $e_1\cup e_2$. This implies that $v$ is a cut point for $I(r_1^+,r_2^+)$, hence $r_1\op_{r_3}r_2$.
\end{proof}

Theorem~\ref{ext Moeb} will be proved by piecing together three results. The first is the following, which will be obtained in Section~\ref{OR exist sect}.

\begin{thm}\label{OR exist thm}
Let $X$ be irreducible and with no free faces. Suppose that there exists a properly discontinuous, cocompact group action $G\acts X$. If $X$ is neither a single point nor $\R$, then $\mf{M}(X)\neq\emptyset$.
\end{thm}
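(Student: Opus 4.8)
\textbf{Proof proposal for Theorem~\ref{OR exist thm}.}

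The plan is to produce the required triple $x,y,z\in\partial_{\rm reg}X$ with $m(x,y,z)=v$ and $x\op_z y$ at some well-chosen vertex $v$, and the cleanest route is via Remark~\ref{easy OR}: it suffices to exhibit a vertex $v$ whose link $\llk v$ contains three pairwise-distinct SLR-extendable elements $e_1,e_2,e_3$ with $\delta(e_1,e_2)\geq 2$. So the whole problem reduces to two sub-tasks: (a) show that SLR-extendable elements are plentiful in links — ideally, that \emph{every} element of \emph{every} link is SLR-extendable, or at least that enough of them are — and (b) find a vertex whose link is ``wide enough'' that two SLR-extendable elements sit at $\delta$-distance $\geq 2$, together with a third distinct one.

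For (a), the key inputs are the no-free-faces hypothesis and the presence of a uniform lattice $G\leq\Aut(X)$ (so $X$ is cocompact), which give essentiality, hyperplane-essentiality, and sector-heaviness via Proposition~\ref{my corner}. The idea is to start from an edge $e$ at $v$ crossing a hyperplane $\mf{w}$, and greedily build a straight-looking ray: having reached a vertex after crossing a chain of hyperplanes none transverse to $\mf{w}$, I want to continue across a new hyperplane still not transverse to $\mf{w}$, and moreover eventually produce a \emph{strongly separated} chain so that the endpoint is regular (Lemma~\ref{single point in ss chain}, Definition~\ref{regular defn}). Here I would use the Flipping/Double-Skewering machinery of \cite{CS} together with sector-heaviness and the existence of strongly separated halfspaces deep inside any halfspace (the standard consequence of irreducibility + essentiality that strongly separated pairs exist, cf.\ the facts quoted around Lemma~\ref{single point in ss chain}) to ``turn'' the ray into a strongly separated chain while keeping it straight-looking. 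The no-free-faces condition is what guarantees we never get stuck: at every vertex of the partially-built ray there is room to extend, since a free face is precisely the local obstruction to geodesic extension. I expect this argument to show that every oriented edge is SLR-extendable, which makes (a) automatic.

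For (b), once every element of every link is SLR-extendable, I need a single vertex $v$ with $\#\llk^0 v \geq 3$ and two elements at $\delta$-distance $\geq 2$ — equivalently, $\llk v$ should not be a single clique (i.e.\ $v$ is not in the interior of a maximal cube, which holds for all vertices since $X$ has no free faces and is not a point or $\R$) and should have at least three vertices and not be ``too connected.'' If $\delta(e,f)\leq 1$ for all $e,f\in\llk v$ then $\llk v$ is a clique, so $v$ lies in a unique maximal cube $c$ with $v$ a vertex of $c$; but then the opposite vertex of $c$ (or a neighbouring configuration) exhibits a free face unless $X$ is a single cube, and a single cube is reducible — contradiction with irreducibility. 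So some link is not a clique. Getting a \emph{third} distinct SLR-extendable element and handling the degenerate low-dimensional cases ($X=\R$, $X$ a single edge, products of two copies of $\R$, etc.) is where I would need to use irreducibility and essentiality carefully: essentiality forces $\deg(v)\geq 2$ everywhere, irreducibility rules out $X$ being a nontrivial product (in particular $X\neq\R\times\R$ as a relevant edge case), and the exclusion of $X=\R$ and $X=\{\mathrm{pt}\}$ is exactly the hypothesis. I would argue that under all these constraints some vertex has a link with $\geq 3$ vertices and two of them at $\delta$-distance $\geq 2$, then invoke Remark~\ref{easy OR}.

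The main obstacle I anticipate is sub-task (a): carefully showing that one can simultaneously (i) keep a ray straight-looking, (ii) extend it indefinitely using no-free-faces, and (iii) arrange a strongly separated subchain so that the limit point is \emph{regular}. The regularity requirement is the delicate part, since straight-looking only controls transversality with the \emph{first} hyperplane, whereas a strongly separated chain requires consecutive halfspaces to admit no common transverse hyperplane at all; reconciling these will require a somewhat intricate inductive construction, alternating ``straight extension'' steps with ``insert a strongly separated halfspace'' steps provided by irreducibility and the skewering lemmas of \cite{CS}, while checking at each stage that straight-lookingness with respect to the initial hyperplane is preserved. I would expect the actual paper to isolate this as one or two technical lemmas (existence of SLR rays out of every edge) and then deduce Theorem~\ref{OR exist thm} quickly from Remark~\ref{easy OR} plus a short case analysis on links.
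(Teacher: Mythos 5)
The decisive gap is in your sub-task (a): under the stated hypotheses it is simply not true that every oriented edge is SLR-extendable, and the ``delicate'' regularity issue you flag is exactly where the greedy construction breaks, not a technicality. Concretely, let $X_0$ be an irreducible, locally finite, cocompact square complex with no free faces (e.g.\ a Bourdon building with quadrilateral chambers) and let $X=X_0'$ be its barycentric subdivision; $X$ satisfies all hypotheses of Theorem~\ref{OR exist thm}. Hyperplanes of $X$ are ``quarter-hyperplanes'' indexed by pairs (original hyperplane, side), and two of them are transverse exactly when the original hyperplanes are. Let $v$ be the centre of an original square, crossed by transverse originals $\mf{w}_x$ and $\mf{w}_y$; the four edges at $v$ are dual to the four quarters. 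Any straight-looking ray issuing from $v$ across a quarter of $\mf{w}_x$ can never cross either quarter of $\mf{w}_y$ (both are transverse to the first hyperplane), so it stays forever in the slab between the two quarters of $\mf{w}_y$. But no point in the closure of that slab is regular: in a strongly separated chain inside $\s_x$, halfspaces containing the whole (unbounded) slab can occur only along an initial segment by the uniqueness in Lemma~\ref{single point in ss chain}, so eventually every bounding hyperplane crosses the slab, i.e.\ has original transverse to $\mf{w}_y$ --- and then a quarter of $\mf{w}_y$ is transverse to two consecutive halfspaces, contradicting strong separation. Hence none of the four edges at $v$ is SLR-extendable (consistently with the paper's remark that OR medians of a subdivision are original vertices; note that if three of them were SLR-extendable, Remark~\ref{easy OR} would make $v$ an OR median). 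This also shows that ``no free faces $\Rightarrow$ we never get stuck'' conflates geodesic extension with SLR extension: in the example you can extend the straight-looking ray indefinitely, yet its endpoint is never regular, and no amount of local flipping/skewering repairs this, because the obstruction is global.

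Because of this, your step (b) has nothing to feed on: you would need SLR-extendable elements at a vertex with the right link configuration, and your proposal gives no mechanism for locating one. This is precisely what the paper's argument supplies, and where the uniform-lattice hypothesis is genuinely used. The paper first proves a dichotomy (Lemma~\ref{straight SLR ray or}): either $X$ contains a straight regular ray --- in which case an OR median is produced directly from a strongly separated halfspace, bypassing Remark~\ref{easy OR} altogether --- or there is a specific vertex $v$, a hyperplane $\mf{w}\in\mscr{W}_v$ and a regular point $x$ with $\#\pi_v(x)\geq 2$ and no element of $\pi_v(x)$ transverse to $\mf{w}$; these elements then lie in a single irreducible component $L\cu\llk v$ with $\#L\geq 3$, and SLR-extendability is proved \emph{only for the elements of that $L$}, via the roaming-components machinery (Lemma~\ref{semi-straight+}, Propositions~\ref{roaming components 1} and~\ref{colouring}, Lemma~\ref{roaming components 2}, Theorem~\ref{roaming components 3}). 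That machinery uses cocompactness of hyperplane stabilisers and, in Proposition~\ref{colouring}, properness/discreteness of the lattice together with neatly contracting elements from Lemma~\ref{cNT RW}; your sketch invokes the lattice only through sector-heaviness, which is far from sufficient. So the proposal, as written, does not yield the theorem; repairing it would amount to reconstructing the paper's dichotomy and the roaming-components results rather than a greedy extension argument.
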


Then, in Section~\ref{from OR sect}, we will prove:

\begin{thm}\label{from OR thm}
Let $X,Y$ be locally finite and $\mc{A}\cu\partial_{\rm reg}X$, $\mc{B}\cu\partial_{\rm reg}Y$ ample. 
\begin{enumerate}
\item Every M\"obius bijection ${f\colon \mc{A}\ra\mc{B}}$ determines a distance-preserving bijection $\phi\colon\mf{M}(X)\ra\mf{M}(Y)$. 
\item If $f$ is equivariant with respect to actions of a group $\G$, so is $\phi$.
\item The map $\phi$ extends $f$ in the following sense: if vertices $v_n\in\mf{M}(X)$ converge to a point $w\in\mc{A}$, we have $\phi(v_n)\ra f(w)$.
\item If $\Phi\colon X\ra Y$ is an isomorphism whose natural extension to $\partial X$ restricts to $f$ on $\mc{A}$, then $\Phi$ and $\phi$ coincide on $\mf{M}(X)$.
\end{enumerate}
\end{thm}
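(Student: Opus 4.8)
The plan is to prove Theorem~\ref{from OR thm} by constructing $\phi$ directly from medians of opposite regular triples, then checking each of the four itemized claims.

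\textbf{Construction of $\phi$ and well-definedness (item (1)).} Given $v\in\mf{M}(X)$, by definition there exist $x,y,z\in\mc{A}\cu\partial_{\rm reg}X$ with $m(x,y,z)=v$ and $x\op_z y$. First I would observe that, since $\mc{A}$ is ample it is automatically concise (Lemma~\ref{ample sets}, or directly Remark~\ref{regular is concise}), so Corollary~\ref{op-preserving} applies: $f(x)\op_{f(z)}f(y)$, and by Remark~\ref{regular is concise} applied in $Y$ the median $m(f(x),f(y),f(z))$ lies in $Y$. I would set $\phi(v):=m(f(x),f(y),f(z))$. The crux is showing this does not depend on the chosen triple $(x,y,z)$. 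Here I would use the cross ratio: for a fourth point $w\in\mc{A}$, the cross-ratio triple $\crt(x,y,z,w)$ records $\#\mscr{W}(x,y|z,w)$, $\#\mscr{W}(x,z|y,w)$, $\#\mscr{W}(x,w|y,z)$ up to a common shift, and since $x\op_z y$ one of these (the one not ``straddling'' $v$) vanishes by Remark~\ref{one is zero}. The quantities $d(v,w')$ for $w'$ ranging over $\mc{A}$, or more precisely the ``coordinates'' $(x\cdot w)_v$, $(y\cdot w)_v$, can thus be recovered from cross ratios involving only $x,y,z,w$; since $f$ preserves $\crt$ and $\op$, the analogous data at $\phi(v)$ agrees. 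Concretely: if $(x,y,z)$ and $(x',y',z')$ are two opposite regular triples both with median $v$, I would show that applying $f$ to the union $\{x,y,z,x',y',z'\}$ and using the $\crt$-preservation forces $m(f(x),f(y),f(z))=m(f(x'),f(y'),f(z'))$, because both equal the unique point of $Y$ realizing the (cross-ratio-determined) separation pattern relative to $f$ of the six boundary points. This also shows $\phi$ is distance-preserving on $\mf{M}(X)$: for $v,v'\in\mf{M}(X)$ with respective triples, $d(v,v')=\#\mscr{W}(v|v')$ is expressible via Gromov products $(x\cdot -)_{(\cdot)}$ of the six boundary points, hence via cross ratios, hence preserved. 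Applying the same argument to $f^{-1}$ gives a two-sided inverse, so $\phi$ is a distance-preserving bijection $\mf{M}(X)\to\mf{M}(Y)$.

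\textbf{Equivariance (item (2)) and extension of $f$ (item (3)).} Equivariance is immediate: if $f(\g\xi)=\g f(\xi)$ for all $\xi\in\mc{A}$ and $v=m(x,y,z)$, then $\g v=m(\g x,\g y,\g z)$ (as $\G$ acts by cubical automorphisms, preserving medians and $\partial_{\rm reg}X$ and $\op$), so $\phi(\g v)=m(f(\g x),f(\g y),f(\g z))=\g\,m(f(x),f(y),f(z))=\g\phi(v)$. For item~(3), suppose $v_n\in\mf{M}(X)$ with $v_n\to w\in\mc{A}$ in $\overline X$. I would pick, for each $n$, an opposite regular triple $(x_n,y_n,z_n)$ with median $v_n$. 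The goal is $\phi(v_n)=m(f(x_n),f(y_n),f(z_n))\to f(w)$. The idea: since $w\in\partial_{\rm reg}X$, it is represented by a strongly separated chain $\mf{h}_0\supsetneq\mf{h}_1\supsetneq\cdots$; for each $k$, eventually $v_n\in\mf{h}_k$, and I would want to show the triple can be arranged (or the median argument shows) that one of $x_n,y_n,z_n$ lies in $\mf{h}_k$ ``past'' $v_n$, forcing $f$ of that point into the corresponding halfspace of $Y$ determined by the $f$-image configuration, and hence $\phi(v_n)$ into deeper and deeper halfspaces converging to $f(w)$. This is the step where I would need to combine conciseness, the opposite condition, and continuity of the cross ratio (Proposition~\ref{continuity}) carefully — it is essentially a limiting argument feeding $v_n$-based cross-ratio data into $f$.

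\textbf{Compatibility with an isomorphism (item (4)) and the main obstacle.} Item~(4) is the easiest: if $\Phi\colon X\to Y$ is a cubical isomorphism restricting to $f$ on $\mc{A}$, then $\Phi$ extends to $\overline X\to\overline Y$ preserving medians, so for $v=m(x,y,z)\in\mf{M}(X)$ we get $\Phi(v)=m(\Phi x,\Phi y,\Phi z)=m(f(x),f(y),f(z))=\phi(v)$. I expect the main obstacle to be the well-definedness in item~(1): the subtlety flagged in the introduction (Figure~\ref{spanning a cube}) is precisely that the six cross ratios of a $4$-tuple do not determine the three separation cardinalities, only their differences, so one genuinely needs the \emph{opposite} hypothesis $x\op_z y$ to kill one of the three transverse families and pin down the median. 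Making this rigorous — i.e.\ proving that the cross-ratio data of $\{x,y,z,x',y',z'\}$ together with the opposite conditions uniquely reconstructs the separation pattern of $\{f(x),\dots,f(z')\}$ relative to a single vertex of $Y$ — is the technical heart, and I would organize it around the observation (Remark~\ref{one is zero}) that, with $x\op_z y$ and $m=m(x,y,z)$, every further point $w$ has $\min\{(x\cdot w)_m,(y\cdot w)_m\}=0$, so the Gromov products at $m$ become genuine cross-ratio differences rather than sums of unknowns.
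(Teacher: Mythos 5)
Your route is essentially the paper's: define $\phi(v)=m(f(x),f(y),f(z))$ for an opposite triple, use Corollary~\ref{op-preserving} and Remark~\ref{one is zero} so that Gromov products based at the median become honest cross-ratio data (this is Lemmas~\ref{order is preserved} and~\ref{more preserved products}), deduce well-definedness and distance preservation from a formula for $d(m_x,m_y)$ in those Gromov products (Proposition~\ref{d(mx,my)}), invert via $f^{-1}$, and note that (2) and (4) are formal. However, two steps as you state them have genuine gaps. First, you claim that for $v\in\mf{M}(X)$ there exist \emph{by definition} points $x,y,z\in\mc{A}$ with $m(x,y,z)=v$ and $x\op_z y$. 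That is not the definition: Definition~\ref{OR defn} only provides a triple in $\partial_{\rm reg}X$, whereas $f$ is defined only on $\mc{A}$, which may be much smaller (in the application it is a single $\G$--orbit). You must first prove $\mf{M}(X)=\mf{M}_{\mc{A}}(X)$, which is the paper's Lemma~\ref{an orbit suffices}: local finiteness makes $\mscr{W}_v$ finite, a strongly separated chain through each of $x,y,z$ eventually lies on the same side of every hyperplane of $\mscr{W}_v$ as the point itself, and ampleness of $\mc{A}$ then lets you replace $x,y,z$ by points of $\mc{A}$ in those halfspaces without changing the median or the opposition relation. The same statement in $Y$ is also what guarantees that your bijection hits all of $\mf{M}(Y)$ rather than only the a priori smaller set of medians of opposite triples from $\mc{B}$; without it, item~(1) is not established. (Also, ampleness does not imply conciseness; conciseness of $\mc{A}$ comes from $\mc{A}\cu\partial_{\rm reg}X$ and Remark~\ref{regular is concise}, as your backup citation indicates.)

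Second, your sketch of item~(3) relies on ``forcing $f$ of that point into the corresponding halfspace of $Y$ determined by the $f$-image configuration'', but $f$ induces no correspondence of halfspaces --- producing one is essentially what the whole theorem is for --- so this step would fail as described. The argument has to be numerical: fix $v_0\in\mf{M}(X)$, write $a_n\leq b_n\leq c_n$ for the ordered values of $(x_n\cdot w)_{v_0}$, $(y_n\cdot w)_{v_0}$, $(z_n\cdot w)_{v_0}$, and observe via a strongly separated chain at $w$ that $v_n\ra w$ if and only if, for each halfspace of the chain, eventually at least \emph{two} of the three points $x_n,y_n,z_n$ lie in it (not one, as in your sketch), which is equivalent to $b_n\ra+\infty$; since these Gromov products are preserved by Lemma~\ref{more preserved products}, the identical criterion in $Y$ gives $\phi(v_n)\ra f(w)$. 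With Lemma~\ref{an orbit suffices} supplied and item~(3) rerouted through this criterion, the rest of your outline matches the paper's proof.
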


Finally, the third step in the proof of Theorem~\ref{ext Moeb} is Theorem~\ref{ext partial iso thm} from the introduction. Section~\ref{ext partial iso sect} will be devoted to a proof of the latter. 

Using these results:

\begin{proof}[Proof of Theorem~\ref{ext Moeb}]
Let $\mc{A}$, $\mc{B}$ and $\G$ be as in the statement of the theorem. Since the $\G$--actions are non-elementary, $X$ and $Y$ are neither single points nor $\R$; moreover, by Lemma~\ref{minimal vs essential}, the actions are essential. Theorem~\ref{OR exist thm} guarantees that $\mf{M}(X)$ and $\mf{M}(Y)$ are nonempty. Proposition~\ref{my corner} and Lemma~\ref{ample sets} show that $\mc{A}$ and $\mc{B}$ are ample. Theorem~\ref{from OR thm} now yields a $\G$--equivariant distance-preserving bijection $\phi\colon\mf{M}(X)\ra\mf{M}(Y)$. Since the action $\Aut(X)\acts X$ is cocompact and preserves $\mf{M}(X)$, Theorem~\ref{ext partial iso thm} shows that $\phi$ uniquely extends to a full isomorphism $\Phi\colon X\ra Y$. 

Note that the uniqueness of $\Phi$ guarantees its $\G$--equivariance. Indeed, given $g\in\G$, the maps $g\o\Phi$ and $\Phi\o g$ are, respectively, extensions of $g\o\phi$ and $\phi\o g$. Since the latter coincide by equivariance of $\phi$, we have $g\o\Phi=\Phi\o g$.

Since $\mf{M}(X)$ is invariant under the essential action $\G\acts X$, every point of $\partial_{\rm reg}X$ is limit of a sequence in $\mf{M}(X)$. Part~(3) of Theorem~\ref{from OR thm} thus guarantees that $f$ is precisely the boundary extension of $\Phi$. Finally, $\Phi$ is the unique isomorphism extending $f$ by part~(4) of Theorem~\ref{from OR thm} and the uniqueness part in Theorem~\ref{ext partial iso thm}.
\end{proof}

\subsection{Existence of OR medians.}\label{OR exist sect}

We consider the following setting:

\begin{ass}
Let $X$ be irreducible, locally finite, with no free faces and $\dim X\geq 1$. Let $G$ be a group acting cocompactly on $X$. 

Later in this subsection (from Proposition~\ref{colouring} onwards), we will require $G$ to be a discrete group acting properly on $X$.
\end{ass}

It will be useful to consider the projection $\pi_v\colon\overline X\ra 2^{\mscr{W}_v}$ defined by: 
\[{\pi_v(x)=\mscr{W}(v|x)\cap\mscr{W}_v}.\] 
The proof of Theorem~\ref{OR exist thm} will be based on the following dichotomy.

\begin{lem}\label{straight SLR ray or}
Suppose that $\partial_{\rm reg}X\neq\emptyset$. Then either $X$ contains a straight regular ray, or there exist a point $x\in\partial_{\rm reg}X$, a vertex $v\in X$ and a hyperplane $\mf{w}\in\mscr{W}_v$ such that $\#\pi_v(x)\geq 2$ and no element of $\pi_v(x)$ is transverse to $\mf{w}$. 
\end{lem}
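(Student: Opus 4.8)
We argue by contraposition: assuming $X$ contains no straight regular ray, I would produce a point $x\in\partial_{\rm reg}X$, a vertex $v\in X$ and a hyperplane $\mf{w}\in\mscr{W}_v$ with $\#\pi_v(x)\geq 2$ such that no element of $\pi_v(x)$ is transverse to $\mf{w}$. First, the setup. Since $X$ is finite dimensional with no free faces it is essential, and being irreducible and cocompact it admits a bi-infinite strongly separated chain $\cdots\supsetneq\mf{h}_{-1}\supsetneq\mf{h}_0\supsetneq\mf{h}_1\supsetneq\cdots$ (one may take, e.g., the one arising from a neatly contracting automorphism, cf.\ \cite{CS,Fernos}). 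Let $\mf{w}_n$ bound $\mf{h}_n$; by Lemma~\ref{single point in ss chain} the point $x$ lying in every $\mf{h}_n$ and the point $x^-$ lying in every $\mf{h}_n^*$ are regular, and I fix a vertex $v_0$ in the line $I(x^-,x)\cap X$ and a geodesic ray $r$ from $v_0$ to $x$. I also record that for any vertex $u$ and any $\xi\in\overline X\setminus\{u\}$ the set $\pi_u(\xi)$ is a clique of $\lk u$: two elements of it separate $u$ from $\xi$, and each, being adjacent to $u$, separates $u$ from the opposite endpoint of its own edge, so all four relevant quadrants are nonempty. In particular, once $\#\pi_v(x)\geq 2$, a hyperplane witnessing the second alternative is automatically \emph{not} in $\pi_v(x)$.

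Next I would locate a transversality inside a ``slab''. The ray $r$ crosses exactly the hyperplanes of $\mscr{W}(v_0|x)$, each once. Since $(\mf{h}_n^*,\mf{h}_{n+1})$ is strongly separated, no hyperplane is transverse to two consecutive $\mf{w}_n$'s; a short argument then shows that every $\mf{c}\in\mscr{W}(v_0|x)\setminus\{\mf{w}_n\}_n$ satisfies $\mf{h}_N\cu\mf{c}'\cu\mf{h}_{N-1}$ for a unique $N$ (here $\mf{c}'$ is the side of $\mf{c}$ containing $x$), is then transverse to neither $\mf{w}_{N-1}$ nor $\mf{w}_N$, and hyperplanes with distinct indices $N$ are nested with one another. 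Hence if $\mscr{W}(r)$ had no transverse pair, $r$ would be a straight regular ray, against our assumption; so $\mscr{W}(r)$ contains a transverse pair $\mf{c},\mf{d}$, necessarily of the same index $N$, both lying in the slab $\mf{h}_{N-1}\cap\mf{h}_N^*$ and transverse to neither of its bounding hyperplanes $\mf{w}_{N-1},\mf{w}_N$.

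Finally I would convert this transverse pair into the pair $(v,\mf{w})$. Homotoping the combinatorial geodesic $r$ across a square, I replace $r$ by a geodesic ray from $v_0$ to $x$ that crosses $\mf{c}$ and $\mf{d}$ through the two edges of one square $Q$; pushing $Q$ as far back inside the slab as possible, I would arrange that a corner $v$ of $Q$ is adjacent to $\mf{w}_{N-1}$ and has $\pi_v(x)\cu\mscr{W}(\mf{h}_{N-1}^*|\mf{h}_N)$. Then $\{\mf{c},\mf{d}\}\cu\pi_v(x)$ gives $\#\pi_v(x)\geq 2$, while every element of $\pi_v(x)$, lying in $\mscr{W}(\mf{h}_{N-1}^*|\mf{h}_N)$, is transverse to neither $\mf{w}_{N-1}$ nor $\mf{w}_N$ by strong separation of the chain; so $\mf{w}:=\mf{w}_{N-1}\in\mscr{W}_v$ works, and $(x,v,\mf{w})$ witnesses the second alternative.

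The main obstacle is precisely this last positioning step: one must realise a transverse pair of ``slab'' hyperplanes together with a bounding hyperplane $\mf{w}_{N-1}$ of that slab in the link of a single vertex $v$, while preventing $\pi_v(x)$ from leaking into the interior of the slab or into later slabs toward $x$ (recall that a clique of size $\geq 2$ may well dominate a link, so an arbitrary choice of $v$ will not do). This is a matter of interval combinatorics inside the finite convex subcomplex spanned by the portion of $r$ lying in the slab, combined with local finiteness; the remainder of the argument is just the strong-separation calculus for regular points recalled above (see \cite{Fernos,Fernos-Lecureux-Matheus,CS}).
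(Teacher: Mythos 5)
There is a genuine gap, and you have flagged it yourself: the ``last positioning step'' that you defer is not a technical remainder but the entire content of the lemma. Everything you establish before it --- that if $X$ has no straight regular ray, then some geodesic ray $r$ towards the regular point $x$ crosses a transverse pair of hyperplanes --- is immediate from the definitions, since $r$ is regular and ``straight'' just means $\mscr{W}(r)$ contains no transverse pair. What the lemma actually asserts is that this failure of straightness can be localised at a single vertex: a $v$ with $\#\pi_v(x)\geq 2$ together with a hyperplane $\mf{w}\in\mscr{W}_v$ transverse to no element of $\pi_v(x)$. Your sketch of how to obtain this (sliding a square $Q$ ``as far back inside the slab as possible'' so that a corner $v$ is adjacent to $\mf{w}_{N-1}$ and $\pi_v(x)\cu\mscr{W}(\mf{h}_{N-1}^*|\mf{h}_N)$) is not carried out, and it aims at a configuration far stronger than what is needed, with no argument that such a vertex exists. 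Moreover, the structural claim feeding this step is false as stated: strong separation of the consecutive pairs only prevents a hyperplane from being transverse to \emph{two} of the $\mf{w}_n$; it does not force $\mf{c}\in\mscr{W}(v_0|x)\setminus\{\mf{w}_n\}$ to satisfy $\mf{h}_N\cu\mf{c}'\cu\mf{h}_{N-1}$. It is perfectly possible that $\mf{c}$ is transverse to exactly one $\mf{w}_N$ while still separating $v_0$ from $x$, so a transverse pair in $\mscr{W}(r)$ need not consist of two ``slab'' hyperplanes disjoint from the walls --- it may involve a chain hyperplane itself --- and the reduction ``necessarily of the same index $N$'' is unjustified.

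For comparison, the paper's proof avoids chains and slabs altogether and supplies exactly the idea your argument is missing. Pick any hyperplane $\mf{u}$ and any $x\in\partial_{\rm reg}X$, let $\overline x$ be the gate-projection of $x$ to the carrier of $\mf{u}$ (it lies in $X$ by regularity), and let $w$ be the vertex with $\mscr{W}(w|\overline x)=\{\mf{u}\}$; then any ray $r$ from $w$ to $x$ is SLR and $\pi_w(x)=\{\mf{u}\}$. If $r$ is not straight, choose the smallest $n\geq 1$ with $\#\pi_{r(n)}(x)\geq 2$ and let $\mf{w}$ be the hyperplane separating $r(n-1)$ from $r(n)$: if some $\mf{v}\in\pi_{r(n)}(x)$ were transverse to $\mf{w}$, then both $\mf{w}$ and $\mf{v}$ would lie in $\pi_{r(n-1)}(x)$, contradicting minimality. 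This ``first failure of straightness along a ray started at a gate vertex'' trick is what closes the argument; if you wish to keep your setup, you would need to replace your base vertex $v_0$ (over which you have no control --- it may already satisfy $\#\pi_{v_0}(x)\geq 2$, leaving no candidate $\mf{w}$) by such a gate vertex and run this minimality argument, rather than trying to position a square against the strongly separated chain.
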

\begin{proof}
By Lemma~\ref{SLR rays exist}, there exists an SLR ray $r\cu X$. Set $x=r^+$. If $r$ is straight, we are done.

Otherwise, a vertex $v=r(n)$ must satisfy $\#\pi_v(x)\geq 2$; we choose $v$ so as to minimise $n\geq 1$. Let $\mf{w}$ be the only hyperplane separating $v$ and $r(n-1)$. If $\mf{w}$ were transverse to some $\mf{v}\in\pi_v(x)$, then $\mf{w}$ and $\mf{v}$ would both be adjacent to $r(n-1)$, violating minimality of $n$.  
\end{proof}

It is in the second case of Lemma~\ref{straight SLR ray or} that it will be particularly hard to prove Theorem~\ref{OR exist thm}. We will construct OR medians based on Lemma~\ref{easy OR}, so we will need some results that allow us to piece together geodesics in order to assemble SLR rays. We now prove a sequence of results with this purpose, culminating in Theorem~\ref{roaming components 3}.

\begin{lem}\label{stabilising hyperplanes}
Given an element $g\in G$ and a vertex $v\in X$, there exists $n\geq 1$ such that every hyperplane in $\mscr{W}_v\cap\mscr{W}_{g^nv}$ is preserved by $g^n$.
\end{lem}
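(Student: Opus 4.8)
The plan is to exploit local finiteness of $X$: the link set $\mscr{W}_v$ is finite, and since $g$ is a cubical automorphism one has $\mscr{W}_{g^nv}=g^n\mscr{W}_v$. Hence a hyperplane $\mf{w}$ lies in $\mscr{W}_v\cap\mscr{W}_{g^nv}$ exactly when both $\mf{w}$ and $g^{-n}\mf{w}$ belong to the finite set $\mscr{W}_v$. So the goal becomes: choose $n\geq 1$ large enough (and divisible enough) that, whenever two members of a $\langle g\rangle$--orbit which are $n$ steps apart both land in $\mscr{W}_v$, that orbit is actually $g^n$--periodic.

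First I would partition $\mscr{W}_v=\mathcal{F}\sqcup\mathcal{I}$ into the hyperplanes whose $\langle g\rangle$--orbit is finite and those whose $\langle g\rangle$--orbit is infinite. For $\mf{w}\in\mathcal{F}$, let $p_{\mf{w}}\geq 1$ be its minimal period under $g$ and set $p=\operatorname{lcm}\{p_{\mf{w}}\mid\mf{w}\in\mathcal{F}\}$ (with $p=1$ if $\mathcal{F}=\emptyset$); then $g^{p}$, and hence $g^{n}$ for any multiple $n$ of $p$, fixes every element of $\mathcal{F}$. For $\mf{w}\in\mathcal{I}$ the map $k\mapsto g^{-k}\mf{w}$ is injective, so the set $\{k\geq 1\mid g^{-k}\mf{w}\in\mscr{W}_v\}$ is finite (it is contained in the preimage of the finite set $\mscr{W}_v$). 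Let $M\geq 1$ be a common strict upper bound for these finitely many finite sets as $\mf{w}$ ranges over $\mathcal{I}$ (take $M=1$ if $\mathcal{I}=\emptyset$), and put $n=pM$.

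With this choice, I would conclude as follows. Take any $\mf{w}\in\mscr{W}_v\cap\mscr{W}_{g^nv}$, so that $g^{-n}\mf{w}\in\mscr{W}_v$. If $\mf{w}\in\mathcal{I}$, then $n$ would belong to $\{k\geq 1\mid g^{-k}\mf{w}\in\mscr{W}_v\}$, forcing $n<M\leq pM=n$, which is absurd; hence $\mf{w}\in\mathcal{F}$, and since $p\mid n$ we obtain $g^{n}\mf{w}=\mf{w}$, as required.

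I do not expect a genuine obstacle here: the statement is soft and really only uses local finiteness, not cocompactness. The single point that needs a moment's care is that the adjacency hypothesis pins down the orbit of $\mf{w}$ only at the two "times" $0$ and $n$, which is why the infinite-orbit hyperplanes must be ruled out by a size estimate rather than handled by periodicity; luckily the injectivity of $k\mapsto g^{-k}\mf{w}$ on an infinite orbit makes that estimate immediate, so no further input is needed.
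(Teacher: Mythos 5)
Your proof is correct and follows essentially the same route as the paper: split $\mscr{W}_v$ into hyperplanes with finite versus infinite $\langle g\rangle$--orbit, use finiteness of $\mscr{W}_v$ (local finiteness) to bound the times at which infinite-orbit hyperplanes can recur, and choose $n$ divisible by the periods of the finite-orbit ones. The only cosmetic difference is that you take $n=pM$ explicitly, whereas the paper first replaces $g$ by a power fixing all periodic hyperplanes and then takes $n$ large; both are fine.
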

\begin{proof}
Let us write $\mscr{W}_v=P\sqcup Q$, where $P$ is the subset of hyperplanes preserved by some power of $g$. Possibly replacing $g$ with a power, we can assume that $g$ fixes every element of $P$. If a hyperplane $\mf{w}$ lies in $\mscr{W}_v\cap\mscr{W}_{g^mv}$ for infinitely many $m>0$, we must have $g^{-m}\mf{w}\in\mscr{W}_v$ for the same integers $m$; since $\mscr{W}_v$ is finite, we then have $\mf{w}\in P$. We conclude that, if $n$ is sufficiently large, we have $\mscr{W}_v\cap\mscr{W}_{g^nv}\cu P$. 
\end{proof}

Given an element $e\in\llk v$, we denote by $\mf{w}(e)$ the hyperplane dual to the edge determined by $e$. More generally, given a subset $F\cu\llk v$, we write $\mf{w}(F)=\{\mf{w}(e)\mid e\in F\}$. Note that each $g\in G$ takes $e$ to an element $ge\in\llk gv$ and $F$ to a subset $gF\cu\llk gv$; we have $\mf{w}(ge)=g\mf{w}(e)$ and $\mf{w}(gF)=g\mf{w}(F)$. 

\begin{lem}\label{semi-straight+}
Given $v\in X$ and $e\in\llk v$, there exists a non-elliptic element $g\in G$ such that:
\begin{enumerate}
\item $\pi_v(gv)=\{\mf{w}(e)\}$;
\item every hyperplane in $\mscr{W}_v\cap\mscr{W}_{gv}$ is preserved by $g$. 
\end{enumerate}
\end{lem}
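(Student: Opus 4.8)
The plan is to pick $g$ so that $gv$ sits deep inside the ``combinatorial sector'' at $v$ in the direction of $e$; then condition~(1) will be automatic and condition~(2) will hold \emph{vacuously}, because $gv$ will not even be adjacent to any hyperplane of $\mscr{W}_v$.

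\textbf{Setup.} Write $\mf{w}=\mf{w}(e)$, let $\mf{h}$ be the halfspace bounded by $\mf{w}$ with $v\notin\mf{h}$, and let $w$ be the endpoint of $e$ other than $v$, so that $w\in\mf{h}$ and $\mscr{W}(v|w)=\{\mf{w}\}$. For $\mf{u}\in\mscr{W}_v$ write $(\mf{u})_v$ for the side of $\mf{u}$ containing $v$, and set
\[R\ :=\ \mf{h}\ \cap\bigcap_{\mf{u}\in\mscr{W}_v,\ \mf{u}\ \text{transverse to}\ \mf{w}}(\mf{u})_v\,.\]
A short computation shows that $R=\{\,p\in X:\mscr{W}(v|p)\cap\mscr{W}_v=\{\mf{w}\}\,\}$: indeed, any $\mf{u}\in\mscr{W}_v$ disjoint from $\mf{w}$ satisfies $\mf{h}\subseteq(\mf{u})_v$ (since $w\in(\mf{u})_v$ and $\mf{u}$ does not cross $\mf{w}$), so such hyperplanes impose no further condition. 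In particular $R$ is a convex subcomplex, $w\in R$, and \emph{if $g\in\G$ satisfies $gv\in R$ then (1) holds}.

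\textbf{Step 1 (the crux): a halfspace inside $R$.} I claim there is a halfspace $\mf{k}\subsetneq\mf{h}$ with $\mf{k}\subseteq R$. This is the one place the no-free-faces hypothesis enters. When no $\mf{u}\in\mscr{W}_v$ is transverse to $\mf{w}$ we have $R=\mf{h}$ (this covers $X=\R$), and we just take $\mf{k}$ to be a proper sub-halfspace of $\mf{h}$, which exists by essentiality of $X$. In general one uses that $X$, having no free faces, is sector-heavy (Proposition~\ref{my corner}): start from a maximal cube at $v$ containing $e$, whose sector contains a halfspace, and then intersect successively with the remaining $(\mf{u})_v$, using Proposition~\ref{NS corner} for the transverse case and the observation that two disjoint hyperplanes of $\mscr{W}_v$ have nested (or facing) $v$-sides in the non-transverse case. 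Making this induction go through in the ``facing'' case is, I expect, the main technical obstacle, and is where cocompactness must be exploited (to rule out the sector being ``pinched'').

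\textbf{Step 2: pushing $v$ into $\mf{k}$, and checking (2).} Since $X$ is essential and $G\acts X$ is cocompact, the Double Skewering Lemma of \cite{CS}, applied to a nested pair $\mf{k}_1\subsetneq\mf{k}$ with $\mf{k}_1$ a proper sub-halfspace of $\mf{k}$, yields a non-elliptic $h\in G$ with $h\mf{k}\subsetneq\mf{k}$; then $h$ translates along an axis whose forward endpoint lies arbitrarily deep in $\mf{k}$, so $h^nv\in\mf{k}$ for all large $n$. Put $g:=h^n$ for such an $n$; then $g$ is non-elliptic and $gv\in\mf{k}\subseteq R$, giving (1). Finally I claim $\mscr{W}_v\cap\mscr{W}_{gv}=\emptyset$, so that (2) is vacuous. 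Suppose $\mf{u}\in\mscr{W}_v$ is adjacent to $gv$, and let $q$ be the vertex with $\mscr{W}(gv|q)=\{\mf{u}\}$. First $\mf{u}\neq\mf{w}(\mf{k})$: otherwise the neighbour of $v$ across $\mf{w}(\mf{k})$ would lie in $\mf{k}\subseteq R$, forcing $\{\mf{u}\}=\{\mf{w}\}$ and hence $\mf{w}(\mf{k})=\mf{w}$, against $\mf{k}\subsetneq\mf{h}$. As $\mf{u}\neq\mf{w}(\mf{k})$ and $gv\in\mf{k}$, we get $q\in\mf{k}$; but $q$ lies on the side of $\mf{u}$ opposite to $gv$, whereas $gv\in\mf{k}\subseteq(\mf{u})_v$ if $\mf{u}$ is transverse to $\mf{w}$, $gv\in\mf{k}\subseteq\mf{h}\subseteq(\mf{u})_v$ if $\mf{u}\neq\mf{w}$ is disjoint from $\mf{w}$, and $q\in\mf{h}^*$ if $\mf{u}=\mf{w}$ (since $gv\in\mf{h}$) — each case contradicts $q\in\mf{k}$ (using $\mf{k}\subseteq(\mf{u})_v$, resp.\ $\mf{k}\subseteq\mf{h}$). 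Hence no such $\mf{u}$ exists, and the proof is complete.
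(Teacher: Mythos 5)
Your Step~2 is essentially sound: granting a halfspace $\mf{k}\cu R$ with $\mf{k}\subsetneq\mf{h}$, the case analysis showing $\mscr{W}_v\cap\mscr{W}_{gv}=\emptyset$ for $gv\in\mf{k}$ is correct (though under mere cocompactness you should invoke Proposition~3.2 of \cite{CS} to skewer $\mf{w}(\mf{k})$, rather than the Double Skewering Lemma, which is stated for essential actions without fixed points at infinity). The genuine gap is exactly the step you flag yourself, and it is the crux, not a technicality: the existence of a halfspace inside $R$. Sector-heaviness (Proposition~\ref{my corner}) and Proposition~\ref{NS corner} only produce halfspaces inside intersections of \emph{pairwise transverse} halfspaces, whereas the halfspaces $(\mf{u})_v$, for distinct $\mf{u}\in\mscr{W}_v$ transverse to $\mf{w}(e)$, need not be pairwise transverse (two edges of $\llk v$ adjacent to $e$ need not be adjacent to each other). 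Your successive-intersection scheme breaks in precisely the ``facing'' case you mention --- the halfspace produced so far may lie entirely in $(\mf{u})_v^*$ for the next $\mf{u}$ --- and no argument is given there; appealing to cocompactness ``to rule out pinching'' is not a proof.

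Note also that what Steps~1--2 together would establish is strictly stronger than the lemma: an element with $\pi_v(gv)=\{\mf{w}(e)\}$ \emph{and} $\mscr{W}_v\cap\mscr{W}_{gv}=\emptyset$. That is essentially Lemma~\ref{roaming components 2} with its hypothesis that $\llk v$ be irreducible (and $X\neq\R$) deleted; in the paper that statement comes much later and its proof uses the present lemma together with Proposition~\ref{roaming components 1}, Proposition~\ref{colouring} and hence Lemma~\ref{cNT RW}, while for reducible links only avoidance of $\mf{w}(L)$ for a single irreducible component $L$ is ever claimed (Theorem~\ref{roaming components 3}). So you are bootstrapping from an unproven claim at least as strong as the downstream results, and it is not even clear it holds when $\llk v$ is a nontrivial join. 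The paper's proof sidesteps the issue: it takes a maximal clique $e=e_1,\dots,e_k$ of $\llk v$, passes to $Z=\mf{w}(e_2)\cap\dots\cap\mf{w}(e_k)$, where $\overline e$ is isolated in $\lk_Z(\overline v)$, uses Lemma~\ref{cocompact stabilisers} and essentiality of $Z$ (no free faces) to apply Proposition~3.2 of \cite{CS} and skewer $\mf{w}(\overline e)$, lifts back to get $\pi_v(g^nv)=\{\mf{w}(e)\}$, and then secures condition~(2) not by making $\mscr{W}_v\cap\mscr{W}_{gv}$ empty --- it typically contains $\mf{w}(e_2),\dots,\mf{w}(e_k)$ --- but by passing to a power via Lemma~\ref{stabilising hyperplanes} so that the surviving common hyperplanes are preserved. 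You should either adopt that route or supply a complete proof of your Step~1, which at present is missing.
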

\begin{proof}
Let $e_1=e,e_2,...,e_k$ form a maximal clique in $\llk v$. Consider the $\CAT$ cube complex $Z=\mf{w}(e_2)\cap...\cap\mf{w}(e_k)$ and let $H\leq G$ be the stabiliser of all sides of all $\mf{w}(e_i)$ with $i\geq 2$ (if $k=1$, we have $Z=X$ and $H=G$). 

Note that $v$ projects to a vertex $\overline v\in Z$ and $e$ projects to an isolated point $\overline e\in\lk_Z(\overline v)$. The cube complex $Z$ has no free faces, by a repeated application of Remark~\ref{nff hyp rmk}. The action $H\acts Z$ is cocompact by Lemma~\ref{cocompact stabilisers} and, therefore, it is essential. By Proposition~3.2 in \cite{CS}, there exists an element $g\in H$ such that $\langle g\rangle\cdot\overline v$ is unbounded and $\mf{w}(\overline e)\in\mscr{W}(\overline v|g^n\overline v)$ for all $n\geq 1$. As $\overline e$ is isolated, we have $\pi_{\overline v}(g^n\overline v)=\{\mf{w}(\overline e)\}$. 

Note that every hyperplane of $X$ separating $v$ and $g^nv$ must be transverse to all $\mf{w}(e_i)$ with $i\geq 2$; hence $\pi_v(g^nv)=\{\mf{w}(e)\}$ for all $n\geq 1$. Finally, Lemma~\ref{stabilising hyperplanes} allows us to assume that $g$ also satisfies condition~(2).
\end{proof}

We collect here two useful remarks for the subsequent discussion.

\begin{rmk}\label{SL and transversality}
Let $v,w\in X$ be vertices with $\pi_v(w)=\{\mf{v}\}$. For a hyperplane $\mf{w}\in\mscr{W}_w$ with $\mf{w}\neq\mf{v}$, we have $\mf{w}\in\mscr{W}_v$ if and only if $\mf{w}$ is transverse to $\mf{v}$. Indeed, if $\mf{w}$ and $\mf{v}$ are not transverse, $\mf{v}$ separates $v$ and $\mf{w}$; hence $\mf{w}\not\in\mscr{W}_v$. On the other hand, if $\mf{w}$ is not adjacent to $v$, there exists $\mf{u}\in\mscr{W}_v$ separating $v$ and $\mf{w}$. Since $w$ lies in the carrier of $\mf{w}$, we then have $\mf{u}\in\mscr{W}(v|w)$; hence $\mf{u}=\mf{v}$ and $\mf{w}$ is not transverse to $\mf{v}$.
\end{rmk}

\begin{rmk}\label{finding disjoint cliques}
Let $v\in X$ be a vertex and $c\cu\llk v$ a clique. We can always find a maximal clique $c'\cu\llk v$ with $c\cap c'=\emptyset$. In order to see this, let us denote the elements of $c$ by $e_1,...,e_k$. First, observe that there exists $e_1'\in\llk v$ such that $e_1',e_2,...,e_k$ is a clique and ${\delta(e_1,e_1')=2}$. Indeed, we can extend $c$ to a maximal clique $\tilde c_1$ and, since $X$ has no free faces, the clique $\tilde c_1\setminus\{e_1\}$ must also be contained in a maximal clique $\tilde c_2\neq\tilde c_1$. We can then take any point of $\tilde c_2\setminus\tilde c_1$ as $e_1'$.

Now, we repeat this procedure to obtain a clique $e_1',e_2',e_3,...,e_k$ with $\delta(e_2,e_2')=2$ and so on. In the end, we obtain a clique $e_1',...,e_k'$ with $\delta(e_i,e_i')=2$ for every $i$. The latter property implies that any maximal clique containing $\{e_1',...,e_k'\}$ is disjoint from $c$.
\end{rmk}

Recall that, given two graphs $\mc{G}_1,\mc{G}_2$, their \emph{join} $\mc{G}_1\ast\mc{G}_2$ is obtained from the disjoint union $\mc{G}_1\sqcup\mc{G}_2$ by adding edges connecting every vertex of $\mc{G}_1$ to every vertex of $\mc{G}_2$. Given a graph $\mc{G}$, its \emph{opposite} is the graph $\mc{G}^o$ with the same vertex set and such that a pair of vertices is joined by an edge in $\mc{G}^o$ if and only if it is not in $\mc{G}$.

A graph $\mc{G}$ is \emph{irreducible} if it does not split as a join of two proper subgraphs; equivalently, the opposite $\mc{G}^o$ is connected. Every finite graph $\mc{G}$ can be decomposed as a join $\mc{G}=\mc{G}_1\ast ...\ast\mc{G}_k$ of pairwise-disjoint, irreducible subgraphs $\mc{G}_i$. These factors are unique up to permutation, as the $\mc{G}_i^o$ are exactly the connected components of the opposite $\mc{G}^o$. We refer to the $\mc{G}_i$ as the \emph{irreducible components} of $\mc{G}$.

\begin{prop}\label{roaming components 1}
Given an irreducible component $L\cu\llk v$ and a element $e\in L$, there exists $g\in G$ with $\pi_v(gv)=\{\mf{w}(e)\}$ and $\mf{w}(L)\cap\mscr{W}_{gv}=\emptyset$.
\end{prop}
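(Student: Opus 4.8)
The plan is to combine Lemma~\ref{semi-straight+} with the structure of the join decomposition of $\llk v$. First I would apply Lemma~\ref{semi-straight+} to the pair $(v,e)$ to obtain a non-elliptic $g\in G$ with $\pi_v(gv)=\{\mf{w}(e)\}$ and such that every hyperplane in $\mscr{W}_v\cap\mscr{W}_{gv}$ is preserved by $g$. The remaining task is to upgrade this to the stronger conclusion $\mf{w}(L)\cap\mscr{W}_{gv}=\emptyset$, i.e.\ that $gv$ sees none of the edges of $L$ other than (the image of) $e$ itself --- wait, more precisely that the hyperplanes dual to the elements of $L$ are all absent from $\mscr{W}_{gv}$. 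Note first that $\mf{w}(e)$ itself, being the unique element of $\pi_v(gv)$, separates $v$ from $gv$ and hence is not adjacent to $gv$; so $\mf{w}(e)\notin\mscr{W}_{gv}$ is automatic, and likewise I should check that $\mf{w}(e)\in\mf{w}(L)$ causes no issue.

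The key geometric input is Remark~\ref{SL and transversality}: since $\pi_v(gv)=\{\mf{w}(e)\}$, a hyperplane $\mf{w}\in\mscr{W}_{gv}$ with $\mf{w}\neq\mf{w}(e)$ lies in $\mscr{W}_v$ if and only if $\mf{w}$ is transverse to $\mf{w}(e)$; and if $\mf{w}\notin\mscr{W}_v$ then (again by the argument in that remark) $\mf{w}$ is \emph{not} transverse to $\mf{w}(e)$, with $\mf{w}(e)$ separating $v$ from $\mf{w}$. Now suppose for contradiction that some $f\in L$ has $\mf{w}(f)\in\mscr{W}_{gv}$, with $\mf{w}(f)\neq\mf{w}(e)$. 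Consider the two cases. If $\mf{w}(f)$ is transverse to $\mf{w}(e)$: then $e$ and $f$ are both in the irreducible component $L$ and are joined by an edge of $\llk v$; but the issue is that $g$ fixes $\mf{w}(f)$ (it lies in $\mscr{W}_v\cap\mscr{W}_{gv}$, which $g$ preserves elementwise --- actually Lemma~\ref{semi-straight+}(2) only says $g$ preserves the hyperplane, so I need to be slightly careful, but preserving the hyperplane is what matters here) while $g$ moves $\mf{w}(e)$ off itself; I would leverage the fact that the transversality relation within $L$ is rigid under the $g$-dynamics. If $\mf{w}(f)$ is \emph{not} transverse to $\mf{w}(e)$, then $\mf{w}(e)$ separates $v$ from $\mf{w}(f)$, so $\mf{w}(f)\notin\mscr{W}_v$; but then $\mf{w}(f)$ cannot be adjacent to $v$ at all, contradicting $f\in L\cu\llk v$ which says exactly $\mf{w}(f)\in\mscr{W}_v$.

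So the non-transverse case is immediate, and the real work --- and the main obstacle --- is the transverse case: ruling out that $gv$ is adjacent to a hyperplane $\mf{w}(f)$ dual to an element $f\in L$ transverse to $e$. Here I expect to need to iterate: replace $g$ by a power $g^N$ and use that $\pi_v(g^Nv)=\{\mf{w}(e)\}$ still holds (each hyperplane separating $v$ from $g^Nv$ must be transverse to the $\mf{w}(e_i)$, $i\geq 2$, from the proof of Lemma~\ref{semi-straight+}) together with the finiteness of $\mscr{W}_v$ and the fact that the elements of $L$ transverse to $e$ span a sub-clique inside $L$. The point is that if $\mf{w}(f)\in\mscr{W}_{gv}$ for a transverse $f$, then, because $g$ fixes $\mf{w}(f)$ and $g\mf{w}(e)\subsetneq\mf{w}(e)$ (choosing the side appropriately, using that $g$ is non-elliptic and translates along a direction pointing into $\mf{w}(e)$), the hyperplane $\mf{w}(f)$ would have to remain transverse to $g^n\mf{w}(e)$ for all $n$, forcing $\mf{w}(f)$ to be transverse to the whole strongly-separated-type chain sitting inside $\mf{w}(e)$; but $L$ being an irreducible component of $\llk v$ should prevent $f$ from being transverse to every such direction. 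Concretely I would argue: the set of elements of $\llk v$ transverse to $e$ is a proper subset (since $L$ is irreducible, $e$ is not transverse to all of $L$, so $L^o$ is connected and in particular $e$ has a non-neighbour in $L$); combining this with the fact from Lemma~\ref{semi-straight+}'s proof that $g$ was built inside the stabiliser $H$ of the clique $e_2,\dots,e_k$ acting essentially on $Z=\mf{w}(e_2)\cap\cdots\cap\mf{w}(e_k)$ with $\overline e$ \emph{isolated} in $\lk_Z(\overline v)$, I get that $\mf{w}(\overline e)$ is the unique hyperplane of $Z$ adjacent to $g^n\overline v$ pointing back toward $\overline v$, and translating this back up to $X$ kills all the would-be hyperplanes $\mf{w}(f)$ with $f\in L$, $f\neq e$. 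I would write this last step carefully as the crux, reducing everything to: a hyperplane dual to $f\in L\setminus\{e\}$ adjacent to $g^nv$ must be transverse to $\mf{w}(e_2),\dots,\mf{w}(e_k)$ and hence descend to a hyperplane of $Z$ adjacent to $g^n\overline v$ distinct from $\mf{w}(\overline e)$ and not separating $g^n\overline v$ from $\overline v$ --- but isolation of $\overline e$ and $\pi_{\overline v}(g^n\overline v)=\{\mf{w}(\overline e)\}$ leave no room for such a hyperplane once $n$ is large, and finiteness of $\mscr{W}_v$ makes "large $n$" harmless since $\pi_v(g^nv)=\{\mf{w}(e)\}$ persists.
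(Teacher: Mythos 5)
Your reduction to the transverse case is correct, but two points fail. A minor one first: $\mf{w}(e)\notin\mscr{W}_{gv}$ is \emph{not} automatic --- a hyperplane separating $v$ from $gv$ can perfectly well be adjacent to $gv$; ruling this out requires observing that it would force $\mf{w}(e)$ to be the only hyperplane in $\mscr{W}(v|gv)$, whence $g\mf{w}(e)=\mf{w}(e)$ and $g^2v=v$, contradicting non-ellipticity. The serious gap is in the transverse case, which is where all the content lies. Your mechanism --- take powers $g^N$ of the element produced by Lemma~\ref{semi-straight+} and argue, via the complex $Z=\mf{w}(e_2)\cap\cdots\cap\mf{w}(e_k)$ from its proof, that no $\mf{w}(f)$ with $f\in L\setminus\{e\}$ survives in $\mscr{W}_{g^Nv}$ --- cannot work. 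The clique $e_1=e,e_2,\dots,e_k$ used there is a \emph{maximal} clique of $\llk v$, so it contains a maximal clique of $L$ through $e$; whenever $e$ has a neighbour in $L$ (the main case), some $e_i$ with $i\geq 2$ lies in $L$. Since $g$ is chosen in the stabiliser $H$ of the sides of all $\mf{w}(e_i)$, each such $\mf{w}(e_i)$ is $g$--invariant and adjacent to $g^nv$ for every $n$, so $\mf{w}(L)\cap\mscr{W}_{g^nv}\neq\emptyset$ for \emph{all} $n$: no power of this particular $g$ satisfies the conclusion. Relatedly, your claim that a hyperplane dual to $f\in L\setminus\{e\}$ adjacent to $g^nv$ must be transverse to all $\mf{w}(e_2),\dots,\mf{w}(e_k)$ (and hence descends to $Z$) is false for precisely these $f$; and even for hyperplanes that do descend, nothing in Lemma~\ref{semi-straight+} or Lemma~\ref{stabilising hyperplanes} forces $\mscr{W}_{\overline v}\cap\mscr{W}_{g^n\overline v}$ to be empty --- that stronger statement is essentially Lemma~\ref{roaming components 2}, which is proved \emph{from} the present proposition, not before it.

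What is missing is a concrete use of irreducibility of $L$. The paper's route: among all $g$ satisfying conditions~(1)--(2) of Lemma~\ref{semi-straight+}, choose one minimising $\#\big(\mf{w}(L)\cap\mscr{W}_{gv}\big)$; if this set $\mf{W}=\mf{w}(\mc{W})$ is nonempty, irreducibility of $L$ (it is not the join of $\mc{W}$ and $L\setminus\mc{W}$) yields $f\in L\setminus\mc{W}$ and $f'\in\mc{W}$ with $\delta(f,f')\geq 2$. Applying Lemma~\ref{semi-straight+} again \emph{at $gv$ in the direction $gf$} and concatenating the two geodesics produces an element $hg$ that still satisfies (1)--(2), whose bad set is contained in $\mf{W}$, but which (via Remark~\ref{SL and transversality} at $gv$) cannot contain $\mf{w}(f')$ --- contradicting minimality. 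Your sketch gestures at ``irreducibility should prevent $f$ from being transverse to every such direction'' but never supplies an exchange mechanism of this kind, and the mechanism you do propose (powers plus descent to $Z$) is defeated by the invariant hyperplanes above.
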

\begin{proof}
Among elements that satisfy conditions~(1) and~(2) of Lem\-ma~\ref{semi-straight+}, let us pick $g\in G$ minimising the cardinality of the set $\mf{W}:=\mf{w}(L)\cap\mscr{W}_{gv}$. Suppose for the sake of contradiction that $\mf{W}$ is nonempty. 

Let $\mc{W}\cu L$ be the subset with $\mf{w}(\mc{W})=\mf{W}$. Observe that $\mf{w}(e)$ does not lie in $\mscr{W}_{gv}$, or it would be the only hyperplane separating $v$ and $gv$. In this case, we would have $g\mf{w}(e)=\mf{w}(e)$ and $g^2v=v$, violating the fact that $g$ is not elliptic. We conclude that $L\setminus\mc{W}$ is nonempty, as it contains $e$. 

Since the graph $L$ does not split as a join of $\mc{W}$ and $L\setminus\mc{W}$, there exist elements $f\in L\setminus\mc{W}$ and $f'\in\mc{W}$ with $\delta(f,f')\geq 2$. Note that $\mf{w}(gf)$ is not transverse to $\mf{w}(e)$, or Remark~\ref{SL and transversality} would yield $\mf{w}(gf)\in\mscr{W}_v\cap\mscr{W}_{gv}$ and $\mf{w}(gf)=g^{-1}\mf{w}(gf)=\mf{w}(f)$, violating $f\not\in\mc{W}$. Note moreover that $\mf{w}(f')=\mf{w}(gf')\in\mscr{W}_v\cap\mscr{W}_{gv}$ is transverse to every element of $\mscr{W}(v|gv)$. In particular $\mf{w}(gf)\not\in\pi_{gv}(v)$, as $\delta(gf,gf')=\delta(f,f')\geq 2$.

Now, Lemma~\ref{semi-straight+} provides $h\in G$ with $\mscr{W}(gv|hgv)\cap\mscr{W}_{gv}=\{\mf{w}(gf)\}$ and such that every hyperplane in $\mscr{W}_{gv}\cap\mscr{W}_{hgv}$ is preserved by $h$. As $\mf{w}(gf)$ and $\mf{w}(e)$ are not transverse and $\mf{w}(gf)\not\in\pi_{gv}(v)$, we can concatenate geodesics from $v$ to $gv$ and from $gv$ to $hgv$ in order to obtain an SL geodesic from $v$ to $hgv$. It follows that $\pi_v(hgv)=\{\mf{w}(e)\}$. Every hyperplane in $\mscr{W}_v\cap\mscr{W}_{hgv}$ must lie in $\mscr{W}_v\cap\mscr{W}_{gv}\cap\mscr{W}_{hgv}$ and is therefore preserved by $h$, $g$ and $hg$. Thus, $hg$ satisfies conditions~(1) and~(2) of Lemma~\ref{semi-straight+}.

By minimality of $\#\mc{W}$, the inclusion $\mf{w}(L)\cap\mscr{W}_{hgv}\cu\mf{W}$ must be an equality. Hence $\mf{w}(f')\in\mscr{W}_v\cap\mscr{W}_{hgv}$ and $\mf{w}(f')=\mf{w}(gf')$ lies in $\mscr{W}_{gv}\cap\mscr{W}_{hgv}$, contradicting the fact that $\delta(gf,gf')=\delta(f,f')\geq 2$.
\end{proof}

In the rest of the subsection, we also require $G\acts X$ to be properly discontinuous.

\begin{prop}\label{colouring}
Let $v\in X$ be a vertex and ${c^-,c^+\cu\llk v}$ maximal cliques. There exists a neatly contracting $g\in G$ with ${\pi_v(g^nv)=\mf{w}(c^-)}$ and $\pi_{g^nv}(v)=\mf{w}(g^nc^+)$ for all $n\geq 1$, unless possibly if $X=\R$ and $c^-=c^+$. 
\end{prop}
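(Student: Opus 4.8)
Here is my plan for proving Proposition~\ref{colouring}.

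\textbf{Overall strategy.} The statement asks, given maximal cliques $c^-,c^+\cu\llk v$, for a neatly contracting $g\in G$ whose axis ``enters'' $v$ along the directions of $c^-$ and ``leaves'' along (the $g^n$-image of) $c^+$. The natural approach is to build $g$ by first producing two non-elliptic elements $g^-,g^+\in G$ obtained from Proposition~\ref{roaming components 1} (applied componentwise to the irreducible factors of $\llk v$), then passing to a high power and composing in a way that creates a strongly separated chain. The key technical input will be: (i) the concatenation trick already used in the proof of Proposition~\ref{roaming components 1} --- if $\pi_v(w)=\{\mf{w}(e)\}$ and $\mf{w}(f)\in\mscr{W}_w$ is not transverse to $\mf{w}(e)$ and not in $\pi_w(v')$ (where $v'$ is the far endpoint of the next segment), then geodesics concatenate to an SL geodesic; and (ii) the fact, provable from essentiality, cocompactness, properness and no free faces (via Proposition~\ref{my corner} / the Flipping and Double Skewering Lemmas of \cite{CS}, as in the proof of Proposition~\ref{g+-} and Lemma~\ref{semi-straight+}), that one can upgrade an ``SL-looking displacement'' into genuinely strongly separated halfspaces.

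\textbf{Key steps, in order.}
First I would reduce to producing, for each of $c^-$ and $c^+$, an element of $G$ that displaces $v$ in exactly the clique's directions: decompose $\llk v=L_1\ast\cdots\ast L_m$ into irreducible components; for each $i$ pick the unique element $e_i^-\in c^-\cap L_i$ (since $c^-$ is a maximal clique it meets each factor in exactly one vertex) and apply Proposition~\ref{roaming components 1} to get $g_i^-$ with $\pi_v(g_i^-v)=\{\mf{w}(e_i^-)\}$ and $\mf{w}(L_i)\cap\mscr{W}_{g_i^-v}=\emptyset$. Composing these (and using that hyperplanes separating $v$ from $g_i^-v$ are transverse to all $\mf{w}(L_j)$, $j\neq i$, so the relevant geodesics concatenate to an SL geodesic crossing all of $c^-$ first) yields a non-elliptic $g^-\in G$ with $\pi_v(g^-v)=\mf{w}(c^-)$, and moreover $\mf{w}(c^-)\cap\mscr{W}_{g^-v}=\emptyset$, so that $\pi_v(g^{-n}v)=\mf{w}(c^-)$ for all $n$ by iterating the concatenation. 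Symmetrically build $g^+$ from $c^+$. Next, by Proposition~\ref{Haglund FFT} and/or replacing $g^\pm$ by powers, arrange that $g^\pm$ act non-transversely; then choose $N$ large and consider $g=(g^-)^{-N}(g^+)^{N}$ (the exact product to be calibrated so that $v$ lies on its minimal set). By the displacement/no-free-faces upgrade one promotes the inclusions of halfspaces realised along the two SL segments into a pair of strongly separated pairs $(\mf{h}_2,\mf{h}_1^*)$ and $(g\mf{h}_1,\mf{h}_2^*)$ as in Definition~\ref{neatly contracting defn}, which makes $g$ neatly contracting; the $\langle g\rangle$-axis through $v$ then crosses $\mf{w}(c^-)$ going backwards and $\mf{w}(c^+)$ going forwards, giving $\pi_v(g^nv)=\mf{w}(c^-)$ and $\pi_{g^nv}(v)=\mf{w}(g^nc^+)$ for all $n\geq 1$.

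\textbf{Main obstacles.} The genuinely delicate point is engineering the strong separation: having $\pi_v(g^nv)$ equal to a single clique's worth of hyperplanes is much weaker than the geometric control demanded by ``neatly contracting'', and pushing the local SL data out to strongly separated halfspaces is exactly where properness and absence of free faces must be used (via the essentiality of the relevant hyperplane-intersection subcomplexes and the Flipping/Double Skewering machinery of \cite{CS}). A secondary subtlety is handling the interaction between $g^-$ and $g^+$: their displacement directions at $v$ ($\mf{w}(c^-)$ versus $\mf{w}(c^+)$) may overlap or be transverse, and I would need the concatenation lemma to still produce an SL line through $v$; the excluded case $X=\R$ with $c^-=c^+$ is precisely where this forces $g$ to fix a direction rather than translate, and I expect the argument to show that whenever $X\neq\R$ the irreducibility of $X$ (hence genuine branching transverse to any line) lets one perturb to avoid this degeneracy. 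Finally, one must check the ``for all $n\geq1$'' uniformity, which follows once $v\in\mathrm{Min}(g)$ and $g$ is neatly contracting, since then $\mscr{W}(v|g^nv)$ is the increasing union of the $g^k$-translates of $\mscr{W}(v|gv)$ and the adjacency pattern at $v$ and at $g^nv$ is controlled by Proposition~\ref{g+-}(3).
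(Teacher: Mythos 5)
Your plan has two genuine gaps, both at the heart of the statement. First, the componentwise composition cannot produce $\pi_v(g^-v)=\mf{w}(c^-)$. Proposition~\ref{roaming components 1} gives elements $g_i^-$ with $\pi_v(g_i^-v)=\{\mf{w}(e_i^-)\}$, i.e.\ each displacement starts by crossing a \emph{single} hyperplane; when you concatenate such segments (even granting that the concatenations are geodesic and SL), the hyperplanes of $\mscr{W}_v$ separating $v$ from the composite image are still only those crossed by the \emph{first} segment, so you get $\pi_v(g^-v)=\{\mf{w}(e_1^-)\}$, not the whole maximal clique; the hyperplane $\mf{w}(e_2^-)$ has no reason to separate $g_1^-v$ from $g_1^-g_2^-v$. (Also, the transversality you invoke -- that all of $\mscr{W}(v|g_i^-v)$ is transverse to $\mf{w}(L_j)$ for $j\neq i$ -- is not part of the statement of Proposition~\ref{roaming components 1}.) The paper sidesteps this entirely: by Proposition~\ref{my corner} (sector-heaviness, which is where ``no free faces'' actually enters) there is a halfspace $\mf{h}_-$ contained in the intersection of the far sides of all elements of $\mf{w}(c^-)$, and then $g^nv\in\mf{h}_-$ forces $\mf{w}(c^-)\cu\pi_v(g^nv)$, whence equality by \emph{maximality} of the clique; symmetrically for $c^+$. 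Your proposal never produces such a halfspace and never uses maximality in this way, and relatedly your insistence on calibrating the product so that $v\in{\rm Min}(g)$ is both unnecessary for the statement and very hard to arrange.

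Second, the passage to strong separation -- the actual content of ``neatly contracting'' -- is only asserted (``by the displacement/no-free-faces upgrade''), with no argument. In the paper this is done globally, not by upgrading SL segments: since $X\neq\R$ the action is essential and non-elementary, Lemma~\ref{cNT RW} provides some neatly contracting $h\in G$, ampleness of the orbit $G\cdot h^+\cu\partial_{\rm reg}X$ (Lemma~\ref{ample sets}, again via sector-heaviness) places distinct translates of $h^+$ inside $\mf{h}_-$ and $\mf{h}_+$, the strongly separated chains defining these regular points yield strongly separated halfspaces $\mf{k}_\pm\cu\mf{h}_\pm$, and the Double Skewering Lemma of \cite{CS} gives $g$ with $g\mf{k}_+^*\cu\mf{k}_-$, whose square is neatly contracting with $g^+\in\mf{h}_-$, $g^-\in\mf{h}_+$; a large power then gives the conclusion for all $n\geq 1$ via Proposition~\ref{g+-}. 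Without an argument of this kind (regular points/strongly separated chains plus flipping/double skewering), your construction does not establish the strongly separated pairs required by Definition~\ref{neatly contracting defn}, and your treatment of the excluded case $X=\R$, $c^-=c^+$ remains a hope rather than a proof.
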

\begin{proof}
By Proposition~\ref{my corner}, the cube complex $X$ is sector-heavy. Let $\mf{h}_+$ be a halfspace contained in the intersection of all sides of elements of $\mf{w}(c^+)$ that do not contain $v$. Similarly, take $\mf{h}_-$ in the intersection of all sides of elements of $\mf{w}(c^-)$ that do not contain $v$. We only need to find a neatly contracting element $g\in G$ such that $g^nv\in\mf{h}_-$ and $g^{-n}v\in\mf{h}_+$ for all $n\geq 1$. For this, it suffices to construct a neatly contracting element $g\in G$ with $g^+\in\mf{h}_-$ and $g^-\in\mf{h}_+$, passing then to a large power if necessary.

The case $X=\R$ is immediate. If $X\neq\R$, the action $G\acts X$ is non-elementary by Lemma~\ref{geometric implies nonelementary}. Since $X$ has no free faces, $G\acts X$ is also essential. Lemma~\ref{cNT RW} provides a neatly contracting element $h\in G$. Applying Lemma~\ref{ample sets} to the set $G\cdot h^+\cu\partial_{\rm reg}X$, we obtain $g_1,g_2\in G$ with ${g_1\cdot h^+\in\mf{h}_-}$, $g_2\cdot h^+\in\mf{h}_+$ and $g_1\cdot h^+\neq g_2\cdot h^+$. Considering strongly separated chains containing these two points, we find strongly separated halfspaces $\mf{k}_{\pm}$ with $\mf{k}_+\cu\mf{h}_+$ and $\mf{k}_-\cu\mf{h}_-$. The Double Skewering Lemma of \cite{CS} now provides $g\in G$ with $g\mf{k}_+^*\cu\mf{k}_-$ and $g^2$ is the required neatly contracting element.
\end{proof}

The following is a small but essential improvement on Proposition~\ref{roaming components 1}.

\begin{lem}\label{roaming components 2}
Suppose that $X\neq\R$ and let $v\in X$ be a vertex with irreducible link. For every element $e\in\llk v$, there exists an element $g\in G$ satisfying $\pi_v(g^nv)=\{\mf{w}(e)\}$ and $\mscr{W}_v\cap\mscr{W}_{g^nv}=\emptyset$ for all $n\geq 1$.
\end{lem}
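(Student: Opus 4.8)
The plan is to deduce the statement for all $n\geq 1$ from the single assertion that $G$ contains a \emph{neatly contracting} element $g$ possessing an axis through $v$ and satisfying $\pi_v(gv)=\{\mf{w}(e)\}$ together with $\mscr{W}_v\cap\mscr{W}_{gv}=\emptyset$. Granting such a $g$, fix an axis $\alpha\ni v$. Since $\alpha$ is a $\langle g\rangle$--invariant geodesic line, $gv$ lies on the geodesic sub-segment $[v,g^nv]\cu\alpha$ and $\mscr{W}(v|g^nv)=\bigsqcup_{i=0}^{n-1}g^i\mscr{W}(v|gv)$ for every $n\geq 1$. If a hyperplane $\mf{w}$ were adjacent to both $v$ and $g^nv$, then its carrier --- being convex and containing $v$ and $g^nv$ --- would contain $[v,g^nv]$, hence $gv$, so $\mf{w}\in\mscr{W}_v\cap\mscr{W}_{gv}=\emptyset$, a contradiction; thus $\mscr{W}_v\cap\mscr{W}_{g^nv}=\emptyset$. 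Likewise, any $\mf{w}\in\pi_v(g^nv)$ lies in $g^i\mscr{W}(v|gv)\cap\mscr{W}_v$ for some $i$; if $i\geq 1$ then $\mf{w}$ separates $g^iv$ from $g^{i+1}v$, so its carrier contains an edge of $\alpha$ lying past $gv$ as well as the vertex $v$, hence contains $gv$, again contradicting $\mscr{W}_v\cap\mscr{W}_{gv}=\emptyset$; so $i=0$ and $\mf{w}\in\mscr{W}(v|gv)\cap\mscr{W}_v=\pi_v(gv)=\{\mf{w}(e)\}$. This establishes the reduction.

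It then remains to produce the element $g$. I would do this by imitating the proof of Proposition~\ref{roaming components 1}, but carrying along the extra data ``$g$ is neatly contracting with $v$ on an axis'' and using Proposition~\ref{colouring} wherever that proof invokes Lemma~\ref{semi-straight+}. (This is the only place where the hypothesis $X\neq\R$ intervenes: it guarantees that the exceptional case of Proposition~\ref{colouring} does not arise, and that the non-elementarity input from Lemma~\ref{geometric implies nonelementary} is available.) Concretely: extend $e$ to a maximal clique $c\cu\llk v$ and use Proposition~\ref{colouring} to find a neatly contracting $g_0$ with $v$ on an axis and $\pi_v(g_0v)=\mf{w}(c)$. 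Then iterate a concatenation step in the spirit of Proposition~\ref{roaming components 1}: given a current neatly contracting element $g$ with $v$ on an axis, if $\pi_v(gv)$ is strictly larger than $\{\mf{w}(e)\}$ or if $\mscr{W}_v\cap\mscr{W}_{gv}\neq\emptyset$, one uses the irreducibility of $\llk v$ together with Remark~\ref{finding disjoint cliques} to locate link elements at $\delta$--distance at least $2$, and then concatenates a geodesic along the axis of $g$ with a suitable axis segment of a further neatly contracting element supplied by Proposition~\ref{colouring}, chosen so that (i) the concatenation is straight-looking, (ii) the resulting element is again neatly contracting with $v$ on an axis, and (iii) one of the two ``defects'' $\#\pi_v(gv)$ or $\#(\mscr{W}_v\cap\mscr{W}_{gv})$ strictly decreases. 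A minimality argument as in \emph{loc.\ cit.}\ then produces an element with $\pi_v(gv)=\{\mf{w}(e)\}$ and $\mscr{W}_v\cap\mscr{W}_{gv}=\emptyset$.

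The hard part will be the bookkeeping in (ii): one must verify that concatenating the axis segments of two neatly contracting elements that share the vertex $v$ (or one of its $\langle g\rangle$--translates) again produces a neatly contracting element whose axis passes through $v$ --- i.e.\ that the strong-separation data witnessing ``neatly contracting'' in Definition~\ref{neatly contracting defn} and Remark~\ref{SWI and NT} survives the concatenation --- and to do so while simultaneously controlling $\pi_v(gv)$ and $\mscr{W}_v\cap\mscr{W}_{gv}$. I expect this to require a careful choice of the strongly separated chains defining the two elements, arranged so that the attracting fixed point of the first lies in a halfspace strongly separated from the repelling fixed point of the second. I note also that a naive induction on $n$ starting directly from the element of Proposition~\ref{roaming components 1} does \emph{not} work: that element need not have $v$ on its axis, so the geodesics from $v$ to $g^nv$ need not concatenate as $n$ grows, and both $\pi_v(g^nv)$ and $\mscr{W}_v\cap\mscr{W}_{g^nv}$ can then fail to stabilise --- which is precisely why the construction must be routed through the neatly contracting elements furnished by Proposition~\ref{colouring}.
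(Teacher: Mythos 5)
Your reduction step is fine: if $v$ lies on an axis of $g$ and $\pi_v(gv)=\{\mf{w}(e)\}$, $\mscr{W}_v\cap\mscr{W}_{gv}=\emptyset$, then the convexity-of-carriers argument you give does yield both conditions for all $n\geq 1$ (and in fact you never use neat contraction there, only the axis through $v$). The genuine gap is in producing the element $g$, which is where all the substance of the lemma lies. You explicitly defer the "hard part" --- verifying that concatenating axis segments of two neatly contracting elements sharing (a translate of) $v$ again gives a neatly contracting element with $v$ on an axis, while one of the two defects $\#\pi_v(gv)$ or $\#(\mscr{W}_v\cap\mscr{W}_{gv})$ strictly decreases --- and it is not clear this induction closes up: the minimality argument of Proposition~\ref{roaming components 1} is run over elements satisfying the two conditions of Lemma~\ref{semi-straight+}, and carrying the much stronger invariant "neatly contracting with an axis through $v$" through each concatenation is a different and harder statement that you have not established. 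Note also that Proposition~\ref{colouring} cannot directly seed your induction, since it only gives $\pi_v(gv)$ equal to the hyperplane set of a whole maximal clique, not the singleton $\{\mf{w}(e)\}$.

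The paper avoids this difficulty entirely by replacing your global hypothesis ("neatly contracting, $v$ on an axis") with a weak local one. Starting from the element $h$ of Proposition~\ref{roaming components 1}, one performs a single correction: letting $c$ be the clique with $\pi_{hv}(v)=\mf{w}(hc)$, if $e\in c$ one uses Remark~\ref{finding disjoint cliques} to find a maximal clique $c'$ disjoint from $c$ and applies Proposition~\ref{colouring} (this is where $X\neq\R$ enters) to produce a new element $g$ still satisfying $\pi_v(gv)=\{\mf{w}(e)\}$ and $\mscr{W}_v\cap\mscr{W}_{gv}=\emptyset$, but now with the extra condition $\mf{w}(ge)\not\in\pi_{gv}(v)$. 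This last condition alone is what makes geodesics concatenate: by induction on $n$, a geodesic from $v$ to $g^{n+1}v$ must pass through $g^nv$ (otherwise the unique hyperplane $\mf{w}(g^ne)$ of $\pi_{g^nv}(g^{n+1}v)$ would lie in $\pi_{g^nv}(v)$), and convexity of carriers then propagates $\pi_v(g^nv)=\{\mf{w}(e)\}$, $\mscr{W}_v\cap\mscr{W}_{g^nv}=\emptyset$ and the extra condition to $n+1$. The axis through $v$ that you want as an input thus appears only a posteriori, and neat contraction is never needed. To complete your argument you would either have to supply the deferred bookkeeping in full, or, more efficiently, swap your strong invariant for the paper's condition $\mf{w}(ge)\not\in\pi_{gv}(v)$.
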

\begin{proof}
We start by constructing $g\in G$ with $\pi_v(gv)=\{\mf{w}(e)\}$, $\mscr{W}_v\cap\mscr{W}_{gv}=\emptyset$ and $\mf{w}(ge)\not\in\pi_{gv}(v)$. Proposition~\ref{roaming components 1} already provides an element $h\in G$ with $\pi_v(hv)=\{\mf{w}(e)\}$ and $\mscr{W}_v\cap\mscr{W}_{hv}=\emptyset$. Let $c\cu\llk v$ be the clique with $\pi_{hv}(v)=\mf{w}(hc)$. If $e\not\in c$, we can take $g=h$; suppose instead that $e\in c$.

Remark~\ref{finding disjoint cliques} yields a maximal clique $c'\cu\llk v$ with $c\cap c'=\emptyset$. Since ${X\neq\R}$, Proposition~\ref{colouring} provides an element $k\in G$ with $\pi_{hv}(kv)=\mf{w}(hc')$ and $\pi_{kv}(hv)=\mf{w}(kc')$. By Remark~\ref{SL and transversality}, no element of $\mf{w}(hc')$ can be transverse to $\mf{w}(e)$ or it would lie in $\mscr{W}_v\cap\mscr{W}_{hv}=\emptyset$. Note moreover that $\pi_{hv}(kv)=\mf{w}(hc')$ is disjoint from $\pi_{hv}(v)=\mf{w}(hc)$. It follows that a geodesic from $v$ to $kv$ is SL and passes through $hv$; in particular, we have $\pi_v(kv)=\{\mf{w}(e)\}$ and $\mscr{W}_v\cap\mscr{W}_{kv}\cu\mscr{W}_v\cap\mscr{W}_{hv}=\emptyset$. Since the clique $c'$ is maximal, we have $\pi_{kv}(v)=\pi_{kv}(hv)=\mf{w}(kc')$, which does not contain $\mf{w}(ke)$. In conclusion, we can take $g=k$.

We now show by induction on $n\geq 1$ that we have $\pi_v(g^nv)=\{\mf{w}(e)\}$, ${\mscr{W}_v\cap\mscr{W}_{g^nv}=\emptyset}$, $\mf{w}(g^ne)\not\in\pi_{g^nv}(v)$ and, moreover, that a geodesic from $v$ to $g^nv$ passes through all $g^iv$ with $1\leq i\leq n-1$. This will conclude the proof.

The case $n=1$ is trivial; let us assume that the above hold for some $n\geq 1$. Since $\mf{w}(g^ne)\not\in\pi_{g^nv}(v)$, a geodesic from $v$ to $g^{n+1}v$ is obtained by concatenating any geodesic from $v$ to $g^nv$ with any geodesic from $g^nv$ to $g^{n+1}v$. By convexity of hyperplane carriers, we then have $\mscr{W}_v\cap\mscr{W}_{g^{n+1}v}\cu\mscr{W}_v\cap\mscr{W}_{g^nv}=\emptyset$, $\mscr{W}_v\cap\mscr{W}(gv|g^{n+1}v)\cu\mscr{W}_v\cap\mscr{W}_{gv}=\emptyset$, ${\mscr{W}_{g^{n+1}v}\cap\mscr{W}(v|g^nv)\cu\mscr{W}_{g^{n+1}v}\cap\mscr{W}_{g^nv}=\emptyset}$. The last two inclusions guarantee that $\pi_v(g^{n+1}v)=\pi_v(gv)=\{\mf{w}(e)\}$ and $\pi_{g^{n+1}v}(v)=\pi_{g^{n+1}v}(g^nv)\not\ni\mf{w}(g^{n+1}e)$.
\end{proof}

The following will be our key tool in proving Theorem~\ref{OR exist thm}.

\begin{thm}\label{roaming components 3}
Consider an irreducible component $L\cu\llk v$ with $\#L\geq 3$. Given $e\in L$ and a maximal clique $c\cu L$, there exists $g\in G$ satisfying:
\begin{enumerate}
\item $\pi_v(gv)=\{\mf{w}(e)\}$;
\item every hyperplane in $\mscr{W}_v\cap\mscr{W}_{gv}$ is preserved by $g$;
\item $\mf{w}(L)\cap\mscr{W}_{gv}=\emptyset$;
\item $\pi_{gv}(v)=\mf{w}(gc)$.
\end{enumerate}
\end{thm}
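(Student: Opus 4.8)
\textbf{Proof plan for Theorem~\ref{roaming components 3}.}

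The strategy is to combine the two previous results --- Proposition~\ref{roaming components 1} (which gives (1), (2), (3) but says nothing about $\pi_{gv}(v)$) and Lemma~\ref{roaming components 2} / Proposition~\ref{colouring} (which control the clique $\pi_{gv}(v)$) --- by a concatenation argument of the same flavour as the one used in the proof of Lemma~\ref{roaming components 2}. Concretely, I would first apply Proposition~\ref{roaming components 1} to the component $L$ and the element $e$ to obtain $h\in G$ with $\pi_v(hv)=\{\mf{w}(e)\}$, with every hyperplane in $\mscr{W}_v\cap\mscr{W}_{hv}$ preserved by $h$, and with $\mf{w}(L)\cap\mscr{W}_{hv}=\emptyset$. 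Let $c_0\cu\llk v$ be the maximal clique with $\pi_{hv}(v)=\mf{w}(hc_0)$. If $c_0=c$ we are already done (modulo checking (4) is literally an equality of cliques, which it is since $c$ is maximal). Otherwise I want to ``rotate'' the far end of the geodesic from $\pi_{hv}(v)=\mf{w}(hc_0)$ to $\mf{w}(hc)$ without disturbing conditions (1)--(3) at $v$.

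For the rotation step I would use the discreteness/properness hypothesis (which is in force from Proposition~\ref{colouring} onwards) together with Proposition~\ref{colouring} applied at the vertex $hv$ to the pair of maximal cliques $hc_0$ and $hc$ of $\lk_X(hv)$: this gives a neatly contracting $k\in G$ with $\pi_{hv}(k^nhv)=\mf{w}(hc_0)$ and $\pi_{k^nhv}(hv)=\mf{w}(k^nhc)$ for all $n\geq 1$ (the exceptional case $X=\R$ is excluded since $\#L\geq 3$ forces $\dim X\geq 2$, hence $X\neq\R$). Set $g=k^nh$. The point is that, since $\pi_{hv}(k^nhv)=\mf{w}(hc_0)$ is \emph{disjoint} from $\pi_{hv}(v)=\mf{w}(hc_0)$ --- wait, these are equal, so one must instead arrange $c_0$ and $c$ to be disjoint first by a preliminary application of Remark~\ref{finding disjoint cliques}, exactly as in Lemma~\ref{roaming components 2}: replace $h$ by the element furnished by the argument of Lemma~\ref{roaming components 2} so that one may assume $\pi_{hv}(v)=\mf{w}(hc_0)$ with $c_0\cap c=\emptyset$, and then $\pi_{hv}(k^nhv)=\mf{w}(hc)$ is disjoint from $\pi_{hv}(v)$. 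Then a geodesic from $v$ to $g v=k^nhv$ is obtained by concatenating a geodesic $v\to hv$ with a geodesic $hv\to k^nhv$, it is SL, and one reads off $\pi_v(gv)=\{\mf{w}(e)\}$ and $\pi_{gv}(v)=\pi_{k^nhv}(hv)=\mf{w}(k^nhc)=\mf{w}(gc)$, giving (1) and (4). For (3), convexity of hyperplane carriers gives $\mf{w}(L)\cap\mscr{W}_{gv}\cu\mf{w}(L)\cap\mscr{W}_{hv}=\emptyset$. For (2), any hyperplane in $\mscr{W}_v\cap\mscr{W}_{gv}$ lies in $\mscr{W}_v\cap\mscr{W}_{hv}\cap\mscr{W}_{k^nhv}$, hence is preserved by $h$ and (being in $\mscr{W}_{hv}\cap\mscr{W}_{k^nhv}$, on which $k$ acts... ) one needs a Lemma~\ref{stabilising hyperplanes}-type argument to pass to a power of $k$ so that such hyperplanes are fixed by $k^n$ as well, hence by $g=k^nh$.

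The main obstacle I expect is \textbf{bookkeeping the hyperplanes in $\mscr{W}_v\cap\mscr{W}_{gv}$ and showing they are all $g$-invariant (condition (2))}: after the concatenation one controls $\mscr{W}_v\cap\mscr{W}_{hv}$ (preserved by $h$ by construction) and one controls $\mscr{W}_{hv}\cap\mscr{W}_{k^nhv}$, but to conclude that the intersection $\mscr{W}_v\cap\mscr{W}_{k^nhv}$ lands inside $\mscr{W}_v\cap\mscr{W}_{hv}$ one needs the geodesic $v\to hv\to k^nhv$ to genuinely pass through $hv$, which in turn requires $\mf{w}(he)\notin\pi_{hv}(v)$ (non-transversality of $\mf{w}(he)$ with the first hyperplane crossed going back toward $v$); this is precisely the delicate point that Lemma~\ref{roaming components 2} was engineered to supply, so the cleanest route is to build $h$ from the start via the construction in the proof of Lemma~\ref{roaming components 2} rather than directly from Proposition~\ref{roaming components 1}, inheriting $\mf{w}(he)\notin\pi_{hv}(v)$, $\mscr{W}_v\cap\mscr{W}_{hv}=\emptyset$ (so (2) and (3) become trivial for $h$), and then only needing to correct the clique at the far end. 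I would then need to double-check that Proposition~\ref{colouring}'s output $k$ can indeed be inserted without re-introducing hyperplanes of $\mf{w}(L)$ into $\mscr{W}_{gv}$ --- which again follows from carrier-convexity --- and that the hypothesis $\#L\geq 3$ is used exactly where needed (to rule out $X=\R$ and to guarantee that the maximal clique $c$ of $L$ has a disjoint partner inside $L$, via Remark~\ref{finding disjoint cliques} applied within the sub-cube-complex determined by $L$).
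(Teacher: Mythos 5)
Your outline reproduces the right concatenation mechanism (first leg with $\mscr{W}_v\cap\mscr{W}_{hv}=\emptyset$ and $\pi_v(hv)=\{\mf{w}(e)\}$, then a disjoint-clique rotation via Remark~\ref{finding disjoint cliques} and Proposition~\ref{colouring}, then maximality of the target clique to force $\pi_{gv}(v)=\pi_{gv}(hv)$, with Lemma~\ref{stabilising hyperplanes} cleaning up condition~(2)), and in the special case $\llk v=L$ this is essentially the paper's argument. But there is a genuine gap in the general case: nothing in Theorem~\ref{roaming components 3} assumes $\llk v$ is irreducible, only that $L$ is one irreducible component of it. Your two main tools are then unavailable as you invoke them. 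Lemma~\ref{roaming components 2} explicitly requires a vertex \emph{with irreducible link}, so you cannot ``build $h$ from the start via the construction in the proof of Lemma~\ref{roaming components 2}'' at $v$; and Proposition~\ref{colouring} applied at $hv$ inside $X$ only takes as input maximal cliques of the full link $\lk_X(hv)$, whereas $hc$ is maximal only inside the component $hL$. If $\llk v$ has other join factors, the rotation step can at best give $\pi_{gv}(v)=\mf{w}(g\tilde c)$ for some clique $\tilde c$ maximal in $\llk v$, which strictly contains $c$, so the exact equality in condition~(4) (and, without further care, the vanishing in condition~(3)) is out of reach by this route.

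The paper closes exactly this gap by a reduction you only gesture at in your last sentence: decompose $\llk v=L_1\ast\dots\ast L_s$ with $L_1=L$, choose maximal cliques $\kappa_i\cu L_i$ for $i\geq 2$, and pass to the $\CAT$ cube complex $Z=\bigcap_{\mf{u}\in\mf{U}}\mf{u}$ where $\mf{U}=\mf{w}(\kappa_2)\cup\dots\cup\mf{w}(\kappa_s)$, acted on properly and cocompactly (Lemma~\ref{cocompact stabilisers}) by the subgroup $H<G$ stabilising every side of every hyperplane in $\mf{U}$. In $Z$ one has $\lk_Z(\overline v)\cong L$, which is irreducible with at least three vertices, so $Z$ is irreducible, $Z\neq\R$, and $Z$ has no free faces; your concatenation argument (Lemma~\ref{roaming components 2}, Remark~\ref{finding disjoint cliques}, Proposition~\ref{colouring}, plus Lemma~\ref{stabilising hyperplanes} applied to hyperplanes of $X$) is then carried out entirely for the action $H\acts Z$, and conditions (1)--(4) are transferred back to $X$ using the fact that hyperplanes separating $v$ from $gv$ (for $g\in H$) are precisely those transverse to all of $\mf{U}$ and corresponding to separating hyperplanes of $Z$. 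Without setting up this subcomplex and subgroup, and without verifying that the statements you quote apply there, the proof does not go through when $\llk v$ is reducible.
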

\begin{proof}
Let $\llk v=L_1\ast ... \ast L_s$ be the decomposition into irreducible components, with $L_1=L$. Choose maximal cliques $\kappa_i\cu L_i$ for each $i\geq 2$. Any two hyperplanes in $\mf{U}:=\mf{w}(\kappa_2)\cup...\cup\mf{w}(\kappa_s)$ are transverse; let $Z$ denote their intersection. The vertex $v\in X$ projects to a vertex $\overline v\in Z$ and $\lk_Z(\overline v)$ is naturally identified with $L_1$. Let $H<G$ be the subgroup that stabilises every side of every hyperplane in $\mf{U}$. The action $H\acts Z$ is proper and, by Lemma~\ref{cocompact stabilisers}, also cocompact. Since $\lk_Z(\overline v)$ is irreducible, with at least three vertices, $Z$ is irreducible and $Z\neq\R$. Note that $Z$ has no free faces.

Observe that $e$ and $c$ project to an element $\overline e\in\lk_Z(\overline v)$ and a maximal clique $\overline c\cu\lk_Z(\overline v)$. Applying Lemma~\ref{roaming components 2} to the action $H\acts Z$, we find $h_1\in H$ with $\pi_{\overline v}(h_1\overline v)=\{\mf{w}(\overline e)\}$ and $\mscr{W}_{\overline v}\cap\mscr{W}_{h_1\overline v}=\emptyset$. By Lemma~\ref{stabilising hyperplanes}, we can moreover assume that $h_1$ preserves every hyperplane of $X$ lying in $\mscr{W}_v\cap\mscr{W}_{h_1v}$. 

Let $\overline c_1\cu\lk_Z(\overline v)$ be the clique with $\pi_{h_1\overline v}(\overline v)=\mf{w}(h_1\overline c_1)$. As $Z$ has no free faces, Remark~\ref{finding disjoint cliques} provides a maximal clique $\overline c_2\cu\lk_Z(\overline v)$ with ${\overline c_1\cap\overline c_2=\emptyset}$. Proposition~\ref{colouring} and Lemma~\ref{stabilising hyperplanes} now yield an element $h_2\in H$ satisfying $\pi_{h_1\overline v}(h_2h_1\overline v)=\mf{w}(h_1\overline c_2)$, $\pi_{h_2h_1\overline v}(h_1\overline v)=\mf{w}(h_2h_1\overline c)$ and such that $h_2$ preserves every hyperplane of $X$ lying in $\mscr{W}_{h_1v}\cap\mscr{W}_{h_2h_1v}$. Since $\pi_{h_1\overline v}(\overline v)$ and $\pi_{h_1\overline v}(h_2h_1\overline v)$ are disjoint, a geodesic from $\overline v$ to $h_2h_1\overline v$ passes through $h_1\overline v$. As $\mscr{W}_{\overline v}\cap\mscr{W}_{h_1\overline v}=\emptyset$, Remark~\ref{SL and transversality} shows that every such geodesic is SL. In particular, we have $\pi_{\overline v}(h_2h_1\overline v)=\{\mf{w}(\overline e)\}$, $\mscr{W}_{\overline v}\cap\mscr{W}_{h_2h_1\overline v}\cu\mscr{W}_{\overline v}\cap\mscr{W}_{h_1\overline v}=\emptyset$ and, since $\overline c$ is maximal, also $\pi_{h_2h_1\overline v}(\overline v)=\pi_{h_2h_1\overline v}(h_1\overline v)=\mf{w}(h_2h_1\overline c)$.

Let us now set $g=h_2h_1$. Note that a hyperplane of $X$ lies in $\mscr{W}(v|gv)$ if and only if it is transverse to every element of $\mf{U}$ and originates a hyperplane of $Z$ that lies in $\mscr{W}(\overline v|g\overline v)$. It follows that $g$ satisfies conditions~(1) and~(4). 

Observing that a geodesic from $v$ to $gv$ passes through the point $h_1v$, we have $\mscr{W}_v\cap\mscr{W}_{gv}\cu\mscr{W}_v\cap\mscr{W}_{h_1v}\cap\mscr{W}_{h_2h_1v}$ and it follows that $g$ satisfies condition~(2). Finally, each element of $\mf{w}(L)\cap\mscr{W}_{gv}$ is transverse to every hyperplane in $\mf{U}$ and therefore originates a hyperplane of $Z$ lying in $\mscr{W}_{\overline v}\cap\mscr{W}_{g\overline v}$. As the latter is empty, we conclude that $g$ also satisfies condition~(3).
\end{proof}

\begin{proof}[Proof of Theorem~\ref{OR exist thm}.]
Observe that $\partial_{\rm reg}X\neq\emptyset$ (for instance, by Lem\-mas~\ref{geometric implies nonelementary} and~\ref{cNT RW}). Thus, according to Lemma~\ref{straight SLR ray or}, there are two cases to consider. First, suppose that there exists a straight regular ray $r$ and set $x=r^+$. Let $\mf{u}$ be the first hyperplane crossed by $r$ and let $\mf{h}$ its side containing $r(0)$; let $y$ be any regular point in $\mf{h}$. Let $\mf{k}$ be a halfspace strongly separated from $\mf{h}$, with $x\in\mf{k}$. For every regular point $z\in\mf{k}$ with $z\neq x$, we have $x\op_zy$ as a consequence of $r$ being straight. If no such $z$ exists, a sub-ray of $r$ consists only of vertices that have degree $2$ in $X$, hence $X=\R$.

If no straight regular ray exists, let $x\in\partial_{\rm reg}X$, $v\in X$ and $\mf{w}\in\mscr{W}_v$ be as provided by Lemma~\ref{straight SLR ray or}. There exist $f\in\llk v$ and a clique $c'\cu\llk v$ with $\mf{w}=\mf{w}(f)$ and $\pi_v(x)=\mf{w}(c')$. The set $c'\cup\{f\}$ is entirely contained in a single irreducible component $L\cu\llk v$ and we have $\#L\geq 3$. We are going to show that every $e\in L$ is SLR-extendable. We then conclude by taking SLR rays extending $f$ and two elements of $c'$, in order to define, respectively, points $x,y,z\in\partial_{\rm reg}X$ with $m(x,y,z)=v$ and $x\op_zy$.

Remark~\ref{finding disjoint cliques} provides a maximal clique $c\cu L$ such that $c\cap c'=\emptyset$. Given $e\in L$, let $g\in G$ be an element satisfying conditions~(1)--(4) of Theorem~\ref{roaming components 3}. Since $\pi_{gv}(gx)=\mf{w}(gc')$ is disjoint from $\pi_{gv}(v)=\mf{w}(gc)$, a ray $\rho$ from $v$ to $gx$ passes through $gv$. If some $\mf{u}\in\mf{w}(gc')$ were transverse to $\mf{w}(e)$, Remark~\ref{SL and transversality} would yield $\mf{u}\in\mscr{W}_v\cap\mscr{W}_{gv}$ and we would have $\mf{u}=g^{-1}\mf{u}\in\mf{w}(c')\cu\mf{w}(L)$, violating the assumption that $\mf{w}(L)\cap\mscr{W}_{gv}=\emptyset$. Hence, no element of $\mf{w}(gc')$ is transverse to $\mf{w}(e)$ and $\rho$ is an SLR ray extending $e$.
\end{proof}

\subsection{From OR medians to partial isomorphisms.}\label{from OR sect}

\begin{ass}
Throughout this subsection, we only assume that $X$ and $Y$ are locally finite. We consider ample, concise subsets $\mc{A}\cu\partial X$ and $\mc{B}\cu\partial Y$ and a M\"obius bijection $f\colon \mc{A}\ra\mc{B}$.
\end{ass}

To avoid cumbersome formulas, we introduce the following notation for $x,y,z,w\in \mc{A}$ and $v\in Y$:
\[(x\cdot y)_v^f:=(f(x)\cdot f(y))_v, \hspace{.5cm} m^f(x,y,z):=m(f(x),f(y),f(z)),\] 
\[\crt^f(x,y,z,w):=\crt(f(x),f(y),f(z),f(w)).\]
Note that, since $\mc{A}$ is concise, a $4$--tuple $(x,y,z,w)\in\mc{A}^4$ lies in $\mscr{A}(X)$ if and only if no three of the four points $x$, $y$, $z$ and $w$ coincide (cf.\ Lemma~\ref{infinite Gromov product}). 

\begin{lem}\label{order is preserved}
Consider $x,y,z,w\in \mc{A}$ with $x\op_z y$. Setting $m=m(x,y,z)$ and $m'=m^f(x,y,z)$, we have:
\[(x\cdot w)_m=(x\cdot w)^f_{m'}, \hspace{.5cm} (y\cdot w)_m=(y\cdot w)^f_{m'}, \hspace{.5cm} (z\cdot w)_m=(z\cdot w)^f_{m'}.\]
\end{lem}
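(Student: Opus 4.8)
The statement asserts that when $x \op_z y$, the three Gromov products of $w$ against $x$, $y$, $z$ taken at $m = m(x,y,z)$ are preserved under $f$. The key observation is that, because $x \op_z y$, Remark~\ref{one is zero} tells us one of $(x\cdot w)_m$ and $(y\cdot w)_m$ is zero, and similarly (by Corollary~\ref{op-preserving}, using that $\mc{A}$ and $\mc{B}$ are ample and concise) one of $(x\cdot w)^f_{m'}$ and $(y\cdot w)^f_{m'}$ is zero. So on each side we are really dealing with only two nonzero quantities among the three Gromov products, which means the cross ratio triple $\crt$ determines them completely: the triple has a zero entry and hence a unique representative with a zero entry. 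Since $f$ is M\"obius, $\crt^f(x,y,z,w) = \crt(x,y,z,w)$, and I would like to read off the three individual Gromov products from this common triple.

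\textbf{Key steps.} First I would record that, by Definition~\ref{crt defn} computed at the basepoint $m$,
\[
\crt_m(x,y,z,w) = \big\ll (x\cdot y)_m + (z\cdot w)_m : (x\cdot z)_m + (y\cdot w)_m : (x\cdot w)_m + (y\cdot z)_m \big\rr,
\]
and since $m = m(x,y,z)$ lies in $I(x,y) \cap I(y,z) \cap I(z,x)$ we have $(x\cdot y)_m = (y\cdot z)_m = (z\cdot x)_m = 0$. Thus the triple simplifies to $\ll (z\cdot w)_m : (y\cdot w)_m : (x\cdot w)_m \rr$. By Remark~\ref{one is zero}, $\min\{(x\cdot w)_m, (y\cdot w)_m\} = 0$, so this is already a representative with a zero entry (and one checks $(z\cdot w)_m \geq 0$ automatically). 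Now I run the same computation on the $Y$ side: $m' = m^f(x,y,z) = m(f(x),f(y),f(z))$ lies in $I(f(x),f(y)) \cap I(f(y),f(z)) \cap I(f(z),f(x))$, so
\[
\crt^f_{m'}(x,y,z,w) = \big\ll (z\cdot w)^f_{m'} : (y\cdot w)^f_{m'} : (x\cdot w)^f_{m'} \big\rr,
\]
and by Corollary~\ref{op-preserving} we have $f(x) \op_{f(z)} f(y)$, so Remark~\ref{one is zero} gives $\min\{(x\cdot w)^f_{m'}, (y\cdot w)^f_{m'}\} = 0$; this is again a representative with a zero entry. Finally, since $f$ is M\"obius, $\crt(x,y,z,w) = \crt^f(x,y,z,w)$ as equivalence classes, and an equivalence class with a zero entry has a \emph{unique} representative with a zero entry (as recalled right before Definition~\ref{crt defn}). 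Hence the two representatives computed above coincide coordinatewise, which is exactly the asserted triple of equalities.

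\textbf{Finiteness caveat.} I should make sure all the quantities above are actually finite so that the cross-ratio triple is not the degenerate class $\ll +\infty : +\infty : +\infty \rr$ and the ``unique representative with a zero entry'' statement applies. Because $\mc{A}$ is concise and $x \op_z y$ in particular forces $m \in X$, the products $(x\cdot w)_m$, $(y\cdot w)_m$, $(z\cdot w)_m$ are all finite by Lemma~\ref{infinite Gromov product}(2) (each of $I(x,w)$, $I(y,w)$, $I(z,w)$ meets $X$, since $w$ and its partners lie in the concise set $\mc{A}$ — or, when some of them coincide, the relevant product is trivially $0$). The same applies on the $Y$ side via conciseness of $\mc{B}$. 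This finiteness check is the only place one must be slightly careful; the rest is bookkeeping with the definitions. I do not anticipate a serious obstacle here — the real content has already been packaged into Remark~\ref{one is zero} and Corollary~\ref{op-preserving}.
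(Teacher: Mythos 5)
Your argument is correct and is essentially the paper's own proof: compute $\crt$ at the medians $m$ and $m'$, use Remark~\ref{one is zero} together with Corollary~\ref{op-preserving} to produce a zero entry in the triple on both sides, and conclude from the M\"obius property and the uniqueness of the zero-entry representative of an equivalence class. The only inaccuracy is in your finiteness aside: if $w$ coincides with one of $x,y,z$ the corresponding Gromov product is $+\infty$ rather than $0$, but this is harmless --- what is actually needed is that $(x,y,z,w)\in\mscr{A}(X)$ (at most one infinite entry, so the class is not $\ll+\infty:+\infty:+\infty\rr$ and infinite entries match across the class), which follows from conciseness because $x\op_z y$ forces $x,y,z$ to be pairwise distinct, exactly as the paper notes.
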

\begin{proof}
Recall from Section~\ref{crossratio section} that $\mscr{A}(X)\cu(\overline X)^4$ is the set of $4$--tuples $(x,y,z,w)$ such that at most one of the three quantities $(x\cdot y)_v+(z\cdot w)_v$, $(x\cdot z)_v+(y\cdot w)_v$ and $(x\cdot w)_v+(y\cdot z)_v$ is infinite. Note that $(x,y,z,w)\in\mscr{A}(X)$ and:
\[\crt_m(x,y,z,w)=\ll (z\cdot w)_m:(y\cdot w)_m:(x\cdot w)_m\rr,\]
\[\crt^f_{m'}(x,y,z,w)=\ll (z\cdot w)^f_{m'}:(y\cdot w)^f_{m'}:(x\cdot w)^f_{m'}\rr.\]
Since $x\op_zy$, Lemma~\ref{one is zero} shows that either $(y\cdot w)_m=0$ or $(x\cdot w)_m=0$. Since $f(x)\op_{f(z)}f(y)$ by Corollary~\ref{op-preserving}, also one among $(y\cdot w)^f_{m'}$ and $(x\cdot w)^f_{m'}$ must vanish. The equality $\crt(x,y,z,w)=\crt^f(x,y,z,w)$ then implies that $(x\cdot w)_m=(x\cdot w)^f_{m'}$, $(y\cdot w)_m=(y\cdot w)^f_{m'}$ and $(z\cdot w)_m=(z\cdot w)^f_{m'}$.
\end{proof}

For the next results, we consider points $x_1,x_2,x,y_1,y_2,y\in \mc{A}$ satisfying $x_1\op_x x_2$ and $y_1\op_y y_2$. We set:
\begin{align*} 
&m_x=m(x_1,x_2,x), & &m_y=m(y_1,y_2,y), \\ 
&m_x'=m^f(x_1,x_2,x), & &m_y'=m^f(y_1,y_2,y).
\end{align*}

\begin{lem}\label{more preserved products}
If $u,v\in\mc{A}$ are distinct points, we have $(u\cdot v)_{m_x}=(u\cdot v)^f_{m_x'}$.
\end{lem}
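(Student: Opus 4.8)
The plan is to reduce the statement to Lemma~\ref{order is preserved} by a median-algebra argument. The point $m_x = m(x_1,x_2,x)$ is a cut point for $I(x_1,x_2)\cap X$ since $x_1\op_x x_2$; in particular $I(x_1,x_2) = I(x_1,m_x)\cup I(m_x,x_2)$, and this structure is preserved by $f$ by Corollary~\ref{op-preserving}. So I will compute the Gromov product $(u\cdot v)_{m_x}$ by projecting to each ``side'' of $m_x$ and comparing with the corresponding computation at $m_x'$.

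First I would record the identity, valid in any median algebra, expressing a Gromov product at a cut point in terms of the two branches. Concretely, with $m = m_x$ and $w\in\overline X$, the gate-projection $m(x_1,x_2,w)$ lies in $I(x_1,m)$ or in $I(m,x_2)$; accordingly one of $(x_1\cdot w)_m$, $(x_2\cdot w)_m$ vanishes, and in fact $\mscr{W}(m|w)\cap\mscr{W}(x_1|x_2)$ is computed entirely on one branch. This lets me express $(u\cdot v)_m = \#\mscr{W}(m|u,v)$ as a sum of a part in $\mscr{W}(x_1|x_2)$ and a part transverse-or-disjoint to it; but cleaner is to use Lemma~\ref{order is preserved} directly. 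Namely, Lemma~\ref{order is preserved} (applied with the quadruple $x_1,x_2,x,u$ and then $x_1,x_2,x,v$) tells me that $(x_i\cdot u)_m = (x_i\cdot u)^f_{m'}$ and $(x\cdot u)_m = (x\cdot u)^f_{m'}$ for every $u\in\mc{A}$, and similarly with $u$ replaced by $v$. The remaining task is to recover $(u\cdot v)_m$ from the quantities $(x_i\cdot u)_m$, $(x_i\cdot v)_m$, $(x\cdot u)_m$, $(x\cdot v)_m$, $(u\cdot v)_m$ in a way that is forced by the cross-ratio/median data.

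The key step: since $x_1\op_x x_2$, Remark~\ref{one is zero} gives $\min\{(x_1\cdot u)_m,(x_2\cdot u)_m\}=0$; say WLOG $(x_2\cdot u)_m = 0$, so $m\in I(x_2,u)$, i.e.\ $m = m(x_1,x_2,u)$ as well — replacing the defining triple of $m$. Now I would like to also move $v$ into the picture. If $(x_2\cdot v)_m = 0$ too, then $m = m(x_1,u,v)=m(x_2,u,v)$? Not quite; rather both $u$ and $v$ ``hang off'' the $x_1$-side, and I can instead use the triple $(x_1, u, x)$ or $(x_1,u,x_2)$ to represent $m$ with $x_1\op_{?}u$ — here I need $u\op_{x_2}x_1$ or similar, which follows because $m$ is still a cut point of the relevant interval (the cut-point property of $m$ in $I(x_1,x_2)$ plus $m\in I(x_2,u)$ forces $m$ to be a cut point of $I(x_1,u)$ as well, by a short link argument via Lemma~\ref{2-cuts}). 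Then Lemma~\ref{order is preserved} applied to a triple among $\{x_1,x_2,x,u\}$ that exhibits $m$ as an OR-type median, with the fourth coordinate $v$, yields $(u\cdot v)_m = (u\cdot v)^f_{m'}$. The symmetric case ($(x_1\cdot v)_m=0$ while $(x_2\cdot u)_m=0$, i.e.\ $u$ and $v$ on opposite branches) is handled by noting then $m\in I(u,v)$ so $(u\cdot v)_m = 0$, and likewise $f(u),f(v)$ are separated by $m'$ (again Corollary~\ref{op-preserving} / the preserved cut structure), giving $(u\cdot v)^f_{m'}=0$ too.

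The main obstacle I anticipate is the bookkeeping needed to justify ``$m$ is a cut point of $I(x_1,u)$'' (and hence $x_1\op_{\,\cdot\,}u$ in the appropriate sense) so that Lemma~\ref{order is preserved} is actually applicable with $v$ as the free point — the naive hypothesis $x_1\op_x x_2$ only directly gives a cut point for $I(x_1,x_2)$, and I must propagate it along the branch using the link characterization in Lemma~\ref{2-cuts} together with local finiteness. Once that propagation is in hand, the three cases (both of $u,v$ on the $x_1$-branch, both on the $x_2$-branch, one on each) each reduce to a single invocation of Lemma~\ref{order is preserved} or to the trivial observation $(u\cdot v)_m=0=(u\cdot v)^f_{m'}$, and the proof closes. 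A cleaner alternative, which I would try first to avoid the branch analysis, is: express everything through $\crt_m(u,v,x_1,x_2)$ and $\crt^f_{m'}(u,v,x_1,x_2)$, which are equal because $f$ is M\"obius, and extract $(u\cdot v)_m$ as the unique entry that is consistent with the already-known equalities $(x_i\cdot u)_m=(x_i\cdot u)^f_{m'}$ etc.\ — here the cut-point property is exactly what makes the system of entries determine $(u\cdot v)_m$ uniquely rather than only up to the shift ambiguity in the equivalence class $\ll\,\cdot:\cdot:\cdot\,\rr$.
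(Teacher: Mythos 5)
Your closing ``cleaner alternative'' is exactly the paper's proof, and it goes through as you would hope: since $\mc{A}$ is concise and $x_1\neq x_2$, $u\neq v$, the $4$--tuple $(x_1,x_2,u,v)$ lies in $\mscr{A}(X)$, so the M\"obius property gives $\crt(x_1,x_2,u,v)=\crt^f(x_1,x_2,u,v)$. Computing both triples at the basepoints $m_x$ and $m_x'$, the first entries are exactly $(u\cdot v)_{m_x}$ and $(u\cdot v)^f_{m_x'}$, because $(x_1\cdot x_2)_{m_x}=0=(x_1\cdot x_2)^f_{m_x'}$; the second and third entries agree by Lemma~\ref{order is preserved} applied with $w=u$ and with $w=v$. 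Since $u\neq v$ and $x_1\neq x_2$, at most one of these two matched entries can be infinite, so the additive shift allowed in the class $\ll\,\cdot:\cdot:\cdot\,\rr$ is forced to be zero and the first entries coincide. Note that what removes the shift ambiguity is this exact matching of two entries (at least one finite), not the cut-point property directly --- the opposition hypothesis has already done its work inside Lemma~\ref{order is preserved} (via Remark~\ref{one is zero} and Corollary~\ref{op-preserving}).

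By contrast, the branch analysis you present first contains genuine gaps and cannot be repaired along the lines you sketch. From $(x_2\cdot u)_{m_x}=0$ you infer ``i.e.\ $m=m(x_1,x_2,u)$''; this is false in general: $m_x\in I(x_1,x_2)\cap I(x_2,u)$ only places $m_x$ between $m(x_1,x_2,u)$ and $x_2$, at distance $(x_1\cdot u)_{m_x}$ from that median, which is typically positive. The proposed propagation ``$m_x$ is a cut point of $I(x_1,u)$'' also does not follow: the hypothesis $x_1\op_x x_2$ constrains only the hyperplanes of $\mscr{W}(x_1|x_2)$ adjacent to $m_x$, and a hyperplane adjacent to $m_x$ separating it from $u$ may well be transverse to one separating it from $x_1$ --- this is precisely the phenomenon that makes changing the representing triple of a median delicate (cf.\ Proposition~\ref{new op vs lcrt}). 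Finally, in your ``one point on each branch'' case the conclusion $(u\cdot v)_{m_x}=0$ fails in general, since hyperplanes separating $m_x$ from both $u$ and $v$ need not belong to $\mscr{W}(x_1|x_2)$ at all and are invisible to the branch structure. So drop the case analysis and keep the direct cross-ratio computation: it is the whole proof.
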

\begin{proof}
As $(x_1,x_2,u,v)\in\mscr{A}(X)$, we have $\crt(x_1,x_2,u,v)=\crt^f(x_1,x_2,u,v)$. Computing the cross ratio triples $\crt(x_1,x_2,u,v)$ and $\crt^f(x_1,x_2,u,v)$ in terms of Gromov products based at $m_x$ and $m_x'$, respectively, we obtain
\[\ll(u\cdot v)_{m_x}: b:c\rr=\ll (u\cdot v)^f_{m_x'}: b':c'\rr,\]
where $b=b'$ and $c=c'$ by Lemma~\ref{order is preserved}. Hence $(u\cdot v)_{m_x}=(u\cdot v)^f_{m_x'}$.
\end{proof}

\begin{prop}\label{d(mx,my)}
We have:
\[d(m_x,m_y)=(y_1\cdot y_2)_{m_x}+\left|(y_1\cdot y)_{m_x}-(y_2\cdot y)_{m_x}\right|.\]
\end{prop}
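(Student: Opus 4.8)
The plan is to compute $d(m_x,m_y)=\#\mscr{W}(m_x|m_y)$ by analysing how the hyperplanes separating $m_x$ and $m_y$ interact with the interval $C:=I(y_1,y_2)$, which contains $m_y=m(y_1,y_2,y)$. Note that $m_x,m_y\in X$ by the standing hypotheses $x_1\op_xx_2$ and $y_1\op_yy_2$, and that $y_1,y_2,y$ are pairwise distinct (otherwise $m_y$ would lie in $\partial X$); the hypothesis $x_1\op_xx_2$ will otherwise play no role, as it only serves to guarantee that $m_x$ is a vertex of $X$.

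\textbf{Reduction via gate-projections.} Set $p:=\pi_C(m_x)=m(m_x,y_1,y_2)$; recall also $\pi_C(m_y)=m_y$ and $\pi_C(y)=m(y_1,y_2,y)=m_y$. A hyperplane separating $m_x$ from $m_y$ that does not cross $C$ must separate $m_x$ from all of $C$; combining this with the projection identities $\mscr{W}(m_x|\pi_C(m_x))=\mscr{W}(m_x|C)$ and $\mscr{W}(m_x|m_y)\cap\mscr{W}(C)=\mscr{W}(\pi_C(m_x)|\pi_C(m_y))$ from Section~\ref{ccc prelims} yields the disjoint decomposition
\[\mscr{W}(m_x|m_y)=\mscr{W}(m_x|p)\ \sqcup\ \mscr{W}(p|m_y).\]
By the definition of the Gromov product, $\#\mscr{W}(m_x|p)=d\big(m_x,m(m_x,y_1,y_2)\big)=(y_1\cdot y_2)_{m_x}$. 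Applying the projection identity instead to the pair $(m_x,y)$ (using $\pi_C(y)=m_y$), we also get $\mscr{W}(p|m_y)=\mscr{W}(m_x|y)\cap\mscr{W}(C)=\mscr{W}(m_x|y)\cap\mscr{W}(y_1|y_2)$. So everything reduces to the identity $\#\big(\mscr{W}(m_x|y)\cap\mscr{W}(y_1|y_2)\big)=\big|(y_1\cdot y)_{m_x}-(y_2\cdot y)_{m_x}\big|$.

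\textbf{A partition argument.} I would partition $S:=\mscr{W}(m_x|y)$ into blocks $S_{ab}$, $a,b\in\{y,m_x\}$, where $\mf{w}\in S_{ab}$ if $y_1$ lies on the same side of $\mf{w}$ as $a$ and $y_2$ lies on the same side as $b$ (this makes sense since $m_x$ and $y$ are on opposite sides of $\mf{w}$). Directly from the definitions one reads off $\mscr{W}(m_x|y_1,y)=S_{yy}\sqcup S_{ym_x}$, $\mscr{W}(m_x|y_2,y)=S_{yy}\sqcup S_{m_xy}$ and $\mscr{W}(m_x|y)\cap\mscr{W}(y_1|y_2)=S_{ym_x}\sqcup S_{m_xy}$; since $\mc{A}$ is concise and $y\neq y_1,y_2$, Lemma~\ref{infinite Gromov product} makes all three of these sets finite. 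Hence $(y_1\cdot y)_{m_x}-(y_2\cdot y)_{m_x}=\#S_{ym_x}-\#S_{m_xy}$ and $\#\big(\mscr{W}(m_x|y)\cap\mscr{W}(y_1|y_2)\big)=\#S_{ym_x}+\#S_{m_xy}$, so it remains only to show that one of $S_{ym_x}$, $S_{m_xy}$ is empty.

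\textbf{Using $y_1\op_yy_2$.} This is the crux and the main obstacle. By definition of $\op$, $C=I(y_1,m_y)\cup I(m_y,y_2)$, and as $p\in C$ we may assume — swapping $y_1$ and $y_2$ if necessary — that $p\in I(m_y,y_2)$; I claim $S_{m_xy}=\emptyset$. Indeed, let $\mf{w}\in S_{m_xy}$ and orient it so that the side $\mf{h}$ containing $y$ also contains $y_2$, while $\mf{h}^*$ contains $m_x$ and $y_1$. Convexity of halfspaces then forces $p=m(m_x,y_1,y_2)\in I(m_x,y_1)\subseteq\mf{h}^*$ and $m_y=m(y_1,y_2,y)\in I(y_2,y)\subseteq\mf{h}$, so $\mf{w}$ separates $p$ from both $m_y$ and $y_2$, contradicting $p\in I(m_y,y_2)$. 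Therefore $S_{m_xy}=\emptyset$, whence $\#\big(\mscr{W}(m_x|y)\cap\mscr{W}(y_1|y_2)\big)=\#S_{ym_x}=(y_1\cdot y)_{m_x}-(y_2\cdot y)_{m_x}\geq0$, and substituting into the first decomposition gives $d(m_x,m_y)=(y_1\cdot y_2)_{m_x}+\big|(y_1\cdot y)_{m_x}-(y_2\cdot y)_{m_x}\big|$. Everything outside this last step is routine bookkeeping with the standard projection formulas; the delicate part is pinning down where the "opposite" hypothesis on $y_1,y_2$ forces one mixed block to vanish, and keeping the orientations of $\mf{w}$ consistent throughout.
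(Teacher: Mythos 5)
Your proposal is correct and takes essentially the same route as the paper: both decompose $d(m_x,m_y)=d(m_x,p)+d(p,m_y)$ via the gate $p=m(m_x,y_1,y_2)$, use $y_1\op_y y_2$ to place $p$ in (say) $I(m_y,y_2)$, and then show that the mixed set (your $S_{m_xy}$, the paper's $\mscr{W}(m_x,y_1|y_2,y)$) is empty, which identifies $d(p,m_y)$ with $(y_1\cdot y)_{m_x}-(y_2\cdot y)_{m_x}$. Your four-block partition of $\mscr{W}(m_x|y)$ is just a more explicit bookkeeping of the same computation the paper carries out directly with the sets $\mscr{W}(m_x|y_1,y_2,y)$ and $\mscr{W}(m_x,y_2|y_1,y)$.
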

\begin{proof}
Set $p=m(y_1,y_2,m_x)$. As $p$ is the gate-projection of $m_x$ to $I(y_1,y_2)$, we have 
\[d(m_x,m_y)=d(m_x,p)+d(p,m_y),\]
where $d(m_x,p)=(y_1\cdot y_2)_{m_x}$. Up to exchanging $y_1$ and $y_2$, we can assume that $p$ lies in $I(m_y,y_2)$. Since no element of $\mscr{W}(p|y_2)=\mscr{W}(m_x,y_1|y_2)$ separates $m_x$ and $y$, it follows that the set $\mscr{W}(m_x,y_1|y_2,y)$ is empty. We conclude that $(y_2\cdot y)_{m_x}=\#\mscr{W}(m_x|y_1,y_2,y)$. On the other hand, observing that $\mscr{W}(p|m_y)=\mscr{W}(m_x,y_2|y_1,y)$, we have 
\[\mscr{W}(m_x|y_1,y)=\mscr{W}(m_x|y_1,y_2,y)\sqcup\mscr{W}(p|m_y)\]
and $(y_1\cdot y)_{m_x}=(y_2\cdot y)_{m_x}+d(p,m_y)$. 
\end{proof}

As an immediate consequence of Lemma~\ref{more preserved products} and Proposition~\ref{d(mx,my)}:

\begin{cor}\label{distance is preserved} 
We have $d(m_x,m_y)=d(m_x',m_y')$. In particular, $m_x'$ and $m_y'$ coincide whenever $m_x$ and $m_y$ do.
\end{cor}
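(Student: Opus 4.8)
The plan is to obtain Corollary~\ref{distance is preserved} almost immediately by combining Proposition~\ref{d(mx,my)} with Lemma~\ref{more preserved products}, exploiting the symmetry of the situation under passing to $f^{-1}$. Concretely, Proposition~\ref{d(mx,my)} expresses $d(m_x,m_y)$ as $(y_1\cdot y_2)_{m_x}+\bigl|(y_1\cdot y)_{m_x}-(y_2\cdot y)_{m_x}\bigr|$, i.e.\ purely in terms of Gromov products based at $m_x$ of pairs of points among $y_1,y_2,y$. The first step is to run the exact same proposition on the $Y$--side: applied to the locally finite cube complex $Y$, the ample concise subset $\mc{B}$, and the points $f(x_1),f(x_2),f(x),f(y_1),f(y_2),f(y)$, it yields
\[d(m_x',m_y')=(y_1\cdot y_2)^f_{m_x'}+\bigl|(y_1\cdot y)^f_{m_x'}-(y_2\cdot y)^f_{m_x'}\bigr|.\]
The second step is to invoke Lemma~\ref{more preserved products} for the three pairs $(y_1,y_2)$, $(y_1,y)$, $(y_2,y)$, which gives $(y_1\cdot y_2)_{m_x}=(y_1\cdot y_2)^f_{m_x'}$, etc.; comparing the two displayed formulas then gives $d(m_x,m_y)=d(m_x',m_y')$, and the ``in particular'' clause follows since $d$ restricted to $Y\times Y$ is a genuine (finite-valued) metric.

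A handful of routine verifications are needed before citing Proposition~\ref{d(mx,my)} on $Y$. First, the six points $f(x_1),f(x_2),f(x),f(y_1),f(y_2),f(y)$ lie in $\mc{B}$ and satisfy $f(x_1)\op_{f(x)}f(x_2)$ and $f(y_1)\op_{f(y)}f(y_2)$ by Corollary~\ref{op-preserving}. Second, $m_x'$ and $m_y'$ genuinely lie in $Y$: the triples $x_1,x_2,x$ and $y_1,y_2,y$ are each pairwise distinct --- otherwise the corresponding median would equal one of the boundary points, contradicting the opposite relation, which demands the median be in $X$ --- so $f(x_1),f(x_2),f(x)$ and $f(y_1),f(y_2),f(y)$ are pairwise distinct, and conciseness of $\mc{B}$ together with Lemma~\ref{infinite Gromov product} puts their medians inside $Y$. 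Third, pairwise distinctness of $y_1,y_2,y$ is exactly what is needed to apply Lemma~\ref{more preserved products} to the three pairs above. All of $\mc{B}$ ample/concise and $Y$ locally finite are in force by the standing assumptions of the subsection.

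I do not expect a genuine obstacle here: Proposition~\ref{d(mx,my)} and Lemma~\ref{more preserved products} were stated precisely so as to fit together, and the only thing requiring a moment's care is confirming that Proposition~\ref{d(mx,my)}, which is phrased for a complex $X$ and set $\mc{A}$, applies verbatim with $(X,\mc{A})$ replaced by $(Y,\mc{B})$ and the points replaced by their $f$--images --- which it does, since its hypotheses are symmetric in the two complexes. The write-up will therefore be short: state the $Y$--side formula, substitute the three equalities from Lemma~\ref{more preserved products}, and conclude.
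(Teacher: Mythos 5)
Your proposal is correct and is exactly the paper's argument: the paper states the corollary as an immediate consequence of Proposition~\ref{d(mx,my)} (applied on both sides, which is legitimate since its hypotheses transfer to $(Y,\mc{B})$ via Corollary~\ref{op-preserving}) and Lemma~\ref{more preserved products}. Your additional checks (pairwise distinctness forced by the opposite relation, medians landing in $Y$ by conciseness) are the right routine verifications and raise no issues.
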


Now consider the set:
\[\mf{M}_{\mc{A}}(X)=\{m(x,y,z)\mid x,y,z\in \mc{A}, \ x\op_zy\}\cu X.\]
The subset $\mf{M}_{\mc{B}}(Y)\cu Y$ is defined similarly. Applying Corollary~\ref{distance is preserved} to $f$ and $f^{-1}$, we obtain maps $\phi\colon\mf{M}_{\mc{A}}(X)\ra\mf{M}_{\mc{B}}(Y)$ and ${\psi\colon\mf{M}_{\mc{B}}(Y)\ra\mf{M}_{\mc{A}}(X)}$ that preserve distances and satisfy:
\[\phi(m(x,y,z))=m^f(x,y,z), \hspace{.5cm} \psi(m(x,y,z))=m^{f^{-1}}(x,y,z),\] 
whenever $x\op_zy$. By Corollary~\ref{op-preserving}, the maps $f$ and $f^{-1}$ both preserve the relation $\op$; it follows that $\phi\o\psi$ and $\psi\o\phi$ are the identity. We have shown:

\begin{cor}\label{isom on M(X)}
The map $\phi\colon\mf{M}_{\mc{A}}(X)\ra\mf{M}_{\mc{B}}(Y)$ is a distance-preserving bijection.
\end{cor}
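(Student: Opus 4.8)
The plan is to assemble the statement from the results already obtained in this subsection; no new ideas are needed.

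First I would address well-definedness of $\phi$. The map is defined on representatives by $m(x,y,z)\mapsto m^f(x,y,z)$ for $x,y,z\in\mc{A}$ with $x\op_z y$, so one must check that the image does not depend on the chosen representation. This is exactly Corollary~\ref{distance is preserved}: if $m_x=m(x_1,x_2,x)$ and $m_y=m(y_1,y_2,y)$ coincide, then so do $m_x'=m^f(x_1,x_2,x)$ and $m_y'=m^f(y_1,y_2,y)$. The same corollary yields $d(m_x,m_y)=d(m_x',m_y')$ for arbitrary choices, so $\phi$ is distance-preserving.

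Next I would check that $\phi$ actually maps into $\mf{M}_{\mc{B}}(Y)$. If $v=m(x,y,z)$ with $x,y,z\in\mc{A}$ and $x\op_z y$, then $\phi(v)=m(f(x),f(y),f(z))$, the three points $f(x),f(y),f(z)$ lie in $\mc{B}$, and $f(x)\op_{f(z)}f(y)$ by Corollary~\ref{op-preserving}; hence $\phi(v)\in\mf{M}_{\mc{B}}(Y)$ by definition of the latter set.

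Finally, for bijectivity, the key point is that $f^{-1}\colon\mc{B}\ra\mc{A}$ is itself a M\"obius bijection between ample, concise subsets, so the same construction with the roles of $X$ and $Y$ interchanged produces a distance-preserving map $\psi\colon\mf{M}_{\mc{B}}(Y)\ra\mf{M}_{\mc{A}}(X)$ with $\psi(m(x,y,z))=m^{f^{-1}}(x,y,z)$ whenever $x\op_z y$. Using Corollary~\ref{op-preserving} once more, for $v=m(x,y,z)$ with $x\op_z y$ we have $f(x)\op_{f(z)}f(y)$, whence $\psi(\phi(v))=m\big(f^{-1}f(x),f^{-1}f(y),f^{-1}f(z)\big)=m(x,y,z)=v$; by symmetry $\phi\o\psi=\id$ as well. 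Thus $\phi$ is a distance-preserving bijection. There is no genuine obstacle at this stage: the one step that required real work, well-definedness, was the reduction in Proposition~\ref{d(mx,my)} of $d(m_x,m_y)$ to a combination of Gromov products preserved by $f$ (Lemmas~\ref{order is preserved} and~\ref{more preserved products}); everything here is a formal consequence.
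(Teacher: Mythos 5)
Your proof is correct and follows the paper's own argument: well-definedness and distance-preservation come from Corollary~\ref{distance is preserved}, the image lands in $\mf{M}_{\mc{B}}(Y)$ by Corollary~\ref{op-preserving}, and bijectivity is obtained by running the same construction for $f^{-1}$ and using $\op$-preservation to see that the two compositions are the identity. This is essentially the same reasoning the paper uses to deduce the corollary.
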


Before proving Theorem~\ref{from OR thm}, we need to make one more observation.

\begin{lem}\label{an orbit suffices} 
If $\mc{A}\cu\partial_{\rm reg}X$, the sets $\mf{M}_{\mc{A}}(X)$ and $\mf{M}(X)$ coincide.
\end{lem}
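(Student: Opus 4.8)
The plan is to prove $\mf{M}_{\mc{A}}(X)=\mf{M}(X)$ by establishing both inclusions. The inclusion $\mf{M}_{\mc{A}}(X)\cu\mf{M}(X)$ is immediate from $\mc{A}\cu\partial_{\rm reg}X$. For the reverse one, fix $v\in\mf{M}(X)$ and choose $x,y,z\in\partial_{\rm reg}X$ with $m(x,y,z)=v$ and $x\op_zy$; these three points are automatically pairwise distinct. Recalling the projection $\pi_v(p)=\mscr{W}(v|p)\cap\mscr{W}_v$ of Section~\ref{OR exist sect}, the goal is to replace $x,y,z$ by points $x',y',z'\in\mc{A}$ with $\pi_v(x')=\pi_v(x)$, $\pi_v(y')=\pi_v(y)$ and $\pi_v(z')=\pi_v(z)$; I will then verify that such $x',y',z'$ still witness $v\in\mf{M}(X)$, now with all three points in $\mc{A}$.

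The heart of the argument is the following claim: for every $w\in\partial_{\rm reg}X$ there is a halfspace $\mf{a}$ with $w\in\overline{\mf{a}}$, $v\in\mf{a}^*$, and $\pi_v(p)=\pi_v(w)$ for all $p\in\overline{\mf{a}}$. To see it, fix a strongly separated chain $\mf{h}_0\supsetneq\mf{h}_1\supsetneq\cdots$ with $w\in\overline{\mf{h}_n}$ for every $n$. By Lemma~\ref{single point in ss chain}, $w$ is the unique point of $\overline X$ lying in every $\mf{h}_n$, i.e.\ $\bigcap_n\overline{\mf{h}_n}=\{w\}$. Since $X$ is locally finite, $\mscr{W}_v$ is finite, and $w$ lies in exactly one of the two closed halfspaces bounded by each $\mf{w}\in\mscr{W}_v$; as the $\overline{\mf{h}_n}$ are compact and nested with intersection $\{w\}$, there are indices $N_{\mf{w}}$ such that for $n\geq N_{\mf{w}}$ the set $\overline{\mf{h}_n}$ is contained in the closed side of $\mf{w}$ through $w$, and an index $N_0$ with $v\notin\overline{\mf{h}_n}$ for $n\geq N_0$. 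Taking $\mf{a}:=\mf{h}_N$ with $N$ larger than $N_0$ and every $N_{\mf{w}}$, every $p\in\overline{\mf{a}}$ lies on the same side as $w$ of each hyperplane of $\mscr{W}_v$, hence $\pi_v(p)=\pi_v(w)$. Now ampleness of $\mc{A}$, applied with $k=1$, gives a point $x'\in\mc{A}\cap\overline{\mf{a}}$, whence $\pi_v(x')=\pi_v(x)$; repeating with $w=y,z$ yields $y',z'\in\mc{A}$ with the desired projections. This is the step I expect to demand the most care: it is exactly here that regularity of $x,y,z$ is used, via the fact that a strongly separated chain pins down a single boundary point, so that a deep enough halfspace in it already records all finitely many directions at $v$.

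It remains to check that $v=m(x',y',z')$ and $x'\op_{z'}y'$, for which I would use two elementary facts. First, for $a,b\in\overline X$ and $v\in X$ one has $v\in I(a,b)$ iff $\pi_v(a)\cap\pi_v(b)=\emptyset$: indeed $\mscr{W}(v|m(v,a,b))=\mscr{W}(v|I(a,b))=\mscr{W}(v|a)\cap\mscr{W}(v|b)$ (using $\pi_{I(a,b)}(v)=m(a,b,v)$ and the gate identity $\mscr{W}(x|\pi_C(x))=\mscr{W}(x|C)$, together with convexity of halfspaces), and if this set is nonempty the first hyperplane crossed by a geodesic from $v$ to $m(v,a,b)$ lies in $\mscr{W}_v$, hence in $\pi_v(a)\cap\pi_v(b)$. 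Second, as in the proof of Lemma~\ref{2-cuts}, each $\pi_v(p)$ is a clique of $\llk v$. Since $m(x,y,z)=v$, the sets $\pi_v(x),\pi_v(y),\pi_v(z)$ are pairwise disjoint, hence so are $\pi_v(x'),\pi_v(y'),\pi_v(z')$; by the first fact $v$ lies in $I(x',y')\cap I(y',z')\cap I(z',x')$, so $m(x',y',z')=v$. Moreover $x\op_zy$ means $v$ is a cut point of $X\cap I(x,y)$, so by Lemma~\ref{2-cuts} no hyperplane of $\pi_v(x)$ is transverse to one of $\pi_v(y)$. Consequently the link of $v$ in $X\cap I(x',y')$ has vertex set $\pi_v(x')\sqcup\pi_v(y')=\pi_v(x)\sqcup\pi_v(y)$, and a direct check with the ultrafilters (using $\pi_v(x')=\pi_v(x)$, $\pi_v(y')=\pi_v(y)$, so that the square at $v$ spanned by any two hyperplanes of $\pi_v(x')$ lies in $I(x',y')$) shows it agrees with the link of $v$ in $X\cap I(x,y)$; the latter is a disjoint union of two cliques, so by Lemma~\ref{2-cuts} $v$ is a cut point of $X\cap I(x',y')$, i.e.\ $x'\op_{z'}y'$. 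Hence $v=m(x',y',z')\in\mf{M}_{\mc{A}}(X)$, which finishes the proof.
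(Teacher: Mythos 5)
Your proof is correct and follows essentially the same route as the paper: use strongly separated chains together with finiteness of $\mscr{W}_v$ (local finiteness) to find halfspaces deep enough to pin down $\pi_v(x),\pi_v(y),\pi_v(z)$, then invoke ampleness of $\mc{A}$ to replace $x,y,z$ by points $x',y',z'\in\mc{A}$ with the same projections to $\mscr{W}_v$. The only differences are cosmetic: you justify the "deep halfspace" step via compactness of $\overline X$ and Lemma~\ref{single point in ss chain} rather than directly from strong separation, and you spell out the final verification that $m(x',y',z')=v$ and $x'\op_{z'}y'$, which the paper leaves implicit.
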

\begin{proof}
Since $\mc{A}\cu\partial_{\rm reg}X$, it is clear that $\mf{M}_{\mc{A}}(X)\cu\mf{M}(X)$. Let us consider a vertex $v\in\mf{M}(X)$ and $x,y,z\in\partial_{\rm reg}X$ with $m(x,y,z)=v$ and $x\op_zy$. 

Consider a strongly separated chain $\mf{k}_0\supsetneq\mf{k}_1\supsetneq ...$ containing $x$. Given $\mf{w}\in\mscr{W}_v$, the halfspace $\mf{k}_n$ is disjoint from $\mf{w}$ and on the same side of $\mf{w}$ as $x$ for all sufficiently large values of $n$. Since $\mscr{W}_v$ is finite, there exists a halfspace $\mf{h}_x\in\s_x$ that lies on the same side as $x$ with respect to all hyperplanes in $\mscr{W}_v$. We similarly define halfspaces $\mf{h}_y$ and $\mf{h}_z$. Since $\mc{A}$ is ample, we can pick points $x'\in \mc{A}\cap\mf{h}_x$, $y'\in \mc{A}\cap\mf{h}_y$ and $z'\in \mc{A}\cap\mf{h}_z$. These satisfy $x'\op_{z'}y'$ and $m(x',y',z')=v$, concluding the proof.
\end{proof}

\begin{proof}[Proof of Theorem~\ref{from OR thm}.]
Note that $\mc{A}$ and $\mc{B}$ are concise by Remark~\ref{regular is concise}. Part~(1) follows from Corollary~\ref{isom on M(X)} and Lemma~\ref{an orbit suffices}, whereas parts~(2) and~(4) are immediate from our construction. We are left to show that $\phi(v_n)\ra f(w)$ whenever vertices $v_n\in\mf{M}(X)$ converge to a point $w\in\mc{A}$.

Say $v_n=m(x_n,y_n,z_n)$ with $x_n,y_n,z_n\in\mc{A}$ and $x_n\op_{z_n}y_n$. Let us write $a_n$, $b_n$ and $c_n$ for the values $(x_n\cdot w)_{v_0}$, $(y_n\cdot w)_{v_0}$ and $(z_n\cdot w)_{v_0}$ ordered so that $a_n\leq b_n\leq c_n$. Consider a strongly separated chain $\mf{k}_0\supsetneq\mf{k}_1\supsetneq ...$ containing $w$. Note that we have $v_n\ra w$ if and only if, for every $k\geq 0$, there exists $N(k)\geq 0$ such that $v_n\in\mf{k}_k$ for all $n\geq N(k)$; the latter happens if and only if at least two elements of $\{x_n,y_n,z_n\}$ lie in $\mf{k}_k$. We conclude that $v_n\ra w$ if and only if $b_n\ra+\infty$. Lemma~\ref{more preserved products} now shows that $v_n\ra w$ if and only if the points $\phi(v_n)=m^f(x_n,y_n,z_n)$ converge to $f(w)$.
\end{proof}

\subsection{Extending partial isomorphisms.}\label{ext partial iso sect}

In this subsection:

\begin{ass}
Let $X$ and $Y$ be irreducible, locally finite and with no free faces. Let $\phi\colon A\ra B$ be a distance-preserving bijection between nonempty subsets $A\cu X$ and $ B\cu Y$. We also assume that $A$ and $B$ are preserved, respectively, by cocompact actions $G_1\acts X$ and $G_2\acts Y$. For now, we require $A$ and $B$ to only consist of vertices; this will change later on, when we consider actions on $\CAT$ \emph{cuboid} complexes.
\end{ass}

We will obtain a cubical isomorphism between $X$ and $Y$ by progressively extending $\phi$ to vertices adjacent to $A$. In order to do so, we will need to recognise halfspaces adjacent to $A$ by their intersections with $A$. This is the purpose of the next few results, up to Lemma~\ref{recognising W(v)}.

\begin{lem}\label{H vs h 1}
Given halfspaces $\mf{h}_1,\mf{h}_2\in\mscr{H}(X)$ satisfying $\mf{h}_1\cap A=\mf{h}_2\cap A$, we have $\mf{h}_1\cu\mf{h}_2$ or $\mf{h}_2\cu\mf{h}_1$.
\end{lem}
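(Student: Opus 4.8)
The statement says: if two halfspaces $\mf{h}_1,\mf{h}_2$ of $X$ have the same trace on $A$, then they are nested. The natural strategy is to rule out the other two possible relations between halfspaces: transversality and the ``facing'' configuration (disjoint halfspaces whose complements cover $X$, i.e.\ $\mf{h}_1^*\cap\mf{h}_2^*=\emptyset$ or a facing triple with only two members — more precisely, either $\mf{h}_1\cap\mf{h}_2=\emptyset$ or $\mf{h}_1^*\cap\mf{h}_2^*=\emptyset$ while they are not nested). In all the non-nested cases I will produce a point of $A$ witnessing $\mf{h}_1\cap A\neq\mf{h}_2\cap A$, using that $A$ is invariant under a cocompact action and that $X$ is essential (which follows from ``locally finite, finite dimensional, no free faces'').

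\textbf{First step: transverse case.} Suppose $\mf{h}_1$ and $\mf{h}_2$ are transverse. Then all four corners $\mf{h}_1\cap\mf{h}_2$, $\mf{h}_1^*\cap\mf{h}_2$, $\mf{h}_1\cap\mf{h}_2^*$, $\mf{h}_1^*\cap\mf{h}_2^*$ are nonempty. I want a point of $A$ in the corner $\mf{h}_1\cap\mf{h}_2^*$ (which would lie in $\mf{h}_1\cap A$ but not in $\mf{h}_2\cap A$). Here I would use cocompactness of $G_1\acts X$ together with essentiality: pick any $a\in A$; by cocompactness $A$ is coarsely dense, so $A$ is unbounded in every halfspace direction. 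More carefully, I would invoke the Double Skewering Lemma of \cite{CS} (or the flipping/skewering machinery available since $X$ is essential and the cocompact $G_1$-action is essential by Lemma~\ref{minimal vs essential} / the discussion around Proposition~\ref{NS corner}) to find $g\in G_1$ with $g\mf{h}_1\subsetneq \mf{h}_1\cap\mf{h}_2^*$ or similar, so that $ga\in A$ lies in $\mf{h}_1$ but not $\mf{h}_2$; symmetrically one finds a point in $\mf{h}_2\cap\mf{h}_1^*$. Either one contradicts $\mf{h}_1\cap A=\mf{h}_2\cap A$.

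\textbf{Second step: disjoint-but-not-nested case.} Suppose $\mf{h}_1\cap\mf{h}_2=\emptyset$ but neither is contained in the other; equivalently $\mf{h}_1\cu\mf{h}_2^*$ and $\mf{h}_2\cu\mf{h}_1^*$ with both inclusions strict, i.e.\ $\mf{h}_1^*\cap\mf{h}_2^*\neq\emptyset$. Then $\mf{h}_1\cap A = \mf{h}_2\cap A$ forces this common set to be disjoint from both $\mf{h}_1$ and $\mf{h}_2$: indeed a point of $\mf{h}_1\cap A$ lies in $\mf{h}_2\cap A\subseteq\mf{h}_2$, but $\mf{h}_1\cap\mf{h}_2=\emptyset$, so $\mf{h}_1\cap A=\emptyset$ and likewise $\mf{h}_2\cap A=\emptyset$. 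So it suffices to show $A$ meets every halfspace. This is again a consequence of cocompactness plus essentiality: the $G_1$-orbit of any $a\in A$ is not contained in any halfspace' metric neighbourhood (essentiality of the action), hence $A$ meets both sides of every hyperplane; in particular $\mf{h}_1\cap A\neq\emptyset$, a contradiction. (Alternatively, apply the Flipping Lemma of \cite{CS} to $\mf{h}_1^*$.) The remaining symmetric configuration $\mf{h}_1^*\cap\mf{h}_2^*=\emptyset$ with neither nested is handled identically after replacing by complements, and the case where the halfspaces are equal is trivial. Since every pair of distinct halfspaces is either nested, transverse, or one of these disjoint-not-nested configurations, this exhausts all cases.

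\textbf{Main obstacle.} The crux is the transverse case: I must genuinely produce a point of $A$ separating the two halfspaces, and this is where essentiality and cocompactness must be combined correctly. The clean way is: since $X$ is essential and irreducible and the cocompact action $G_1\acts X$ is essential, Proposition~\ref{NS corner} gives a halfspace $\mf{k}\subseteq\mf{h}_1\cap\mf{h}_2^*$, and then cocompactness (coarse density of $A$) plus the skewering lemma push a point of $A$ into $\mf{k}$. I expect the bookkeeping of which lemma from \cite{CS} applies — flipping vs.\ skewering — to be the only delicate point; everything else is a short case analysis. One should also double-check that ``no free faces'' (our standing assumption) indeed yields essentiality of $X$, which it does since a finite-dimensional cube complex with no free faces is essential (as used in the proof of Proposition~\ref{my corner}).
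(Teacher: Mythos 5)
Your proposal is correct and follows essentially the same route as the paper: rule out transversality via Proposition~\ref{NS corner} together with essentiality to place a point of $A$ in $\mf{h}_1\cap\mf{h}_2^*$, and rule out the two facing configurations by noting that $A$ meets every halfspace, so $\mf{h}_1\cap\mf{h}_2\supseteq\mf{h}_1\cap A\neq\emptyset$ and $\mf{h}_1^*\cap\mf{h}_2^*\supseteq\mf{h}_1^*\cap A\neq\emptyset$. The only cosmetic difference is that the flipping/skewering lemmas you hedge about are unnecessary: since $A$ contains a $G_1$--orbit and the cocompact action is essential (as $X$ has no free faces), $A$ already meets every halfspace, which is exactly how the paper argues.
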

\begin{proof}
The cocompact action $G_1\acts X$ leaves $A$ invariant and, since $X$ has no free faces, it is essential and hyperplane-essential. The halfspaces $\mf{h}_1$ and $\mf{h}_2$ are not transverse, or Proposition~\ref{NS corner} would provide a halfspace contained in $\mf{h}_1\cap\mf{h}_2^*$ and, by essentiality of $G_1\acts X$, we would have $\mf{h}_1\cap\mf{h}_2^*\cap A\neq\emptyset$. We also cannot have $\mf{h}_1\cu\mf{h}_2^*$ or $\mf{h}_2^*\cu\mf{h}_1$, as $\mf{h}_1\cap\mf{h}_2\supseteq\mf{h}_1\cap A\neq\emptyset$ and $\mf{h}_1^*\cap\mf{h}_2^*\supseteq\mf{h}_1^*\cap A\neq\emptyset$. We conclude that either $\mf{h}_1\cu\mf{h}_2$ or $\mf{h}_2\cu\mf{h}_1$. 
\end{proof}

Consider a vertex $v\in A$, a hyperplane $\mf{w}\in\mscr{W}_v$, and the side $\mf{h}$ of $\mf{w}$ that does not contain $v$. It is convenient to introduce the notation $H_{\mf{w}}:=\mf{h}\cap A$.

\begin{rmk}\label{useful fact}
For every $v\in A$ and $\mf{w}\in\mscr{W}_v$, there exists $x\in H_{\mf{w}}$ with $\pi_v(x)=\{\mf{w}\}$. Since $A$ is $G_1$--invariant, this follows from Lemma~\ref{semi-straight+} applied to the cocompact action $G_1\acts X$.
\end{rmk}

\begin{lem}\label{H_w}
Consider $v\in A$ and $\mf{w}\in\mscr{W}_v$.
\begin{enumerate}
\item A point $x\in A$ lies in $H_{\mf{w}}$ if and only if, for every $y\in H_{\mf{w}}$, we have $d(x,y)<d(x,v)+d(v,y)$ (equivalently, $\pi_v(x)\cap\pi_v(y)\neq\emptyset$).
\item If $\mf{w'}\in\mscr{W}_v\setminus\{\mf{w}\}$, then $H_{\mf{w}}\neq H_{\mf{w}'}$.
\end{enumerate}
\end{lem}
\begin{proof}
Part~(2) follows from Lemma~\ref{H vs h 1}. Let us prove part~(1).

Since $\mf{w}\in\pi_v(y)$ for all $y\in H_{\mf{w}}$, the intersection $\pi_v(x)\cap\pi_v(y)$ is nonempty for all $x,y\in H_{\mf{w}}$. For the other implication, Remark~\ref{useful fact} provides $y_0\in H_{\mf{w}}$ with $\pi_v(y_0)=\{\mf{w}\}$. If $x\in A$ and $\pi_v(x)\cap\pi_v(y_0)\neq\emptyset$, it follows that $\mf{w}\in\pi_v(x)$, i.e.\ $x\in H_{\mf{w}}$.
\end{proof}

Given a vertex $v\in A$, let $\mc{H}_v$ denote the collection of all subsets $H\cu A$ with the property from part~(1) of Lemma~\ref{H_w}. More precisely, for every $x\in A$, we have:
\begin{equation*}\tag{$\ast$}\label{mcH defn} x\in H~\Leftrightarrow~\forall y\in H,~d(x,y)<d(x,v)+d(v,y). \end{equation*}
Recall that $d(x,y)<d(x,v)+d(v,y)$ is equivalent to $\pi_v(x)\cap\pi_v(y)\neq\emptyset$. 

\begin{rmk}\label{if a singleton}
If an element $H\in\mc{H}_v$ contains a point $y_0$ with $\#\pi_v(y_0)=1$, we must have ${H=H_{\mf{u}}}$ for the only hyperplane $\mf{u}\in\pi_v(y_0)$. Indeed, property~(\ref{mcH defn}) shows that $\mf{u}\in\pi_v(y)$ for all $y\in H$. Hence, a point $x\in A$ lies in $H$ if and only if $\mf{u}\in\pi_v(x)$, i.e.\ if and only if $x\in H_{\mf{u}}$.
\end{rmk}

\begin{lem}\label{recognising W(v)}
An element $H\in\mc{H}_v$ is of the form $H=H_{\mf{w}}$ for some hyperplane $\mf{w}\in\mscr{W}_v$ if and only if there exists a point $x\in H$ that belongs to no other element of $\mc{H}_v$.
\end{lem}
\begin{proof}
If, given $x\in A$, two hyperplanes $\mf{u}$ and $\mf{v}$ lie in $\pi_v(x)$, we have ${x\in H_{\mf{u}}\cap H_{\mf{v}}}$. By Lemma~\ref{H_w} and Remark~\ref{if a singleton}, we conclude that $x$ lies in a single element of $\mc{H}_v$ if and only if we have $\#\pi_v(x)=1$. 

By Remarks~\ref{useful fact} and~\ref{if a singleton}, an element $H\in\mc{H}_v$ can contain such a point if and only if it is of the form $H=H_{\mf{w}}$ for a hyperplane $\mf{w}\in\mscr{W}_v$.
\end{proof}

We will also employ the notation $\mc{H}_v$ and $H_{\mf{w}}$ for vertices $v\in Y$ and hyperplanes $\mf{w}\in\mscr{W}(Y)$, of course replacing $A$ with $B$ in the definitions.

In the following discussion, we consider two edges $e_1,e_2\cu X$, so that $e_i$ has endpoints $v_i\in A$ and $w_i\in X$. We set $\mf{w}_i=\mf{w}(e_i)$ and let $\mf{h}_i\in\mscr{H}(X)$ be the side of $\mf{w}_i$ containing $w_i$. As $\phi$ preserves distances, it induces a bijection $\mc{H}_{v_i}\ra\mc{H}_{\phi(v_i)}$. Lemma~\ref{recognising W(v)} guarantees the existence of a hyperplane $\phi_*(\mf{w}_i)\in\mscr{W}_{\phi(v_i)}$ with ${\phi(H_{\mf{w_i}})=H_{\phi_*(\mf{w}_i)}}$. We also define $\phi_*(\mf{h}_i)\in\mscr{H}(Y)$ as the side of $\phi_*(\mf{w}_i)$ that satisfies $\phi(\mf{h}_i\cap A)=\phi(H_{\mf{w_i}})=H_{\phi_*(\mf{w}_i)}=\phi_*(\mf{h}_i)\cap B$.

Finally, let $\wt\phi(w_i)\in\phi_*(\mf{h}_i)$ denote the vertex of $Y$ that is separated from $\phi(v_i)$ only by $\phi_*(\mf{w}_i)$. We will show in a moment that this definition of $\wt\phi(w_i)$ is independent of the choice of $e_i$ and $v_i$.

\begin{prop}\label{h1=h2}
\begin{enumerate}
\item[]
\item We have $\mf{h}_1\cap A=\mf{h}_2\cap A$ if and only if $\mf{h}_1=\mf{h}_2$.
\item If $\mf{h}_1\cap A=\mf{h}_2^*\cap A$, exactly one of the following happens:
	\begin{enumerate}
	\item $\mf{h}_2^*\subsetneq\mf{h}_1$ and $d(x,v_2)\leq d(x,v_1)+d(v_1,v_2)-4$ for all $x\in\mf{h}_1\cap A$;
	\item $\mf{h}_1=\mf{h}_2^*$ and $d(x_0,v_2)=d(x,v_1)+d(v_1,v_2)-2$ for some point $x_0\in\mf{h}_1\cap A$.
	\end{enumerate}
\end{enumerate}
\end{prop}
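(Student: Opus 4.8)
The plan is to analyze the mutual position of the two halfspaces $\mf{h}_1,\mf{h}_2$ using the structural results established earlier, principally Lemma~\ref{H vs h 1} and Proposition~\ref{NS corner} together with essentiality and hyperplane-essentiality of the cocompact action $G_1\acts X$ (which hold because $X$ has no free faces). Throughout I will use Remark~\ref{useful fact}: for each $i$ there is a point $x_i\in H_{\mf{w}_i}=\mf{h}_i\cap A$ with $\pi_{v_i}(x_i)=\{\mf{w}_i\}$, i.e.\ $x_i$ is separated from $v_i$ only by $\mf{w}_i$ among hyperplanes adjacent to $v_i$.

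For part~(1): the ``if'' direction is trivial. For ``only if'', suppose $\mf{h}_1\cap A=\mf{h}_2\cap A$. By Lemma~\ref{H vs h 1} we may assume $\mf{h}_1\subseteq\mf{h}_2$. If the inclusion were proper, then $\mf{h}_2\cap\mf{h}_1^*$ is nonempty and convex; I claim it contains a halfspace. Indeed, $\mf{w}_1$ and $\mf{w}_2$ are either equal (impossible, since then $\mf{h}_1=\mf{h}_2$) or nested with $\mf{w}_1\neq\mf{w}_2$; since the two halfspaces are not transverse, one can apply essentiality directly, or better: the region $\mf{h}_2\cap\mf{h}_1^*$ contains $v_1$, and by essentiality of $G_1\acts X$ the $G_1$-orbit of $v_1$ is not contained in a neighborhood of $\mf{w}_1$, so $\mf{h}_2\cap\mf{h}_1^*\cap A\neq\emptyset$ — but this set is $(\mf{h}_2\cap A)\setminus(\mf{h}_1\cap A)=\emptyset$, a contradiction. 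Hence $\mf{h}_1=\mf{h}_2$.

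For part~(2): assume $\mf{h}_1\cap A=\mf{h}_2^*\cap A$. First, $\mf{w}_1$ and $\mf{w}_2$ cannot be transverse: otherwise Proposition~\ref{NS corner} gives a halfspace inside $\mf{h}_1\cap\mf{h}_2$, and by essentiality this meets $A$, contradicting $\mf{h}_1\cap A\cap(\mf{h}_2\cap A)=\emptyset$. So $\mf{h}_1,\mf{h}_2^*$ are nested; since $\mf{h}_1\cap A=\mf{h}_2^*\cap A\neq\emptyset$ and (again by essentiality, applied to $v_2\in\mf{h}_1\cap\mf{h}_2$) we cannot have $\mf{h}_1\subsetneq\mf{h}_2^*$, we are left with $\mf{h}_2^*\subseteq\mf{h}_1$. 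If $\mf{h}_2^*=\mf{h}_1$ we are in case~(b): then $\mf{w}_1=\mf{w}_2$, so $v_1,v_2$ lie on the same side $\mf{h}_1^*=\mf{h}_2$ of this common hyperplane while $w_1,w_2$ lie on the side $\mf{h}_1=\mf{h}_2^*$; take $x_0=x_1$, the point with $\pi_{v_1}(x_0)=\{\mf{w}_1\}$. Any geodesic from $v_1$ to $v_2$ stays in $\overline{\mf{h}_2}=\overline{\mf{h}_1^*}$ (as $v_1,v_2$ are on that side and it is convex), hence does not cross $\mf{w}_1$, so $\mf{w}_1\in\mscr{W}(x_0|v_1)\setminus\mscr{W}(v_1|v_2)$ and $\mscr{W}(x_0|v_2)=\{\mf{w}_1\}\sqcup\mscr{W}(v_1|v_2)$ provided $\mf{w}_1\notin\mscr{W}(x_0|v_1)\cap\cdots$; a short count with $d(x_0,v_2)\le d(x_0,v_1)+d(v_1,v_2)$ and the fact that $x_0$ lies in $\mf{h}_1$ while $v_1,v_2$ lie in $\mf{h}_1^*$ pins down $d(x_0,v_2)=d(x_0,v_1)+d(v_1,v_2)$... here I should be careful: the displayed formula in~(b) has $-2$, so the bookkeeping must account for the fact that $e_1$ contributes length $1$, i.e.\ $d(x_0,v_1)$ should be read correctly — I will carry out this crossing-set computation carefully, the point being that $\mscr{W}(x_0|v_2)=\mscr{W}(x_0|v_1)\sqcup\mscr{W}(v_1|v_2)$ up to the single hyperplane $\mf{w}_1=\mf{w}_2$ being double-counted, giving the stated value. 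If instead $\mf{h}_2^*\subsetneq\mf{h}_1$ we are in case~(a): then $\mf{w}_1\neq\mf{w}_2$ are disjoint with $\mf{h}_2^*\subsetneq\mf{h}_1$, so for any $x\in\mf{h}_1\cap A=\mf{h}_2^*\cap A$ both $\mf{w}_1$ and $\mf{w}_2$ separate $x$ from $v_2$ (since $x\in\mf{h}_2^*$, $v_2\in\mf{h}_2$, and $x\in\mf{h}_1$, $v_2\in\mf{h}_1^*$ as $v_2\in\mf{h}_2\subsetneq\mf{h}_1^*$), while a geodesic from $v_1$ to $v_2$ can be taken through $w_1$ (crossing $\mf{w}_1$ but not returning across $\mf{w}_2$), and one then compares $\mscr{W}(x|v_2)$ with $\mscr{W}(x|v_1)\cup\mscr{W}(v_1|v_2)$; the two halfspaces $\mf{h}_1,\mf{h}_2^*$ being distinct and not transverse forces at least $4$ hyperplanes' worth of discrepancy (the edges $e_1,e_2$ and the two hyperplanes bounding $\mf{h}_1,\mf{h}_2$ all get counted on the wrong side), yielding $d(x,v_2)\le d(x,v_1)+d(v_1,v_2)-4$.

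The main obstacle I anticipate is the precise constant bookkeeping in part~(2) — getting exactly $-2$ in case~(b) and exactly $-4$ in case~(a), rather than just ``$\le -2$'' and ``$\le$ something negative''. The clean way to handle this is to work entirely with symmetric differences of the ultrafilters $\sigma_x,\sigma_{v_1},\sigma_{v_2}$: write $d(x,v_1)+d(v_1,v_2)-d(x,v_2)=2\cdot\#\mscr{W}(x,v_2\mid v_1)$ (the Gromov-product identity from Section~\ref{ccc prelims}), so the inequality becomes a lower bound on the number of hyperplanes separating $v_1$ from both $x$ and $v_2$. In case~(b) one shows $\mscr{W}(x_0,v_2\mid v_1)=\{\mf{w}_1\}$ exactly, using $\pi_{v_1}(x_0)=\{\mf{w}_1\}$ and $v_2\in\mf{h}_1^*$; in case~(a) one shows $\mf{w}_1$ together with the hyperplane $\mf{w}_2$ both lie in $\mscr{W}(x,v_2\mid v_1)$ (for every $x\in\mf{h}_1\cap A$), and they are distinct, giving $\#\mscr{W}(x,v_2\mid v_1)\ge 2$, hence the discrepancy is $\ge 4$. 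That the two cases are mutually exclusive is clear since $\mf{h}_2^*\subsetneq\mf{h}_1$ and $\mf{h}_1=\mf{h}_2^*$ cannot both hold.
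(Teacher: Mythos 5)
Part~(1) and case~(a) of part~(2) are essentially sound and follow the paper's route: part~(1) via Lemma~\ref{H vs h 1} (your appeal to essentiality there is superfluous, since $v_1$ itself already lies in $\mf{h}_2\cap\mf{h}_1^*\cap A$), and case~(a) via the observation in your closing paragraph that $\mf{w}_1$ and $\mf{w}_2$ are two distinct elements of $\mscr{W}(x,v_2|v_1)$, which is exactly the paper's argument phrased without the median. The problem is case~(b), and it starts with a positional error that runs through your whole discussion of part~(2): since $\mf{h}_i$ is by definition the side of $\mf{w}_i$ containing $w_i$, we have $v_2\in\mf{h}_2^*$, not $v_2\in\mf{h}_2$. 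So when $\mf{h}_1=\mf{h}_2^*$, the vertices $v_1$ and $v_2$ lie on \emph{opposite} sides of the common hyperplane $\mf{w}_1=\mf{w}_2$ (and in case~(a) one has $v_2\in\mf{h}_1\cap\mf{h}_2^*$, so neither $\mf{w}_1$ nor $\mf{w}_2$ separates $x$ from $v_2$ as claimed in the body of your argument -- they separate $v_1$ from $\{x,v_2\}$). Your case-(b) computation, built on ``$v_1,v_2$ lie on the same side $\mf{h}_1^*$'', first produces $d(x_0,v_2)=d(x_0,v_1)+d(v_1,v_2)$, and the subsequent patch, as well as the closing claim that $\mscr{W}(x_0,v_2|v_1)=\{\mf{w}_1\}$ ``using $\pi_{v_1}(x_0)=\{\mf{w}_1\}$ and $v_2\in\mf{h}_1^*$'', rests on the same false premise (note that if $v_2$ really lay in $\mf{h}_1^*$, then $\mf{w}_1$ would not belong to $\mscr{W}(x_0,v_2|v_1)$ at all).

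More importantly, even after correcting the positions, the \emph{exact} equality in~(b) -- which is what the statement asserts and what Corollary~\ref{phi* preserves equality} later needs in order to distinguish cases~(a) and~(b) -- does not follow from $\pi_{v_1}(x_0)=\{\mf{w}_1\}$ together with $v_2\in\mf{h}_1$ alone: these facts only give $\mf{w}_1\in\mscr{W}(x_0,v_2|v_1)$, hence the inequality $d(x_0,v_2)\leq d(x_0,v_1)+d(v_1,v_2)-2$. To upgrade this to equality one must exclude any further hyperplane separating $v_1$ from both $x_0$ and $v_2$, and this uses crucially that $v_2$ is \emph{adjacent} to the common hyperplane: the paper shows that every element of $\mscr{W}(v_2|w_1)$ is transverse to $\mf{w}_1$ (as $v_2$ and $w_1$ both lie in the carrier of $\mf{w}_1=\mf{w}_2$, on the side $\mf{h}_1$), whereas $\pi_{v_1}(x_0)=\{\mf{w}_1\}$ forces every element of $\mscr{W}(w_1|x_0)$ to be non-transverse to $\mf{w}_1$; the two sets are therefore disjoint, so $m(v_1,v_2,x_0)=w_1$ and $d(v_1,w_1)=1$ yields the stated value. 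Some argument of this kind (using the adjacency of $v_2$ to $\mf{w}_1=\mf{w}_2$, not just its side) is absent from your plan, so case~(b) is a genuine gap as written.
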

\begin{proof}
If we had $\mf{h}_1\cap A=\mf{h}_2\cap A$ and $\mf{h}_1\neq\mf{h}_2$, Lemma~\ref{H vs h 1} would allow us to assume that $\mf{h}_1\subsetneq\mf{h}_2$. Since $v_1$ is adjacent to $\mf{w}_1$, we would then have $v_1\in\mf{h}_2$. On the other hand, $v_1\in A$ and $v_1\not\in\mf{h}_1$, a contradiction.

We now address part~(2). First, suppose that $\mf{h}_1=\mf{h}_2^*$ and consider a point $x_0\in A$ with $\pi_{v_1}(x_0)=\{\mf{w}_1\}$, as provided by Remark~\ref{useful fact}. Note that $x_0\in\mf{h}_1\cap A$. Since $v_2\in\mf{h}_2^*=\mf{h}_1$ lies in the carrier of $\mf{w}_2=\mf{w}_1$, every element of $\mscr{W}(v_2|w_1)$ is transverse to $\mf{w}_1$, whereas no element of $\mscr{W}(w_1|x_0)$ is. It follows that $\mscr{W}(v_2|w_1)$ and $\mscr{W}(w_1|x_0)$ are disjoint, hence $m(v_1,v_2,x_0)=w_1$. Since $d(v_1,w_1)=1$, we conclude that $d(x_0,v_2)=d(x,v_1)+d(v_1,v_2)-2$.

Assume now instead that $\mf{h}_1\cap A=\mf{h}_2^*\cap A$, but $\mf{h}_1\neq\mf{h}_2^*$. By Lemma~\ref{H vs h 1}, we have either $\mf{h}_2^*\subsetneq\mf{h}_1$ or $\mf{h}_1\subsetneq\mf{h}_2^*$. In the latter case, since $v_2$ is adjacent to $\mf{w}_2\cu\mf{h}_1^*$, we would have $v_2\in\mf{h}_1^*$; so $v_2\in\mf{h}_2^*\setminus\mf{h}_1$, a contradiction. We conclude that $\mf{h}_2^*\subsetneq\mf{h}_1$. 

Note that $v_2\in\mf{h}_2^*\cap A=\mf{h}_1\cap A$. Thus, we have $m=m(v_1,v_2,x)\in\mf{h}_1\cap\mf{h}_2^*$ for every $x\in\mf{h}_1\cap A$. Since $v_1\in\mf{h}_1^*\cap A=\mf{h}_2\cap A$, it follows that $\mf{w}_1,\mf{w}_2$ lie in $\mscr{W}(v_1|m)$ and $d(v_1,m)\geq 2$. Hence $d(x,v_2)\leq d(x,v_1)+d(v_1,v_2)-4$.
\end{proof}

Recalling that $\phi$ preserves distances and $\phi(\mf{h}_i\cap A)=\phi_*(\mf{h}_i)\cap B$, this yields:

\begin{cor}\label{phi* preserves equality}
We have $\mf{h}_1=\mf{h}_2$ if and only if $\phi_*(\mf{h}_1)=\phi_*(\mf{h}_2)$. Furthermore, $\mf{h}_1=\mf{h}_2^*$ if and only if $\phi_*(\mf{h}_1)=\phi_*(\mf{h}_2)^*$.
\end{cor}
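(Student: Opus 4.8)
\textbf{Proof of Corollary~\ref{phi* preserves equality} (proposal).}
The plan is to deduce both equivalences directly from Proposition~\ref{h1=h2} together with the two facts that $\phi$ is a distance-preserving bijection and that $\phi(\mf{h}_i\cap A)=\phi_*(\mf{h}_i)\cap B$ by construction. The key observation is that all of the conditions separating the various cases in Proposition~\ref{h1=h2} are phrased purely in terms of distances between points of $A$ (and the cardinalities $d(x,v_1)+d(v_1,v_2)-d(x,v_2)$), so they are preserved verbatim when we push everything forward by $\phi$.

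First I would prove the first equivalence. If $\mf{h}_1=\mf{h}_2$, then $\mf{h}_1\cap A=\mf{h}_2\cap A$, hence $\phi_*(\mf{h}_1)\cap B=\phi(\mf{h}_1\cap A)=\phi(\mf{h}_2\cap A)=\phi_*(\mf{h}_2)\cap B$, and now part~(1) of Proposition~\ref{h1=h2}, applied in $Y$ to the two edges whose images under $\phi$ witness $\phi_*(\mf{h}_1)$ and $\phi_*(\mf{h}_2)$, gives $\phi_*(\mf{h}_1)=\phi_*(\mf{h}_2)$. (Here one uses that $\phi_*(\mf{h}_i)$ is by definition a halfspace adjacent to $\phi(v_i)\in B$, so the hypotheses of Proposition~\ref{h1=h2}(1) hold in $Y$.) The converse is identical after replacing $\phi$ by $\phi^{-1}$: note $\phi^{-1}$ is itself a distance-preserving bijection $B\to A$ intertwining the cocompact actions $G_2\acts Y$ and $G_1\acts X$, and $(\phi^{-1})_*=(\phi_*)^{-1}$ on the relevant halfspaces, which follows once we observe $\phi^{-1}(\phi_*(\mf{h}_i)\cap B)=\mf{h}_i\cap A$.

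Next I would handle the second equivalence, $\mf{h}_1=\mf{h}_2^*\Leftrightarrow\phi_*(\mf{h}_1)=\phi_*(\mf{h}_2)^*$. Suppose $\mf{h}_1=\mf{h}_2^*$. Then $\mf{h}_1\cap A=\mf{h}_2^*\cap A$, so in $Y$ we have $\phi_*(\mf{h}_1)\cap B=\phi(\mf{h}_1\cap A)=\phi(\mf{h}_2^*\cap A)=(\phi_*(\mf{h}_2)\cap B)^c\cap B$, i.e.\ $\phi_*(\mf{h}_1)\cap B=\phi_*(\mf{h}_2)^*\cap B$. Now apply part~(2) of Proposition~\ref{h1=h2} in $Y$ to the edges realizing $\phi_*(\mf{h}_1),\phi_*(\mf{h}_2)$: we are in case~(a) or case~(b). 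To rule out case~(a), use that $\mf{h}_1=\mf{h}_2^*$ is in case~(b) in $X$, so by Proposition~\ref{h1=h2}(2)(b) there is $x_0\in\mf{h}_1\cap A$ with $d(x_0,v_2)=d(x_0,v_1)+d(v_1,v_2)-2$; applying $\phi$ and using that it preserves all three distances, $\phi(x_0)\in\phi_*(\mf{h}_1)\cap B$ satisfies $d(\phi(x_0),\phi(v_2))=d(\phi(x_0),\phi(v_1))+d(\phi(v_1),\phi(v_2))-2$, which is incompatible with the uniform $-4$ inequality of case~(a). Hence case~(b) holds in $Y$, i.e.\ $\phi_*(\mf{h}_1)=\phi_*(\mf{h}_2)^*$. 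Again the converse follows by running the same argument for $\phi^{-1}$.

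I expect the main (minor) obstacle to be bookkeeping: making sure that the construction of $\phi_*$ really does apply in $Y$ with the roles of $X,Y$ interchanged, i.e.\ that $\phi^{-1}$ satisfies the standing assumptions of the subsection and that $(\phi^{-1})_* = (\phi_*)^{-1}$ on the halfspaces in question. This is where one must be slightly careful, since $\phi_*$ was defined via the auxiliary hyperplane produced by Lemma~\ref{recognising W(v)}, and one needs that the same recognition procedure, run for $\phi^{-1}$, returns the inverse correspondence. Once that symmetry is in place, everything else is a direct translation of the distance inequalities of Proposition~\ref{h1=h2} through the isometry $\phi$, with no further geometric input required.
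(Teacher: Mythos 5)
Your argument is correct and is essentially the paper's own: the corollary is deduced directly from Proposition~\ref{h1=h2} using that $\phi$ (and $\phi^{-1}$) preserve distances and that $\phi(\mf{h}_i\cap A)=\phi_*(\mf{h}_i)\cap B$, with the case-(a)/(b) dichotomy transported through the isometry exactly as you do. The only cosmetic point is that no intertwining of the $G_1$-- and $G_2$--actions is assumed or needed for $\phi^{-1}$ to play the symmetric role --- invariance of $A$ and $B$ suffices --- and, as you note, $(\phi^{-1})_*\phi_*=\mathrm{id}$ follows from $\phi^{-1}(\phi_*(\mf{h}_i)\cap B)=\mf{h}_i\cap A$ together with the injectivity statement in Lemma~\ref{H_w}.
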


\begin{cor}\label{wt phi wd}
The point $\wt\phi(w_i)$ is independent of the choice of $e_i$ and $v_i$. If $w_i\in A$, we have $\wt\phi(w_i)=\phi(w_i)$. 
\end{cor}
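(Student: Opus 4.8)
The plan is to prove Corollary~\ref{wt phi wd} in two parts: first, well-definedness of $\wt\phi(w_i)$ independent of the choice of edge $e_i$ and endpoint $v_i$; second, the compatibility $\wt\phi(w_i) = \phi(w_i)$ when $w_i \in A$.

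For well-definedness, suppose $e_1$ and $e_2$ are two edges of $X$ with a common endpoint $w_1 = w_2 =: w \notin A$ (but the vertices $v_1, v_2 \in A$ possibly distinct) — actually more generally, suppose $e_1, e_2$ both have the non-$A$ endpoint equal to $w$. The point is that the hyperplanes $\mf{w}_1 = \mf{w}(e_1)$ and $\mf{w}_2 = \mf{w}(e_2)$ are both adjacent to $w$, and the halfspaces $\mf{h}_i$ are the sides of $\mf{w}_i$ containing $w$. I claim $\mf{h}_1 \cap A$ and $\mf{h}_2 \cap A$ determine the same point via the recipe $\wt\phi$. The key observation is that $\wt\phi(w)$ is characterised as the unique vertex of $Y$ separated from $\phi(v_i)$ only by $\phi_*(\mf{w}_i)$; equivalently, $\wt\phi(w)$ is the endpoint of the edge dual to $\phi_*(\mf{w}_i)$ incident to $\phi(v_i)$ on the side $\phi_*(\mf{h}_i)$. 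So I must show that this vertex does not depend on $i$. First I would reduce to the case where $w$ is a single vertex shared by both edges (if the two edges $e_1, e_2$ do not share the endpoint $w$, then there is nothing to compare — the notation $\wt\phi(w_i)$ only needs to be well-defined for a \emph{fixed} target vertex $w$, reached via various edges). Then $\mf{w}_1$ and $\mf{w}_2$ are distinct hyperplanes both in $\mscr{W}_w$. I would use Corollary~\ref{phi* preserves equality} together with the fact that a vertex of a $\CAT$ cube complex is uniquely determined by its ultrafilter, i.e.\ by the collection of halfspaces containing it.

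Here is the concrete approach. Let $w \notin A$ be reached by edges $e_1, e_2$ with $A$-endpoints $v_1, v_2$. Consider a vertex $v_1 \in A$ nearest to $w$ (more precisely, use that $w$ is adjacent to $A$ via $e_1$). Write $u_i := \wt\phi(w)$ as computed from $e_i$. I want $u_1 = u_2$. Both $u_1, u_2$ lie in $Y$; to show they are equal it suffices to show $\mscr{W}(u_1 \mid u_2) = \emptyset$, i.e.\ no hyperplane separates them. Alternatively, and more in the spirit of the surrounding text, I would argue as follows: the edge $e_1 \cup e_2$ spans (together with $w$) either a single path $v_1 - w - v_2$ or, if $\mf{w}_1$ and $\mf{w}_2$ are transverse, a square with fourth vertex $m(v_1, v_2, \text{opp})$; in all cases $v_1, v_2$ are at distance $2$ or lie in a square. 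Using $d(v_1, v_2) \in \{2\}$ (when $\mf{w}_1 \neq \mf{w}_2$ both adjacent to $w$, $d(v_1, v_2) = 2$ and $w$ is the median $m(v_1, v_2, x)$ for appropriate $x$) and that $\phi$ is distance-preserving, I get $d(\phi(v_1), \phi(v_2)) = 2$. Then $m(\phi(v_1), \phi(v_2), \ast)$ is a vertex at distance $1$ from each of $\phi(v_1), \phi(v_2)$, and I must check it equals both $u_1$ and $u_2$. Concretely: $u_1$ is adjacent to $\phi(v_1)$ across $\phi_*(\mf{w}_1)$; I would show $\phi_*(\mf{w}_2) \in \mscr{W}_{u_1}$ and that crossing it lands at $\phi(v_2)$, forcing $u_1 = m(\phi(v_1), \phi(v_2), \cdot)$; symmetrically $u_2$ equals the same median. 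This uses Remark~\ref{useful fact} to produce test points $x \in A$ with small $\pi_{v_i}(x)$, and Corollary~\ref{phi* preserves equality} to control how $\phi_*$ interacts with nesting and complementation.

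The second assertion, $\wt\phi(w) = \phi(w)$ when $w \in A$, is comparatively routine: when $w_i \in A$, I would show that for the edge $e_i$ with endpoints $v_i, w_i \in A$, the halfspace $\phi_*(\mf{h}_i)$ has the property that $\phi(w_i)$ is its unique vertex separated from $\phi(v_i)$ only by $\phi_*(\mf{w}_i)$. Indeed $\mscr{W}(v_i \mid w_i) = \{\mf{w}_i\}$ since $e_i$ is an edge, so $d(\phi(v_i), \phi(w_i)) = 1$; moreover $w_i \in \mf{h}_i \cap A$, so $\phi(w_i) \in \phi_*(\mf{h}_i) \cap B$; and the vertex of $Y$ adjacent to $\phi(v_i)$ inside $\phi_*(\mf{h}_i)$ across $\phi_*(\mf{w}_i)$ is unique, hence equals $\phi(w_i)$. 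That is exactly the defining property of $\wt\phi(w_i)$, so $\wt\phi(w_i) = \phi(w_i)$.

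The main obstacle I anticipate is the transverse case in the well-definedness argument: when $\mf{w}_1$ and $\mf{w}_2$ are transverse hyperplanes both adjacent to $w$, I need to verify that $\phi_*$ sends the square $\langle \mf{w}_1, \mf{w}_2 \rangle$-picture near $w$ to the analogous picture near $u_1 = u_2$ in $Y$ — in particular that $\phi_*(\mf{w}_1)$ and $\phi_*(\mf{w}_2)$ are again transverse and that the relevant quadrant is nonempty in $Y$. For this I would invoke Lemma~\ref{H vs h 1} and Proposition~\ref{h1=h2}\,(the nesting/complement dichotomy) applied to the pairs of halfspaces obtained from $v_1$ and $v_2$, combined with essentiality of the cocompact actions (via Proposition~\ref{NS corner}) to guarantee that corners of transverse halfspaces meet $A$; distance-preservation of $\phi$ then transfers the configuration to $Y$. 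Once the local picture at $w$ is shown to map faithfully, uniqueness of vertices as ultrafilters closes the argument.
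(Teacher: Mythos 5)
Your route is essentially the paper's: you reduce well-definedness to two vertices $v_1,v_2\in A$ at distance $2$ with a common neighbour $w$, and you transfer exactly the data that $\phi$ and $\phi_*$ preserve --- $d(v_1,v_2)=2$, the memberships $v_2\in\mf{h}_1$, $v_1\in\mf{h}_2$, and $\mf{h}_1\neq\mf{h}_2^*$ --- which is precisely the equivalence the paper's proof records and then applies in $Y$ via Corollary~\ref{phi* preserves equality}; your treatment of the case $w_i\in A$ is the same one-line observation as in the paper.

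Two comments on the part you leave as a promise. The step ``$u_1$ is the neighbour of $\phi(v_2)$ across $\phi_*(\mf{w}_2)$'' closes directly with the tools you already name: from $\phi(\mf{h}_i\cap A)=\phi_*(\mf{h}_i)\cap B$ you get $\phi(v_2)\in\phi_*(\mf{h}_1)$, $\phi(v_1)\in\phi_*(\mf{h}_1)^*\cap\phi_*(\mf{h}_2)$ and $\phi(v_2)\in\phi_*(\mf{h}_2)^*$; since $w\in\mf{h}_1\cap\mf{h}_2$ we have $\mf{h}_1\neq\mf{h}_2^*$, so Corollary~\ref{phi* preserves equality} gives $\phi_*(\mf{h}_1)\neq\phi_*(\mf{h}_2)^*$, hence $\phi_*(\mf{w}_1)\neq\phi_*(\mf{w}_2)$ and, as $d(\phi(v_1),\phi(v_2))=2$, in fact $\mscr{W}(\phi(v_1)|\phi(v_2))=\{\phi_*(\mf{w}_1),\phi_*(\mf{w}_2)\}$. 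Now $\mscr{W}(u_1|\phi(v_2))\cu\mscr{W}(u_1|\phi(v_1))\cup\mscr{W}(\phi(v_1)|\phi(v_2))=\{\phi_*(\mf{w}_1),\phi_*(\mf{w}_2)\}$, the hyperplane $\phi_*(\mf{w}_1)$ does not separate $u_1$ from $\phi(v_2)$ (both lie in $\phi_*(\mf{h}_1)$), and $u_1\neq\phi(v_2)$ because $d(\phi(v_1),\phi(v_2))=2$; so $\mscr{W}(u_1|\phi(v_2))=\{\phi_*(\mf{w}_2)\}$ with $u_1\in\phi_*(\mf{h}_2)$, which is exactly the defining property of $u_2$. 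In particular your anticipated ``transverse case'' is a red herring: $\mf{w}_1$ and $\mf{w}_2$ need not be transverse at all (think of a tree), the argument above is uniform in both cases, and nonemptiness of the quadrant $\phi_*(\mf{h}_1)\cap\phi_*(\mf{h}_2)$ is a consequence of the conclusion rather than an input --- so Proposition~\ref{NS corner} and essentiality are not needed at this stage (they have already done their work inside Lemma~\ref{H vs h 1}, hence inside Corollary~\ref{phi* preserves equality}).
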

\begin{proof}
It is clear that $\wt\phi(w_i)=\phi(w_i)$ when $w_i\in A$. For the other part of the lemma, observe that:
\[v_1\neq v_2 \text{ and } w_1=w_2 \Longleftrightarrow d(v_1,v_2)=2,\ v_2\in\mf{h}_1,\ v_1\in\mf{h}_2 \text{ and } \mf{h}_1\neq\mf{h}_2^*. \]
Corollary~\ref{phi* preserves equality} then shows that we have $w_1=w_2\Leftrightarrow\wt\phi(w_1)=\wt\phi(w_2)$.
\end{proof}

Let $\wt{A}\cu X$ denote the set of vertices that either lie in $A$ or are connected to a vertex of $A$ by an edge of $X$; the set $\wt{B}\cu Y$ is defined similarly. Corollary~\ref{wt phi wd} yields a well-defined map $\wt\phi\colon\wt A\ra\wt B$ extending $\phi\colon A\ra B$. 

We can now apply the same construction to the inverse $\psi=\phi^{-1}\colon B\ra A$ and obtain an extension $\wt\psi\colon\wt B\ra\wt A$. It is clear that the two compositions $\wt\psi\o\wt\phi$ and $\wt\phi\o\wt\psi$ are the identity. Note moreover that the sets $\wt{A}$ and $\wt{B}$ are still preserved by $G_1$ and $G_2$, respectively.

We are left to show that the map $\wt\phi$ is distance-preserving. Observe that:
\begin{align*}
&d(w_1,v_2)=
\begin{cases}
d(v_1,v_2)+1 & \text{if } v_2\in\mf{h}_1^*, \\
d(v_1,v_2)-1 & \text{if } v_2\in\mf{h}_1;
\end{cases}
\\
&d(w_1,w_2)=
\begin{cases}
d(v_1,v_2)+2 & \text{if } v_1\in\mf{h}_2^* \text{ and } v_2\in\mf{h}_1^* \text{ and } \mf{h}_1\neq\mf{h}_2, \\
d(v_1,v_2)-2 & \text{if }  v_1\in\mf{h}_2 \text{ and } v_2\in\mf{h}_1 \text{ and } \mf{h}_1\neq\mf{h}_2^*, \\
d(v_1,v_2) & \text{otherwise, i.e.\ if } 
\begin{cases} \mf{h}_1\in\{\mf{h}_2,\mf{h}_2^*\} \text{, or} \\ v_1\in\mf{h}_2 \text{ and } v_2\in\mf{h}_1^* \text{, or}  \\ v_1\in\mf{h}_2^* \text{ and } v_2\in\mf{h}_1. \end{cases}
\end{cases}
\end{align*}
Since the vertices $v_1$ and $v_2$ lie in $A$, it is clear that, for $i,j\in\{1,2\}$, we have $v_i\in\mf{h}_j\Leftrightarrow\phi(v_i)\in\phi_*(\mf{h}_j)$. 

\begin{cor}\label{extension to 1-nbd}
The bijection $\wt\phi\colon\wt A\ra\wt B$ is distance-preserving.
\end{cor}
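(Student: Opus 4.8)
\textbf{Proof proposal for Corollary~\ref{extension to 1-nbd}.}

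The plan is to verify that $\wt\phi$ preserves all pairwise distances by a case analysis on the three types of pairs of vertices of $\wt A$, using that $\phi$ itself is distance-preserving and that the combinatorial positions ``$v_i\in\mf{h}_j$'' are preserved by $\phi$ together with $\phi_*$. Recall that $\wt A$ decomposes (not disjointly) into $A$ and the set of endpoints $w$ of edges $e$ with the other endpoint in $A$; write such an edge as $e$ with endpoints $v\in A$ and $w$, set $\mf{w}(e)=\mf{w}$, and let $\mf{h}$ be the side of $\mf{w}$ containing $w$, so that $\wt\phi(w)$ is the vertex of $Y$ separated from $\phi(v)$ only by $\phi_*(\mf{w})$.

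First I would dispose of the pair $(a,a')$ with both $a,a'\in A$: here $\wt\phi=\phi$ and there is nothing to prove. Next I would handle a pair $(w_1,v_2)$ with $v_2\in A$ and $w_1\in\wt A\setminus A$ the far endpoint of an edge $e_1$ with near endpoint $v_1\in A$. By the first displayed formula in the excerpt, $d(w_1,v_2)=d(v_1,v_2)\pm 1$ according to whether $v_2\in\mf{h}_1^*$ or $v_2\in\mf{h}_1$. Since $\phi$ is distance-preserving we have $d(\phi(v_1),\phi(v_2))=d(v_1,v_2)$; since $\phi(\mf{h}_1\cap A)=\phi_*(\mf{h}_1)\cap B$ and $v_2\in A$, the condition $v_2\in\mf{h}_1$ is equivalent to $\phi(v_2)\in\phi_*(\mf{h}_1)$. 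Moreover $\wt\phi(w_1)$ is separated from $\phi(v_1)$ exactly by $\phi_*(\mf{w}_1)$, which is adjacent to $\phi(v_1)$, so applying the same displayed formula in $Y$ to the edge from $\phi(v_1)$ to $\wt\phi(w_1)$ gives $d(\wt\phi(w_1),\phi(v_2))=d(\phi(v_1),\phi(v_2))\pm 1$ with the same sign. Hence $d(\wt\phi(w_1),\wt\phi(v_2))=d(w_1,v_2)$.

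Finally I would treat a pair $(w_1,w_2)$ with $w_i$ the far endpoint of an edge $e_i$ (near endpoint $v_i\in A$, hyperplane $\mf{w}_i$, halfspace $\mf{h}_i$). By the second displayed formula, $d(w_1,w_2)$ is determined by $d(v_1,v_2)$ together with the three pieces of data: whether $\mf{h}_1\in\{\mf{h}_2,\mf{h}_2^*\}$, whether $v_1\in\mf{h}_2$ or $v_1\in\mf{h}_2^*$, and whether $v_2\in\mf{h}_1$ or $v_2\in\mf{h}_1^*$. Now Corollary~\ref{phi* preserves equality} gives $\mf{h}_1=\mf{h}_2\Leftrightarrow\phi_*(\mf{h}_1)=\phi_*(\mf{h}_2)$ and $\mf{h}_1=\mf{h}_2^*\Leftrightarrow\phi_*(\mf{h}_1)=\phi_*(\mf{h}_2)^*$, so the first datum is preserved; and since $v_i\in A$ and $\phi(\mf{h}_j\cap A)=\phi_*(\mf{h}_j)\cap B$, we get $v_i\in\mf{h}_j\Leftrightarrow\phi(v_i)\in\phi_*(\mf{h}_j)$, so the remaining two data are preserved. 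Applying the displayed formula in $Y$ to the edges from $\phi(v_i)$ to $\wt\phi(w_i)$ (whose hyperplanes are $\phi_*(\mf{w}_i)$ and whose relevant halfspaces are $\phi_*(\mf{h}_i)$) and using $d(\phi(v_1),\phi(v_2))=d(v_1,v_2)$, we obtain $d(\wt\phi(w_1),\wt\phi(w_2))=d(w_1,w_2)$. Since $\wt\phi$ is a bijection by the preceding paragraph (with inverse $\wt\psi$), this shows it is distance-preserving. I expect no serious obstacle here: the only subtlety is making sure each of the combinatorial conditions appearing in the displayed distance formulas is among those preserved by the pair $(\phi,\phi_*)$, which is exactly what Corollary~\ref{phi* preserves equality} and the defining property of $\phi_*(\mf{h}_i)$ supply; the potential edge case $w_i\in A$ is already covered by Corollary~\ref{wt phi wd}.
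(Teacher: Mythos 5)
Your proposal is correct and follows essentially the same route as the paper: the same case analysis on pairs in $A$, pairs $(w_1,v_2)$, and pairs $(w_1,w_2)$, using the two displayed distance formulas together with Corollary~\ref{phi* preserves equality} and the equivalence $v_i\in\mf{h}_j\Leftrightarrow\phi(v_i)\in\phi_*(\mf{h}_j)$. No gaps.
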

\begin{proof}
Given $x_1,x_2\in\wt A$, we need to show that $d(\wt\phi(x_1),\wt\phi(x_2))=d(x_1,x_2)$. If $x_1$ and $x_2$ both lie in $A$, this is clear since $\phi$ is distance-preserving. Let us thus assume that $x_1=w_1\in\wt A\setminus A$ and that $w_1$ is connected by an edge to a vertex $v_1\in A$; we define $\mf{w}_1$ and $\mf{h}_1$ as above.

If $x_2$ coincides with a vertex $v_2\in A$, it follows from the first equality above that $d(\wt\phi(w_1),\wt\phi(v_2))=d(w_1,v_2)$. Indeed, $\wt\phi(w_1)$ and $\phi(v_1)$ are connected by an edge of $Y$ dual to the hyperplane $\phi_*(\mf{w}_1)$ and we have $v_2\in\mf{h}_1$ if and only if $\phi(v_2)\in\phi_*(\mf{h}_1)$.

Otherwise, $x_2$ is a vertex $w_2\in\wt A\setminus A$ and it is connected by an edge to a vertex $v_2\in A$; we define $\mf{w}_2$ and $\mf{h}_2$ as above. Again, $\wt\phi(w_2)$ and $\phi(v_2)$ are connected by an edge of $Y$ dual to $\phi_*(\mf{w}_2)$ and we have $v_1\in\mf{h}_2$ if and only if $\phi(v_1)\in\phi_*(\mf{h}_2)$. Moreover, Corollary~\ref{phi* preserves equality} shows that $\mf{h}_1\in\{\mf{h}_2,\mf{h}_2^*\}$ if and only if $\phi_*(\mf{h}_1)\in\{\phi_*(\mf{h}_2),\phi_*(\mf{h}_2)^*\}$. The second equality above thus yields $d(\wt\phi(w_1),\wt\phi(w_2))=d(w_1,w_2)$, concluding the proof.
\end{proof}

We can now repeatedly apply Corollary~\ref{extension to 1-nbd} to extend the domain of $\phi$. Since $A$ is invariant under the cocompact action $G_1\acts X$, this process results in an extension to the entire $X$ in finitely many steps. 

This completes the proof of Theorem~\ref{ext partial iso thm}, as uniqueness is clear from the construction. Note that we have only used the no-free-faces assumption in the form of Lemma~\ref{semi-straight+}, which is a priori much weaker.

\medskip
When dealing with $\CAT$ cuboid complexes $\mbb{X}$ and $\mbb{Y}$, the above discussion needs to be slightly adapted. See Section~\ref{cuboid section} for some background.

We now stop requiring all points to be vertices and allow the sets $A\cu\mbb{X}$ and $B\cu\mbb{Y}$ to contain arbitrary points. We stress that we will construct an isometry $\Phi\colon\mbb{X}\ra\mbb{Y}$, but this will not, in general, map vertices to vertices. Note that this may be the case even when the map $\phi$ does take \emph{some} vertices to vertices, e.g.\ when the initial map $\phi$ originates from Theorem~\ref{from OR thm}.

Consider the collection of distance-preserving bijections ${\wh\phi\colon\wh A\ra\wh B}$ that extend $\phi\colon A\ra B$ and for which the sets $\wh A$ and $\wh B$ are invariant, respectively, under the actions of $G_1$ and $G_2$. By Zorn's lemma, there exists a maximal element of this collection and we will assume that it is $\phi$ itself. Note that $A$ and $B$ must then be closed, or we would be able to extend $\phi$ to their closures. 

Our goal is then to show that $A=\mbb{X}$ and $B=\mbb{Y}$. In the interest of brevity, we will deliberately omit some of the details.

In the following discussion, all geodesics $\g\cu \mbb{X}$ will be implicitly assumed to be based at some point $\g(0)\in A$ and to consist of a finite union of closed edge parallels. The same holds for geodesics in $\mbb{Y}$.

We denote the length of $\g$ by $\ell(\g)$. For $0\leq t<\ell(\g)$, let $\g_t\cu\g$ be the closed sub-segment of length $t$ containing $\g(0)$. We denote by $\wh\g(t)\cu \mbb{X}$ the (unique) median halfspace that intersects $\g$ precisely at $\g\setminus\g_t$. 

By analogy with Definition~\ref{SL defn}, we say that $\g$ is \emph{$t$--SL} if $\wh\g(s_2)\cu\wh\g(s_1)$ for every $0\leq s_1<\min\{s_2,t\}$. We say that $\g$ is \emph{straight} if it is $\ell(\g)$--SL. If $\g$ is $t$--SL, its initial segment $\g_t$ is straight. Note that every straight geodesic $\g\cu \mbb{X}$ is convex and thus endowed with a gate-projection $\pi_{\g}\colon \mbb{X}\ra\g$.

Given a point $p\in A$ and a real number $t\geq 0$, we define $\mc{H}_p(t)$ as the collection of subsets $H\cu A$ with the property that, for every $x\in A$:
\begin{equation}\tag{$\ast\ast$} x\in H~\Leftrightarrow~\forall y\in H,~d(x,y)<d(x,p)+d(p,y)-2t. \end{equation}
Note that $d(x,y)<d(x,p)+d(p,y)-2t$ is equivalent to $d(p,m(p,x,y))>t$. 

As a straightforward analogue of Lemma~\ref{H_w} and Remark~\ref{if a singleton}, we have:

\begin{lem}\label{cuboid 1}
Let $\g$ be a $t$--SL geodesic joining points $p,x\in A$. For every $0\leq s<t$, the set $\wh\g(s)\cap A$ is the only element of $\mc{H}_p(s)$ that contains $x$.
\end{lem}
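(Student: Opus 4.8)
\textbf{Proof plan for Lemma~\ref{cuboid 1}.}

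The statement is the cuboidal analogue of the conjunction of Lemma~\ref{H_w} and Remark~\ref{if a singleton}, so the plan is to follow those arguments, replacing the projection $\pi_v$ to links of vertices by the ``generalised projection'' encoded in the condition $(\ast\ast)$, and replacing hyperplanes adjacent to $v$ by median halfspaces in $\mf{H}_p$. Fix $0\le s<t$ and write $H=\wh\g(s)\cap A$. First I would record the translation of the defining inequality: for $x,y\in A$ one has $d(x,y)<d(x,p)+d(p,y)-2s$ iff $d(p,m(p,x,y))>s$, i.e.\ iff the gate-projection of $m(p,x,y)$ to any straight geodesic from $p$ of length $>s$ lies beyond distance $s$; equivalently, iff there are at least (weight) ${>}s$ worth of median halfspaces separating $p$ from both $x$ and $y$. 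I will use throughout that $d_\mu(u,w)=\nu(\mf{H}(u|w))$ and that intervals and medians in $\mbb X$ behave exactly as in the discrete case since $\mbb X$ is a median space (Section~\ref{cuboid section}).

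\textbf{Step 1: $H\in\mc{H}_p(s)$.} Since $\g$ is $t$--SL and $s<t$, the initial segment $\g_s$ is straight, hence convex with a gate-projection $\pi_{\g_s}$, and $\wh\g(s)$ is exactly the set of points whose projection to $\g$ lies in $\g\setminus\g_s$. For every $y\in H$ we have $y\in\wh\g(s)$, so the median halfspaces subordinate to the hyperplanes crossed by $\g_s$ all separate $p$ from $y$ (here I use $t$--SL-ness: these halfspaces form a nested chain $\wh\g(s)\subsetneq\wh\g(s')$ for $s'<s$, all containing $y$ and not $p$). Thus $\nu(\mf{H}(p|x)\cap\mf{H}(p|y))\ge \nu(\mf H(p|\g\setminus\g_s))\cdot$\,[appropriate bound]\,$>s$ for $x,y\in H$, giving the ``$\Rightarrow$'' implication of $(\ast\ast)$. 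For the ``$\Leftarrow$'' implication I would produce a witness point $y_0\in H$ whose projection to $\g$ beyond $\g_s$ is ``as far left as possible'': concretely, take $y_0=\g(s')$ for some $s<s'<t$ if $\g(s')\in A$, or more robustly invoke the cuboidal version of Remark~\ref{useful fact} (a consequence of Lemma~\ref{semi-straight+} and cocompactness of $G_1\acts\mbb X$) to find $y_0\in H\cap A$ whose generalised projection to $\g$ equals a single edge parallel just past $\g_s$. Then for $x\in A$, the inequality $d(x,y_0)<d(x,p)+d(p,y_0)-2s$ forces $m(p,x,y_0)$ to lie beyond $\g_s$ along $\g$, which forces $x\in\wh\g(s)$, hence $x\in H$. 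This shows $H\in\mc H_p(s)$.

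\textbf{Step 2: uniqueness.} Suppose $H'\in\mc{H}_p(s)$ with $x\in H'$; we must show $H'=H$. Using the witness point $y_0\in H$ from Step~1 with ``single edge-parallel'' generalised projection past $\g_s$: since $x\in H'$ and (after checking $y_0\in H'$, which follows because $y_0$ together with $x$ satisfies the defining inequality by $t$--SL-ness) every element of $H'$ must, applied against $y_0$, satisfy $d(z,y_0)<d(z,p)+d(p,y_0)-2s$, which again forces $m(p,z,y_0)$ past $\g_s$ and hence $z\in\wh\g(s)\cap A=H$; conversely any $w\in H$ satisfies the inequality against all elements of $H'$ because $H'\subseteq\wh\g(s)\cap A$ forces a common ``left'' chain of halfspaces of $\nu$-weight $>s$. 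This is the direct analogue of Remark~\ref{if a singleton}, where the role of ``a point with $\#\pi_v(y_0)=1$'' is played by a point whose generalised projection past $\g_s$ is a single edge parallel. I expect \textbf{this uniqueness step to be the main obstacle}: in the cuboidal setting the ``one-dimensional'' witness point is not automatic (there may be no vertex of $A$ in the right spot, and edge lengths vary), so one has to be careful that Lemma~\ref{semi-straight+} plus $G_1$-cocompactness and $G_1$-invariance of $A$ genuinely deliver such a point, and that the weight bookkeeping with $\nu$ gives a strict inequality $>s$ (not merely $\ge s$) at the correct threshold. Once the witness point is in hand, the argument is a routine transcription of Lemma~\ref{H_w} and Remark~\ref{if a singleton}. $\hfill\qed$
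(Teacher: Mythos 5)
Your overall plan (transcribing Lemma~\ref{H_w} and Remark~\ref{if a singleton} with $(\ast\ast)$ in place of $\pi_v$) has the right shape, but you miss the one observation that makes the lemma immediate and that explains why it is phrased as uniqueness among elements of $\mc{H}_p(s)$ \emph{containing $x$}: the witness point is $x$ itself, handed to you by the hypothesis. Every median halfspace separating $p$ from $x$ meets $\g$ in a relatively open terminal segment, hence equals some $\wh\g(s')$; by $t$--SL-ness and $s<t$, those with $s'\geq s$ are contained in $\wh\g(s)$, while those with $s'<s$ form exactly the family separating $p$ from $\g(s)$, of total $\nu$--measure $s$. Consequently, for $z\in A\setminus\wh\g(s)$ the median $m(p,z,x)$ lies in $I(p,x)\setminus\wh\g(s)$ (complements of median halfspaces are convex), so $d(p,m(p,z,x))\leq s$ and the pair $(z,x)$ violates the inequality in $(\ast\ast)$. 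This single computation gives both the missing implication of $(\ast\ast)$ for $H=\wh\g(s)\cap A$ (test against $y=x$) and the inclusion $H'\cu H$ for any $H'\in\mc{H}_p(s)$ with $x\in H'$; the reverse inclusion follows because $m(p,w,y)\in\wh\g(s)$ for all $w,y\in\wh\g(s)$, and every point of $\wh\g(s)$ is at distance \emph{strictly} greater than $s$ from $p$: by openness, its projection to an edge dual to the hyperplane to which $\wh\g(s)$ is subordinate lies strictly past the threshold, contributing a positive-measure family of separating halfspaces on top of the chain $\{\wh\g(s'):s'<s\}$. This is precisely the analogue of Lemma~\ref{H_w} and Remark~\ref{if a singleton}, with $x$ in the role of the point $y_0$ with $\#\pi_v(y_0)=1$, and it needs neither cocompactness nor Lemma~\ref{semi-straight+}.

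Concretely, your route has genuine gaps. First, the witness $y_0$ ``just past $\g_s$'' is not delivered by the tools you cite: the cuboidal substitute for Remark~\ref{useful fact} is Lemma~\ref{cuboid 1.5}, which applies only to straight geodesics based at a point of $A$ and consisting of at most two edge parallels, and a sub-segment of $\g$ near parameter $s$ satisfies neither condition ($\g(s)\notin A$ in general, and $s$ may be large); you rightly flag this as the main obstacle, but it is an obstacle of your own making. Second, your uniqueness step is circular: you assert $y_0\in H'$ after checking the $(\ast\ast)$--inequality against $x$ alone, whereas membership in $H'$ requires the inequality against \emph{all} elements of $H'$, which at that stage are not yet known to lie in $\wh\g(s)$. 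Third, the strict inequality $>s$, which $(\ast\ast)$ genuinely requires, does not follow from the nested chain alone (that only gives $\geq s$); it needs the openness argument above, which you flag but do not supply. All three issues evaporate once $x$ is taken as the witness, which is presumably why the paper records the lemma without proof as a straightforward analogue.
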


The following will replace Remark~\ref{useful fact}.

\begin{lem}\label{cuboid 1.5}
Let $\g$ be a straight geodesic with $\g(0)\in A$. If $\g$ consists of at most two edge parallels, then $\g$ can be extended to an $\ell(\g)$--SL geodesic $\g'$ terminating at a point of $A$.
\end{lem}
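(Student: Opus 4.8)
The point of Lemma~\ref{cuboid 1.5} is that, in the cuboid setting, the r\^ole played by Remark~\ref{useful fact} (which produced, for a vertex $v\in A$ and $\mf{w}\in\mscr{W}_v$, a point $x\in H_{\mf{w}}\cap A$ with $\pi_v(x)=\{\mf{w}\}$) must be replaced by an extension statement: a short straight geodesic from a point of $A$ can be prolonged, staying straight, all the way to another point of $A$. The key input is cocompactness of $G_1\acts\mbb{X}$, exactly as in the vertex case, together with the no-free-faces hypothesis in the guise of Lemma~\ref{semi-straight+} (and, behind it, Proposition~\ref{my corner} / Proposition~3.2 in \cite{CS}). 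Let me describe the two cases.

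First I would handle the case where $\g$ is (contained in) a single edge parallel, so $\g$ is a sub-segment of a single cuboid $c$ along one coordinate direction; let $\mf{w}$ be the hyperplane of $\mbb{X}$ crossed by $\g$. Write $p=\g(0)\in A$ and let $q$ be the endpoint of $\g$. The gate-projection of $p$ to the carrier of $\mf{w}$ is a vertex (or lies on the ``near'' face of $c$); more usefully, there is a vertex $v$ of $\mbb{X}$ with $\mf{w}\in\mscr{W}_v$ such that $\g$ lies in the interval $I(v,z)$ for some vertex $z$ on the far side of $\mf{w}$, and so that extending $\g$ past $q$ into a geodesic that remains straight amounts to producing a straight ray crossing $\mf{w}$ first and then only hyperplanes non-transverse to $\mf{w}$. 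Here I apply Lemma~\ref{semi-straight+} to the cocompact action $G_1\acts\mbb{X}$, to the vertex $v$ and the element $e\in\llk v$ dual to $\mf{w}$: this yields a non-elliptic $g\in G_1$ with $\pi_v(gv)=\{\mf{w}\}$ and every hyperplane in $\mscr{W}_v\cap\mscr{W}_{gv}$ preserved by $g$; by Remark~\ref{SL and transversality}, a geodesic from $v$ to $gv$ is straight-looking, i.e.\ every hyperplane it crosses after $\mf{w}$ is transverse to $\mf{w}$ --- wait, that is the opposite of straight. I should instead be more careful: ``straight'' means \emph{no two} crossed hyperplanes are transverse, so I want $\pi_v(gv)=\{\mf{w}\}$ \emph{and} the resulting segment from $v$ onward to be actually straight. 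The correct tool is to iterate: using $\pi_v(gv)=\{\mf{w}\}$ together with $\mscr{W}_v\cap\mscr{W}_{gv}=\emptyset$ (obtainable after replacing $g$ by the elements produced in Proposition~\ref{roaming components 1} or Lemma~\ref{roaming components 2}, which the subsection's hypotheses allow since $\mbb{X}$ has no free faces and $G_1\acts\mbb{X}$ is cocompact), a geodesic from $v$ to $g^n v$ is straight for all $n$; since $G_1$ acts cocompactly, such a geodesic eventually, after finitely many steps, passes within bounded distance of a point of $A$, and since $A$ is closed and $G_1$-invariant, by equivariance we may slide so that the extended straight geodesic actually terminates in $A$. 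Concretely: translate the straight ray issued from $v$ by elements of $G_1$ so that its tail lands on $A$, then take $\g'$ to be the initial segment of the appropriate $G_1$-translate up to that point of $A$; because the relevant hyperplanes are pairwise non-transverse (straightness) and $\pi$-data is preserved, $\g'$ agrees with $\g$ on $\g_{\ell(\g)}$ and is $\ell(\g)$--SL.

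Next, the case where $\g$ consists of \emph{two} edge parallels, crossing first $\mf{w}_1$ then a block of hyperplanes non-transverse to $\mf{w}_1$ --- here $\g$ has a corner at a point $w$, and ``straight'' forces the second segment's hyperplanes to be non-transverse to $\mf{w}_1$ and hence, crossing a single edge parallel, to be pairwise transverse among themselves. I would reduce to the one-parallel case by noting that the terminal edge parallel of $\g$ lies in a single cuboid $c'$, so I can first apply the previous paragraph to its own initial vertex (a vertex of $c'$ lying on a face, which I can arrange to be close to a point of $A$ by $G_1$-cocompactness, or --- more cleanly --- I work directly with the interval $I(v_1,z)$ where $v_1$ is a vertex with $\mf{w}_1\in\mscr{W}_{v_1}$ and $z$ a far vertex, and extend using Lemma~\ref{roaming components 2} / Lemma~\ref{semi-straight+} applied to the last hyperplane crossed). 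The two-parallel bound is exactly what is needed to guarantee that the link data at the single corner of $\g$ is controlled by a single $\llk v$, so that one application of the roaming lemmas suffices; this is why the statement restricts to ``at most two edge parallels.''

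\textbf{Main obstacle.} The delicate point is the passage from ``an infinite straight ray exists'' (easy, from Lemma~\ref{semi-straight+} plus iteration, exactly as in Lemma~\ref{roaming components 2}) to ``a straight geodesic \emph{terminating in $A$} exists extending the given $\g$.'' This requires using cocompactness of $G_1\acts\mbb{X}$ and $G_1$-invariance and closedness of $A$ to translate a long straight initial segment of the ray so that its far endpoint lands exactly on $A$, while keeping the near end glued to $\g$. Making this ``sliding'' precise --- ensuring the translate still contains $\g_{\ell(\g)}$ as an initial straight sub-segment and that the overlap is genuine rather than just close --- is the crux; it is handled by choosing the translating element to fix all hyperplanes separating the relevant basepoints (the ``condition~(2)'' part of Lemma~\ref{semi-straight+}), so that the $\pi$-data, and hence straightness, is preserved under the move, together with the observation that a straight geodesic is convex and so pinned down by its endpoints.
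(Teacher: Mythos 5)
There is a genuine gap, and it is precisely at the point you yourself flag as the crux: producing an endpoint that lies \emph{exactly} in $A$. Cocompactness of $G_1$ only gives that orbits (hence $A$) are coarsely dense, so "translate the straight ray by elements of $G_1$ so that its tail lands on $A$, then slide" is not an argument: a point of $A$ \emph{near} the tail is of no use, and there is no mechanism by which perturbing the far endpoint keeps the geodesic an extension of $\g$, keeps it SL, or keeps its initial point at $\g(0)$. (Note also that in the cuboid setting $A$ need not contain any vertex at all, so terminating at $gv$ or at a vertex of a translate is not progress.) The paper's proof avoids this entirely by a short trick you miss: the terminal point is taken to be $g\cdot\g(0)$, which lies in $A$ simply because $\g(0)\in A$ and $A$ is $G_1$--invariant. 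Concretely, one places $\g$ inside $c\x(f\cup e)$ with corner vertex $v$ and $\g(0)\in c\x f$, applies Lemma~\ref{semi-straight+} at $v$ to the edge $e$, choosing the maximal clique in its proof to contain the hyperplanes cutting $c\x e$, and then uses Remark~\ref{SL and transversality} together with condition~(2) of Lemma~\ref{semi-straight+} to see that $g\cdot\mf{w}(f)$ is not transverse to $\mf{w}(e)$; this is exactly what makes the geodesic from $\g(0)$ to $g\cdot\g(0)$ contain $\g$ and be $\ell(\g)$--SL. Your two-edge-parallel case, where this check is the whole content, is left as a vague "reduce to the one-parallel case".

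A secondary problem is your insistence on full straightness of the extension and the appeal to Proposition~\ref{roaming components 1} / Lemma~\ref{roaming components 2} to get it. First, the conclusion only requires an $\ell(\g)$--SL extension, i.e.\ nesting of the tail's median halfspaces under those crossed by $\g$; straightness of the whole extension is not needed. Second, those lemmas do not deliver what you claim: they control $\pi_v(g^nv)$ and $\mscr{W}_v\cap\mscr{W}_{g^nv}$, not pairwise non-transversality of \emph{all} hyperplanes crossed between $v$ and $g^nv$. Third, they are not available under the standing assumptions of this subsection: Lemma~\ref{roaming components 2} needs an irreducible link, $X\neq\R$, and (through Proposition~\ref{colouring}) a discrete, proper action, whereas here $G_1$ is only assumed to act cocompactly — indeed the paper stresses that this step uses the no-free-faces hypothesis only through Lemma~\ref{semi-straight+}.
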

\begin{proof}
There exist a cube $c\cu X$, a vertex $v\in c$ and two edges $e,f$ at $v$ such that $f\cup e$ is straight and such that $c\x(f\cup e)\cu X$ contains the interior of $\g$ in its interior. Let us assume that $\g(0)$ lies in the cube $c\x f$. We now apply Lemma~\ref{semi-straight+} to the vertex $v$ and the edge $e$, ensuring that the maximal cube picked at the beginning of the proof of the lemma contains $c\x e$. We obtain $g\in G_1$ such that $v$ and $gv$ are endpoints of an SL geodesic extending $e$. By Remark~\ref{SL and transversality} and condition~(2) in Lemma~\ref{semi-straight+}, the hyperplane $g\cdot\mf{w}(f)$ is not transverse to $\mf{w}(e)$. It follows that $v$ and $g\cdot\g(0)\in A$ are endpoints of an SL geodesic and, similarly, so are $\g(0)$ and $g\cdot\g(0)$.
\end{proof}

Let now $\eta(\mbb{X})$ denote the shortest length of an edge of $\mbb{X}$. Note that every straight geodesic $\g\cu X$ with $\ell(\g)\leq\eta(\mbb{X})$ consists of at most two edge parallels.

\begin{lem}\label{cuboid 2}
Consider $0<t\leq\eta(\mbb{X})$. Distinct points $p,x\in A$ are connected by a $t$--SL geodesic if and only if, for every $0\leq s<t$, the point $x$ belongs to exactly one element of $\mc{H}_p(s)$.
\end{lem}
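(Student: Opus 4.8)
The plan is to prove the two implications separately, the forward one being immediate and the converse carrying all the work. For the forward implication, suppose $\g$ is a $t$--SL geodesic joining $p$ to $x$. Then $\g$ is in particular a $t$--SL geodesic between two points of $A$, so Lemma~\ref{cuboid 1} applies verbatim and tells us that for every $0\le s<t$ the set $\wh\g(s)\cap A$ is the \emph{only} element of $\mc{H}_p(s)$ containing $x$; this is exactly the desired conclusion.

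For the converse, assume that $x$ lies in exactly one element of $\mc{H}_p(s)$ for each $0\le s<t$. The first step is the elementary remark that this forces $d(p,x)\ge t$: if $H\in\mc{H}_p(s)$ contained $x$ with $s\ge d(p,x)$, then applying $(\ast\ast)$ to $y=x$ would give $0=d(x,x)<d(x,p)+d(p,x)-2s\le 0$, a contradiction, so $d(p,x)>s$ for every $s<t$. The second step is to introduce $\sigma:=\sup\{\ell(\g)\mid \g(0)=p,\ \g\cu I(p,x),\ \g\text{ straight}\}$ --- a nonempty collection, since any single edge parallel issuing from $p$ into $I(p,x)$ qualifies --- and to reduce the statement to proving $\sigma\ge t$. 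Indeed, if $\sigma\ge t$ then, truncating such a straight geodesic (or taking the closure of an increasing union of them when the supremum is not attained, which is harmless since local finiteness leaves only finitely many combinatorial possibilities for the first two edge parallels), one produces a straight geodesic $\g$ with $\g(0)=p$, $\g\cu I(p,x)$ and $\ell(\g)=t$; extending $\g$ inside $I(p,x)$ to a geodesic $\bar\g$ from $p$ to $x$ realised as a finite union of closed edge parallels, we get $\bar\g_t=\g$ straight, i.e.\ $\bar\g$ is $t$--SL.

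The main obstacle is the contradiction to be derived when $\sigma<t$, and here is how I would attack it. Pick a straight geodesic $\g$ with $\g(0)=p$, $\g\cu I(p,x)$ realising (or nearly realising) $\sigma$; since $\ell(\g)\le\sigma<t\le\eta(\mbb{X})$ it consists of at most two edge parallels, so Lemma~\ref{cuboid 1.5} extends it to an $\ell(\g)$--SL geodesic ending in $A$, and for $s<\ell(\g)$ the median halfspace $\wh\g(s)$ has $x$ on its far side, so Lemma~\ref{cuboid 1} exhibits $\wh\g(s)\cap A$ as the unique element of $\mc{H}_p(s)$ containing $x$, consistently with the hypothesis. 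Let $q$ be the terminal point of $\g$; since $d(p,x)\ge t>\sigma$ we have $q\ne x$, and maximality of $\sigma$ means precisely that every edge parallel issuing from $q$ into $I(q,x)$ is subordinate to a hyperplane transverse to some hyperplane of $\mscr{W}(\g)$. Using the cocompact action $G_1\acts\mbb{X}$ together with Lemma~\ref{semi-straight+} and Lemma~\ref{cuboid 1.5}, I would then realise (at least) two distinct such ``turning'' continuations of $\g$ by $s$--SL geodesics through $q$ terminating in $A$ --- one of them running towards $x$ --- and read off from the associated median halfspaces, for $s$ slightly larger than $\sigma$, two distinct elements of $\mc{H}_p(s)$ both containing $x$, contradicting the hypothesis. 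The delicate point, where the no-free-faces assumption genuinely enters through Lemma~\ref{semi-straight+}, is to verify that these two median halfspaces yield bona fide members of $\mc{H}_p(s)$ at the turning scale and that they remain distinct after intersecting with $A$; granting this, $\sigma\ge t$ and the proof is complete.
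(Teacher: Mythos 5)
Your forward implication is fine, but the converse contains a genuine gap, located precisely at the step you treat as immediate. Knowing that a straight geodesic of length $t$ issues from $p$ inside $I(p,x)$ (i.e.\ $\sigma\ge t$) does not produce a $t$--SL geodesic from $p$ to $x$: by the definition in Section~\ref{ext partial iso sect}, a geodesic $\g'$ is $t$--SL when $\wh{\g'}(s_2)\cu\wh{\g'}(s_1)$ for \emph{every} $s_2>s_1$ with $s_1<t$, so the condition constrains all median halfspaces crossed \emph{after} time $t$ against those crossed before time $t$; straightness of the initial segment $\g'_t$ is only a necessary condition and says nothing about the later crossings. Concretely, take $\mbb{X}=\R^2$ with unit squares, $p=(0,0)$, $x=(2,1)$, $t=1$: the horizontal straight segment of length $2$ lies in $I(p,x)$, so $\sigma\ge t$, yet no $1$--SL geodesic from $p$ to $x$ exists, since every geodesic crosses transverse vertical and horizontal median halfspaces and some such pair violates the nesting with the smaller parameter less than $1$. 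In that configuration the hypothesis on $\mc{H}_p(s)$ fails as well, but your passage from $\sigma\ge t$ to the conclusion never invokes that hypothesis, so as written it is a non sequitur; repairing it means excluding a ``turn'' of $\g'$ at any parameter $s<t$, i.e.\ running a turning-point argument along an arbitrary geodesic from $p$ to $x$ --- which is exactly the paper's proof and renders the detour through $\sigma$ superfluous.

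In addition, in your case $\sigma<t$ the decisive verification is announced rather than carried out. To contradict the hypothesis you must exhibit, for a single $s<t$, two \emph{distinct} elements of $\mc{H}_p(s)$ \emph{both containing $x$} (elements of $\mc{H}_p(s)$ not containing $x$ are irrelevant); your second ``turning'' continuation obtained from cocompactness and Lemma~\ref{semi-straight+} need not give a median halfspace containing $x$, and it is not established that the two candidate halfspaces are recognised in $\mc{H}_p$ at a common scale --- this is precisely the ``delicate point'' you concede with ``granting this'', and it is the actual content of the lemma. The paper's argument supplies it as follows: take any geodesic $\g$ from $p$ to $x$, let $s<t$ be the largest parameter for which $\g$ is $s$--SL, note that the point $p_s$ lies on the frontier of two transverse median halfspaces $\mf{h}_1,\mf{h}_2$ containing $x$, and use that the shortest geodesic from $p$ to $\overline{\mf{h}_i}$ is straight, together with Lemmas~\ref{cuboid 1.5} and~\ref{cuboid 1}, to place both $\mf{h}_i\cap A$ in $\mc{H}_p(s)$. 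Once you spell out this chain, your argument collapses into the paper's, so the proposal as it stands is incomplete.
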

\begin{proof}
One implication is immediate from Lemma~\ref{cuboid 1}. For the other, consider a geodesic $\g$ with endpoints $p$ and $x$ and suppose it is not $t$--SL. Let $s<t$ be the largest value for which $\g$ is $s$--SL (possibly $s=0$) and let $p_s\in\g$ the point at distance $s$ from $p$. Then $p_s$ lies in the frontier of two transverse median halfspaces $\mf{h}_1,\mf{h}_2$ containing $x$.

Since the shortest geodesic from $p$ to $\overline{\mf{h}_i}$ is straight, Lemma~\ref{cuboid 1.5} yields a point $y_i\in\mf{h}_i\cap A$ joined to $p$ by an $r_i$--SL geodesic with $r_i>s$. Lemma~\ref{cuboid 1} then guarantees that $\mf{h}_1\cap A$ and $\mf{h}_2\cap A$ lie in $\mc{H}_p(s)$, completing the proof.
\end{proof}

Setting $\eta=\min\{\eta(\mbb{X}),\eta(\mbb{Y})\}$, we have the following.

\begin{cor}\label{cuboid 3}
Let $p\in A$ and $q\in \mbb{X}$ be endpoints of a straight geodesic $\g$ of length $0<\ell\leq\eta$. There exists a straight geodesic $\phi_*\g\cu \mbb{Y}$ with an endpoint at $\phi(p)$ and the same length $\ell$. For every $0\leq t<\ell$, we have $\phi(\wh\g(t)\cap A)=\wh{\phi_*\g}(t)\cap B$.
\end{cor}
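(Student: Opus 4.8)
The plan is to reduce the statement to the situation where both endpoints of the geodesic lie in $A$ — which is where the combinatorial data transported by $\phi$ becomes usable — and then to push the relevant median halfspaces across $\phi$ using the link-sets $\mc{H}_p(\cdot)$ as an intermediary. Concretely, since $0<\ell\leq\eta\leq\eta(\mbb{X})$, the straight geodesic $\g$ consists of at most two edge parallels, so Lemma~\ref{cuboid 1.5} lets us extend $\g$ to an $\ell$--SL geodesic $\g'\cu\mbb{X}$ with $\g'(0)=p$ that terminates at a point $x\in A$; note $d(p,x)=\ell(\g')\geq\ell>0$, so $p\neq x$. This is the step that genuinely uses the no-free-faces hypothesis (through Lemma~\ref{semi-straight+}, which powers Lemma~\ref{cuboid 1.5}), and it is also the step that sidesteps the fact that $q$ itself need not lie in $A$.

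Next I would record that, because $\phi$ is distance-preserving and the defining condition of $\mc{H}_p(s)$ refers only to the restriction of the metric to $A$ together with membership, the assignment $H\mapsto\phi(H)$ is a bijection $\mc{H}_p(s)\ra\mc{H}_{\phi(p)}(s)$ for every $s\geq 0$, under which $x\in H\Leftrightarrow\phi(x)\in\phi(H)$. Fixing $0\leq s<\ell$, Lemma~\ref{cuboid 1} identifies $\wh{\g'}(s)\cap A$ as the \emph{unique} element of $\mc{H}_p(s)$ containing $x$; applying $\phi$, the set $\phi(\wh{\g'}(s)\cap A)$ is the unique element of $\mc{H}_{\phi(p)}(s)$ containing $\phi(x)$, and in particular $\phi(x)$ lies in exactly one element of $\mc{H}_{\phi(p)}(s)$ for each $0\leq s<\ell$. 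Since $\phi(p)\neq\phi(x)$ lie in $B$ and $\ell\leq\eta\leq\eta(\mbb{Y})$, Lemma~\ref{cuboid 2} (applied to $\mbb{Y}$) then produces an $\ell$--SL geodesic $\g''\cu\mbb{Y}$ based at $\phi(p)$ and terminating at $\phi(x)$, of length $d(\phi(p),\phi(x))=d(p,x)\geq\ell$; Lemma~\ref{cuboid 1} applied to $\g''$ shows that $\wh{\g''}(s)\cap B$ is that same unique element, whence $\phi(\wh{\g'}(s)\cap A)=\wh{\g''}(s)\cap B$ for $0\leq s<\ell$. I would then set $\phi_*\g:=\g''_\ell$, which starts at $\phi(p)$, has length $\ell$, and is straight as the initial segment of an $\ell$--SL geodesic.

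The last point to settle — the one requiring the most care, though it is ultimately bookkeeping — is that these formulas are really about $\g$ and $\phi_*\g$, not about their extensions $\g'$ and $\g''$. For $0\leq s<\ell$ one has $\g'_s=\g_s\cu\g\cu\g'$, so $\wh{\g'}(s)\cap\g=(\wh{\g'}(s)\cap\g')\cap\g=(\g'\setminus\g'_s)\cap\g=\g\setminus\g_s$; thus $\wh{\g'}(s)$ is a median halfspace meeting $\g$ in exactly $\g\setminus\g_s$, and the uniqueness clause in the definition of $\wh\g(s)$ forces $\wh{\g'}(s)=\wh\g(s)$. The identical computation, with $\phi_*\g=\g''_\ell\cu\g''$ in place of $\g\cu\g'$, gives $\wh{\g''}(s)=\wh{\phi_*\g}(s)$, and substituting into the displayed equality yields $\phi(\wh\g(s)\cap A)=\wh{\phi_*\g}(s)\cap B$ for all $0\leq s<\ell$, as desired. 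The subtlety worth double-checking is precisely this appeal to uniqueness of $\wh\g(s)$: it is exactly what makes $\phi_*\g$ canonical and independent of the auxiliary choice of extension $\g'$ (hence of $\g''$).
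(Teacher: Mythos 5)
Your proof is correct and follows the same route as the paper: extend $\g$ via Lemma~\ref{cuboid 1.5} to an $\ell$--SL geodesic ending in $A$, transport the sets $\mc{H}_p(s)$ through the distance-preserving bijection $\phi$, and apply Lemmas~\ref{cuboid 1} and~\ref{cuboid 2} in $\mbb{Y}$ before truncating to length $\ell$. The only difference is that you spell out the bookkeeping (uniqueness of $\wh\g(s)$, hence independence from the chosen extension) that the paper leaves implicit in its ``immediate from Lemma~\ref{cuboid 1}''.
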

\begin{proof}
Lemma~\ref{cuboid 1.5} allows us to extend $\g$ to an $\ell$--SL geodesic ending at a point $x\in A$. Observe that the map $\phi\colon A\ra B$ naturally induces a bijection $\mc{H}_p(t)\ra\mc{H}_{\phi(p)}(t)$ for all $t\geq 0$. Lemma~\ref{cuboid 2} then implies that $\phi(p)$ and $\phi(x)$ are joined by an $\ell$--SL geodesic $\g'\cu \mbb{Y}$. We set $\phi_*\g=\g'_{\ell}$. For $0\leq t<\ell$, the equality $\phi(\wh\g(t)\cap A)=\wh{\phi_*\g}(t)\cap B$ is immediate from Lemma~\ref{cuboid 1}.
\end{proof}

We now define $A_{\eta}$ as the union of straight geodesics of length $\eta$ that intersect $A$. The set $B_{\eta}$ is defined similarly and Corollary~\ref{cuboid 3} allows us to extend $\phi\colon A\ra B$ to a bijection $\phi_{\eta}\colon A_{\eta}\ra B_{\eta}$ that maps each straight geodesic ${\g\cu A_{\eta}}$ isometrically onto $\phi_*\g$. Note moreover that the sets $A_{\eta}$ and $B_{\eta}$ are still preserved by $G_1$ and $G_2$, respectively.
 
We will now show that $\phi_{\eta/2}$ preserves distances. For this purpose, consider straight geodesics $\g_1,\g_2\cu \mbb{X}$ of length $\eta$ based at $p_1,p_2\in A$. Lemma~\ref{cuboid 1.5} allows us to extend $\g_i$ to an $\eta$--SL geodesic $\g_i'$ ending at a point $x_i\in A$. We also introduce the notation $\wh\g_2(s_2)^*$ for the only median halfspace satisfying $\g_2\setminus(\wh\g_2(s_2)\cup\wh\g_2(s_2)^*)=\{\g_2(s_2)\}$. Here $\g_2(s_2)$ denotes the point of $\g_2$ at distance $s_2$ from $p_2$. We will need the following analogue of Proposition~\ref{h1=h2}.

\begin{prop}\label{cuboid 4}
Let us consider the Gromov products $r_1=(x_1\cdot p_2)_{p_1}$, ${r_2=(x_2\cdot p_1)_{p_2}}$, $r_1'=(x_1\cdot x_2)_{p_1}$ and  ${r_2'=(x_1\cdot x_2)_{p_2}}$. We also pick real numbers $s_i\geq 0$ with $s_1+s_2\leq\eta$.
\begin{enumerate}
\item If we have $\wh\g_1(s_1)\cap A=\wh\g_2(s_2)\cap A$, then $\wh\g_1(r_1)=\wh\g_2(r_2)$. In particular, $\wh\g_1(s_1)=\wh\g_2(r_2+s_1-r_1)$ and $\wh\g_2(s_2)=\wh\g_1(r_1+s_2-r_2)$.
\item If $\wh\g_1(s_1)\cap A=\wh\g_2(s_2)^*\cap A$, exactly one of the following happens:
	\begin{enumerate}
	\item $\wh\g_2(s_2)^*\supsetneq\wh\g_1(s_1)$ and $\max\{r_1,r_2\}<s_1+s_2$;
	\item $\wh\g_2(s_2)^*=\wh\g_1(s_1)$ and $\max\{r_1,r_2\}=s_1+s_2$, $\min\{r_1',r_2'\}=0$; 
	\item $\wh\g_2(s_2)^*\subsetneq\wh\g_1(s_1)$ and $\max\{r_1,r_2\}\geq s_1+s_2$. If the latter is an equality, then $\min\{r_1',r_2'\}>0$.
	\end{enumerate}
\end{enumerate}
\end{prop}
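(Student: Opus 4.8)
The plan is to run the argument of Proposition~\ref{h1=h2} in the median‑halfspace language of Section~\ref{cuboid section}, with hyperplanes and their halfspaces replaced throughout by median halfspaces of $\mbb{X}$ and cardinalities of hyperplane sets replaced by $\nu$‑weights. Write $\mf{h}_1=\wh\g_1(s_1)$ and $\mf{h}_2=\wh\g_2(s_2)$. The dictionary is this: since $\g_1$ is straight, the nested family $\{\wh\g_1(s):0\le s<\eta\}$ lists, in order of inclusion, the pairwise non‑transverse median halfspaces crossed by $\g_1$, with $\wh\g_1(s)$ carrying $\nu$‑weight $s$ of strictly larger members (and likewise for $\g_2$); while the four Gromov products are arc‑length positions of gate‑projections along the extended geodesics, $\g_1'(r_1)=m(p_1,x_1,p_2)=\pi_{I(p_1,p_2)}(x_1)$, $\g_2'(r_2)=m(p_2,x_2,p_1)=\pi_{I(p_1,p_2)}(x_2)$, $\g_1'(r_1')=m(p_1,x_1,x_2)$, $\g_2'(r_2')=m(p_2,x_2,x_1)$. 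The first move would be to read off, from the trace hypothesis together with $p_1,p_2,x_1,x_2\in A$, the side of each of $\mf{h}_1,\mf{h}_2$ on which each of these four points lies; this pins down the relative position of $\mf{h}_1$, $\mf{h}_2$, $\wh\g_1(r_1)$ and $\wh\g_2(r_2)$ inside the chains.

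For part~(1): the hypothesis $\mf{h}_1\cap A=\mf{h}_2\cap A$ gives $p_1,p_2\notin\mf{h}_1\cup\mf{h}_2$ and (using $x_i,p_i\in A$) $x_1,x_2\in\mf{h}_1\cap\mf{h}_2$. Since each of $\g_1,\g_2$ is straight it crosses the hyperplane bounding any given median halfspace at most once, so the recorded sidedness of $p_1,p_2,x_1,x_2$ places $\wh\g_1(r_1)$, $\mf{h}_1$, $\mf{h}_2$ and $\wh\g_2(r_2)$ in a single family totally ordered by inclusion; matching them up in this family — $\wh\g_1(r_1)$ being its largest member missing $p_2$ and $\wh\g_2(r_2)$ its largest member missing $p_1$, while any member containing $x_1$ or $x_2$ contains $\mf{h}_1\cap\mf{h}_2$ — forces $\wh\g_1(r_1)=\wh\g_2(r_2)$. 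Reading the $\nu$‑weights off the two now‑identified tails of the chains then gives $\wh\g_1(s_1)=\wh\g_2(r_2+s_1-r_1)$ and $\wh\g_2(s_2)=\wh\g_1(r_1+s_2-r_2)$, the ``in particular''.

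For part~(2): the hypothesis now reads $\mf{h}_1\cap A=\mf{h}_2^*\cap A$, which swaps the roles of the two sides of $\mf{h}_2$; one reads off $p_2,x_1\in\mf{h}_1$, $p_1,x_2\in\mf{h}_2$, $x_1\notin\mf{h}_2$, $x_2\notin\mf{h}_1$, and as before $\mf{h}_1$ and $\mf{h}_2^*$ lie in a common chain, so exactly one of $\mf{h}_1\subsetneq\mf{h}_2^*$, $\mf{h}_1=\mf{h}_2^*$, $\mf{h}_2^*\subsetneq\mf{h}_1$ holds — cases (a), (b), (c). Case~(c) is essentially the situation of Proposition~\ref{h1=h2}(2a), and the same $\nu$‑weight count, now also tracking the medians $m(p_i,x_i,x_j)$, gives $\max\{r_1,r_2\}\ge s_1+s_2$, with equality only if the overlap of the two chains degenerates to a point, i.e.\ $\min\{r_1',r_2'\}>0$; case~(b) is the analogue of Proposition~\ref{h1=h2}(2b), where $\partial\mf{h}_1=\partial\mf{h}_2$ is crossed by $\g_1$ at $\nu$‑weight $s_1$ and by $\g_2$ at $\nu$‑weight $s_2$, forcing simultaneously $\max\{r_1,r_2\}=s_1+s_2$ and $\min\{r_1',r_2'\}=0$; and case~(a) is new (it cannot occur when $\g_1,\g_2$ are single edges, which is why Proposition~\ref{h1=h2}(2) has only two cases) — here a symmetric argument first yields $r_1-s_1=r_2-s_2$ and then bounds this common value against $\min\{s_1,s_2\}$, giving $\max\{r_1,r_2\}<s_1+s_2$. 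Each case reduces to a bounded count of $\nu$‑weights of sub‑intervals of $\g_1$, $\g_2$ and $I(p_1,p_2)$.

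The genuine obstacle — and the reason the conclusion is phrased via the halfspaces $\wh\g_i(r_i)$ attached to the straight geodesics rather than directly via $\mf{h}_1,\mf{h}_2$ — is that a median halfspace of type~$1$ need neither contain a genuine halfspace nor meet $A$, so there is no blanket cuboid analogue of Lemma~\ref{H vs h 1} (``equal trace on $A$ implies nested''); the nesting used above has to be produced by hand from the facts that $p_1,p_2,x_1,x_2$ lie in $A$ and that $\g_1,\g_2$ are straight. Beyond that the argument is pure bookkeeping, but one must consistently distinguish $\g_i$ (length $\eta$) from its extension $\g_i'$, distinguish $\wh\g(t)$ from the complementary open halfspace $\wh\g(t)^*$ (whose common frontier contains $\g(t)$), and keep the arc‑length and $\nu$‑weight parametrizations aligned throughout.
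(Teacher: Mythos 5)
Your treatment of part~(1) is essentially the paper's argument and is sound: since $\wh\g_2(s_2)$ (resp.\ $\wh\g_2(s_2)^*$ in part~(2)) contains $x_1$ and misses $p_1$, it is one of the median halfspaces crossed by the $\eta$--SL geodesic $\g_1'$, hence comparable with $\wh\g_1(s_1)$ because $s_1<\eta$; the largest halfspace containing the smaller of the two and avoiding $p_1,p_2$ is then simultaneously $\wh\g_1(r_1)$ and $\wh\g_2(r_2)$, exactly as in the paper. (Your diagnosis of the ``genuine obstacle'' is off, though: the cuboid analogue of Lemma~\ref{H vs h 1} does hold — transverse median halfspaces are subordinate to transverse hyperplanes, Proposition~\ref{NS corner} produces a sector, and essentiality of the $G_1$--action places points of the invariant set $A$ deep inside it — but since your chain argument gives the nesting anyway, this is harmless.)

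Part~(2), however, has a genuine gap. The trichotomy and the upper bounds ($\max\{r_1,r_2\}<s_1+s_2$ in~(a), $\le s_1+s_2$ in~(b)) do come out of the $1$--Lipschitz/nesting considerations you sketch, but every assertion involving $r_1',r_2'$ is left unproved: that $\min\{r_1',r_2'\}=0$ \emph{and} $\max\{r_1,r_2\}=s_1+s_2$ (a lower bound!) in case~(b), that $\max\{r_1,r_2\}\ge s_1+s_2$ in case~(c), and that equality there forces $\min\{r_1',r_2'\}>0$. Phrases like ``forcing simultaneously'' and ``the overlap of the two chains degenerates to a point'' are not arguments, and this is precisely where the quantitative hypotheses must be used. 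The paper's proof rests on a key observation you never state or replace: one cannot have $\min\{r_1',r_2'\}>0$ and $\max\{r_1,r_2\}<\eta$ at the same time, which is where the facts that $\g_1,\g_2$ are straight, have length exactly $\eta=\min\{\eta(\mbb{X}),\eta(\mbb{Y})\}$ and consist of at most two edge parallels enter; this is then combined with the computation that $r_1'=0$ forces $r_2=s_1+s_2$ in case~(b) and $r_2>s_1+s_2$ in case~(c) (the strict inequality is what yields ``equality $\Rightarrow\min\{r_1',r_2'\}>0$''). Your write-up contains neither ingredient, so cases~(b) and~(c) are not established. In addition, the intermediate identity $r_1-s_1=r_2-s_2$ you invoke in case~(a) does not follow from the configuration and is not needed (the stated conclusion of~(a) already follows from the Lipschitz bound together with the strictness of the inclusion $\wh\g_1(s_1)\subsetneq\wh\g_2(s_2)^*$); and note that reading off $p_2\in\wh\g_1(s_1)$, $p_1\in\wh\g_2(s_2)$ requires $s_2>0$ and care with frontier points, since $\wh\g_2(s_2)^*$ is not the set-theoretic complement of $\wh\g_2(s_2)$.
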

\begin{proof}
If $\wh\g_1(s_1)\cap A=\wh\g_2(s_2)\cap A$, we can assume that $\wh\g_1(s_1)\cu\wh\g_2(s_2)$. Moreover, $x_1,x_2\in\wh\g_1(s_1)$ and $p_1,p_2\not\in\wh\g_2(s_2)$. The largest median halfspace that contains $\wh\g_2(s_2)$ while being disjoint from $\{p_1,p_2\}$ must then coincide with both $\wh\g_1(r_1)$ and $\wh\g_2(r_2)$. The rest of part~(1) follows.

The key observation for part~(2) is that we cannot simultaneously have $\min\{r_1',r_2'\}>0$ and $\max\{r_1,r_2\}<\eta$. This follows from the fact that $\g_1$ and $\g_2$ have length $\eta$, are straight, and each consist of at most two edge parallels.

Now, assuming that $\wh\g_1(s_1)\cap A=\wh\g_2(s_2)^*\cap A$, we must have either $\wh\g_2(s_2)^*\supsetneq\wh\g_1(s_1)$, ${\wh\g_2(s_2)^*=\wh\g_1(s_1)}$ or $\wh\g_2(s_2)^*\subsetneq\wh\g_1(s_1)$. In the first case, it is clear that $\max\{r_1,r_2\}<s_1+s_2$, since $\pi_{\g_i}$ are $1$--Lipschitz.

If $\wh\g_2(s_2)^*=\wh\g_1(s_1)$, one similarly sees that $\max\{r_1,r_2\}\leq s_1+s_2\leq\eta$. If $\max\{r_1,r_2\}=\eta$, we have $\max\{r_1,r_2\}=s_1+s_2$ and $\min\{r_1',r_2'\}=0$. Otherwise $\max\{r_1,r_2\}<\eta$ and the key observation yields $\min\{r_1',r_2'\}=0$. Without loss of generality $r_1'=0$, in which case $r_2=s_1+s_2$, settling case~(b).

Finally, suppose that $\wh\g_2(s_2)^*\subsetneq\wh\g_1(s_1)$. If $\min\{r_1',r_2'\}>0$, we have already remarked that $\max\{r_1,r_2\}=\eta\geq s_1+s_2$. If instead $\min\{r_1',r_2'\}=0$, say $r_1'=0$, we have $r_2>s_1+s_2$. This concludes the proof.
\end{proof}

Since $\phi$ preserves Gromov products of points of $A$, we obtain:

\begin{cor}\label{cuboid 5}
Given $0\leq s_i\leq\eta/2$, we have $\wh\g_1(s_1)=\wh\g_2(s_2)$ if and only if $\wh{\phi_*\g_1}(s_1)=\wh{\phi_*\g_2}(s_2)$. Furthermore, $\wh\g_1(s_1)=\wh\g_2(s_2)^*$ if and only if $\wh{\phi_*\g_1}(s_1)=\wh{\phi_*\g_2}(s_2)^*$.
\end{cor}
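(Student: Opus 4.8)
The plan is to deduce Corollary~\ref{cuboid 5} from Proposition~\ref{cuboid 4} in exactly the same way Corollary~\ref{phi* preserves equality} follows from Proposition~\ref{h1=h2} in the cubical case, using that $\phi\colon A\ra B$ preserves distances and hence Gromov products of points of $A$. The key point is that the quantities appearing in the conclusion of Proposition~\ref{cuboid 4} --- the Gromov products $r_1=(x_1\cdot p_2)_{p_1}$, $r_2=(x_2\cdot p_1)_{p_2}$, $r_1'=(x_1\cdot x_2)_{p_1}$, $r_2'=(x_1\cdot x_2)_{p_2}$ --- are all computed at the points $p_i\in A$ from the points $x_i\in A$ (the endpoints of the $\eta$--SL extensions $\g_i'$ provided by Lemma~\ref{cuboid 1.5}). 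Therefore, after applying $\phi$, the analogous Gromov products for $\phi_*\g_1$ and $\phi_*\g_2$ (with $p_i$ replaced by $\phi(p_i)$ and $x_i$ by $\phi(x_i)$, noting $\phi_*\g_i'$ extends $\phi_*\g_i$ by Corollary~\ref{cuboid 3}) take exactly the same values $r_1,r_2,r_1',r_2'$.

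First I would fix straight geodesics $\g_1,\g_2\cu\mbb{X}$ of length $\eta$ based at $p_1,p_2\in A$ and numbers $0\le s_i\le\eta/2$, so that $s_1+s_2\le\eta$ and Proposition~\ref{cuboid 4} applies. I would then unwind the statement ``$\wh\g_1(s_1)=\wh\g_2(s_2)$'': by part~(1) of Proposition~\ref{cuboid 4}, this forces a numerical relation among $s_1$, $s_2$, $r_1$, $r_2$ (namely $s_1=r_2+s_1-r_1$ reading off consistently, i.e.\ $r_1=r_2$, together with $s_2=r_1+s_2-r_2$), and conversely such a relation together with $\wh\g_1(s_1)\cap A=\wh\g_2(s_2)\cap A$ gives back the equality. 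The crucial bookkeeping is that ``$\wh\g_1(s_1)=\wh\g_2(s_2)$'' can be detected purely from the data $(s_1,s_2,r_1,r_2,r_1',r_2')$ and the single bit ``$\wh\g_1(s_1)\cap A=\wh\g_2(s_2)\cap A$'' versus ``$\wh\g_1(s_1)\cap A=\wh\g_2(s_2)^*\cap A$''; but this last bit is itself determined by $\phi$, since $\phi(\wh\g_i(s_i)\cap A)=\wh{\phi_*\g_i}(s_i)\cap B$ by Corollary~\ref{cuboid 3}, so that $\wh\g_1(s_1)\cap A=\wh\g_2(s_2)\cap A\Leftrightarrow\wh{\phi_*\g_1}(s_1)\cap B=\wh{\phi_*\g_2}(s_2)\cap B$, and likewise with a complement on the right. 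I would treat the two cases (the two subsets of $A$ are equal, or one equals the complement of the other) separately, invoking part~(1) or part~(2) of Proposition~\ref{cuboid 4} respectively; the constraint $s_i\le\eta/2$ is what makes $s_1+s_2\le\eta$ so that Proposition~\ref{cuboid 4} is available for both $(\g_1,\g_2)$ and $(\phi_*\g_1,\phi_*\g_2)$.

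Concretely: if $\wh\g_1(s_1)\cap A=\wh\g_2(s_2)\cap A$, then by part~(1) the equality $\wh\g_1(s_1)=\wh\g_2(s_2)$ holds iff $r_2+s_1-r_1=s_1$ and $r_1+s_2-r_2=s_2$, i.e.\ iff $r_1=r_2$; since the same $r_1,r_2$ govern $\phi_*\g_1,\phi_*\g_2$ and $\wh{\phi_*\g_1}(s_1)\cap B=\wh{\phi_*\g_2}(s_2)\cap B$, the equivalence propagates. If instead $\wh\g_1(s_1)\cap A=\wh\g_2(s_2)^*\cap A$, then by part~(2) we have $\wh\g_1(s_1)=\wh\g_2(s_2)^*$ iff $\max\{r_1,r_2\}\ge s_1+s_2$ with $\min\{r_1',r_2'\}=0$ in case of equality (case (b) versus cases (a),(c)); again this is a condition on $(s_1,s_2,r_1,r_2,r_1',r_2')$ only, hence is preserved by passing to $\phi_*\g_1,\phi_*\g_2$. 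Combining these gives both asserted equivalences. The main obstacle --- really the only subtle point --- is making sure the case analysis in Proposition~\ref{cuboid 4}(2) genuinely pins down the trichotomy ``$\supsetneq$ / $=$ / $\subsetneq$'' from the numerical data alone, including the borderline equality $\max\{r_1,r_2\}=s_1+s_2$ where one must consult $\min\{r_1',r_2'\}$; once that is checked, the transfer along $\phi$ is automatic because $\phi$ preserves all the relevant Gromov products and the incidence pattern of the sets $\wh\g_i(s_i)\cap A$ with $A$. As in the cubical case, I would keep the write-up terse, in the spirit of ``this is the exact analogue of Corollary~\ref{phi* preserves equality}.''

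\begin{proof}[Proof of Corollary~\ref{cuboid 5}.]
We argue as for Corollary~\ref{phi* preserves equality}. Extend $\g_i$ to an $\eta$--SL geodesic $\g_i'$ ending at a point $x_i\in A$, as in Lemma~\ref{cuboid 1.5}, and recall from Corollary~\ref{cuboid 3} that $\phi_*\g_i'$ is an $\eta$--SL geodesic extending $\phi_*\g_i$, ending at $\phi(x_i)$, and that $\phi(\wh\g_i(s)\cap A)=\wh{\phi_*\g_i}(s)\cap B$ for all $0\le s<\eta$. Since $\phi$ preserves distances between points of $A$, it preserves the Gromov products $r_1=(x_1\cdot p_2)_{p_1}$, $r_2=(x_2\cdot p_1)_{p_2}$, $r_1'=(x_1\cdot x_2)_{p_1}$, $r_2'=(x_1\cdot x_2)_{p_2}$; that is, the corresponding Gromov products for $\phi_*\g_1'$ and $\phi_*\g_2'$ equal $r_1,r_2,r_1',r_2'$ as well.

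As $0\le s_i\le\eta/2$, we have $s_1+s_2\le\eta$, so Proposition~\ref{cuboid 4} applies to the pair $(\g_1,\g_2)$ and, by the previous paragraph, equally to $(\phi_*\g_1,\phi_*\g_2)$ with the same values $s_1,s_2,r_1,r_2,r_1',r_2'$.

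Suppose first $\wh\g_1(s_1)\cap A=\wh\g_2(s_2)\cap A$; equivalently, applying $\phi$, $\wh{\phi_*\g_1}(s_1)\cap B=\wh{\phi_*\g_2}(s_2)\cap B$. By Proposition~\ref{cuboid 4}(1), $\wh\g_1(s_1)=\wh\g_2(s_2)$ holds if and only if $r_2+s_1-r_1=s_1$ and $r_1+s_2-r_2=s_2$, i.e.\ if and only if $r_1=r_2$; and likewise $\wh{\phi_*\g_1}(s_1)=\wh{\phi_*\g_2}(s_2)$ if and only if $r_1=r_2$. Hence the two equalities are equivalent. In this situation neither $\wh\g_1(s_1)=\wh\g_2(s_2)^*$ nor $\wh{\phi_*\g_1}(s_1)=\wh{\phi_*\g_2}(s_2)^*$ can occur (as the corresponding intersections with $A$, resp.\ $B$, would have to be both equal and complementary), so both asserted equivalences hold.

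Now suppose instead $\wh\g_1(s_1)\cap A=\wh\g_2(s_2)^*\cap A$; equivalently $\wh{\phi_*\g_1}(s_1)\cap B=\wh{\phi_*\g_2}(s_2)^*\cap B$. By Proposition~\ref{cuboid 4}(2), $\wh\g_1(s_1)=\wh\g_2(s_2)^*$ holds if and only if $\max\{r_1,r_2\}=s_1+s_2$ and $\min\{r_1',r_2'\}=0$ (case (b)), as opposed to cases (a) and (c). This is a condition on $s_1,s_2,r_1,r_2,r_1',r_2'$ alone, so it is satisfied for $(\g_1,\g_2)$ if and only if it is satisfied for $(\phi_*\g_1,\phi_*\g_2)$; hence $\wh\g_1(s_1)=\wh\g_2(s_2)^*$ if and only if $\wh{\phi_*\g_1}(s_1)=\wh{\phi_*\g_2}(s_2)^*$. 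Again $\wh\g_1(s_1)=\wh\g_2(s_2)$ is impossible here, and similarly on the $Y$ side, so both equivalences hold. This exhausts the cases.
\end{proof}
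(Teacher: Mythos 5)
Your route is the same as the paper's: the paper proves Corollary~\ref{cuboid 5} by the one-line observation that both the hypotheses and the numerical criteria in Proposition~\ref{cuboid 4} are phrased in terms of the traces on $A$ and the Gromov products $r_1,r_2,r_1',r_2'$ of points of $A$, all preserved by $\phi$ (and transported to $\phi_*\g_i$ via Corollary~\ref{cuboid 3}), exactly as in the passage from Proposition~\ref{h1=h2} to Corollary~\ref{phi* preserves equality}. Your handling of the complementary case via the trichotomy (a)/(b)/(c) is correct. However, your case~(1) misquotes Proposition~\ref{cuboid 4}(1): that statement gives $\wh\g_1(s_1)=\wh\g_2(r_2+s_1-r_1)$ and $\wh\g_2(s_2)=\wh\g_1(r_1+s_2-r_2)$, so under the hypothesis $\wh\g_1(s_1)\cap A=\wh\g_2(s_2)\cap A$ the equality $\wh\g_1(s_1)=\wh\g_2(s_2)$ holds if and only if $r_2+s_1-r_1=s_2$, i.e.\ $s_1-r_1=s_2-r_2$ (using that distinct parameters give distinct halfspaces along $\g_2$). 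Your criterion ``$r_2+s_1-r_1=s_1$ and $r_1+s_2-r_2=s_2$, i.e.\ $r_1=r_2$'' is not what the proposition says and agrees with the correct condition only when $s_1=s_2$; as written, your ``if and only if'' is false whenever $s_1\neq s_2$, so that step fails. The damage is limited: the corrected condition $s_1-r_1=s_2-r_2$ is still a statement about the $\phi$-invariant data $(s_1,s_2,r_1,r_2)$, so your scheme ``both sides are equivalent to the same numerical condition'' goes through verbatim after the substitution.

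Two smaller points. First, the equivalence $\wh\g_1(s_1)\cap A=\wh\g_2(s_2)^*\cap A\Leftrightarrow\wh{\phi_*\g_1}(s_1)\cap B=\wh{\phi_*\g_2}(s_2)^*\cap B$ is not literally Corollary~\ref{cuboid 3}, which only controls the traces of the halfspaces $\wh\g(t)$ themselves; it does follow, e.g.\ because a point $x\in A$ lies in $\wh\g_2(s_2)^*$ if and only if $x\notin\wh\g_2(t)$ for some $t<s_2$, and each trace $\wh\g_2(t)\cap A$ transfers under $\phi$, but a sentence of justification should be added. Second, ``this exhausts the cases'' is not literally true: the traces on $A$ may be neither equal nor complementary; in that residual case neither equality of halfspaces can hold on the $\mbb{X}$ side, nor on the $\mbb{Y}$ side (by the same trace transfers), so both asserted equivalences hold vacuously --- worth one line rather than silence.
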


We now pick points $q_i\in\g_i$ and set $d_i=d(p_i,q_i)\geq 0$. Consider the sets:
\begin{align*}
&\{t\in [0,d_1]\mid q_2\in\wh\g_1(t),\ p_2\not\in\wh\g_1(t)\}, &\{t\in [0,d_1]\mid p_2,q_2\in\wh\g_1(t)\}, \\
&\{t\in [0,d_1]\mid p_2\in\wh\g_1(t),\ q_2\not\in\wh\g_1(t)\}, &\{t\in [0,d_2]\mid p_1,q_1\in\wh\g_2(t)\}.
\end{align*}
These are sub-intervals of $[0,d_1]$ and $[0,d_2]$, respectively; let $L(q_1,q_2)$ denote the sum of their lengths. These intervals correspond to the `median hyperplanes' that are crossed two or three times when moving from $q_1$ to $q_2$ through $p_1$ and $p_2$. It follows that:
\[d(q_1,q_2)=d(q_1,p_1)+d(p_1,p_2)+d(p_2,q_2)-2\cdot L(q_1,q_2).\]
Since $\phi$ is isometric on $A$ and along $\g_i$, in order to obtain the equality ${d(\phi(q_1),\phi(q_2))=d(q_1,q_2)}$, it is enough to prove $L(\phi(q_1),\phi(q_2))=L(q_1,q_2)$. When $d_1,d_2\leq\eta/2$, the latter equality follows from Corollary~\ref{cuboid 5}. 

For instance, observe that we have $q_2\in\wh\g_1(t)$ and $p_2\not\in\wh\g_1(t)$ if and only if there exists $0\leq s<d_2$ with $\wh\g_1(t)=\wh\g_2(s)$ and, by Corollary~\ref{cuboid 5}, this happens if and only if $\phi_{\eta}(q_2)\in\wh{\phi_*\g_1}(t)$ and $\phi(p_2)\not\in\wh{\phi_*\g_1}(t)$.

We conclude that the bijection $\phi_{\eta/2}\colon A_{\eta/2}\ra B_{\eta/2}$ is an isometry. Since $A_{\eta/2}$ and $B_{\eta/2}$ are preserved, respectively, by the actions of $G_1$ and $G_2$, maximality of $A$ and $B$ yields $A=A_{\eta/2}$ and $B=B_{\eta/2}$. 

It follows that, for every edge parallel $\g\cu\mbb{X}$, the intersection $A\cap\g$ is clopen in $\g$. Thus, $A$ and $B$ contain every edge parallel that they intersect and this implies that they contain every cube that they intersect. 

We have finally shown that $A=\mbb{X}$ and $B=\mbb{Y}$, completing the proof of Theorem~\ref{ext partial iso thm} in the cuboidal case.

\section{Compactifying the space of cubical actions.} \label{cptf sect}

In this section, we prove Proposition~\ref{cptf main} and Corollary~\ref{CSV application}. The main argument is based on ultralimits and median spaces and occupies Section~\ref{ultralimits sect}. Throughout, it is convenient to place ourselves in the context of $\CAT$ \emph{cuboid} complexes; see Section~\ref{cuboid section} for terminology and notation.

\subsection{Distances to minimal sets.}

Let $\mbb{X}$ be a $\CAT$ cuboid complex of dimension $D<+\infty$. We will denote the metric on $\mbb{X}$ simply by $d$.

Recall that, given $x,y\in \mbb{X}$, we write $\mf{H}(x|y)$ for the set of median halfspaces containing $y$ but not $x$. Given a real number $\delta\in\R_{\geq 0}$, we also denote by $\mf{H}_{\delta}(x|y)$ the subset of those $\mf{h}\in\mf{H}(x|y)$ that satisfy $d(x,\mf{h})=\delta$. Observe that distinct elements of $\mf{H}_{\delta}(x|y)$ must be subordinate to transverse hyperplanes, hence $\#\mf{H}_{\delta}(x|y)\in\{0,1,...,D\}$ for all $x,y\in \mbb{X}$ and $\delta\geq 0$.

Writing $\mc{L}$ for the Lebesgue measure on $\R$, all $x,y\in \mbb{X}$ satisfy:
\[d(x,y)=\int_{\R}\#\mf{H}_{\delta}(x|y)\cdot d\mc{L}(\delta).\]

\begin{lem}\label{cptf 1}
If an automorphism $g\in\Aut(\mbb{X})$ fixes a unique point $p\in\mbb{X}$, we have $d(x,gx)\geq\frac{2}{D}\cdot d(x,p)$ for all $x\in\mbb{X}$.
\end{lem}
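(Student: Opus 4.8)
The plan is to exploit the factorization $d(x,y) = \int_{\R} \#\mf{H}_{\delta}(x|y)\, d\mc{L}(\delta)$ together with the observation that, since $g$ fixes the unique point $p$, no median halfspace adjacent to $p$ can be $g$-invariant, and more generally halfspaces ``near'' $p$ must be moved substantially by $g$. First I would reduce to estimating, for each $\delta$, how the count $\#\mf{H}_{\delta}(x|p)$ compares to the count of halfspaces separating $x$ from $gx$. The key geometric input is that $g$ fixes $p$ and no other point: this forces every median halfspace $\mf{h}$ with $p$ in its frontier to satisfy $\mf{h} \cap g\mf{h} \subsetneq \mf{h}$ (or the reverse containment), in the sense that $g$ cannot preserve $\mf{h}$; otherwise one could produce a second fixed point by gate-projecting to a suitable convex $g$-invariant set. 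Iterating, the halfspaces in $\mf{H}(p|x)$ at depth $\delta$ from $p$ get pushed off by $g$, contributing to $\mf{H}(x|gx) \cup \mf{H}(gx|x)$.

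The heart of the argument is a counting estimate. Fix $x$ and consider the geodesic interval $I(p,x)$. For each $\delta \in [0, d(p,x))$ the set $\mf{H}_{\delta}(p|x)$ is nonempty, and since $\dim \mbb{X} = D$ we have $1 \le \#\mf{H}_{\delta}(p|x) \le D$, so on a set of measure at least $d(p,x)/D$ of values $\delta$ (the ``depths actually realized'') there is at least one halfspace. For any such halfspace $\mf{h}$ with $p \in \mf{h}^*$, $x \in \mf{h}$, the image $g\mf{h}$ either separates $x$ from $gx$ or, by the no-second-fixed-point hypothesis applied along the nested family, one of $\mf{h}, g\mf{h}$ separates $x$ from $gx$. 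A clean way to organize this: decompose $\mf{H}(p|x)$ according to whether $g\mf{h} \in \mf{H}(p|x)$ or not; the halfspaces of the second type directly feed into $\mf{H}(p|gx) \triangle \mf{H}(p|x) \subseteq \mf{H}(x|gx)\cup\mf{H}(gx|x)$, while for those of the first type one iterates, using finiteness of descending chains (the DCC property, valid here since each halfspace has positive distance-width) to guarantee the orbit eventually exits. Summing the depth-contributions and using that at most $D$ halfspaces share a given depth gives $\#\mf{H}(x|gx) + \#\mf{H}(gx|x) \ge \frac{2}{D} d(p,x)$ after accounting symmetrically for $\mf{H}(p|gx)$, and since $d(x,gx) = \#\mf{H}(x|gx) + \#\mf{H}(gx|x)$ (as weighted counts), this is the claim. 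One should double-check the constant: each unit of distance from $p$ to $x$ realized as a ``layer'' forces a halfspace that contributes to either $\mf{H}(gp|gx) = \mf{H}(p|gx)$ differing from $\mf{H}(p|x)$, and the symmetric difference has weight at most $d(x,gx)$, while $d(p,x) \le D \cdot \mc{L}\{\delta : \mf{H}_\delta(p|x) \ne \emptyset\}$ and $\mc{L}\{\delta : \mf{H}_\delta(p|x) \ne \mf{H}_\delta(p|gx)\}$ bounds $\frac{1}{1}\cdot$ (half of) $d(x,gx)$ from below by the fixed-point-uniqueness, yielding the factor $2/D$.

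The main obstacle I anticipate is making precise the step ``fixing a unique point forces every nearby halfspace to be moved.'' The cube-complex analogue would say: if $g$ preserved a halfspace $\mf{h}$ together with $\mf{h}^*$ and fixed $p \in \partial\mf{h}$, one runs into trouble; the subtlety is that $g$ might permute a finite family of halfspaces at a given depth without fixing any individual one, yet still (a priori) the gate-projections could conspire to give a fixed point only at $p$. The honest version is a contrapositive: if $d(x,gx) < \frac{2}{D} d(x,p)$ for some $x$, then the measure of depths $\delta$ at which $g$ moves some halfspace of $\mf{H}(p|x)$ out is too small, so on a positive-measure (indeed cofinite-in-$[0,d(p,x)]$) set of depths $g$ stabilizes the layer $\mf{H}_\delta(p|x)$ setwise; pushing $\delta \downarrow 0$ and using local finiteness of the cube structure near $p$ (only finitely many halfspaces adjacent to $p$), some power $g^{D!}$ fixes a halfspace adjacent to $p$ and its complement, hence fixes a vertex of the corresponding edge other than... — and then a short argument produces a fixed point distinct from $p$ on the carrier, contradicting uniqueness. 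I would carry this out as: (i) record the integral formula and the bound $\#\mf{H}_\delta \le D$; (ii) prove the layer-stabilization dichotomy; (iii) run the contrapositive to get a second fixed point; (iv) assemble the constant. $\qed$
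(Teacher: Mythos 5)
Your overall strategy is the paper's: integrate the layer counts $\#\mf{H}_\delta(p|x)\le D$ over $\delta$, and reduce everything to the dichotomy that $g$ cannot stabilize a nonempty layer $\mf{H}_\delta(p|x)$ setwise. The gap is exactly at that dichotomy — the point you yourself flag as the main obstacle — and neither of your two routes closes it. The DCC/orbit argument fails because a $g$-orbit of halfspaces can be periodic inside $\mf{H}(p|x)$: $g$ may cyclically permute up to $D$ pairwise-transverse halfspaces at the same depth, so the orbit never exits and no descending chain ever appears. Your contrapositive fallback then passes to the power $g^{D!}$ so as to pin down an individual halfspace adjacent to $p$; but a fixed point of $g^{D!}$ different from $p$ contradicts nothing, since the hypothesis is uniqueness of the fixed point of $g$ only (a finite-order rotation of a square fixes just the centre, while a suitable power is the identity). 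Two smaller inaccuracies: the claim that $\mf{H}_\delta(p|x)\neq\emptyset$ for \emph{every} $\delta\in[0,d(p,x))$ is false (take the $\ell^1$-diagonal of a square), though the bound $\mc{L}\{\delta:\mf{H}_\delta(p|x)\neq\emptyset\}\ge d(p,x)/D$ you actually use is correct; and ``only finitely many halfspaces adjacent to $p$'' invokes local finiteness, which is not assumed in this part of the paper.

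The paper closes the dichotomy with $g$ itself, not a power, and in one stroke handles precisely the permuted-layer scenario: if $g\cdot\mf{H}_{\delta}(p|x)=\mf{H}_{\delta}(p|x)\neq\emptyset$, the (at most $D$) halfspaces of the layer are subordinate to pairwise-transverse hyperplanes and determine a cuboid $c\cu\mbb{X}$ that is $g$-invariant; $g$ must then fix the unique point of $c$ farthest from $p$, a second fixed point at distance at least $\delta>0$ from $p$ — contradiction. With the dichotomy in hand, the paper also extracts the constant more cleanly than your symmetric-difference bookkeeping: setting $m=m(p,x,gx)$, one has $\mf{H}_\delta(p|m)=\mf{H}_\delta(p|x)\cap\mf{H}_\delta(p|gx)$, the dichotomy gives $\#\mf{H}_\delta(p|m)\le\tfrac{D-1}{D}\,\#\mf{H}_\delta(p|x)$, and integrating together with $d(x,gx)=2\bigl(d(p,x)-d(p,m)\bigr)$ yields the factor $\tfrac{2}{D}$ at once. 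Your accounting can be repaired (each nonempty layer, being moved while having the same cardinality as its image, contributes at least two halfspaces to $\mf{H}(p|x)\,\triangle\,\mf{H}(p|gx)\cu\mf{H}(x|gx)\cup\mf{H}(gx|x)$, whose total weight is at most $d(x,gx)$), but without a correct proof of the layer dichotomy the argument does not go through.
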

\begin{proof}
Given a point $x\in\mbb{X}$ and $\delta>0$, suppose for the sake of contradiction that $g\cdot\mf{H}_{\delta}(p|x)=\mf{H}_{\delta}(p|x)\neq\emptyset$. The elements of $\mf{H}_{\delta}(p|x)$ are subordinate to pairwise-transverse hyperplanes, which determine a cuboid $c\cu\mbb{X}$ preserved by $g$. It follows that $g$ fixes the (unique) point of $c$ that is farthest from $p$, contradicting uniqueness of $p$. We conclude that, for all $\delta>0$, the set $g\cdot\mf{H}_{\delta}(p|x)=\mf{H}_{\delta}(p|gx)$ is either empty or different from $\mf{H}_{\delta}(p|x)$.

Setting $k_{\delta}=\#\mf{H}_{\delta}(p|x)\leq D$ and $m=m(p,x,gx)$, we then have:
\[\#\mf{H}_{\delta}(p|m)=\#\left(\mf{H}_{\delta}(p|x)\cap\mf{H}_{\delta}(p|gx)\right)\leq\tfrac{k_{\delta}-1}{k_{\delta}}\cdot\#\mf{H}_{\delta}(p|x)\leq\tfrac{D-1}{D}\cdot\#\mf{H}_{\delta}(p|x).\]
The lemma now follows from two inequalities:
\[d(p,m)=\int_{\R}\#\mf{H}_{\delta}(p|m)\cdot d\mc{L}(\delta)\leq\tfrac{D-1}{D}\cdot\int_{\R}\#\mf{H}_{\delta}(p|x)\cdot d\mc{L}(\delta)=\tfrac{D-1}{D}\cdot d(p,x),\]
\[d(x,gx)=2\cdot d(x,m)=2\cdot\left(d(p,x)-d(p,m)\right)\geq 2\cdot\left(1-\tfrac{D-1}{D}\right)\cdot d(p,x). \qedhere\]
\end{proof}

\begin{prop}\label{cptf 2}
Let $g\in\Aut(\mbb{X})$ act stably without inversions and non-transversely. For every $x\in \mbb{X}$, we have $d(x,gx)\geq\frac{2}{D}\cdot d(x,{\rm Min}_{\mbb{X}}(g))+\ell_{\mbb{X}}(g)$.
\end{prop}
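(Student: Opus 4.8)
The strategy is to reduce the statement to Lemma~\ref{cptf 1} by passing to the right quotient cube complex. Since $g$ acts stably without inversions and non-transversely, Proposition~\ref{Haglund FFT}(4) guarantees that $M:={\rm Min}_{\mbb{X}}(g)$ is a convex subcomplex of $\mbb{X}$ (in the cuboidal setting, a convex subcomplex in the obvious sense). Fix $x\in\mbb{X}$ and let $\overline x=\pi_M(x)$ be its gate-projection to $M$. Then $d(x,gx)=d(x,\overline x)+d(\overline x,g\overline x)+d(g\overline x,gx)$ would follow if we knew $\overline x\in I(x,gx)$ and $g\overline x\in I(x,gx)$; more carefully, one first checks $d(\overline x,g\overline x)=\ell_{\mbb{X}}(g)$ (since $\overline x\in M$) and then shows that the median halfspaces separating $x$ from $\overline x$ are disjoint from those separating $\overline x$ from $g\overline x$, and similarly on the other end, so that $d(x,gx)\ge d(x,\overline x)+\ell_{\mbb{X}}(g)+d(g\overline x,gx)=2\,d(x,M)+\ell_{\mbb{X}}(g)$ \emph{unless} there is cancellation. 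The point of the $\tfrac2D$ factor is precisely to control that cancellation, so a cruder route is needed: the term we really must lower-bound is $d(x,gx)-\ell_{\mbb{X}}(g)$ in terms of $d(x,M)$.

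The cleaner approach is to work in the restriction quotient $Z$ of $\mbb{X}$ associated to the set of hyperplanes $\mscr{W}(M)^c$, i.e.\ the hyperplanes \emph{not} crossing $M$ (equivalently, those separating $x$ from $M$ for varying $x$). Equip $Z$ with the induced cuboid metric. Since $g$ preserves $M$ and acts non-transversely, $g$ descends to an automorphism $\overline g$ of $Z$, the image of $M$ is a single point $p\in Z$, and $\overline g$ fixes $p$; moreover $\overline g$ fixes no other point of $Z$ (a fixed point of $\overline g$ other than $p$ would pull back, via convexity of $M$ and the no-transversality, to an axis or fixed subcomplex of $g$ properly containing $M$ in the relevant directions — contradicting the definition of the minimal set). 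The quotient projection $\rho\colon\mbb{X}\to Z$ is $1$--Lipschitz and distance-nonincreasing, and crucially $d_Z(\rho(x),p)=d_{\mbb{X}}(x,M)=d(x,M)$, because the halfspaces separating $x$ from $M$ are exactly those surviving in $Z$ (use Proposition~2.3 of \cite{Fio1}, i.e.\ $\mscr{W}(x|\pi_M(x))=\mscr{W}(x|M)$, in its cuboidal analogue). Now $\dim Z\le D$, so Lemma~\ref{cptf 1} applies to $\overline g\acts Z$ and gives $d_Z(\rho(x),\overline g\rho(x))\ge\frac2D d_Z(\rho(x),p)=\frac2D d(x,M)$.

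Finally I would reassemble: the median halfspaces of $\mbb{X}$ separating $x$ from $gx$ split, according to whether they survive in $Z$, into those contributing to $d_Z(\rho(x),\overline g\rho(x))$ and those crossing $M$. The latter, by convexity of $M$ and $g$--invariance, have total weight at least $\ell_{\mbb{X}}(g)=d(\overline x,g\overline x)$ — indeed $\mscr{W}(\overline x|g\overline x)\subseteq\mscr{W}(M)$, these halfspaces also separate $x$ from $gx$ after noting $\overline x\in I(x,gx)$ up to the $M$--crossing part, and they are disjoint as a set from those counted by $\rho$. Adding the two contributions yields $d(x,gx)\ge d_Z(\rho(x),\overline g\rho(x))+\ell_{\mbb{X}}(g)\ge\frac2D d(x,M)+\ell_{\mbb{X}}(g)$, as desired. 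The main obstacle I anticipate is the bookkeeping in this last step — verifying that the two families of median halfspaces (those detected in $Z$ versus those crossing $M$) are genuinely disjoint and that each family has the claimed total weight, which requires care with the gate-projection identities in the cuboidal (measured-halfspace) setting rather than the purely combinatorial one. A secondary subtlety is checking that $\overline g$ has $p$ as its \emph{unique} fixed point in $Z$, which is where the definition of ${\rm Min}_{\mbb{X}}(g)$ as the union of \emph{all} axes (or the full fixed set) is used.
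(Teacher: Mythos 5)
Your proposal is correct and follows essentially the same route as the paper: pass to the restriction quotient that kills the hyperplanes of ${\rm Min}_{\mbb{X}}(g)$ (so the convex minimal set collapses to a point), apply Lemma~\ref{cptf 1} there, and reassemble $d(x,gx)$ from the surviving halfspaces plus those crossing the minimal set, whose total weight is exactly $\ell_{\mbb{X}}(g)$. The two subtleties you flag are precisely what the paper leaves implicit; for uniqueness of the fixed point in the quotient, the clean argument is that any point $y$ in a fibre over a fixed point satisfies $\mscr{W}(y|gy)\cu\mscr{W}({\rm Min}_{\mbb{X}}(g))$, so by the gate identity $d(y,gy)=d\big(\pi_{{\rm Min}}(y),g\,\pi_{{\rm Min}}(y)\big)=\ell_{\mbb{X}}(g)$ and hence $y\in{\rm Min}_{\mbb{X}}(g)$ by Proposition~\ref{Haglund FFT}(3), forcing the fibre to be the one over $p$.
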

\begin{proof}
By Proposition~\ref{Haglund FFT}, the minimal set ${\rm Min}_{\mbb{X}}(g)$ is a convex subcomplex of $\mbb{X}$. Passing to the restriction quotient (in the sense of \cite{CS}) that kills all hyperplanes of ${\rm Min}_{\mbb{X}}(g)$, the proposition follows from Lemma~\ref{cptf 1}.
\end{proof}

\subsection{Normalising length functions.}\label{ultralimits sect}

Let us now fix a group $\G$ with a finite generating set $S\cu \G$. We denote by $\|\cdot\|$ the word norm on $\G$ induced by $S$. 

Whenever $\G$ acts by isometries on a metric space $(M,d)$, we write:
\[\tau_M(x)=\max_{s\in S} d(x,sx),  \hspace{.5cm} \tau_M=\inf_{x\in M}\tau_M(x), \hspace{.5cm} \ell_M(g)=\inf_{x\in M} d(x,gx),\]
for all $x\in M$ and $g\in\G$.

\begin{lem}\label{cptf 3}
For every $g\in \G$ and $x\in M$, we have $d(x,gx)\leq\tau_M(x)\cdot\|g\|$. In particular, $\ell_M(g)\leq\tau_M\cdot\|g\|$.
\end{lem}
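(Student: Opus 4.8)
\textbf{Proof plan for Lemma~\ref{cptf 3}.}

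The statement is an elementary consequence of the triangle inequality together with the fact that $S$ generates $\G$. The plan is as follows. Fix $g \in \G$ and write $\|g\| = n$, so that $g = s_1 s_2 \cdots s_n$ with each $s_i \in S \cup S^{-1}$. (Here one uses the standard convention that the word norm associated to $S$ is taken with respect to the symmetric generating set $S \cup S^{-1}$; note that $d(x, s^{-1}x) = d(sx, x)$ since $\G$ acts by isometries, so $\tau_M(x) = \max_{s \in S \cup S^{-1}} d(x, sx)$ as well.) The first step is to telescope: writing $g_k = s_1 \cdots s_k$ for $0 \le k \le n$ (with $g_0 = 1$ and $g_n = g$), we have
\[
d(x, gx) \;\le\; \sum_{k=1}^{n} d(g_{k-1} x, g_k x).
\]
The second step is to simplify each summand. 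Since $\G$ acts by isometries and $g_k = g_{k-1} s_k$, we get $d(g_{k-1}x, g_k x) = d(g_{k-1} x, g_{k-1} s_k x) = d(x, s_k x) \le \tau_M(x)$, where the last inequality is the definition of $\tau_M(x)$ (using the symmetric version noted above). Summing over $k$ yields $d(x, gx) \le n \cdot \tau_M(x) = \tau_M(x) \cdot \|g\|$, which is the first claim.

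For the second claim, take the infimum over $x \in M$ on both sides. Strictly, one argues: for any $\epsilon > 0$, pick $x$ with $\tau_M(x) \le \tau_M + \epsilon$; then $\ell_M(g) \le d(x, gx) \le (\tau_M + \epsilon)\|g\|$. Letting $\epsilon \to 0$ gives $\ell_M(g) \le \tau_M \cdot \|g\|$.

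There is essentially no obstacle here; the only point requiring a word of care is the bookkeeping around whether the word norm is taken over $S$ or $S \cup S^{-1}$, and the observation that this distinction is harmless because $\tau_M(x)$ is automatically invariant under replacing a generator by its inverse when the action is isometric. Everything else is a one-line telescoping argument.
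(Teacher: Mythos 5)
Your proof is correct and follows exactly the same route as the paper: telescope $d(x,gx)$ along the word $g=s_1\cdots s_k$, use the isometry of the action to bound each summand by $\tau_M(x)$, then take infima. Your extra remark that $\tau_M(x)$ is unchanged under symmetrising $S$ (since $d(x,s^{-1}x)=d(x,sx)$) is a harmless clarification the paper leaves implicit.
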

\begin{proof}
Writing $g=s_1s_2...s_k$ with $s_i\in S$ and $k=\|g\|$, we have:
\[d(x,gx)\leq\sum_{i=0}^{k-1}d(s_1...s_i\cdot x,s_1...s_{i+1}\cdot x)\leq\sum_{i=1}^kd(x,s_ix)\leq k\cdot\tau_{M}(x). \qedhere\]
\end{proof}

If $\G$ acts on a $\CAT$ cuboid complex $\mbb{X}$, the quantity $\tau_{\mbb{X}}$ is always attained at a point of $\mbb{X}$. In fact, it is not hard to see that $\tau_{\mbb{X}}=\tau_{\mbb{X}}(x)$ for a vertex $x$ of the barycentric subdivision $\mbb{X}'$.

Consider now a sequence of actions on $\CAT$ cuboid complexes $\G\acts\mbb{X}_n$. For the sake of simplicity, we will write $\ell_n$ and $\tau_n$ with the meaning of $\ell_{\mbb{X}_n}$ and $\tau_{\mbb{X}_n}$. Let us pick basepoints $o_n\in\mbb{X}_n$ with $\tau_n(o_n)=\tau_n$.

Fixing a non-principal ultrafilter $\om\cu 2^{\N}$, let $\mbb{X}_{\om}$ denote the ultralimit $\lim_{\om}(\mbb{X}_n,o_n)$. This is a complete, geodesic, median space. If moreover $\dim\mbb{X}_n\leq D<+\infty$ for all $n\geq 0$, the rank of $\mbb{X}_{\om}$ is at most $D$ (see e.g.\ Theorem~2.3 in \cite{Bow1}). If $\lim_{\om}\tau_n<+\infty$, we also obtain an isometric action $\G\acts\mbb{X}_{\om}$. Indeed, Lemma~\ref{cptf 3} shows that $d(o_n,go_n)\leq\tau_n(o_n)\cdot\|g\|=\tau_n\cdot\|g\|$ for every $g\in \G$. Let us write $\tau_{\om}$ and $\ell_{\om}$ rather than $\tau_{\mbb{X}_{\om}}$ and $\ell_{\mbb{X}_{\om}}$.

\begin{prop}\label{cptf 3.5}
Suppose that $\lim_{\om}\tau_n<+\infty$ and $\lim_{\om}\dim\mbb{X}_n=D<+\infty$. Then $\tau_{\om}=\lim_{\om}\tau_n$ and $\ell_{\om}(g)=\lim_{\om}\ell_n(g)$ for all $g\in \G$.
\end{prop}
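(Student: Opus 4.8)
The claim has two parts: the equality of minimal displacements $\tau_\om=\lim_\om\tau_n$, and the equality of translation lengths $\ell_\om(g^{D!})=\lim_\om\ell_n(g^{D!})$. In both cases one inequality is the ``easy'' direction coming from ultralimits of almost-minimising points, and the other is the delicate direction requiring a lower bound valid uniformly along the sequence. The uniform lower bound is exactly what Proposition~\ref{cptf 2} provides, and this is where the hypothesis $\dim\mbb{X}_n\leq D$ is used; I expect the translation-length estimate to be the main technical point, since $g$ itself need not act non-transversely, which is why the statement is only about $g^{D!}$ (cf.\ part~(5) of Proposition~\ref{Haglund FFT}).

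\emph{The inequality $\tau_\om\leq\lim_\om\tau_n$.} This is immediate: the basepoint $o_\om=\lim_\om o_n\in\mbb{X}_\om$ satisfies $d(o_\om,so_\om)=\lim_\om d(o_n,so_n)\leq\lim_\om\tau_n$ for every $s\in S$, since each $o_n$ realises $\tau_n$; taking the max over the finite set $S$ gives $\tau_\om(o_\om)\leq\lim_\om\tau_n$, hence $\tau_\om\leq\lim_\om\tau_n$. For the reverse inequality, I would first observe that, by Proposition~\ref{Haglund FFT}, it suffices to compute $\tau_n$ at a vertex of the barycentric subdivision $\mbb{X}_n'$, and that, since $\G\acts\mbb{X}_n'$ has no inversions, the element $(g^{D!})$ --- or rather, we only need an element realising $\tau_n$ --- either is elliptic or has an axis. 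The cleanest route: for each $x\in\mbb{X}_\om$ represented by $(x_n)$, one has $\tau_\om(x)=\lim_\om\tau_n(x_n)\geq\lim_\om\tau_n$ because $\tau_n(x_n)\geq\tau_n$ by definition; taking the infimum over $x$ gives $\tau_\om\geq\lim_\om\tau_n$. So in fact \emph{both} inequalities are elementary once one notes $\tau_n(x_n)\geq\tau_n$ for any choice of representatives, and that the basepoint witnesses the infimum.

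\emph{The inequality $\ell_\om(g^{D!})\leq\lim_\om\ell_n(g^{D!})$.} For each $n$, choose $x_n\in\mbb{X}_n$ with $d(x_n,g^{D!}x_n)\leq\ell_n(g^{D!})+\tfrac1n$; one must check these $x_n$ stay within bounded distance of $o_n$ after rescaling so that the ultralimit point $x_\om=\lim_\om x_n$ lands in $\mbb{X}_\om$, which follows from Proposition~\ref{cptf 2} applied to a power of $g$ acting non-transversely: $\tfrac2D\,d(x_n,{\rm Min}_{\mbb{X}_n}(g^{D!}))\leq d(x_n,g^{D!}x_n)-\ell_n(g^{D!})\leq\tfrac1n$, and on the other hand $d(o_n,{\rm Min}_{\mbb{X}_n}(g^{D!}))$ is controlled by $\tau_n$ and $\|g^{D!}\|$ via Lemma~\ref{cptf 3} together with Proposition~\ref{cptf 2} again (so $d(x_n,o_n)$ is bounded by a constant times $\lim_\om\tau_n<+\infty$). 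Then $d(x_\om,g^{D!}x_\om)=\lim_\om d(x_n,g^{D!}x_n)\leq\lim_\om\ell_n(g^{D!})$, giving $\ell_\om(g^{D!})\leq\lim_\om\ell_n(g^{D!})$.

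\emph{The inequality $\ell_\om(g^{D!})\geq\lim_\om\ell_n(g^{D!})$.} This is where Proposition~\ref{cptf 2} does the real work. Pick $y_\om\in\mbb{X}_\om$ with $d(y_\om,g^{D!}y_\om)\leq\ell_\om(g^{D!})+\varepsilon$, represented by $(y_n)$ with $d(y_n,g^{D!}y_n)\to d(y_\om,g^{D!}y_\om)$ along $\om$. Since $g^{D!}$ acts stably without inversions and non-transversely on each $\mbb{X}_n$ (Remark/Proposition: every element acts without inversions on the barycentric subdivision, and $g^{D!}$ is non-transverse by Proposition~\ref{Haglund FFT}(5) --- one should work in $\mbb{X}_n'$ or note the argument is insensitive to subdivision), Proposition~\ref{cptf 2} gives $d(y_n,g^{D!}y_n)\geq\ell_n(g^{D!})$ for every $n$, whence $\lim_\om\ell_n(g^{D!})\leq\lim_\om d(y_n,g^{D!}y_n)=d(y_\om,g^{D!}y_\om)\leq\ell_\om(g^{D!})+\varepsilon$; letting $\varepsilon\to0$ finishes it. I would flag two routine care-points: (i) subdivision --- Proposition~\ref{cptf 2}'s hypotheses (stably without inversions, non-transverse) need to be arranged, and the harmless replacement of $\mbb{X}_n$ by $\mbb{X}_n'$ doubles all lengths uniformly, so it does not affect the equalities; (ii) the ultralimit $\mbb{X}_\om$ has rank $\leq D$ by Theorem~2.3 in \cite{Bow1}, but for this proposition we only need the displacement identities, not the rank, so that fact is cited but not essential here. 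The main obstacle, as noted, is marshalling Proposition~\ref{cptf 2} uniformly in $n$ and confirming the almost-minimising points for $g^{D!}$ remain at bounded distance from the basepoints $o_n$ so the ultralimit argument is legitimate.
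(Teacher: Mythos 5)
Your treatment of $\tau_\om=\lim_\om\tau_n$ and of the lower bound $\ell_\om(g^{D!})\geq\lim_\om\ell_n(g^{D!})$ is fine and matches the paper (note that the lower bound only needs $d(y_n,g^{D!}y_n)\geq\ell_n(g^{D!})$, which is the definition of $\ell_n$ as an infimum; Proposition~\ref{cptf 2} is not really doing work there). The genuine gap is in the upper bound. You take arbitrary points $x_n$ with $d(x_n,g^{D!}x_n)\leq\ell_n(g^{D!})+\tfrac1n$ and claim that $d(x_n,o_n)$ is bounded because both $x_n$ and $o_n$ are close to ${\rm Min}_{\mbb{X}_n}(g^{D!})$ (the former within $\tfrac{D}{2n}$ by Proposition~\ref{cptf 2}, the latter within $\tfrac{D}{2}\|g^{D!}\|\tau_n$ by Proposition~\ref{cptf 2} and Lemma~\ref{cptf 3}). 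This does not follow: when $\ell_n(g^{D!})>0$ the minimal set contains axes and has infinite diameter, so $x_n$ can lie near the minimal set yet be arbitrarily far from $o_n$ along it. Then $\lim_\om d(o_n,x_n)$ may be infinite and $(x_n)$ does not define a point of $\mbb{X}_\om$, so the inequality $\ell_\om(g^{D!})\leq\lim_\om\ell_n(g^{D!})$ is not established as written.

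The fix is the one the paper uses: instead of arbitrary almost-minimisers, take $p_n$ to be the point of ${\rm Min}_{\mbb{X}_n}(g^{D!})$ closest to $o_n$. By Proposition~\ref{Haglund FFT} (after barycentric subdivision, which in the cuboid setting is an isometry, so no rescaling issue at all) the displacement of $g^{D!}$ at $p_n$ equals $\ell_n(g^{D!})$ exactly, and Proposition~\ref{cptf 2} applied at $o_n$, combined with Lemma~\ref{cptf 3}, gives $d(o_n,p_n)\leq\tfrac{D}{2}\,d(o_n,g^{D!}o_n)\leq\tfrac{D}{2}\|g^{D!}\|\tau_n$, which is $\om$-bounded by hypothesis. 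Hence $p_\om=(p_n)$ is a legitimate point of $\mbb{X}_\om$ with $d(p_\om,g^{D!}p_\om)=\lim_\om\ell_n(g^{D!})$, and together with your lower bound this yields the claimed equality. So the strategy is the same as the paper's, but the specific choice of witnesses in the upper-bound direction must be the nearest-point projections of the basepoints to the minimal sets, not generic almost-minimising points.
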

\begin{proof}
Writing $o_{\om}=(o_n)\in\mbb{X}_{\om}$, we have $\tau_{\om}(o_{\om})=\lim_{\om}\tau_n(o_n)=\lim_{\om}\tau_n$. For every $x_{\om}=(x_n)\in\mbb{X}_{\om}$, we have $\tau_{\om}(x_{\om})=\lim_{\om}\tau_n(x_n)\geq\lim_{\om}\tau_n$. Hence $\tau_{\om}=\tau_{\om}(o_{\om})=\lim_{\om}\tau_n$.

Now, barycentrically subdividing the cuboid complexes if necessary, every ${g\in \G}$ acts on every $\mbb{X}_n$ stably without inversions; furthermore, $g^{D!}$ acts non-trans\-versely by Proposition~\ref{Haglund FFT}. Let $p_n\in\mbb{X}_n$ be the point of ${\rm Min}_{\mbb{X}_n}(g^{D!})$ that is closest to $o_n$. By Proposition~\ref{cptf 2} and Lemma~\ref{cptf 3}, we have: 
\[d(o_n,p_n)\leq\tfrac{D}{2}\cdot d(o_n,g^{D!}o_n)\leq\tfrac{D}{2}\cdot\|g^{D!}\|\cdot\tau_n(o_n)=\tfrac{D}{2}\cdot\|g^{D!}\|\cdot\tau_n.\]
In particular, $\lim_{\om}d(o_n,p_n)<+\infty$ and $p_{\om}=(p_n)$ is a point of $\mbb{X}_{\om}$. Now
\[d(p_{\om},g^{D!}p_{\om})=\lim_{\om}d(p_n,g^{D!}p_n)=\lim_{\om}\ell_n(g^{D!}),\]
and, similarly, $d(x_{\om},g^{D!}x_{\om})\geq d(p_{\om},g^{D!}p_{\om})$ for every $x_{\om}\in\mbb{X}_{\om}$. We conclude that $\ell_{\om}(g^{D!})=d(p_{\om},g^{D!}p_{\om})=\lim_{\om}\ell_n(g^{D!})=D!\cdot\lim_{\om}\ell_n(g)$. 

Finally, it follows from Proposition~4.9(1) in \cite{Fio10b} that $\ell_{\om}(g^{D!})=D!\cdot\ell_{\om}(g)$. Thus, we deduce that $\ell_{\om}(g)=\lim_{\om}\ell_n(g)$, concluding the proof.
\end{proof}

Note that, given a $\CAT$ cuboid complex $\mbb{X}$, an action $\G\acts\mbb{X}$ has no global fixed point (equivalently: orbits are unbounded) if and only if $\tau_{\mbb{X}}\neq 0$. In this case, we define the \emph{reduced length function} as $\overline\ell_{\mbb{X}}(g)=\ell_{\mbb{X}}(g)/\tau_{\mbb{X}}$. 

\begin{thm}\label{cptf 4}
If $\dim \mbb{X}_n\leq D$ and the actions $\G\acts \mbb{X}_n$ have unbounded orbits, there exists an element $g\in \G$ such that $\overline\ell_{\mbb{X}_n}(g)\not\ra 0$.
\end{thm}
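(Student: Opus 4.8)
\textbf{Proof proposal for Theorem~\ref{cptf 4}.}

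The plan is to argue by contradiction: suppose that $\overline\ell_{\mbb{X}_n}(g)\ra 0$ for every $g\in\G$, and derive that the limiting action on the ultralimit has a global fixed point, contradicting the fact that $\tau_{\om}=\lim_{\om}\tau_n>0$ (after normalisation). More precisely, I would first rescale each $\mbb{X}_n$ by the factor $1/\tau_n$, so that the resulting rescaled complex $\mbb{X}_n'$ (no longer a cuboid complex in the strict sense, but still a median space of rank $\leq D$, obtained by multiplying the function $\mu$ by $1/\tau_n$, hence still covered by the cuboid formalism of Section~\ref{cuboid section}) satisfies $\tau_{\mbb{X}_n'}=1$. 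Note that rescaling does not change $\overline\ell$, since both $\ell$ and $\tau$ scale the same way, so $\ell_{\mbb{X}_n'}(g)=\overline\ell_{\mbb{X}_n}(g)$. Now pass to an ultralimit $\mbb{X}_{\om}=\lim_{\om}(\mbb{X}_n',o_n)$ with basepoints $o_n$ realising $\tau_{\mbb{X}_n'}(o_n)=1$; since $\lim_{\om}\tau_{\mbb{X}_n'}=1<+\infty$ and $\lim_{\om}\dim\mbb{X}_n'\leq D<+\infty$, Proposition~\ref{cptf 3.5} applies and yields an isometric action $\G\acts\mbb{X}_{\om}$ on a complete, geodesic median space of rank at most $D$, with $\tau_{\om}=\lim_{\om}\tau_{\mbb{X}_n'}=1$ and $\ell_{\om}(g^{D!})=\lim_{\om}\ell_{\mbb{X}_n'}(g^{D!})$ for all $g\in\G$.

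Next I would use the contradiction hypothesis: for every $g\in\G$ we have $\ell_{\mbb{X}_n'}(g)=\overline\ell_{\mbb{X}_n}(g)\ra 0$, hence $\lim_{\om}\ell_{\mbb{X}_n'}(g^{D!})=0$, so $\ell_{\om}(g^{D!})=0$ for every $g\in\G$. In a complete median space of finite rank, an isometry with zero translation length that moreover "acts semisimply" has a fixed point; the cleanest route is to invoke Remark~\ref{cptf 3.75}, or rather to reprove the needed consequence directly: since $\ell_{\om}(g^{D!})=0$, the element $g^{D!}$ fixes a point (finite-rank median spaces have the property that the infimum of the displacement function, when zero, is attained — this is where finite rank is used, via the fact that $\tfrac12 d(x,g x)\geq \tfrac1D d(x,\mathrm{Fix})$-type estimates, cf.\ the proof of Proposition~\ref{cptf 3.5} with $p_n$ the projection to the minimal set). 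Alternatively, and more robustly, I would note that $\ell_{\om}(g)\leq \ell_{\om}(g^{D!})=0$ need not hold directly, so it is safer to work with the finite-index "power" subgroup: let $H\leq\G$ be the (normal, finite-index) subgroup generated by $\{g^{D!}:g\in\G\}$... but this is infinitely generated in general, so instead one restricts to a finite generating set. Concretely: for each $s$ in the finite generating set $S$, $s^{D!}$ fixes a point of $\mbb{X}_{\om}$; by Helly's lemma for median spaces (fixed-point sets of commuting-up-to-finite-index... ) --- this does not immediately give a common fixed point. The correct and simplest fix is to observe that we really want to bound $\tau_{\om}$ directly: for each $s\in S$, $\ell_{\om}(s^{D!})=0$ means $s^{D!}$ has arbitrarily small displacement, and since the action is on a finite-rank median space one gets (via Proposition~\ref{cptf 2} transported to the ultralimit) that $s^{D!}$ fixes a point $q_s$ with a uniform bound $d(o_{\om},q_s)\leq \tfrac D2\|s^{D!}\|\cdot 1$. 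This does not yet close the argument — see below.

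To actually close it, I would instead argue as follows, which I expect to be the \emph{main obstacle} and the part requiring the most care. Rather than chasing fixed points of individual elements, bound $\tau_{\om}$ from above using the hypothesis on \emph{all} translation lengths via a counting/volume argument in finite rank. The point is: in a rank-$\leq D$ median space, for any isometry $g$ and any point $x$, one has $d(x,gx)\leq D\cdot\ell(g) + (\text{something controlled by how }g\text{ moves }x)$ — no, that is false in general. So the genuinely correct approach, and the one I would commit to, is: use that $\lim_\om\tau_{\mbb{X}_n'}=1$ together with $\ell_{\mbb{X}_n'}(s^{D!})\ra 0$ for $s\in S$ to get, for $\om$-large $n$, points $p_n^s\in\mathrm{Min}_{\mbb{X}_n'}(s^{D!})$ close to $o_n$ (distance $\leq \tfrac D2\|s^{D!}\|\tau_{\mbb{X}_n'}$ by Proposition~\ref{cptf 2}, i.e.\ uniformly bounded), and then replace the basepoint: one shows $\tau_{\mbb{X}_n'}$ cannot be bounded below by a positive constant while \emph{every} $\overline\ell_{\mbb{X}_n}(g)\to 0$, because the gap between $\tau$ and the length spectrum is controlled — an action with $\overline\ell\equiv$ (arbitrarily small) on generators either fixes a point (so $\tau=0$) or has a small "almost fixed point", forcing $\tau$ small. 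Formally: $\tau_{\mbb{X}_n}=\min_x\max_{s}d(x,sx)$ and one bounds this by $\max_s d(p_n,sp_n)$ for a well-chosen $p_n$; using $d(p_n,s^{D!}p_n)=\ell_n(s^{D!})$ and the structure of axes/minimal sets (Proposition~\ref{Haglund FFT}) to recover a bound on $d(p_n,sp_n)$ in terms of $\ell_n(s)$ and $\ell_n(s^{D!})\leq D!\cdot\ell_n(s)$, each of which is $o(\tau_n)$ by hypothesis. This yields $\tau_n/\tau_n = 1 \leq \max_s d(p_n,sp_n)/\tau_n \to 0$, the desired contradiction. The delicate step is passing from "$s^{D!}$ has small displacement at $p_n$" to "$s$ has small displacement at a comparable point", which one handles by taking $p_n$ in the convex minimal set $\mathrm{Min}_{\mbb{X}_n}(s^{D!})$, on which $\langle s\rangle$ still acts, and iterating over the finitely many generators using Helly's lemma to keep all the relevant minimal sets meeting near $o_n$.
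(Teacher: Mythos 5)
Your first paragraph reproduces the paper's setup correctly: rescale by $1/\tau_n$, pass to the ultralimit using Proposition~\ref{cptf 3.5}, get $\tau_{\om}=1$ and $\ell_{\om}(g^{D!})=0$, and note that $g^{D!}$ then fixes a point of $\mbb{X}_{\om}$ (the ultralimit of the points $p_n\in\mathrm{Min}_{\mbb{X}_n}(g^{D!})$). Up to there you are on the paper's track. The problem is everything after that: you never close the argument, and the closing you finally commit to contains a false step. Your plan is to bound $\max_{s\in S}d(p_n,sp_n)$ for a point $p_n$ chosen in the minimal sets $\mathrm{Min}(s^{D!})$ of the \emph{generators}, claiming that $d(p_n,sp_n)$ is controlled by $\ell_n(s)$ and $\ell_n(s^{D!})$. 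This is not true: an elliptic $s$ can move a point of $\mathrm{Min}(s^{D!})$ arbitrarily far (think of $s$ of finite order in a tree, so $\mathrm{Min}(s^{D!})$ is the whole space, while $p_n$ sits far from $\mathrm{Fix}(s)$); displacement at a point is only controlled by translation length \emph{plus} the distance to $\mathrm{Min}(s)$, which is exactly what you do not control. Moreover the Helly step you invoke needs the sets $\mathrm{Min}(s^{D!})$, $s\in S$, to pairwise intersect, which nothing guarantees. Finally, any argument that only uses the generators cannot work: a group can act on a tree with every generator elliptic (so $\overline\ell(s)=0$ for $s\in S$) and yet with $\tau$ large, because products of generators are hyperbolic. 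The hypothesis $\overline\ell_{\mbb{X}_n}(g)\ra 0$ for \emph{all} $g\in\G$ must enter, and in your final paragraph it does not.

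What is missing are the two ingredients the paper uses to finish. First, from $\ell_{\om}(g^{D!})=0$ one gets that $\langle g\rangle$ has bounded orbits in $\mbb{X}_{\om}$; to promote this to a fixed point of $g$ itself, the paper puts a $\G$--invariant complete $\CAT$ metric on the finite-rank median space $\mbb{X}_{\om}$ (Theorem~1.1 in \cite{Bow4}, or the ultralimit of the $\CAT$ metrics on the $\mbb{X}_n$) and applies Cartan's fixed-point theorem. Second, once \emph{every} element of $\G$ fixes a point, one needs a genuine theorem to produce a \emph{global} fixed point --- this is Theorem~3.1 in \cite{Fio2} for actions on complete finite-rank median spaces --- and that global fixed point contradicts $\tau_{\om}=1$. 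Your text explicitly flags that you cannot get a common fixed point and then substitutes the quantitative claim ``small translation lengths on generators force $\tau$ small'', which is essentially the statement being proved and is false as stated; so the proposal has a genuine gap at its core.
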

\begin{proof}
Suppose for the sake of contradiction that $\overline\ell_{\mbb{X}_n}(g)\ra 0$ for all $g\in\G$. Applying Proposition~\ref{cptf 3.5} to the sequence $\G\acts\frac{1}{\tau_n}\cdot\mbb{X}_n$, we obtain an action on a complete, connected, finite rank, median space $\mbb{X}_{\om}$ with $\tau_{\om}=1$ and $\ell_{\om}(g)=0$ for all $g\in \G$. By Corollary~A in \cite{Fio10b}, each $g\in\G$ has a fixed point in $\mbb{X}_{\om}$. Theorem~3.1 in \cite{Fio2} then yields a global fixed point for $\G\acts X_{\om}$. This contradicts $\tau_{\om}=1$.
\end{proof}

As in the introduction, we can consider the composition:
\[\mathrm{Cub}_D(\G)\overset{\overline\ell}{\longrightarrow}\R^{\G}\setminus\{0\}\longrightarrow\mbb{P}(\R^{\G}).\]
Note that we are now mapping $\mathrm{Cub}_D(\G)$ into $\R^{\G}\setminus\{0\}$ via \emph{reduced} length functions, as this does not affect the projection to the projectivisation. Lemma~\ref{cptf 3} shows that reduced length functions all lie in the compact subspace $\prod_{g\in \G}[0,\|g\|]\cu\R^{\G}$ and, by Theorem~\ref{cptf 4}, they do not accumulate on the zero function. The closure of the image of $\overline\ell\colon\mathrm{Cub}_D(\G)\ra\R^{\G}\setminus\{0\}$ is then compact and projects to a compact subset of $\mbb{P}(\R^{\G})$. 

This proves Proposition~\ref{cptf main}.

\medskip
We conclude this section by sketching a proof of Corollary~\ref{CSV application}. We assume that the reader is familiar with the notation and content of \cite{Charney-Stambaugh-Vogtmann}.

\begin{proof}[Proof of Corollary~\ref{CSV application}]
Let $\G$ be the right-angled Artin group defined by a graph $\Lambda$ and let $\mbb{S}$ denote its Salvetti complex. Every point of the simplicial complex $K_{\Lambda}$ (and more generally $\Sigma_{\Lambda}$) defined in \cite{Charney-Stambaugh-Vogtmann} is a pair $\s=(M,\alpha)$, where $M$ is a compact non-positively curved cuboid complex (obtained as a ``Salvetti blow-up'') and $\alpha\colon M\ra\mbb{S}$ is a homotopy equivalence. 

Equivalently, we can view $\s$ as the $\G$--action on the universal cover $X=\wt M$ determined by $\alpha$. Two such actions are identified as points of $\Sigma_{\Lambda}$ exactly when they are $\G$--equivariantly homothetic. Associating to $\s$ the projectivised combinatorial length function of this action, we obtain a map $\Sigma_{\Lambda}\ra\mbb{P}(\R^{\G})$. It will follow from Theorem~\ref{NFF MLSR} and Proposition~\ref{cptf main} that this is an injection with relatively compact image, once we show that their hypotheses are satisfied in this context. Continuity of the map is straightforward.

Assume for a moment that, for every $\s\in\Sigma_{\Lambda}$, the complex $M$ (hence $X=\wt M$) has no free faces; we will sketch a proof of this below. Then $X$ is an essential $\CAT$ cube complex and, since $\G\acts X$ is proper and cocompact, this action is also $\ell^1$--minimal and non-elementary (Lemmas~\ref{geometric implies nonelementary} and~\ref{minimal vs essential}). Note moreover that $\dim X$ is uniformly bounded above in terms of the graph $\Lambda$. Finally, since the right-angled Artin group $\G$ does not split as a direct product, the universal cover $\wt{\mbb{S}}$ of $\mbb{S}$ has nonempty contracting boundary \cite{Charney-Sultan}. Since $X$ and $\wt{\mbb{S}}$ are quasi-isometric, $X$ also has nonempty contracting boundary and, thus, it has at most one unbounded factor. As $X$ is essential, it has no bounded factors, hence $X$ must be irreducible.

We conclude with a sketch of the fact that $M$ and $X$ have no free faces. Suppose that $M=\mbb{S}^{\mathbf{\Pi}}$ for a set $\mathbf{\Pi}=\{\mathbf{P}_1,...,\mathbf{P}_k\}$ of pairwise-compatible $\Lambda$-Whitehead partitions. Suppose for the sake of contradiction that $M$ does have free faces. Then there exists a maximal cube $c\cu M$ and a codimension-one face $c'\cu c$ that is contained in no other maximal cube. We can assume that $\mathbf{\Pi}$ and $c$ are chosen so as to minimise the quantity $(k+\dim c)$.

Let $\mf{w}_i$ the hyperplane of $M$ dual to all edges labelled $e_{\mathbf{P}_i}$ (cf.\ Theorem~3.14(2) in \cite{Charney-Stambaugh-Vogtmann}).

{\bf Step~1:} \emph{the cube $c$ must intersect all $\mf{w}_i$; in particular, the partitions $\mathbf{P}_i$ pairwise commute.} Denote by $M_i$ the collapse of $M$ along the hyperplane $\mf{w}_i$. By Theorem~4.6 in \cite{Charney-Stambaugh-Vogtmann}, this is isomorphic to the blow-up of $\mbb{S}$ with respect to $\mathbf{\Pi}\setminus\{\mathbf{P}_i\}$. If some $\mf{w}_i$ did not cut the cube $c$, then $c$ would project injectively to $M_i$ and $c'$ would still be a free face in $M_i$. However, this would violate minimality of $(k+\dim c)$.

Now, let us a pick a vertex $v\in c'$ and let $\eps$ be the edge at $v$ that lies in $c\setminus c'$. Let $R=(P_1,...,P_k)$ be the region associated to the vertex $v$. Let $\eps_1,...,\eps_s$ be the edges of $c'$ at $v$ that are labelled by elements of $V^{\pm}$; more precisely, say that $\eps_i$ is labelled by $e_{x_i}$ with $x_i\in V^{\pm}$.

{\bf Step~2:} \emph{the edge $\eps$ is not labelled $e_{\mathbf{P}_i}$ for any $1\leq i\leq k$.} Otherwise, consider an element $m\in\max P_i$. Observe that $m\in I(R)$ as $\max P_i\cu\llk P_j$ for all $j\neq i$. Let $\eps'$ be the edge at $v$ labelled $e_m$ and observe that it does not span a square with $\eps$. On the other hand, $\llk m=\llk P_i\supseteq\{x_1,...,x_s\}$ and $m\in\max P_i\cu\llk P_j$ for all $j\neq i$. It follows that $\eps'$ and $c'$ span a cube different from $c$, a contradiction.

{\bf Step~3:} \emph{the edge $\eps$ also cannot be labelled by $e_x$ for any $x\in V^{\pm}$ (a contradiction).} In order to see this, suppose that $\eps$ is labelled $e_x$. Note that $\eps$ cannot be a loop or $c'$ would not be a free face. Thus, let $R'\neq R$ be the region determined by the vertex of $\eps$ other than $v$. Theorem~3.14(1) in \cite{Charney-Stambaugh-Vogtmann} shows that there exists an edge $\eps'$ at $v$ that is labelled $e_{\mathbf{P}_i}$, where $R$ and $R'$ lie on opposite sides of the partition $\mathbf{P}_i$. Since $x\in\sing\mathbf{P}_i$, it follows that $\eps'$ and $\eps$ do not span a square and, moreover, $\{x_1,...,x_s\}\cu\llk x\cu\llk\mathbf{P}_i$. Hence, $\eps'$ spans a cube with $c'$, violating the fact that $c'$ is a free face.
\end{proof}

\bibliography{mybib}
\bibliographystyle{alpha}

\end{document}